\theoremstyle{plain}
\newtheorem{theo}{Theorem}[section]
\newtheorem{prop}[theo]{Proposition}
\newtheorem{lemm}[theo]{Lemma}
\newtheorem{coro}[theo]{Corollary}
\newtheorem{assu}[theo]{Assumption}
\newtheorem{defi}[theo]{Definition}
\theoremstyle{definition}
\newtheorem{rema}[theo]{Remark}
\DeclareMathOperator{\cnx}{div}
\DeclareMathOperator{\Hess}{Hess}
\DeclareMathOperator{\supp}{supp}
\DeclareMathOperator{\di}{d}
\DeclareSymbolFont{pletters}{OT1}{cmr}{m}{sl}
\DeclareMathSymbol{s}{\mathalpha}{pletters}{`s}
\def\htilde{\widetilde{h}}
\def\lDx#1{\langle D_x\rangle^{#1}\,}
\def\mU{\mathcal{U}}
\def\B{B }
\def\defn{\mathrel{:=}}
\def\Deltayx{\Delta_{x,y}}
\def\eps{\varepsilon}
\def\la{\left\lvert}
\def\lA{\left\lVert}
\def\le{\leq}
\def\les{\lesssim}
\def\ma{a}
\def\mez{\frac{1}{2}}
\def\partialx{\nabla}
\def\partialyx{\nabla_{x,y}}
\def\ra{\right\rvert}
\def\rA{\right\rVert}
\def\tdm{\frac{3}{2}}
\def\xN{\mathbf{N}}
\def\xR{\mathbf{R}}
\def\gain{\mu}
\numberwithin{equation}{section}
\title{Strichartz estimates for gravity water waves}
\author{
T. Alazard, 
N. Burq, 
C. Zuily}
\date{\empty}
\begin{document}
 \maketitle
 \begin{abstract}

We prove Strichartz estimates for gravity water waves, 
in arbitrary dimension and in fluid domains with general bottoms. We consider rough solutions 
such that, initially, the first order derivatives of the velocity field are not controlled in 
$L^\infty$-norm, or the initial free surface has a curvature 
not controlled in $L^2$-norm. 
\end{abstract}

\section{Introduction}
This paper continues the analysis started in \cite{ABZ3,ABZ4}. 
Our goal in this series of papers is to study gravity water waves 
having rough initial data. 
In \cite{ABZ3} we proved that the Cauchy problem is well-posed 
under the minimal assumptions to insure, in terms of 
Sobolev embeddings, that at time $t=0$ the trace $\underline{v}$ 
of the velocity at the free surface 
belongs to $W^{1,\infty}(\xR^d)$ and the free surface elevation $\eta$ 
belongs to~$W^{\tdm,\infty}(\xR^d)$. 
Here $d$ is the dimension of the free surface, so that $d=1$ for two-dimensional waves 
and $d=2$ for three-dimensional waves.

In this paper, we shall prove {\em a priori\/} 
estimates for solutions such that, initially, the $W^{1,\infty}(\xR^d)$-norm of 
$\underline{v}$ and the $W^{\tdm,\infty}(\xR^d)$-norm of $\eta$ are not bounded. 
With regards to the analysis of the possible singularities in the water waves equations, 
the most important conclusion is that, for two-dimensional gravity waves (for $d=1$), one can 
consider solutions such that, initially, the $L^2(\xR)$-norm of the curvature is not bounded.

The analysis is in two steps. 
The first step is to reduce the water waves equations to a wave 
type equation. This task was performed in 
\cite{ABZ3}. This reduction was based on two facts. Firstly, 
the Craig-Sulem-Zakharov reduction to a system on the boundary, introducing the Dirichlet-Neumann operator. The second fact (following Lannes~\cite{LannesJAMS}Ê
and \cite{ABZ1,AM}), is that one can use microlocal analysis 
to study the Dirichlet-Neumann operator in non smooth domains. For our purposes, the key point 
is that we proved in \cite{ABZ3} tame estimates depending linearly on H\"older norms, by using paradifferential calculus. 
Our goal here is to use Strichartz 
estimates to control these H\"older norms, and hence 
to improve the analysis of the Cauchy problem by combining these Strichartz estimates 
with the results proved in \cite{ABZ3}. 

The main difficulty will therefore consist in proving 
Strichartz estimates for gravity waves at a slightly lower level of regularity 
where we proved the existence of the solutions in \cite{ABZ3}. 
For the equations with surface tension, Strichartz estimates were proved in 
\cite{CHS,ABZ2}. Without surface tension, by contrast with other dispersive equations, 
one new point is that high frequencies propagate slower than low frequencies.

\subsection{Equations and assumptions on the fluid domain}

We consider the incompressible Euler equation in a 
time dependent fluid domain $\Omega$  
contained in a fixed container $\mathcal{O}$, located between a free surface 
and a fixed bottom. 
As in our previous papers, 
the only assumption we shall make on the bottom is that it is separated from the free surface 
by a ÓstripÓ of fixed length. 
Namely, we assume that, for each time $t$ in $[0,T]$, 
$$
\Omega(t)=\left\{ (x,y)\in \mathcal{O} \, :\, y < \eta(t,x)\right\},
$$
where $\mathcal{O}\subset \xR^d\times \xR$ is a given open connected set. The spatial 
coordinates are $x\in \xR^d$ (horizontal) and $y\in \xR$ (vertical) with $d\ge 1$. 
We assume that the free surface
$$
\Sigma(t)=\left\{ (x,y)\in \xR^d\times \xR \, :\, y = \eta(t,x)\right\},
$$ 
is separated from the bottom $\Gamma=\partial\Omega(t)\setminus \Sigma(t)$ 
by a curved strip. This means that we study the case where there exists $h>0$ such that, for 
any $t$ in $[0,T]$, 
\begin{equation}\label{n1}
\Omega_{h}(t)\defn \left\{ (x,y)\in \xR^d\times \xR \, :\, \eta(t,x)-h < y < \eta(t,x)\right\} \subset \mathcal{O}.
\end{equation}
\textbf{Examples.} $i)$ $\mathcal{O}=\xR^d\times \xR$ corresponds to the infinite depth case (then 
$\Gamma=\emptyset$); $ii)$ The finite depth case corresponds to the case where $\mathcal{O}=\{(x,y)\in \xR^d\times \xR\,:\, y>b(x)\}$ for some continuous function $b$ such that $\eta(t,x)-h>b(x)$ for any time $t$ (then $\Gamma=\{y=b(x)\}$). 
Notice that no regularity assumption is required on $b$. $iii)$ 
See the picture below.

We consider a potential flow such that the velocity $v$ is given by $\nabla_{x,y}\phi$ 
for some harmonic function $\phi\colon \Omega\rightarrow \xR$. 
The equations by which the motion is determined are the following:
\begin{equation}\label{n2}
\left\{
\begin{aligned}
&\Deltayx\phi=0 &&  \text{in }\Omega \\[1ex]
&\partial_{t}\phi+\frac{1}{2}\la \partialyx\phi\ra^2  +P+g y = 0
&&\text{in }\Omega \\[1ex]
&\partial_{t} \eta = \partial_{y}\phi -\partialx\eta\cdot\partialx \phi &&\text{for }y=\eta(t,x) \\[1ex]
&P = 0
&&\text{for }y=\eta(t,x)\\[1ex]
&\partial_\nu \phi=0 &&\text{on }\Gamma,
\end{aligned}
\right.
\end{equation}
where $g>0$ is acceleration due to gravity, $P$ is the pressure and 
$\nu$ denotes the normal vector to $\Gamma$ (whenever it exists; 
for general domains, one solves the boundary value problem by a variational argument, 
see \cite{ABZ1,ABZ3}). 
The system~\eqref{n2} is augmented with initial data at time $0$. 

\usetikzlibrary{fadings}
\usetikzlibrary{decorations}
\usepgflibrary{decorations.pathmorphing}

\tikzfading[name=fade out, inner color=transparent!0,
  outer color=transparent!100]

\begin{center}
\begin{tikzpicture}[scale=1,samples=100]
    
\clip (-5,-3) rectangle (5,2);

    \filldraw [white!80!black,draw=black]
plot [domain=-6:-1] ({\x},{1+exp(\x)}) --
plot [domain=-1:1] ({\x},{1+exp(-\x*\x)}) --  
plot [domain=1:6] ({\x},{1+exp(-\x)}) -- (6,0) -- (-6,0) ;
    \fill[color=white!80!black,draw=none] (-12,-6) rectangle (9,1);
    \fill[black]
    decorate [decoration={random steps,segment length=1pt,amplitude=1pt}] 
    {(-5,-2.25) -- (-3.5,-2.5)}
    decorate [decoration={random steps,segment length=1pt,amplitude=1pt}] 
    {(-3.5,-2.5) -- (-3,-4.25)}
    decorate [decoration={random steps,segment length=5pt,amplitude=4pt}] 
    {-- (-2.7,-2.25) -- (-1,-2.25)}
    decorate [decoration={random steps,segment length=2pt,amplitude=1pt}] 
    {-- (-1,-2.25) -- (5,-2.25)}
    -- (5,-3) -- (-5,-3) -- (-5,-2.25);
     \node at (0,-1) [above] {$\Omega(t)$};
\node at (3,1.2) [above] {$\Sigma(t)=\{y=\eta(t,x)\}$};
\node at (3,-2.2) [above] {$\Gamma$};
\end{tikzpicture}  
\end{center}

\subsection{Regularity thresholds for the water waves}\label{S:21}

A well-known property of smooth solutions of \eqref{n2} is that their energy 
is conserved
$$
\frac{d}{dt} 
\left\{\mez \int_{\Omega(t)} \la \nabla\phi (t,x,y)\ra^2 \, dxdy + \frac{g}{2}\int_{\xR^d} \eta(t,x)^2 \, dx \right\} =0.
$$
However, we do not know if weak solutions exist at this level of regularity 
(even the meaning of the equations is not clear at that level of regularity). 
This is the only known coercive quantity (see~\cite{BO}).

There is another regularity threshold given by a scaling invariance which holds 
in the infinite depth case (that is when $\mathcal{O}=\xR^d\times \xR$). 
If $\phi$ and $\eta$ are solutions of the gravity water waves equations, then $\phi_\lambda$ and $\eta_\lambda$ defined by
$$
\phi_\lambda(t,x,y)=\lambda^{-3/2} \phi (\sqrt{\lambda}t,\lambda x , \lambda y),\quad 
\eta_\lambda(t,x)=\lambda^{-1} \eta(\sqrt{\lambda} t,\lambda x),
$$
solve the same equations. Then the critical case corresponds to $\eta_0$ Lipschitz and $\phi_0$ in $\dot{C}^{3/2}$ (one can replace the H\"older spaces by 
other spaces having the same invariance by scalings). 

According to the scaling argument, 
one expects that the problem is ``ill-posed'' for initial data such that the free surface is not Lipschitz. 
See e.g.\ \cite{CaARMA,CCT} for such ill-posedness 
results for semi-linear equations. However, the water waves equations are not semi-linear and it 
is not clear that the scaling argument is the only relevant regularity threshold to determine 
the optimal regularity in the analysis of the Cauchy problem (we refer the reader to 
the discussion in Section~$1.1.2$ of the recent famous result by Klainerman-Rodnianski-Szeftel~\cite{KRS}). 
In particular, it remains an open problem to prove an ill-posedness result for the gravity water waves equations. 
We refer to the recent paper by Chen, Marzuola, Spirn and Wright~\cite{CMSW} for a related result with surface tension. 

There are several additional criteria which appear in the mathematical analysis 
of the water waves equations and 
some comments are in order. The first results on the water waves equations required that 
the initial data are smooth enough, so that at least 
the curvature belongs to $L^\infty(\xR^d)$ as well as the first order derivatives of the velocity. 
The literature on the subject is now well known, 
starting with the pioneering work of Nalimov~\cite{Nalimov} (see also 
Yosihara~\cite{Yosihara} and Craig~\cite{Craig1985}) who showed the unique solvability in 
Sobolev spaces under a smallness assumption. 
Wu proved that the Cauchy problem is well posed 
without smallness assumption~(\cite{WuJAMS,WuInvent}) and 
several extensions of this result were obtained 
by different methods by several authors (see~\cite{ABZ3,CS,LannesJAMS,LindbladAnnals,MR,SZ}). 

Let us mention that it seems compulsory to assume that the 
gradient of the velocity is bounded (see~\cite{ChLi,ABZ3,WaZh}). 
Below we shall consider solutions such that, 
even though the initial velocity field is only  $C^{1- \epsilon}$, 
the solution itself is still $L^2((-T,T);C^{1+\epsilon'})$. 

Eventually, we recall from the appendix of \cite{ABZ4} that 
the Cauchy problem for the linearized water equations, which can be written under the form 
$\partial_t u +i \sqrt{g} \la D_x\ra^{1/2} u=0$, is ill-posed on H\"older spaces. So we need to work in Sobolev spaces, 
although the continuation criteria mentioned above are most naturally stated working in H\"older spaces.

\subsection{Reformulation of the equations}

Following Zakharov~(\cite{Zakharov1968}) 
and Craig and Sulem 
(\cite{CrSu}) 
we reduce the water waves equations 
to a system on the free surface. To do so, notice that since the velocity potential 
$\phi$ is harmonic, it is fully determined by the knowledge of $\eta$ and the knowledge of its trace at the free surface, denoted by $\psi$. 
Then one uses the 
Dirichlet-Neumann operator which maps a function defined on the free surface to the normal derivative of its harmonic extension. 

Namely, if $\psi=\psi(t,x) \in\xR$ is defined by 
$$
\psi(t,x)=\phi(t,x,\eta(t,x)),
$$
and if the Dirichlet-Neumann operator is defined by 
\begin{align*}
(G(\eta) \psi)  (t,x)&=
\sqrt{1+|\partialx\eta|^2}\,
\partial _n \phi\arrowvert_{y=\eta(t,x)}\\
&=(\partial_y \phi)(t,x,\eta(t,x))-\partialx \eta (t,x)\cdot (\partialx \phi)(t,x,\eta(t,x)),
\end{align*}
then one obtains the following system for two unknowns of the variables $(t,x)$,
\begin{equation}\label{n10}
\left\{
\begin{aligned}
&\partial_{t}\eta-G(\eta)\psi=0,\\[1ex]
&\partial_{t}\psi+g \eta
+ \smash{\frac{1}{2}\la\partialx \psi\ra^2  -\frac{1}{2}
\frac{\bigl(\partialx  \eta\cdot\partialx \psi +G(\eta) \psi \bigr)^2}{1+|\partialx  \eta|^2}}
= 0.
\end{aligned}
\right.
\end{equation}
We refer to \cite{ABZ1,ABZ3} for a precise construction of $G(\eta)$ 
in a domain with a general bottom. 

For general domains, it is proved in \cite{Bertinoro} that 
if a solution $(\eta,\psi)$ of System~\eqref{n10} belongs 
to $C^0([0,T];H^s(\xR^d))$ for some $T>0$ and $s>1+d/2$, then one can define a velocity potential $\phi$ and 
a pressure $P$ satisfying \eqref{n2}. 
Below we shall always consider solutions such that  $(\eta,\psi)$ belongs 
to $C^0([0,T];H^s(\xR^d))$ for some $s>1+d/2$ (which is the scaling index). It is thus sufficient to solve the 
Craig--Sulem--Zakharov formulation~\eqref{n10} of the water waves equations.

\subsection{Local in time existence for low regularity solutions}

We shall work below with the horizontal and vertical traces of the velocity 
on the free boundary, namely
$$
B= (\partial_y \phi)\arrowvert_{y=\eta},\quad 
V = (\nabla_x \phi)\arrowvert_{y=\eta}.$$ 
These can be defined only in terms of $\eta$ and $\psi$ by means of the formula 
\begin{equation}\label{defi:BV}
B\defn \frac{\partialx \eta \cdot\partialx \psi+ G(\eta)\psi}{1+|\partialx  \eta|^2},
\qquad
V\defn \partialx \psi -B \partialx\eta.
\end{equation}
Also, recall that the Taylor coefficient $\ma$ defined 
by
$$a=-\partial_y P\arrowvert_{y=\eta}
$$
can be defined in terms of 
$\eta,\psi$ only (see \cite{Bertinoro} or Definition~$1.5$ in \cite{ABZ3}). 
In our previous paper we prove the following result. 
\begin{theo}[from \cite{ABZ3}]
\label{T1}
Let $d\ge 1$, $s>1+d/2$ and consider an initial data $(\eta_{0},\psi_{0})$ such that
\begin{enumerate}
\item \label{assu1}
$\eta_0\in H^{s+\mez}(\xR^d),\quad \psi_0\in H^{s+\mez}(\xR^d),\quad V_0\in H^{s}(\xR^d),\quad B_0\in H^{s}(\xR^d)$,
\item there exists $h>0$ such that condition \eqref{n1} holds initially for $t=0$,
\item (Taylor sign condition) there exists $c>0$ such that, for all $x\in \xR^d$, 
$\ma_0(x)\ge c$.
\end{enumerate}
Then there exists $T>0$ such that 
the Cauchy problem for \eqref{n10} 
with initial data  $(\eta_{0},\psi_{0})$ has a unique solution 
$(\eta,\psi)\in C^0\big([0,T];H^{s+\mez}(\xR^d)\times H^{s+\mez}(\xR^d)\big)$ 
such that 
\begin{enumerate}
\item we have $(V,B)\in C^0\big([0,T];H^{s}(\xR^d)\times H^{s}(\xR^d)\big)$,
\item the condition \eqref{n1} holds for 
$0\le t\le T$, with $h$ replaced with $h/2$,
\item for all $0\le t\le T$ and for all $x\in \xR^d$, 
$\ma(t,x)\ge c/2$.
\end{enumerate}
\end{theo}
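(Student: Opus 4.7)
The strategy is the now-standard paradifferential reduction for quasilinear dispersive equations, which in this setting reduces \eqref{n10} to a half-wave type equation amenable to an $L^2$-based energy estimate. For $s>1+d/2$, Sobolev embedding gives $\eta \in W^{\tdm,\infty}(\xR^d)$ and $V,B \in C^{1+\eps}_*(\xR^d)$ for some $\eps>0$, which is enough regularity for the symbolic calculus of the Dirichlet--Neumann operator developed in \cite{ABZ1} to apply. The first step is to paralinearize $G(\eta)\psi$: introducing the Alinhac good unknown $U \defn \psi - T_B \eta$, one obtains an identity
\begin{equation*}
G(\eta)\psi = T_\lambda U - T_V \cdot \nabla \eta + R_1(\eta,\psi),
\end{equation*}
where $\lambda(t,x,\xi)$ is a positive elliptic symbol of order $1$ -- the principal symbol of the Dirichlet--Neumann operator, equivalent to $|\xi|$ at infinity -- and $R_1$ is controlled in $H^s$ \emph{tamely}, i.e.\ linearly in the top Sobolev norm with a coefficient depending only on H\"older-type norms and on $h^{-1}$. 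Paralinearizing the Bernoulli equation similarly produces a $2\times 2$ paradifferential system
\begin{equation*}
\partial_t \eta + T_V \cdot \nabla \eta - T_\lambda U = f_1, \qquad \partial_t U + T_V \cdot \nabla U + T_\ma \eta = f_2,
\end{equation*}
with $f_1,f_2$ tamely controlled in $H^s$.

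The second step uses the Taylor sign $\ma \ge c>0$ to symmetrize. Choose paradifferential symbols $p$ of order $\mez$ and $q$ of order $0$, elliptic thanks to $\ma\ge c$ and $\lambda \gtrsim |\xi|$, and set $\Phi \defn T_p U - i T_q \eta$, so that the coupling between $T_\ma \eta$ and $T_\lambda U$ is conjugated into a single paradifferential operator $T_\gamma$ with real symbol $\gamma \sim \sqrt{\ma \lambda} \sim \sqrt{g}\,|\xi|^{\mez}$. Up to lower-order remainders, $\Phi$ then solves
\begin{equation*}
\partial_t \Phi + T_V \cdot \nabla \Phi + i T_\gamma \Phi = F,
\end{equation*}
whose principal part is skew-adjoint in $L^2$. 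Applying $\lDx{s}$ and pairing with $\Phi$ in $L^2$, the skew-adjoint part disappears and symbolic calculus yields a Gronwall-type inequality
\begin{equation*}
\frac{d}{dt}\lA \Phi\rA_{H^s}^2 \le \mathcal{F}\bigl(\lA \eta\rA_{H^{s+\mez}} + \lA \psi\rA_{H^{s+\mez}} + \lA V\rA_{H^s} + \lA B\rA_{H^s}\bigr),
\end{equation*}
for a continuous non-decreasing $\mathcal{F}$ depending also on $h^{-1}$ and $c^{-1}$. Since $(\eta,\psi,V,B)\mapsto \Phi$ is elliptic of the correct order, this closes an a priori estimate on a uniform interval $[0,T]$ depending only on the initial data.

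Existence is then obtained by a Friedrichs-type approximation scheme: regularize the initial data, solve the corresponding system (which is classically well-posed for smooth data), and pass to the limit using the uniform estimate above. Uniqueness and continuity in time of $(\eta,\psi,V,B)$ follow from applying the same symmetrized energy estimate to the difference of two solutions, measured in a Sobolev norm one level below. The persistence of the strip condition \eqref{n1} with $h/2$ and of the Taylor sign with constant $c/2$ is a direct continuity argument, using that $\partial_t \eta$ and $\partial_t \ma$ are bounded in $L^\infty$ as soon as the norms above are controlled.

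The main obstacle is the first step: at this threshold regularity, where $\eta$ is only $W^{\tdm,\infty}$ and $V$ only $W^{1,\infty}$, and the bottom of the fluid domain may be arbitrarily rough, every single estimate on $G(\eta)$ and its paralinearization must be \emph{tame}, i.e.\ linear in the top Sobolev norm with coefficient controlled by the low H\"older norms. Without such tame estimates the Gronwall inequality above would not close. Establishing these bounds in the required generality is the technical heart of \cite{ABZ3}; the symmetrization, energy estimate and approximation scheme sketched above are comparatively soft by-products.
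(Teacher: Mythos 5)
Your plan is essentially the proof of this theorem as carried out in the cited reference \cite{ABZ3} (the present paper only quotes the statement and recalls the key ingredients in Section~2): paralinearization of $G(\eta)$ via Alinhac's good unknown, reduction to the $2\times2$ paradifferential system \eqref{syst:s}, symmetrization with $\gamma=\sqrt{\ma\lambda}$ and $q=\sqrt{\ma/\lambda}$ using the Taylor sign condition, tame $H^s$ energy estimates, and an approximation/contraction argument. The only cosmetic difference is that you work with $(\eta,U)$ while the paper's recalled system is phrased in terms of $(\zeta_s,U_s)$ built from $(\nabla\eta,V,B)$; the structure and the role of the tame estimates are the same.
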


The same result holds for periodic initial data. In \cite{ABZ4} we proved that 
it holds in fact in the more general setting of uniformly local Sobolev spaces.

\subsection{A priori estimates}

Our goal in this paper is to 
take benefit of the dispersive properties of the water-waves system and improve 
the regularity thresholds just exhibited. 
This improvement should be seen as an illustration of the fact that the water-wave system is, after suitable 
paralinearizations, a {\em quasi-linear} wave type equation and consequently, 
the machinery developed in this context by Bahouri-Chemin~\cite{BaCh} 
and Tataru~\cite{TataruNS} 
applies to this context (see also 
Staffilani-Tataru~\cite{StTa} and Burq-G\'erard-Tzvetkov~\cite{BGT1}; we also refer the reader to 
the recent 
book by Bahouri-Chemin-Danchin~\cite{BCD} for references on Strichartz estimates for 
equations with non smooth coefficients). 
For the sake of simplicity, 
we restrict our attention to a priori estimates and make the following assumptions.
\begin{assu}\label{A:2}
Let $T_0$ in $(0,1]$. We consider smooth solutions of \eqref{n10} such that
\begin{enumerate}[i)]
\item $(\eta,\psi)$ belongs to $C^1([0,T_0]; H^{s_0}(\xR^d)\times H^{s_0}(\xR^d))$ for some 
$s_0$ large enough;
\item there exists $h>0$ such that \eqref{n1} holds for any $t$ in $[0,T_0]$ (this is the assumption 
that there exists a curved strip of width $h$ separating the free surface from the bottom);
\item there exists $c>0$ such that the Taylor coefficient $a(t,x)=-\partial_y P\arrowvert_{y=\eta(t,x)}$ is bounded from below by $c$ 
from any $(t,x)$ in $[0,T_0]\times \xR^d$.
\end{enumerate}
\end{assu}

For $\rho= k + \sigma$ with $k\in\xN$ and $\sigma \in (0,1)$, recall that one denotes 
by~$W^{\rho,\infty}(\xR^d)$ 
the space of functions whose derivatives up to order~$k$ are bounded and uniformly H\"older continuous with exponent~$\sigma$. Hereafter, we always consider indexes $\rho\not\in\xN$.

Define, for $T$ in $(0,T_0]$, the norms
\begin{equation*}
\begin{aligned}
M_s(T)&\defn 
\lA (\psi,\eta,B,V)\rA_{C^0([0,T];H^{s+\mez}\times H^{s+\mez}\times H^s\times H^s)},\\
Z_r(T)&\defn \lA \eta\rA_{L^p([0,T];W^{r+\mez,\infty})}
+\lA (B,V)\rA_{L^p([0,T];W^{r,\infty}\times W^{r,\infty})},\end{aligned}
\end{equation*}
where $p=4$ if $d=1$ and $p=2$ for $d\ge 2$, and set
$$
M_{s,0}\defn \lA (\psi(0),\eta(0),B(0),V(0))\rA_{H^{s+\mez}\times H^{s+\mez}\times H^s\times H^s}.
$$

The main result of this paper is the following
\begin{theo}\label{T2}
Let $T_0>0$ and $\gain$ be such that $\gain=\frac{1}{24}$ if $d=1$ and 
$\gain<\frac{1}{12}$ for $d\ge 2$. Consider two real numbers $s$ and $r$ satisfying
\begin{equation*}
s>1+\frac{d}{2}-\gain, \quad 1<r<s+\gain-\frac{d}{2},\quad 
r\not\in\mez \xN. 
\end{equation*}
For any $A>0$ there exist $B>0$ and $T_1\in (0,T_0]$ such that, 
for all smooth solution 
$(\eta,\psi)$ of \eqref{n10} defined on the time interval~$[0,T_0]$ and satisfying 
Assumption~\ref{A:2} on this time interval, if 
$M_{s,0}\le A$ then $M_s(T_1)+Z_r(T_1)\le B$.
\end{theo}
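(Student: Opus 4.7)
The plan is a bootstrap argument in which $M_s(T)$ and $Z_r(T)$ are controlled simultaneously on a short time interval. Given $A>0$ with $M_{s,0}\le A$, I would fix a constant $B\gg A$ (to be chosen), and let $T\in(0,T_0]$ be maximal with $M_s(T)+Z_r(T)\le 2B$; such a $T>0$ exists by Theorem~\ref{T1} and the Sobolev embedding $H^s\hookrightarrow W^{r,\infty}$ at $t=0$. The goal is to show that, for $T_1 = T_1(A,B)$ small enough, in fact $M_s(T)+Z_r(T)\le B$ on $[0,\min(T,T_1)]$, which by continuity forces $T\ge T_1$ and yields the theorem.

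\emph{Energy estimate.} The paradifferential reduction and Alinhac-type symmetrization of~\cite{ABZ3} transforms \eqref{n10} into a paradifferential evolution
$$\partial_t u + iT_\gamma u = f,$$
with a real symbol $\gamma$ of order $1/2$ built from the Taylor coefficient $a$ and the metric induced by $\eta$. Because the tame estimates of~\cite{ABZ3} depend \emph{linearly} on the Hölder norms of the unknowns, a Grönwall-type bound of the form
$$M_s(T)\le \mathcal{F}(M_{s,0})\exp\!\Big(\mathcal{F}(M_s(T))\int_0^T \mathcal{G}(\tau)\,d\tau\Big)$$
follows, where $\mathcal{G}(\tau)$ is pointwise dominated by the integrand defining $Z_r(T)$. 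Hölder's inequality in time bounds the integral by $T_1^{1-1/p}Z_r(T)\le T_1^{1-1/p}\cdot 2B$, which for $T_1$ small gives $M_s(T)\le 2\mathcal{F}(A)$.

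\emph{Strichartz estimate.} The main new -- and hardest -- ingredient is a Strichartz inequality for the paradifferential half-wave equation above with coefficients of only finite regularity, producing a gain $\mu>0$ of derivatives over the Sobolev embedding bound:
$$\|u\|_{L^p([0,T_1];W^{r,\infty})}\lesssim \|u\|_{C^0([0,T_1];H^s)}+\|f\|_{L^1([0,T_1];H^s)},$$
with $(p,r,\mu)$ as in the theorem. I would follow the Bahouri-Chemin~\cite{BaCh}, Tataru~\cite{TataruNS}, Staffilani-Tataru~\cite{StTa} strategy for wave equations with rough coefficients, adapted to a dispersion of order $1/2$: (i) Littlewood-Paley localize the solution at frequency $2^j$; (ii) mollify the symbol $\gamma$ at the subcritical scale $2^{j\delta}$, paying an error governed by the Hölder regularity of $(\eta,V,B,a)$ produced by the energy step; (iii) chop $[0,T_1]$ into slabs of length $2^{-j\nu}$ on which the regularized symbol is essentially smooth and the bicharacteristic flow of $\sqrt{a(x)|\xi|}$ is controlled; (iv) construct a WKB parametrix with this phase and an amplitude of order one; (v) prove a dispersive estimate on each slab by stationary phase and upgrade it to Strichartz via a $TT^*$ argument; (vi) sum over slabs and over dyadic frequencies. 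Two features make this step delicate: the half-order dispersion forces high frequencies to propagate \emph{slower} than low ones, which shifts the admissible Strichartz pairs and is responsible for the specific values $p=4$ ($d=1$) and $p=2$ ($d\ge 2$); and one must choose $\delta,\nu$ optimally so that the errors from the mollification, the number of slabs, and the loss at frequency $2^j$ are simultaneously summable. The outcome of this optimization, coupled with the regularity available from the energy step, pins down the values $\mu = 1/24$ in dimension one and $\mu<1/12$ in higher dimensions.

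Combining the two estimates yields $M_s(T)+Z_r(T)\le C_1(A)+C_2(B)\,T_1^\theta$ for some $\theta>0$. Choosing $B:=2C_1(A)$ and then $T_1=T_1(A)$ small enough closes the bootstrap.
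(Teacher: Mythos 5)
Your overall architecture matches the paper's: the tame Sobolev estimate \eqref{n415} from \cite{ABZ3}, a Strichartz estimate obtained by the Lebeau--Smith--Bahouri--Chemin--Tataru strategy (dyadic localization, mollified symbols, time slabs of length $h^{\delta/2}$, WKB parametrix, stationary phase, $TT^*$, gluing), and a continuity/bootstrap argument. There is, however, a genuine gap in the way you close the bootstrap. The Strichartz inequality you invoke at the index $r$ has the form
$$\Vert u\Vert_{L^p(I;W^{r,\infty})}\le \mathcal{F}\big(\Vert V\Vert_{E_0}+\mathcal{N}_k(\gamma)\big)\big(\Vert u\Vert_{C^0(I;H^s)}+\Vert f\Vert_{L^p(I;H^s)}\big),$$
and neither the prefactor (under the bootstrap hypothesis it is only bounded by a function of $B$, since $\Vert V\Vert_{E_0}$ is an $L^p W^{1,\infty}$ norm which is \emph{not} controlled by $H^s$ when $s<1+d/2$, only by $Z_r(T)\le 2B$) nor the term $\Vert u\Vert_{C^0(I;H^s)}$ carries any power of $T_1$. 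So what you actually obtain is $Z_r(T)\le C(B)\,C(A)$ rather than $C_1(A)+C_2(B)T_1^{\theta}$, and the choice $B=2C_1(A)$ does not close, because $\mathcal{F}$ is increasing. The paper resolves exactly this point by proving the H\"older bound at a strictly larger index $r'\in(r,\,s+\mu-\frac{d}{2})$ (Proposition~\ref{T26}) and then interpolating $\lA v\rA_{W^{r,\infty}}\lesssim \lA v\rA_{W^{1-\mu,\infty}}^{\theta}\lA v\rA_{W^{r',\infty}}^{1-\theta}$, with the low-regularity factor controlled by $M_s$ via Sobolev embedding; H\"older in time then gives $Z_{r}(T)\le T^{\theta/p}M_s(T)^{\theta}Z_{r'}(T)^{1-\theta}$, whence \eqref{n425} with the crucial factor of $T$. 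Your proposal never uses the strict inequality $r<s+\mu-\frac{d}{2}$ for this purpose, and without such a device the $Z_r$ component of the bootstrap cannot be closed.

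A second omission: $Z_r$ is a norm of the physical unknowns $(\eta,B,V)$, whereas the Strichartz estimate applies to the good unknown $u=\langle D_x\rangle^{-s}(U_s-i\theta_s)$ of Corollary~\ref{T10}. Recovering $Z_{r'}$ from $\Vert u\Vert_{L^p(I;W^{r',\infty})}$ requires inverting $T_{\sqrt{a/\lambda}}$ in H\"older spaces to estimate $\eta$, commutator estimates to estimate $V+T_\zeta B$, and the paralinearization $G(\eta)B=-\cnx V+\gamma$ together with the ellipticity of $T_{-\lambda+i\zeta\cdot\xi}$ to separate $B$ from $V$ (Steps 2--4 of the paper's proof of Proposition~\ref{T26}); your proposal does not address this. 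A minor further point: the reduced equation also contains the transport term $\mez(T_V\cdot\nabla+\nabla\cdot T_V)$, and straightening this vector field by a para-change of variables is an essential preliminary to the parametrix construction.
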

\begin{rema}In terms of Sobolev embeddings, $M_{s,0}$ only controls the 
$W^{1-\mu,\infty}$-norm (resp.\ $W^{\tdm-\mu,\infty}$-norm) 
of the traces $(V,B)$ of the velocity on the free surface (resp.\ $\eta$). 
Moreover, for $d=1$, $s+\mez$ might be strictly smaller than $2$ so that the $L^2(\xR)$-norm 
of the curvature of 
the initial free surface might be infinite.
\end{rema}  

The proof of Theorem~\ref{T2}  relies on the paradifferential reduction established in \cite{ABZ3} where we proved  
that the equations can be reduced to a very simple form
\begin{equation}\label{eqpara}
\Big(  \partial_t + \mez( T_{V } \cdot \nabla + \nabla \cdot T_{V }) +i T_{\gamma } \Big)u  =f 
\end{equation}
where $T_V$ is a paraproduct and $T_\gamma$ is a para-differential operator of 
$\frac{1}{2}$ with symbol 
$$
\gamma=\sqrt{\ma \lambda},\quad 
\lambda=\sqrt{\big(1+ \vert \nabla \eta\vert^2\big)\vert \xi \vert^2 
- \big(\xi \cdot \nabla \eta\big)^2}.
$$
Here $a$ is the Taylor coefficient and 
$\lambda$ is the principal symbol of the Dirichlet-Neumann operator 
(see Section~\ref{s:2} below for the definition of paradifferential operators). 
Using this reduction, we combine the Sobolev estimates 
already proved in \cite{ABZ3} and estimates in H\"older spaces coming 
from Strichartz ones.

\subsection{Strichartz estimates for gravity water waves}
Most of this paper is devoted to the proof of the following result, which 
is the main step in the proof of Theorem~\ref{T2}.

\begin{theo}\label{T4}
Let $I= [0,T]$, $d\geq 1$. 
Let $\gain$ be such that 
$\gain=\frac{1}{24}$ if $d=1$ and $\gain<\frac{1}{12}$ if $d\ge 2$.

Let  $s>1+\frac{d}{2}-\gain$ and $f \in L^\infty(I; H^s(\xR^d))$. 
Let $u\in C^0(I; H^s(\xR^d))$ be a solution of \eqref{eqpara}. 
Then one can find $k = k(d)$ such that
\begin{equation*}
\begin{aligned}
\Vert  u  \Vert_{L^p(I;  W^{s-\frac{d}{2}+\gain, \infty}(\xR^d))}
\leq    \mathcal{F}\big(\Vert V\Vert_{E_0} +  \mathcal{N}_k(\gamma)\big) 
\Big\{  \Vert  f\Vert_{L^p(I; H^{s }(\xR^d))} 
+ \Vert  u \Vert_{C^0(I; H^s(\xR^d))}\Big\}
\end{aligned}
\end{equation*}
where   $p=4$ if $ d=1 $ and $p=2$ if $d \geq 2.$ Here $E_0 = L^p(I; W^{1,\infty}(\xR^d))^d$ and $\mathcal{N}_k(\gamma) = \sum_{\vert \beta  \vert \leq k} \Vert D_\xi^\beta \gamma\Vert_{L^\infty(I\times \xR^d \times \mathcal{C})}$ 
with $\mathcal{C}=\{\frac{1}{10}\le\la\xi\ra\le10\}$.  
\end{theo}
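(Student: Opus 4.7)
The plan is to combine paradifferential calculus with the program of Bahouri-Chemin and Tataru for Strichartz estimates with non-smooth coefficients, adapted to the half-order dispersion encoded in $\gamma=\sqrt{a\lambda}$. Write $u=\sum_{j\ge -1}u_j$ with $u_j=\Delta_j u$ spectrally localized at $\{|\xi|\simeq 2^j\}$. Applying $\Delta_j$ to \eqref{eqpara} and using the symbolic calculus of paradifferential operators, $u_j$ satisfies an equation of the same form with source $f_j=\Delta_j f$ plus commutator errors controlled in $L^p_t H^s$ by $\Vert V\Vert_{E_0}$, $\mathcal{N}_k(\gamma)$ and $\Vert u\Vert_{C^0_t H^s}$. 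Fix a parameter $\kappa>0$ to be optimized, partition $I$ into subintervals $I_{j,k}$ of length $\delta_j\simeq 2^{-j\kappa}$, and mollify $V,\gamma$ at scale $\delta_j$ in time (and at scale $2^{-j}$ in space) to obtain smooth symbols $V^j,\gamma^j$. The mollification error acting on $u_j$ is controlled by a fractional power of $\delta_j$ coming from the H\"older regularity of $V$ and $\eta$ that one bootstraps from the hypotheses.

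\textbf{WKB parametrix, dispersion, Strichartz.} On each subinterval $I_{j,k}$, construct a parametrix for $(\partial_t+T_{V^j}\cdot\nabla+iT_{\gamma^j})$ of the form
\begin{equation*}
K(t,s;x,y)=\int_{\xR^d} e^{i(\phi(t,s,x,\xi)-y\cdot\xi)}\,a(t,s,x,\xi)\,d\xi,
\end{equation*}
where $\phi$ solves the Hamilton-Jacobi equation $\partial_t\phi+V^j\cdot\nabla_x\phi+\gamma^j(x,\nabla_x\phi)=0$ with $\phi\vert_{t=s}=x\cdot\xi$, and $a$ is built from successive transport equations along the bicharacteristics. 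Provided $\delta_j$ is small enough, $\phi$ exists on $I_{j,k}$ by the method of characteristics and stays close to the free phase $x\cdot\xi-(t-s)(V^j\cdot\xi+\gamma^j(x,\xi))$, so the non-degeneracy $|\det\partial_\xi^2\phi|\simeq 2^{-3jd/2}$ inherited from $\partial_\xi^2\gamma\sim|\xi|^{-3/2}$ is preserved. Stationary phase then yields the dispersive bound $\Vert K(t,s)\Delta_j\Vert_{L^1\to L^\infty}\lesssim 2^{3jd/4}|t-s|^{-d/2}$ on $I_{j,k}$, and combining this with the $L^2\to L^2$ energy boundedness of the flow, the Keel-Tao $TT^*$ argument yields fixed-frequency Strichartz estimates on $I_{j,k}$ with loss $2^{j(d/2-\mu_0)}$ for a gain $\mu_0=\mu_0(d,p)$.

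\textbf{Summation, optimization, and main obstacle.} Summing over the $\simeq 2^{j\kappa}$ subintervals in $L^p_t$ costs $2^{j\kappa/p}$, and summing dyadically in $j$ via a Besov-type formulation of the $W^{s-d/2+\mu,\infty}$-norm gives a net gain of the shape $\mu=\mu_0-\kappa/p$ once the mollification errors have been absorbed by the right-hand side; optimizing $\kappa$ then produces $\mu=\frac{1}{24}$ for $d=1$ (with $p=4$) and $\mu$ strictly less than $\frac{1}{12}$ for $d\ge 2$ (with $p=2$). The principal technical obstacle is the parametrix construction under the scant regularity of the coefficients available here: essentially $V\in L^p_t W^{1,\infty}_x$ and only a H\"older control of $\eta$ inherited from the $Z_r$-norm that is itself to be bootstrapped. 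After $\delta_j$-mollification one must verify that $\phi$ and $a$ satisfy quantitative semi-classical bounds---derivatives in $(t,x,\xi)$ controlled by the right negative powers of $\delta_j$ and $2^{-j}$---uniformly on the whole interval $I_{j,k}$. It is precisely the tension between the mollification scale needed to run WKB and the size of the error thus introduced that fixes the exact value of $\mu$ and accounts for the numerology in the conclusion.
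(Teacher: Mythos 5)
Your overall program coincides with the paper's: Littlewood--Paley localization at frequency $2^j$, regularization of the symbols at a frequency-dependent scale, a WKB parametrix on time intervals of length $2^{-j\delta/2}$, the dispersive bound $2^{3jd/4}\vert t-s\vert^{-d/2}$ by stationary phase, the $TT^*$ argument, gluing of the subintervals, and dyadic summation; your numerology $\mu=\mu_0-\kappa/p$ reproduces the paper's losses $\sigma_1=\frac38+\frac\delta8$ and $\sigma_d=\frac d2-\frac14+\frac\delta4$ with $\delta=\frac23$. So the architecture is right.

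There is, however, one step of the paper's argument that your sketch omits and that is not merely technical: before constructing the parametrix, the paper \emph{straightens} the vector field $\partial_t+S_{j\delta}(V)\cdot\nabla$ by composing with the flow $x=X(t;y,h)$ of the mollified velocity (a para-change of variables in the spirit of Alinhac), and only then performs the semiclassical rescaling $z=h^{-1/2}y$, $\htilde=h^{1/2}$. You instead keep $V^j\cdot\nabla_x\phi$ inside the Hamilton--Jacobi equation. The difficulty is the anisotropy between the order-one transport and the order-$\frac12$ dispersion: at frequency $2^j$ the transport produces a phase whose spatial derivatives are of size $2^j\Vert\nabla V\Vert$, which is incompatible with the symbol classes $S^m_{2\delta-1}$ adapted to $\htilde=h^{1/2}$ in which the paper controls the amplitude, the remainders $R_N,S_N$ and the non-degeneracy of $\partial_\xi^2\phi$; your assertion that the Hessian bound \emph{is preserved} is precisely what needs proof, and the paper obtains it (Propositions~\ref{Hessp} and \ref{estflow}, estimate \eqref{HessphiND}) only after the transport has been removed, using $\int_0^s\Vert\nabla V(\sigma)\Vert_{L^\infty}\,d\sigma\lesssim \vert s\vert^{1/2}\Vert V\Vert_{E_0}$ to control the Jacobian of the flow. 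A second, smaller discrepancy: the paper mollifies the coefficients in the \emph{space} variable at scale $2^{-j\delta}$ (Lemma~\ref{estgamma-delta}), not in time; since $V$ is merely $L^p$ in time, a time mollification at scale $2^{-j\kappa}$ combined with spatial smoothing only at scale $2^{-j}$ would not yield the $h^{-\delta(\vert\alpha\vert-\frac12)}$ derivative bounds on which the whole symbolic calculus and the transport equations for the amplitude rest.
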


The first step of the proof of Theorem~\ref{T4} 
is classical in the context of quasi-linear wave equations (see the works by Lebeau~\cite{Lebeau92}, 
Smith~\cite{Smith}, Bahouri-Chemin~\cite{BaCh}, 
Tataru~\cite{TataruNS} and Blair~\cite{Blair}). 
It consists, after a dyadic decomposition at frequency $h^{-1}$, 
in regularizing the coefficients at scale $h^{-\delta}, \delta \in (0,1)$. 
Then, we need to straighten the vector field $\partial_t +V\cdot \nabla$ 
by means of a para-change of variables (see~\cite{AliXEDP,Alipara}). 
Finally, we are able to write a parametrix for the reduced system, which 
allows to prove Strichartz estimates using the usual strategy ($TT^*$ method) 
on a small time interval $|t| \leq h^{\frac{\delta}{2}}$. To conclude, we glue the estimates.

\section{Symmetrization of the equations}\label{s:2}

 In this section we recall some results from~\cite{ABZ3} which are used in the sequel. 
 
 \subsection{Paradifferential operators}

We begin by recalling notations for Bony's paradifferential operators (see \cite{Bony,MePise,Meyer,Taylor}). Except in the last section, we shall 
not use paradifferential calculus in this paper, though 
we use in an essential way the paradifferential reduction made in \cite{ABZ3}. 

For~$k\in\xN$, we denote by $W^{k,\infty}({\mathbf{R}}^d)$ the usual Sobolev spaces.
For $\rho= k + \sigma$, $k\in \xN, \sigma \in (0,1)$ denote 
by~$W^{\rho,\infty}({\mathbf{R}}^d)$ 
the space of functions whose derivatives up to order~$k$ are bounded and uniformly H\"older continuous with 
exponent~$\sigma$. 

\begin{defi}\label{T:5}
Given~$\rho\in [0, 1]$ and~$m\in\xR$,~$\Gamma_{\rho}^{m}({\mathbf{R}}^d)$ denotes the space of
locally bounded functions~$a(x,\xi)$
on~${\mathbf{R}}^d\times({\mathbf{R}}^d\setminus 0)$,
which are~$C^\infty$ functions of $\xi$ outside the origin 
and
such that, for any~$\alpha\in\xN^d$ and any~$\xi\neq 0$, the function
$x\mapsto \partial_\xi^\alpha a(x,\xi)$ is in $W^{\rho,\infty}({\mathbf{R}}^d)$ and there exists a constant
$C_\alpha$ such that,
\begin{equation*}
\forall\la \xi\ra\ge \mez,\quad 
\lA \partial_\xi^\alpha a(\cdot,\xi)\rA_{W^{\rho,\infty}(\xR^d)}\le C_\alpha
(1+\la\xi\ra)^{m-\la\alpha\ra}.
\end{equation*}
\end{defi}
Given a symbol~$a$ in one such symbol class, one defines
the paradifferential operator~$T_a$ by
\begin{equation}\label{eq.para}
\widehat{T_a u}(\xi)=(2\pi)^{-d}\int \chi(\xi-\eta,\eta)\widehat{a}(\xi-\eta,\eta)\psi(\eta)\widehat{u}(\eta)
\, d\eta,
\end{equation}
where
$\widehat{a}(\theta,\xi)=\int e^{-ix\cdot\theta}a(x,\xi)\, dx$
is the Fourier transform of~$a$ with respect to the first variable; 
$\chi$ and~$\psi$ are two fixed~$C^\infty$ functions such that:
\begin{equation}\label{cond.psi}
\psi(\eta)=0\quad \text{for } \la\eta\ra\le 1,\qquad
\psi(\eta)=1\quad \text{for }\la\eta\ra\geq 2,
\end{equation}
and~$\chi(\theta,\eta)$ 
satisfies, for some small enough positive numbers $\eps_1<\eps_2$,
$$
\chi(\theta,\eta)=1 \quad \text{if}\quad \la\theta\ra\le \eps_1\la \eta\ra,\qquad
\chi(\theta,\eta)=0 \quad \text{if}\quad \la\theta\ra\geq \eps_2\la\eta\ra,
$$
and
$$
\forall (\theta,\eta)\,:\qquad \la \partial_\theta^\alpha \partial_\eta^\beta \chi(\theta,\eta)\ra\le 
C_{\alpha,\beta}(1+\la \eta\ra)^{-\la \alpha\ra-\la \beta\ra}.
$$

Given a symbol~$a\in \Gamma^m_{\rho}(\xR^d)$, we set
\begin{equation}\label{defi:norms}
M_{\rho}^{m}(a)= 
\sup_{\la\alpha\ra\le 1+2d+\rho~}\sup_{\la\xi\ra \ge 1/2~}
\lA (1+\la\xi\ra)^{\la\alpha\ra-m}\partial_\xi^\alpha a(\cdot,\xi)\rA_{W^{\rho,\infty}(\xR)}.
\end{equation}

\subsection{Change of unknowns and symmetrization}
Consider two real numbers $s$ and $r$ such that $s>\frac{3}{4}+\frac{d}{2}$ and $r>1$. 
Consider a smooth solution of \eqref{n10} defined on some time interval $[0,T]$ and 
satisfying Assumption~\ref{A:2}. 
Introduce
$$
\lambda(t,x,\xi)=\Bigl(\big(1+ \vert \nabla \eta(t,x)\vert^2\big)\vert \xi \vert^2 
- \big(\xi \cdot \nabla \eta(t,x)\big)^2\Bigr)^\mez.
$$
(Notice that if $\eta(t,\cdot)$ is in $W^{\rho+1,\infty}(\xR^d)$ then $\lambda(t,\cdot,\cdot)$ belongs to 
$\Gamma^1_\rho(\xR^d)$.)

Set $\zeta=\nabla\eta$ and introduce
$$
U_s \defn \lDx{s} V+T_\zeta \lDx{s}\B,\quad \zeta_s\defn \lDx{s}\zeta.
$$
We proved in \cite{ABZ3} that
\begin{equation}\label{syst:s}
\left\{
\begin{aligned}
&(\partial_t+T_V\cdot\partialx)U_s+T_\ma\zeta_s=f_1,\\
&(\partial_{t}+T_V\cdot\partialx)\zeta_s=T_\lambda U_s +f_2,
\end{aligned}
\right.
\end{equation}
where $a(t,x)=-\partial_y P\arrowvert_{y=\eta(t,x)}$ 
is the Taylor coefficient and where, for each time $t\in [0,T]$, 
\begin{multline*}
\lA (f_1(t),f_2(t))\rA_{L^2\times H^{-\mez}}\\
\le C\left( \lA (\eta,\psi)(t)\rA_{H^{s+\mez}},\lA (V,B)(t)\rA_{H^s}\right)
\left\{1+\lA \eta(t)\rA_{W^{r+\mez,\infty}}+\lA (V,B)(t)\rA_{W^{r,\infty}}\right\}.
\end{multline*}

To prove an $L^2$ estimate for System~\eqref{syst:s}, in \cite{ABZ3}, we performed a 
symmetrization of the non-diagonal part of~\eqref{syst:s}. Here we used 
the fact that the Taylor coefficient $\ma$ is a positive function. 
Let us mention that this is automatically satisfied 
for infinitely deep fluid domain: this was 
proved by Wu~(see~\cite{WuInvent,WuJAMS}) for smooth enough 
domains. As mentioned in \cite{ABZ3}, this 
remains valid for any~$C^{1,\alpha}$-domain, $0<\alpha<1$ (since 
the Hopf Lemma is true for such domains, see~\cite{Safonov} and the references therein). 

\begin{prop}[from~\cite{ABZ3}]\label{psym}
Let 
$s>\frac{3}{4}+\frac{d}{2}$, $r>1$. Introduce the symbols
$$
\gamma=\sqrt{\ma \lambda},\quad q= \sqrt{\frac{\ma}{\lambda}},
$$
and set $\theta_s=T_{q} \zeta_s$. Then 
\begin{align}
&\partial_{t} U_s+T_{V}\cdot\partialx  U_s  + T_\gamma \theta_s = F_1,\label{systreduit1}
\\
&\partial_{t}\theta_s+T_{V}\cdot\partialx \theta_s - T_\gamma U_s =F_2,
\label{systreduit2}
\end{align}
for some source terms $F_1,F_2$ satisfying 
\begin{multline*}
\lA (F_{1}(t),F_{2}(t))\rA_{L^{2}\times L^{2}}\\
\le C\left( \lA (\eta,\psi)(t)\rA_{H^{s+\mez}},\lA (V,B)(t)\rA_{H^s}\right)
\left\{1+\lA \eta(t)\rA_{W^{r+\mez,\infty}}+\lA (V,B)(t)\rA_{W^{r,\infty}}\right\}.
\end{multline*}
\end{prop}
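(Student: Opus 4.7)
The strategy is to conjugate the off-diagonal system \eqref{syst:s} into a symmetric form using the paradifferential multiplier $T_q$ with $q=\sqrt{\ma/\lambda}$. The algebraic rationale is that, at the level of principal symbols, $q\lambda = \sqrt{\ma\lambda} = \gamma$ and $\gamma q = \ma$, so replacing $\zeta_s$ by $\theta_s = T_q\zeta_s$ turns $T_\ma\zeta_s$ into $T_\gamma\theta_s$ in the first equation, while conjugation of $T_\lambda$ by $T_q$ produces $T_\gamma$ in the second equation. Moreover, since $q$ has order $-\mez$, the operator $T_q$ maps $H^{-\mez}\to L^2$, which is precisely the gain needed to upgrade the $L^2\times H^{-\mez}$ bound on $(f_1,f_2)$ to the $L^2\times L^2$ bound claimed for $(F_1,F_2)$.

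To obtain \eqref{systreduit1}, I rewrite the source term of the first equation of \eqref{syst:s} as
$T_\ma\zeta_s = T_\gamma T_q\zeta_s + (T_\ma - T_\gamma T_q)\zeta_s = T_\gamma\theta_s + R_1\zeta_s,$
so that $F_1 = f_1 + R_1\zeta_s$ with $R_1 = T_{\gamma q} - T_\gamma T_q$. By the composition theorem for paradifferential operators with symbols of finite Hölder regularity $\rho = r - \mez > \mez$, $R_1$ gains one derivative over the naive order $\gamma + q = 0$, hence sends $H^{-\mez}$ into $L^2$ with operator norm tamely controlled by paradifferential seminorms of $\gamma\in\Gamma^{\mez}_\rho$ and $q\in\Gamma^{-\mez}_\rho$.

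To obtain \eqref{systreduit2}, I apply $T_q$ to the second equation of \eqref{syst:s} and commute $T_q$ past $\partial_t+T_V\cdot\partialx$:
\begin{align*}
(\partial_t + T_V\cdot\partialx)\theta_s
&= T_q(\partial_t + T_V\cdot\partialx)\zeta_s + T_{\partial_t q}\zeta_s + [T_V\cdot\partialx, T_q]\zeta_s \\
&= T_qT_\lambda U_s + T_qf_2 + T_{\partial_t q}\zeta_s + [T_V\cdot\partialx, T_q]\zeta_s.
\end{align*}
Using $T_qT_\lambda = T_\gamma + R_2$ from the composition formula (with $R_2$ of order $-\rho+\mez < 0$), we arrive at \eqref{systreduit2} with
$F_2 = T_qf_2 + T_{\partial_t q}\zeta_s + [T_V\cdot\partialx, T_q]\zeta_s + R_2 U_s.$

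The remaining task is to bound $F_1, F_2$ in $L^2$ with the announced tame constant. The operator norms of $R_1, R_2$ and of the commutator $[T_V\cdot\partialx, T_q]$ are controlled by paradifferential seminorms of $\gamma, q, V$ in Hölder classes, which in turn are dominated by $\lA\eta\rA_{W^{r+\mez,\infty}} + \lA(V,B)\rA_{W^{r,\infty}}$ together with the lower bound $\ma \geq c/2$ ensuring that $\sqrt{\ma/\lambda}$ is a smooth function of its arguments. I expect the main obstacle to be the estimation of $T_{\partial_t q}\zeta_s$: one must express $\partial_t \ma$ and $\partial_t\lambda$ using the evolution equations \eqref{n10} and the identities \eqref{defi:BV}, and check that $\partial_t q$ defines a symbol in $\Gamma^{-\mez}_{\rho-1}$ whose seminorms are tamely bounded by the Sobolev and Hölder norms appearing on the right-hand side of the statement. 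Once these symbol-level bounds are in place, the standard $L^2$ mapping properties of paradifferential operators yield the claimed estimate on $(F_1, F_2)$.
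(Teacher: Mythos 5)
The paper does not actually prove this proposition --- it is imported verbatim from \cite{ABZ3} --- so I can only measure your argument against what that proof has to accomplish. Your overall strategy (conjugate the second equation of \eqref{syst:s} by $T_q$ and exploit the symbol identities $q\lambda=\gamma$, $\gamma q=\ma$) is the right one, and your treatment of $F_1=f_1+(T_{\gamma q}-T_\gamma T_q)\zeta_s$, of $T_qf_2$, and of $(T_qT_\lambda-T_\gamma)U_s$ is sound: with $\rho=r-\mez>\mez$ these remainders have order $\le-\mez$ (resp.\ $<0$) and land in $L^2$ when applied to $\zeta_s\in H^{-\mez}$ (resp.\ $U_s\in L^2$), with tame constants of the required form.

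The gap is in the two terms $T_{\partial_tq}\zeta_s$ and $[T_V\cdot\partialx,T_q]\zeta_s$, which you propose to estimate separately; at the stated regularity neither one is in $L^2$ on its own. For the commutator, the composition theorem with symbols of H\"older regularity $\rho=r-\mez<1$ only yields an operator of order $1-\mez-\rho=1-r$, which maps $H^{-\mez}$ into $H^{r-3/2}\not\subset L^2$ when $r<3/2$; the obstruction at the symbol level is the piece $V\cdot\partial_xq$ of the Poisson bracket, which requires a full $x$-derivative of $q$, i.e.\ $\nabla^2\eta\in L^\infty$, which is not available. Symmetrically, $\partial_tq$ contains $\partial_t\partialx\eta=\partialx B-(\partialx V)\cdot\partialx\eta-V\cdot\nabla^2\eta$ (using $\partial_t\eta+V\cdot\partialx\eta=B$), whose last term again involves $\nabla^2\eta$; thus $\partial_tq$ is a symbol of \emph{negative} H\"older regularity $r-\frac{3}{2}$ in $x$, not a symbol of regularity $\rho-1$ whose paradifferential operator maps $H^{-\mez}$ to $L^2$, as your plan asserts. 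The two defects cancel only when combined: the singular pieces assemble into $T_{(\partial_t+V\cdot\partialx)q}$, and the material derivative of the coefficients of $q$ is controlled ($(\partial_t+V\cdot\partialx)\partialx\eta=\partialx B-(\partialx V)\cdot\partialx\eta$ contains no second derivative of $\eta$, and the tame bound for $(\partial_t+V\cdot\partialx)\ma$ is one of the key estimates of \cite{ABZ3}). So the proof must commute $T_q$ with $\partial_t+T_V\cdot\partialx$ as a single operator, via a commutator lemma adapted to Lipschitz vector fields; keeping the two terms separate cannot close. A smaller point: the seminorms $M^{\pm\mez}_\rho(\gamma)$, $M^{\mp\mez}_\rho(q)$ involve $\Vert\ma\Vert_{W^{\rho,\infty}}$, and bounding this tamely by the norms on the right-hand side is itself a nontrivial input (elliptic regularity for the pressure) that should be invoked explicitly.
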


One has the following corollary, which is the starting point of the proof of the 
Strichartz estimate.

\begin{coro}\label{T10}
With the above notations, set
\begin{equation}\label{eq:r0}
u=\lDx{-s}(U_s-i \theta_s)=\lDx{-s}(U_s-iT_{\sqrt{a/\lambda}}\nabla \eta_s),
\end{equation}
and recall the notation
$$
\gamma(t,x,\xi)=\sqrt{\ma(t,x)\lambda(t,x,\xi)}.
$$
Then $u$ satisfies the complex-valued equation
\begin{equation}\label{eq:r1}
\partial_t u +\mez \bigl( T_V\cdot \partialx +\partialx\cdot T_V\bigr)u + i T_\gamma u =f ,
\end{equation}
where $f$ satisfies
for each time $t\in [0,T]$,
\begin{equation}\label{eq:r2}
\lA f(t)\rA_{H^s}\le \mathcal{F}\big( \lA (\eta,\psi)(t)\rA_{H^{s+\mez}},\lA (V,B)(t)\rA_{H^s}\big) 
\left\{1+\lA \eta(t)\rA_{W^{r+\mez,\infty}}
+\lA (V,B)(t)\rA_{W^{r,\infty}}\right\}.
\end{equation}
\end{coro}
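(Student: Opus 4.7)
The plan is to reduce the symmetric system \eqref{systreduit1}--\eqref{systreduit2} to a single complex-valued equation by taking the complex combination $w \defn U_s - i\theta_s$, and then to obtain the statement by conjugating with $\lDx{-s}$. First I would subtract $i$ times \eqref{systreduit2} from \eqref{systreduit1}; using that $T_\gamma \theta_s + iT_\gamma U_s = iT_\gamma(U_s - i\theta_s)$, this produces
\[
\partial_t w + T_V\cdot\partialx w + iT_\gamma w = F_1 - iF_2.
\]
Next I would put the transport term into the self-adjoint form required by \eqref{eq:r1} by means of the elementary identity $\partialx \cdot T_V = T_V\cdot\partialx + T_{\partialx\cdot V}$, which replaces $T_V\cdot\partialx$ by $\mez(T_V\cdot\partialx + \partialx\cdot T_V)$ at the cost of the zeroth-order remainder $\mez T_{\partialx\cdot V} w$ on the right-hand side.

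I would then apply $\lDx{-s}$ to this equation. Setting $u = \lDx{-s} w$ and commuting $\lDx{-s}$ through the paradifferential operators produces \eqref{eq:r1} with
\[
f = \lDx{-s}(F_1 - iF_2) + \mez \lDx{-s} T_{\partialx\cdot V}\, w + \bigl[\,\lDx{-s},\ \mez(T_V\cdot\partialx + \partialx\cdot T_V) + iT_\gamma\,\bigr] w.
\]
To prove \eqref{eq:r2}, it suffices to bound each summand in $H^s$. The first one equals $\lA F_1 - iF_2\rA_{L^2}$, which is controlled by exactly the right-hand side of \eqref{eq:r2} via Proposition~\ref{psym}. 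The second reduces, by the $L^2$-continuity of paraproducts, to a multiple of $\lA V\rA_{W^{1,\infty}}\, \lA w\rA_{L^2}$; and $\lA w\rA_{L^2}\le \lA U_s\rA_{L^2}+\lA \theta_s\rA_{L^2}$ is bounded by the Sobolev norms of $(\eta,\psi,V,B)$ appearing in the statement, by the definitions of $U_s,\theta_s$ and again paraproduct continuity.

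The main technical step is the control of the commutators $[\lDx{-s},T_V\cdot\partialx]$ and $[\lDx{-s},T_\gamma]$ acting on $w\in L^2$. For these I would invoke the symbolic calculus for paradifferential operators: since $V\in W^{r,\infty}$ with $r>1$, the commutator $[\lDx{-s},T_V]$ is paradifferential of order $-s-1$ with seminorm controlled by $\lA V\rA_{W^{1,\infty}}$, so that $[\lDx{-s},T_V\cdot\partialx]$ is of order $-s$; similarly, since $\gamma\in \Gamma^{\mez}_r$ with seminorms controlled by $\lA \eta\rA_{W^{r+\mez,\infty}}$ and $\lA a\rA_{L^\infty}$, the operator $[\lDx{-s},T_\gamma]$ is paradifferential of order $-s-\mez$. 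Both therefore map $L^2$ into $H^s$ with the required quantitative bound, and summing all contributions yields \eqref{eq:r2}. The only delicate part of the argument is this bookkeeping of commutator orders and seminorms; it is routine but must be carried out using the quantitative paradifferential calculus recalled in Section~\ref{s:2}.
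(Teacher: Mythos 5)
Your proposal is correct and follows essentially the same route as the paper: form $U_s-i\theta_s$ from Proposition~\ref{psym}, commute $\lDx{-s}$, symmetrize the transport term via $\partialx\cdot T_V=T_V\cdot\partialx+T_{\cnx V}$, and close by bounding $\lA U_s\rA_{L^2}+\lA \theta_s\rA_{L^2}$ through paraproduct continuity and the Sobolev estimate for $a$. The only (immaterial) difference is the order in which you symmetrize and commute, and you spell out the commutator bookkeeping a bit more explicitly than the paper does.
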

\begin{proof}
It immediately follows from Proposition~\ref{psym} that $u=U_s-i \theta_s$ 
satisfies
\begin{equation}\label{troisieme}
(\partial_t +T_V\cdot \partialx + i T_\gamma ) (U_s-i \theta_s)=h ,
\end{equation}
where, for each time $t\in [0,T]$,
$$
\lA h(t)\rA_{L^2}\le \mathcal{F}\big( \lA (\eta,\psi)(t)\rA_{H^{s+\mez}},\lA (V,B)(t)\rA_{H^s}\big)
\left\{1+\lA \eta(t)\rA_{W^{r+\mez,\infty}}
+\lA (V,B)(t)\rA_{W^{r,\infty}}\right\}.
$$
By commuting $\lDx{-s}$ to~\eqref{troisieme} we thus obtain
$$
\partial_t u +T_V\cdot \partialx u + i T_\gamma u =f' ,
$$
where the $H^s$-norm of $f'(t)$ is bounded by the right-hand side of \eqref{eq:r2}. 
This gives \eqref{eq:r1} with $f=f'+\mez T_{\cnx V} u$ and 
it remains to prove 
that the $H^s$-norm of $T_{\cnx V} u$ is bounded by the right-hand side of \eqref{eq:r2}. 
Since a paraproduct by an $L^\infty$-function acts on any Sobolev spaces, we have 
$\lA T_{\cnx V} u\rA_{H^s}\les \lA \cnx V\rA_{L^\infty}\lA u\rA_{H^s}$. Therefore, 
to complete the proof, it is sufficient to prove that
\begin{equation}\label{n201}
\lA u(t)\rA_{H^s}\le 
\mathcal{F}\big( \lA (\eta,\psi)(t)\rA_{H^{s+\mez}},\lA (V,B)(t)\rA_{H^s}\big) .
\end{equation}
To do so, write, by definition of $u$,
\begin{align*}
\lA u(t)\rA_{H^s}&\le \lA U_s\rA_{L^2}+\lA \theta_s\rA_{L^2}\\
&\le \lA \langle D_x\rangle^s V\rA_{L^2}+\lA T_\zeta \langle D_x\rangle^s B\rA_{L^2} 
+\lA T_q \zeta_s\rA_{L^2}\\
&\le \lA V\rA_{H^s}+\lA \zeta\rA_{L^\infty}\lA B\rA_{H^s} 
+\mathcal{M}^{-\mez}_0(q) \lA \zeta\rA_{H^{s-\mez}}
\end{align*}
where we used that $\lA T_p v\rA_{H^{\mu-m}}\les \mathcal{M}^m_0(p)\lA v\rA_{H^\mu}$ 
for any paradifferential operator with symbol $p$ in $\Gamma^m_0$ (see \eqref{defi:norms} 
for the definition of $\mathcal{M}^m_0(p)$). Now we have, using the Sobolev embeding, 
$$
\lA \zeta\rA_{L^\infty}+\mathcal{M}^{-\mez}_0(q)
\le \mathcal{F}\big( \lA \eta\rA_{W^{1,\infty}},\lA a\rA_{L^\infty}\big)
\le \mathcal{F}\big( \lA \eta\rA_{H^{s+\mez}},\lA a\rA_{H^{s-\mez}}\big)
$$
so \eqref{n201} follows from the Sobolev estimates for $a$ proved in \cite{ABZ3} (see Proposition~$4.6$).
\end{proof}

\section{Smoothing the paradifferential symbols}

 From now on we assume  $s>1 + \frac{d}{2} - \sigma_d$  where $\sigma_1= \frac{1}{24},\, \sigma_d =  \frac{1}{12}$ if $d \geq 2 $ and we set $s_0 :=\mez - \sigma_d >0.$ Then  $s-\mez >\frac{d}{2} +s_0.$ Now, with $I = [0,T], $ we introduce the spaces
 \begin{equation}\label{EFG}
 \left\{
 \begin{aligned}
 &E:=C^0( I; H^{s - \mez}(\xR^d)) \cap L^2(I; W^{\mez, \infty}(\xR^d)),\\
 & F:= C^0( I; H^{s}(\xR^d)) \cap L^2(I; W^{1, \infty}(\xR^d)),\\
 &G:= C^0( I; W^{s_0, \infty}(\xR^d)) \cap  L^2(I; W^{\mez, \infty}(\xR^d)) 
\end{aligned}
\right.
 \end{equation}
 endowed with their natural norms.
 
 We shall assume that 
 \begin{equation}
 \begin{aligned}
  &(i) \quad \ma \in E, \quad  \nabla \eta \in E, \quad V \in  F,\\
  &(ii) \quad  \exists c>0 : \ma(t,x) \geq c,\quad  \forall (t,x) \in I \times \xR^d.
\end{aligned}
 \end{equation}
 Let us recall that 
\begin{equation}\label{U}
\begin{aligned}
&\gamma (t,x,\xi) =   \big(\ma ^2\mU(t,x,\xi)\big)^{\frac{1}{4}}\\
&\mU(t,x,\xi) \defn (1+ \vert \nabla \eta\vert^2(t,x))\vert \xi \vert^2 - (\xi \cdot \nabla \eta(t,x))^2.
\end{aligned}
 \end{equation}
   
 Now we have,  for $\xi \in \mathcal{C}_0 := \{\xi : \mez \leq \vert \xi \vert \leq2\}$  considered as a parameter,
$$
\ma^2\mU\in G 
$$
uniformly in $\xi$.

\begin{lemm}\label{estgamma}
There exists $\mathcal{F}:\xR^+ \to \xR^+$   such that $\Vert \gamma \Vert_G \leq \mathcal{F}( \Vert \nabla \eta \Vert_E) $ for all $\xi \in \mathcal{C}_0.$
\end{lemm}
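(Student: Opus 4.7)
The plan is to combine Hölder-algebra product estimates with a composition-with-smooth-function argument on $\gamma = (\ma^2 \mU)^{1/4}$. We proceed in three steps.

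\textbf{Step 1 (passing from $E$ to a Hölder setting).} Since $s_0 = \mez - \sigma_d \in (0,1)$ and $\mez\in (0,1)$ with both exponents non-integer, the spaces $W^{s_0,\infty}(\xR^d)$ and $W^{\mez,\infty}(\xR^d)$ are Zygmund/Hölder spaces and therefore Banach algebras. Moreover, because $s > 1 + \frac{d}{2} - \sigma_d$, we have $s-\mez > \frac{d}{2}+s_0$, so the Sobolev embedding $H^{s-\mez}(\xR^d)\hookrightarrow W^{s_0,\infty}(\xR^d)$ holds. Combined with the direct inclusion $L^2(I;W^{\mez,\infty}) \subset L^2(I;W^{\mez,\infty})$, this gives $\nabla\eta,\ma \in G$ with
\[
\lA \nabla\eta\rA_G + \lA \ma\rA_G \le \mathcal{F}\bigl(\lA \nabla\eta\rA_E, \lA \ma\rA_E\bigr).
\]

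\textbf{Step 2 (polynomial bound on $\ma^2\mU$).} For $\xi\in\mathcal{C}_0$, the symbol $\mU(t,x,\xi)$ is a polynomial of degree $\le 2$ in the entries of $\nabla\eta(t,x)$ whose coefficients are bounded uniformly in $\xi\in\mathcal{C}_0$. Using the algebra property of $W^{s_0,\infty}$ and $W^{\mez,\infty}$ (i.e. $\lA fg\rA_{W^{\rho,\infty}}\les \lA f\rA_{L^\infty}\lA g\rA_{W^{\rho,\infty}}+\lA g\rA_{L^\infty}\lA f\rA_{W^{\rho,\infty}}$), we obtain, uniformly in $\xi\in\mathcal{C}_0$,
\[
\lA \ma^2\mU\rA_G \le \mathcal{F}\bigl(\lA \nabla\eta\rA_E, \lA \ma\rA_E\bigr).
\]

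\textbf{Step 3 (fourth root via composition).} The Taylor sign assumption $\ma(t,x)\ge c>0$ and the elementary inequality $\mU(t,x,\xi)\ge \vert\xi\vert^2 \ge \frac{1}{4}$ on $\mathcal{C}_0$ yield a uniform lower bound $\ma^2\mU\ge c^2/4>0$. Combined with the pointwise upper bound $\ma^2\mU\le \mathcal{F}(\lA \ma\rA_{L^\infty},\lA\nabla\eta\rA_{L^\infty})$, this forces the range of $(t,x)\mapsto \ma^2\mU(t,x,\xi)$ to lie in a compact interval $K\subset (0,\infty)$ whose size is controlled by the $E$-norms of the data. The function $F(z):=z^{1/4}$ is $C^\infty$ on $K$, so the standard Moser/Meyer composition estimate in Hölder spaces
\[
\lA F(u)\rA_{W^{\rho,\infty}} \le C(F,K)\bigl(1+\lA u\rA_{W^{\rho,\infty}}\bigr),\qquad \rho\in(0,1)\setminus\xN,
\]
applies pointwise in $t$ with $\rho=s_0$ and with $\rho=\mez$. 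Taking the $C^0_t$, resp.\ $L^2_t$, norms and using Step 2 we conclude
\[
\lA \gamma\rA_G \le \mathcal{F}\bigl(\lA \nabla\eta\rA_E\bigr),
\]
where the dependence on $\lA\ma\rA_E$ is absorbed into $\mathcal{F}$ via the a priori control of $\ma$ in terms of $\eta$ obtained in the analysis of the Taylor coefficient.

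The only delicate point is Step~3: one must verify that the lower bound $\ma^2\mU\ge c^2/4$ is strictly uniform in $(t,x,\xi)\in I\times\xR^d\times\mathcal{C}_0$ in order to apply the composition estimate — but this follows at once from the Taylor sign condition together with the explicit pointwise lower bound on $\mU$ on $\mathcal{C}_0$. The product and composition steps themselves are purely routine Hölder-algebra manipulations.
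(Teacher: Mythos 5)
Your proposal is correct and follows essentially the same route as the paper: uniform positivity of $\ma^2\mU$ on $\mathcal{C}_0$ from the Taylor sign condition and the Cauchy--Schwarz bound $\mU\ge\vert\xi\vert^2$, the algebra property to put $\ma^2\mU$ in $G$, and then the H\"older regularity of the fourth root. The only cosmetic difference is that the paper makes your Step~3 explicit by hand, via the factorization $\gamma^4(t,x,\xi)-\gamma^4(t,y,\xi)=(\gamma(t,x,\xi)-\gamma(t,y,\xi))\sum_{j=0}^3\gamma(t,x,\xi)^{3-j}\gamma(t,y,\xi)^j$ together with the lower bound $\gamma\ge c_0$, which for exponents $\rho\in(0,1)$ is exactly the Lipschitz-on-a-compact-interval computation underlying the composition estimate you invoke.
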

\begin{proof}
By the Cauchy-Schwartz inequality we have $\mathcal{U}(t,x,\xi) \geq \vert \xi \vert^2$ from which we deduce that 
\begin{equation}\label{gamma}
\gamma(t,x,\xi) \geq c_0>0  \quad  \forall    (t,x,\xi) \in I\times \xR^d \times \mathcal{C}_0. 
\end{equation}
Moreover $\gamma \in C^0(I; L^\infty(\xR^d \times \mathcal{C}_0)).$ On the other hand, since
$$
\gamma^4(t,x,\xi) - \gamma^4(t,y,\xi) = (\gamma(t,x,\xi) -\gamma(t,y,\xi))\sum_{j=0}^3 
( \gamma(t,x,\xi))^{3-j}(\gamma(t,y,\xi))^j
$$
we have, using \eqref{gamma},
$$
\frac{\vert \gamma(t,x,\xi) 
-\gamma(t,y,\xi)\vert}{\vert x-y\vert^\sigma} 
\leq \frac{1}{ 4c_0^3} \frac{\vert (\ma^2\mU)(t,x,\xi) -(\ma^2\mU)(t,y,\xi)\vert}{\vert x-y\vert^\sigma}\cdot
$$
Taking $\sigma = s_0$ and $\sigma = 1/2$ we deduce the lemma.
\end{proof}

Guided by works by Lebeau~\cite{Lebeau92}, Smith \cite{Smith}, 
Bahouri-Chemin~\cite{BaCh}, Tataru~\cite{TataruNS} and Blair~\cite{Blair}, 
we smooth out the symbols of the operator. 

\begin{defi}
Now let $\psi \in C_0^\infty(\xR^d), \psi(\xi) 
= 1 \text{ if } \vert \xi \vert \leq \mez,  \psi(\xi) = 0 \text{ if } \vert \xi \vert \geq 1.$ With $ h= 2^{-j}$ 
and $\delta >0,$ which will be chosen later on, we set
\begin{equation}
\gamma_\delta (t,x,\xi) = \psi(h^\delta D_x)\gamma(t,x,\xi).
\end{equation}
\end{defi}

\begin{lemm}\label{estgamma-delta}
$(i)\quad  \forall \alpha, \beta \in \xN^d \quad \exists C_{\alpha, \beta} >0 : \forall t \in I, \forall \xi \in \xR^d$
$$\vert D_x^\alpha D^\beta_\xi \gamma_\delta (t,x,\xi) \vert \leq C_{\alpha, \beta} h^{-\delta \vert \alpha \vert} 
\Vert D^\beta_\xi  \gamma(t, \cdot, \xi) \Vert_{L^\infty(\xR^d)}.$$
$(ii) \quad  \forall \alpha, \beta \in \xN^d, \vert \alpha \vert \geq 1 \quad \exists C_\alpha >0 : \forall t \in I, \forall \xi \in \xR^d$
$$ \vert D_x^\alpha D^\beta_\xi  \gamma_\delta (t,x,\xi) \vert \leq C_{\alpha, \beta} h^{-\delta (\vert \alpha \vert - \mez)} 
\Vert D^\beta_\xi  \gamma(t, \cdot, \xi) \Vert_{W^{\mez,\infty}(\xR^d)}.$$
\end{lemm}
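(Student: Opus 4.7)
The plan is to realize $\gamma_\delta(t,\cdot,\xi) = K_h * \gamma(t,\cdot,\xi)$ where $K_h$ is the convolution kernel associated to the Fourier multiplier $\psi(h^\delta D_x)$, that is, $K_h(x) = h^{-\delta d}\check\psi(h^{-\delta}x)$ with $\check\psi = \mathcal{F}^{-1}\psi$. Since $\psi(h^\delta D_x)$ commutes with both $D_x$ and $D_\xi$, differentiation in $\xi$ simply replaces $\gamma$ by $D_\xi^\beta \gamma$ throughout, and $x$-derivatives can be moved onto the kernel. Thus it is enough to treat the case $\beta = 0$ in each statement, since the general case follows by applying the same pointwise argument to $D_\xi^\beta\gamma$ in place of $\gamma$.

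For part (i), I would write
\[
D_x^\alpha \gamma_\delta(t,x,\xi) = \int (D^\alpha K_h)(x-y)\,\gamma(t,y,\xi)\,dy
\]
and estimate by Young's inequality,
\[
|D_x^\alpha \gamma_\delta(t,x,\xi)| \le \|D^\alpha K_h\|_{L^1(\xR^d)}\,\|\gamma(t,\cdot,\xi)\|_{L^\infty(\xR^d)} .
\]
A direct scaling (change of variables $z = h^{-\delta}y$) gives $\|D^\alpha K_h\|_{L^1} = h^{-\delta|\alpha|}\|D^\alpha \check\psi\|_{L^1}$, which yields (i).

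For part (ii), the key input is the cancellation $\int D^\alpha K_h(z)\,dz = 0$ whenever $|\alpha|\ge 1$, which follows from the fact that the Fourier transform of $D^\alpha K_h$ equals $(i\xi)^\alpha \psi(h^\delta\xi)$ and vanishes at $\xi = 0$. Using this cancellation I would rewrite
\[
D_x^\alpha \gamma_\delta(t,x,\xi) = \int (D^\alpha K_h)(z)\bigl[\gamma(t,x-z,\xi)-\gamma(t,x,\xi)\bigr]\,dz,
\]
and then use the Hölder bound $|\gamma(t,x-z,\xi)-\gamma(t,x,\xi)|\le \|\gamma(t,\cdot,\xi)\|_{W^{\mez,\infty}}|z|^{\mez}$ to obtain
\[
|D_x^\alpha \gamma_\delta(t,x,\xi)| \le \|\gamma(t,\cdot,\xi)\|_{W^{\mez,\infty}} \int |z|^{\mez}|(D^\alpha K_h)(z)|\,dz.
\]
A scaling computation on the last integral yields $h^{-\delta(|\alpha|-\mez)}\int |w|^{\mez}|D^\alpha \check\psi(w)|\,dw$, which is the desired bound.

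The only place where care is needed is keeping the scaling exponents straight in the two $L^1$ computations for $D^\alpha K_h$ and $|z|^{\mez}|D^\alpha K_h|$; everything else is routine. The substantive step is recognizing that the extra factor $h^{\delta/2}$ gained in (ii) compared to (i) comes exactly from the first-order cancellation of the mean of $D^\alpha K_h$ combined with the half-order Hölder modulus of $\gamma$ — this is the standard mechanism by which a Littlewood–Paley smoothing trades a loss in the number of derivatives for a gain in the spatial regularity of the symbol.
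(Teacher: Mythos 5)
Your proof is correct. Part (i) is exactly the paper's argument (convolution kernel plus scaling). For part (ii) you take a genuinely different, though equally standard, route: the paper does not use the first-moment cancellation of $D^\alpha K_h$ directly, but instead decomposes $D_x^\alpha\psi(h^\delta D_x)D_\xi^\beta\gamma = \sum_k v_k$ with Littlewood--Paley blocks $\Delta_k$, observes that the sum truncates at $k\lesssim j\delta$ because $\Delta_k\psi(h^\delta D_x)=0$ for $2^{k-1}\geq h^{-\delta}$, bounds each block by $2^{k(|\alpha|-\frac12)}\Vert D_\xi^\beta\gamma\Vert_{W^{\frac12,\infty}}$ using the Bernstein-type estimate $\Vert\Delta_k g\Vert_{L^\infty}\leq C2^{-k/2}\Vert g\Vert_{W^{\frac12,\infty}}$, and sums the resulting geometric series (which is where the hypothesis $|\alpha|\geq 1$, i.e.\ $|\alpha|-\frac12>0$, enters). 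Your version encodes the same cancellation mechanism at the level of the kernel rather than block by block: the zero mean of $D^\alpha K_h$ lets you subtract $\gamma(t,x,\xi)$ inside the integral and pay with the H\"older-$\frac12$ modulus, and the weight $|z|^{1/2}$ produces the gain $h^{\delta/2}$ by scaling. Your argument is slightly more self-contained (no Littlewood--Paley characterization of $W^{\frac12,\infty}$ is needed, only the definition of the H\"older seminorm, plus the remark that for $|z|\geq 1$ one simply uses $2\Vert\gamma\Vert_{L^\infty}\leq 2\Vert\gamma\Vert_{L^\infty}|z|^{1/2}$ so the pointwise bound holds for all $z$); the paper's dyadic version is the one that generalizes painlessly to other regularity exponents and to Zygmund spaces. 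Both are sound, and the role of $|\alpha|\geq 1$ is the same in each: it is what makes the cancellation (resp.\ the geometric sum) available.
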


\begin{proof}
$(i)$ follows from the fact that
$$\gamma_\delta (t,x,\xi) = h^{-\delta d}\widehat{\psi}\Big(\frac{\cdot}{h^\delta}\Big) \ast \gamma (t,\cdot, \xi).$$
$(ii)$ We write
$$
D_x^\alpha D^\beta_\xi  \gamma_\delta(t,x,\xi)
= \sum_{k =-1}^{+\infty}\Delta_k D_x^\alpha \psi(h^\delta D_x) D^\beta_\xi \gamma(t,x,\xi):= \sum_{k =-1}^{+\infty}v_k 
$$
where $\Delta_k$ denotes the usual Littlewood-Paley  frequency localization (see section 5 below).

If $\mez 2^{k} \geq h^{-\delta} = 2^{j\delta}$ we have $\Delta_k \psi(h^\delta D_x) =0$. 
Therefore
$$
D_x^\alpha D^\beta_\xi \gamma_\delta(t,x,\xi) = \sum_{k =-1}^{2+ [j\delta]} v_k.
$$
Now 
$$
v_k = 2^{k \vert \alpha \vert}  \varphi_1(2^{-k}D_x) \psi(h^\delta D_x) \Delta_k D^\beta_\xi  \gamma(t,x,\xi) 
 $$
where $\varphi_1(\xi)$ is supported in $\{\frac{1}{3} \leq \xi \vert \leq 3\}.$    Therefore,
$$
\Vert v_k\Vert_{L^\infty(\xR^d)} 
\leq 2^{k \vert \alpha \vert} \Vert  {\Delta}_k  D^\beta_\xi  \gamma(t,\cdot,\xi)\Vert_{L^\infty(\xR^d)} 
\leq C2^{k \vert \alpha \vert} 2^{- \frac{k}{2}} 
\Vert D^\beta_\xi \gamma(t, \cdot, \xi)\Vert_{W^{\mez, \infty}(\xR^d)}.
$$
It follows that
$$
\Vert D_x^\alpha D^\beta_\xi \gamma_\delta(t,\cdot,\xi)\Vert _{L^\infty(\xR^d)} 
\leq C  \sum_{k =-1}^{2+ [j\delta]} 2^{k(\vert \alpha \vert - \mez)} \Vert D^\beta_\xi  \gamma (t,\cdot, \xi) \Vert_{W^{\mez, \infty}(\xR^d)}.
$$
Since $\vert \alpha\vert - \mez >0$ we deduce that
$$
\Vert D_x^\alpha D^\beta_\xi  \gamma_\delta(t,\cdot,\xi)\Vert _{L^\infty(\xR^d)} 
\leq C  2^{j\delta(\vert \alpha \vert - \mez)}\Vert D^\beta_\xi \gamma (t,\cdot, \xi) \Vert_{W^{\mez, \infty}(\xR^d)}.
$$
This completes the proof.
\end{proof}

We introduce now the Hessian matrix of $\gamma_\delta,$
\begin{equation}\label{Hess}
\Hess_\xi (\gamma_{\delta}) (t,x,\xi) = \Big( \frac{\partial^2 \gamma_\delta}{\partial \xi_j \partial \xi_k}(t,x,\xi)\Big).
\end{equation}

Our purpose is to prove the following result.
\begin{prop}\label{hess}
There exist $c_0>0, h_0 >0$ such that 
$$
\vert \det   \Hess_\xi (\gamma_{\delta}) (t,x,\xi)\vert \geq c_0 $$
for all $t \in I, x\in \xR^d, \xi \in \mathcal{C}_0, 0< h \leq h_0.$
\end{prop}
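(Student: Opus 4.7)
The strategy is to first establish the lower bound for the unsmoothed symbol $\gamma=\sqrt{\ma}\,\mU^{\uq}$ by an explicit Hessian computation, and then transfer it to $\gamma_\delta$ using the fact that $\psi(h^\delta D_x)\gamma-\gamma$ is small in $L^\infty$ thanks to the H\"older regularity of $\gamma$ supplied by Lemma~\ref{estgamma}. Writing $\mU(t,x,\xi)=\xi^\top M(t,x)\xi$ with the symmetric positive definite matrix
\[
M(t,x)\defn (1+|\nabla\eta|^2)I_d-(\nabla\eta)(\nabla\eta)^\top,
\]
one has $\nabla_\xi \mU=2M\xi$ and $\Hess_\xi \mU=2M$, so the chain rule applied to $\gamma=\sqrt{\ma}\,\mU^{\uq}$ (noting that $\ma$ is independent of $\xi$) gives
\[
\Hess_\xi \gamma=\frac{\sqrt{\ma}}{2}\,\mU^{-\tq}\Bigl[M-\tdm\,\mU^{-1}(M\xi)(M\xi)^\top\Bigr].
\]

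The crux is the determinant computation. The matrix determinant lemma gives $\det M=(1+|\nabla\eta|^2)^{d-1}$, and the identity $\xi^\top M\xi=\mU$ (which is the homogeneity of the quadratic form $\mU$) produces the decisive rank-one cancellation
\[
\det\Bigl[M-\tdm\,\mU^{-1}(M\xi)(M\xi)^\top\Bigr]=\det(M)\Bigl(1-\tdm\,\mU^{-1}\,\xi^\top M\xi\Bigr)=-\mez\det M.
\]
Combining these,
\[
\det\Hess_\xi \gamma=-\frac{\ma^{d/2}}{2^{d+1}}\,\mU^{-\frac{3d}{4}}\,(1+|\nabla\eta|^2)^{d-1}.
\]
On $\mathcal{C}_0$ the Cauchy--Schwarz inequality gives $|\xi|^2\le \mU\le (1+|\nabla\eta|^2)|\xi|^2$, so $\mU$ is bounded above and below; combined with $\ma\ge c>0$ this yields $|\det\Hess_\xi \gamma|\ge 2c_0$ uniformly on $I\times\xR^d\times\mathcal{C}_0$.

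It remains to transfer the bound to $\gamma_\delta$. Since $\psi(h^\delta D_x)$ commutes with $\partial_\xi$, one has $\Hess_\xi \gamma_\delta=\psi(h^\delta D_x)\Hess_\xi \gamma$. The entries of $\Hess_\xi \gamma(t,\cdot,\xi)$ are smooth combinations of $\ma$, $\nabla\eta$ and $\xi$ whose only denominators are positive powers of $\mU$, which is bounded below on $\mathcal{C}_0$; hence, arguing exactly as in Lemma~\ref{estgamma}, these entries lie in $W^{s_0,\infty}(\xR^d)$ uniformly for $\xi\in\mathcal{C}_0$, with norm controlled by $\mathcal{F}(\Vert\nabla\eta\Vert_E)$. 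The standard Littlewood--Paley estimate $\Vert(\psi(h^\delta D_x)-I)w\Vert_{L^\infty}\lesssim h^{\delta s_0}\Vert w\Vert_{W^{s_0,\infty}}$ then yields
\[
\sup_{(t,x,\xi)\in I\times\xR^d\times\mathcal{C}_0}\bigl|\Hess_\xi\gamma_\delta(t,x,\xi)-\Hess_\xi\gamma(t,x,\xi)\bigr|\le Ch^{\delta s_0},
\]
and continuity of the determinant gives $|\det\Hess_\xi \gamma_\delta|\ge c_0$ for all $h\le h_0$ small enough. The main content of the argument is the rank-one identity in the middle paragraph: the $\mez$-homogeneity of $\gamma$ in $\xi$, together with the uniform ellipticity of the quadratic form $\mU$, forces the bracketed matrix to have determinant exactly $-\mez\det M$, and this sign-definite, non-degenerate factor is what ultimately survives the mollification.
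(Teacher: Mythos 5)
Your proof is correct and follows the same strategy as the paper: an explicit computation of $\det \Hess_\xi(\gamma)$ reducing to a rank-one-update determinant (the paper diagonalizes the quadratic form and evaluates $\det(\delta_{jk}+\lambda\omega_j\omega_k)=1+\lambda$, whereas you apply the chain rule to $\mU^{1/4}$ and invoke the matrix determinant lemma, arriving at the same formula), followed by the uniform lower bound from $\ma\ge c$ and $|\xi|^2\le\mU\le C|\xi|^2$, and finally the $O(h^{\delta s_0})$ perturbation estimate for $(I-\psi(h^\delta D_x))\Hess_\xi\gamma$ using the $W^{s_0,\infty}$ regularity of the symbol. This matches the paper's argument in all essential respects.
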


With the notation in \eqref{U} we can write
$$
\mU(t,x,\xi) = \langle A(t,x) \xi, \xi \rangle,
$$
where $A(t,x)$ is a symmetric matrix. 
Since we have
\begin{equation}\label{inf}
\vert \xi \vert^2 \leq \mU(t,x,\xi) \leq C(1+ \Vert \nabla \eta \Vert^2_{L^\infty(I \times \xR^d)}) \vert \xi \vert ^2 
\end{equation}
we see that the eigenvalues of $A$ are greater than one;  therefore we have
\begin{equation}\label{detA}
\det A(t,x) \geq 1 \quad \forall (t,x) \in I \times \xR^d.
\end{equation}
We shall need the following lemma.
\begin{lemm}
With $\alpha = \frac{1}{4}$ we have
$$
\vert \det  \Hess_\xi (\gamma)(t,x,\xi) \vert = a^{\frac{d}{2}}(2\alpha)^d\vert 2\alpha -1\vert \det A(t,x) 
\big(\mU(t,x,\xi)\big)^{(\alpha-1)d}.
$$
\end{lemm}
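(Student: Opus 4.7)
The plan is a direct computation, exploiting the fact that $\gamma$ is essentially a scalar power of a quadratic form in $\xi$, so its Hessian is a rank-one perturbation of a constant matrix and the determinant can be evaluated by the matrix determinant lemma.

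First I would write $\gamma(t,x,\xi) = a^{1/2}\, \mU(t,x,\xi)^\alpha$ with $\alpha=\tfrac14$, and freeze $(t,x)$ throughout (since we differentiate only in $\xi$). Using $\mU(\xi)=\langle A\xi,\xi\rangle$ with $A=A^T$, I would compute
\[
\partial_{\xi_j}\mU = 2(A\xi)_j, \qquad \partial_{\xi_j}\gamma = 2\alpha\, a^{1/2}\, \mU^{\alpha-1}(A\xi)_j,
\]
and then
\[
\partial_{\xi_k}\partial_{\xi_j}\gamma = 2\alpha\, a^{1/2}\, \mU^{\alpha-1}\Big(A_{jk} + \frac{2(\alpha-1)}{\mU}(A\xi)_j(A\xi)_k\Big).
\]
In matrix form this reads $\Hess_\xi(\gamma) = 2\alpha\, a^{1/2}\, \mU^{\alpha-1}\Big(A + \tfrac{2(\alpha-1)}{\mU}(A\xi)(A\xi)^T\Big)$.

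The next step is to take determinants. Pulling out the scalar factor gives a factor $(2\alpha)^d\, a^{d/2}\, \mU^{(\alpha-1)d}$, and it remains to compute $\det\big(A + \tfrac{2(\alpha-1)}{\mU}(A\xi)(A\xi)^T\big)$. This is a rank-one perturbation of $A$, so the matrix determinant lemma yields
\[
\det\Big(A + \frac{2(\alpha-1)}{\mU}(A\xi)(A\xi)^T\Big) = \det A\cdot\Big(1 + \frac{2(\alpha-1)}{\mU}(A\xi)^T A^{-1}(A\xi)\Big).
\]
Since $A$ is symmetric, $(A\xi)^T A^{-1}(A\xi) = \xi^T A\xi = \mU$, and the parenthesis collapses to $1 + 2(\alpha-1) = 2\alpha-1$. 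Combining and taking absolute values gives exactly the announced formula
\[
|\det \Hess_\xi(\gamma)(t,x,\xi)| = a^{d/2}(2\alpha)^d|2\alpha-1|\det A(t,x)\,\bigl(\mU(t,x,\xi)\bigr)^{(\alpha-1)d}.
\]

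There is essentially no obstacle here: the only thing to be careful about is the sign, since $2\alpha-1 = -\tfrac12<0$, which is exactly why the absolute value appears on the left-hand side while the factors $a^{d/2}$, $\det A$, $\mU^{(\alpha-1)d}$ are all positive (as noted, $\det A\ge 1$ by \eqref{detA} and $\mU>0$ by \eqref{inf}). This will then feed into Proposition~\ref{hess} via the lower bounds on $a$, $\det A$, and $\mU$, applied to $\gamma_\delta$ instead of $\gamma$ up to errors controlled by Lemma~\ref{estgamma-delta}.
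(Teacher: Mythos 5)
Your proposal is correct: the Hessian computation, the extraction of the scalar factor, and the evaluation of the rank-one perturbation via the matrix determinant lemma (using $(A\xi)^T A^{-1}(A\xi)=\xi^T A\xi=\mU$, legitimate since the eigenvalues of $A$ are $\ge 1$ so $A$ is invertible) all check out, and the sign discussion is right.

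The paper takes a slightly different route to the same rank-one determinant. It first diagonalizes $A$ by an orthogonal matrix, writes $\mU=\vert M\xi\vert^2$ and $\gamma=a^{1/2}g(M\xi)$ with $g(\zeta)=\vert\zeta\vert^{2\alpha}$, so that $\Hess_\xi(\gamma)= a^{1/2}\,{}^tM\,(\Hess_\zeta g)(M\xi)\,M$ and the factor $\det A=\vert\det M\vert^2$ comes out of the change of variables; it then computes $\det(\delta_{jk}+\lambda\omega_j\omega_k)=1+\lambda$ by hand (showing the polynomial $F(\lambda)$ satisfies $F(0)=F'(0)=1$ and $F''\equiv 0$) rather than quoting the matrix determinant lemma. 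Your version keeps $A$ in place, differentiates $a^{1/2}\mU^{\alpha}$ directly, and delegates the rank-one determinant to the standard lemma. The two arguments are computationally equivalent; yours is a bit shorter and more self-contained at the level of the Hessian, while the paper's reduction to the Euclidean symbol $\vert\zeta\vert^{2\alpha}$ makes the structure (radial function composed with a linear map) explicit and gives an elementary, citation-free proof of the rank-one determinant identity. Either is acceptable.
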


\begin{proof}
Here $t$ and $x$ are fixed parameters.
The matrix $A$ being symmetric one can find an orthogonal matrix $B$ such that 
$B^{-1}AB = D = \text{diag}(\mu_j)$ where the $\mu_j's$ are the eigenvalues of $A.$ Setting 
$C=\text{diag}(\sqrt{\mu_j})$ and $M= CB^{-1}$ we see that $\mU(t,x,\xi) = \vert M\xi \vert^2$ 
which implies that
\begin{equation}
\gamma(t,x,\xi) = a^\mez g(M\xi) \quad \text{where} \quad g(\zeta) = \vert \zeta \vert^{2\alpha},
\end{equation}
so that $\text{Hess}_\xi (\gamma)(t,x,\xi) = a^{\frac{1}{2}}  \,  {}^t\!M \big( \text{Hess}_\zeta( g) (M\xi)\big) M$. 
Since $\vert \det M \vert^2 = \vert \det C \vert^2 = \det A$ we obtain
\begin{equation}\label{Hgamma}
\vert \det  \text{Hess}_\xi (\gamma)(t,x,\xi) \vert  = a^{\frac{d}{2}} \det A(t,x) \, \vert \det \text{Hess}_\zeta (g) (M(t,x)\xi) \vert.
\end{equation}
Now we have,
\begin{equation}\label{Hg}
\frac{\partial^2 g}{\partial \zeta_j \partial \zeta_k} (\zeta) 
= 2\alpha \vert \zeta \vert^{2\alpha -2}\big(\delta_{jk} + 2(\alpha -1)\omega_j \omega_k\big), \quad \omega_j = \frac{\zeta_j}{\vert \zeta \vert}.
\end{equation}
Let us consider the function $F: \xR \to \xR$ defined by
$$F(\lambda) = \det \big( \delta_{jk} + \lambda \omega_j \omega_k\big).$$
It is a polynomial in $\lambda$ and we have
\begin{equation}\label{F}
F(0) = 1.
\end{equation}
Let us denote by $C_j(\lambda)$ the $j^{th}$ column of this determinant. Then
$$
F'(\lambda) = \sum_{k=1}^d \det \big(C_1(\lambda), \ldots, C'_k(\lambda),\ldots C_d(\lambda) \big).
$$
We see easily that
$$
\det \big(C_1(0), \ldots, C'_k(0),\ldots C_d(0) \big) = \omega_j^2
$$
which ensures that
\begin{equation}\label{F'}
F'(0) = 1.
\end{equation}
Now since $C_j(\lambda)$ is linear with respect to $\lambda$ 
we have $C''_j(\lambda) = 0$ therefore
$$
F''(\lambda) = \sum_{j=1}^d \sum_{k=1,k\neq j}^d 
\det \big(C_1(\lambda),\ldots, C'_j(\lambda),\ldots,C'_k(\lambda),\ldots, C_d(\lambda) \big).
$$
Now $C'_j(\lambda) = \omega_j(\omega_1,\ldots,\omega_d)$ 
and $C'_k(\lambda) = \omega_k(\omega_1,\ldots,\omega_d)$. 
It follows that $F''(\lambda) = 0$ for all $\lambda \in \xR.$ 
We deduce from \eqref{F}, \eqref{F'} that $F(\lambda) = 1 + \lambda$ and from \eqref{Hg} that
$$
\det \text{Hess}_\zeta(g)(\zeta) = \big(2\alpha \vert \zeta \vert^{2\alpha -2}\big)^d (2\alpha -1).
$$
The lemma follows then from \eqref{Hgamma} since $\mU(t,x,\xi) = \vert M(t,x)\xi \vert^2.$
\end{proof}

\begin{coro}\label{hess1}
One can find $c_0 >0$ such that  
$$
\vert \det  \Hess_\xi (\gamma)(t,x,\xi) \vert \geq c_0,
$$
for all $t\in I, x \in \xR^d, \xi \in \mathcal{C}_0.$ 
\end{coro}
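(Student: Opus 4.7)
The plan is to read off the corollary directly from the identity established in the preceding lemma, namely
$$
\vert \det \Hess_\xi(\gamma)(t,x,\xi)\vert = a^{d/2}(2\alpha)^d \vert 2\alpha -1\vert \det A(t,x)\,\bigl(\mU(t,x,\xi)\bigr)^{(\alpha-1)d},
$$
with $\alpha=\tfrac14$, by bounding each factor uniformly on $I\times \xR^d\times \mathcal{C}_0$.

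First I would collect the lower bounds on the factors that are genuinely positive. By the Taylor sign assumption $(ii)$ we have $a(t,x)\geq c>0$, hence $a^{d/2}\geq c^{d/2}$. The constants $(2\alpha)^d=2^{-d}$ and $\vert 2\alpha-1\vert=\tfrac12$ are harmless, and by \eqref{detA} we have $\det A(t,x)\geq 1$. It only remains to handle the factor $\mU^{(\alpha-1)d}=\mU^{-3d/4}$, which, since the exponent is negative, requires an \emph{upper} bound on $\mU$.

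This upper bound is supplied by \eqref{inf}: on $\mathcal{C}_0=\{\tfrac12\leq\vert\xi\vert\leq 2\}$ we have
$$
\mU(t,x,\xi)\leq C\bigl(1+\Vert \nabla\eta\Vert_{L^\infty(I\times\xR^d)}^2\bigr)\vert\xi\vert^2 \leq 4C\bigl(1+\Vert \nabla\eta\Vert_{L^\infty(I\times\xR^d)}^2\bigr),
$$
and since by our working assumption $\nabla\eta\in E\hookrightarrow C^0(I;L^\infty)$, this upper bound is finite. Raising to the power $-3d/4$ gives a uniform positive lower bound for $\mU^{(\alpha-1)d}$ on $I\times\xR^d\times\mathcal{C}_0$. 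Multiplying all these lower bounds together yields a constant $c_0>0$, depending only on $c$, $d$, and $\Vert \nabla\eta\Vert_{L^\infty(I\times\xR^d)}$, such that $\vert \det \Hess_\xi(\gamma)\vert\geq c_0$, which is exactly the corollary.

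There is no real obstacle here: the entire content is algebraic once the preceding lemma is granted, and the only subtlety is noticing that the negative exponent $\alpha-1=-3/4$ means one must invoke the \emph{upper} bound in \eqref{inf} rather than the lower one. Proposition~\ref{hess} will then follow by the same computation performed on $\gamma_\delta$, using Lemma~\ref{estgamma-delta} to control the perturbation $\gamma_\delta-\gamma$ and its second $\xi$-derivatives uniformly as $h\to 0$, so that the determinant of $\Hess_\xi(\gamma_\delta)$ stays close to that of $\Hess_\xi(\gamma)$ for $h$ small enough.
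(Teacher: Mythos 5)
Your proof is correct and is exactly the paper's argument: the paper's proof of this corollary is the one-line remark that it follows from the preceding lemma together with \eqref{inf} and \eqref{detA}, and you have simply spelled out the factor-by-factor bounds (including the correct observation that the negative exponent $(\alpha-1)d=-3d/4$ forces you to use the \emph{upper} bound on $\mU$ from \eqref{inf}, and that $a\geq c$ comes from the Taylor sign assumption). Nothing to add.
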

\begin{proof}
This follows from the preceeding lemma and from \eqref{inf}, \eqref{detA}.
\end{proof}
\begin{proof}[Proof of Proposition \ref{hess}]
 
Recall that  we have   for all $\alpha \in \xN^d$
$$\sup_{t\in I} \sup _{\vert \xi \vert \leq 2} \Vert D^\alpha_\xi \gamma (t,\cdot,\xi)\Vert_{W^{s_0, \infty}(\xR^d)} < +\infty.$$
For fixed $j,k\in \{1,\ldots,d\}$ we write
\begin{equation}\label{perturb}
\frac{\partial^2 \gamma_\delta}{\partial \xi_j \partial \xi_k}(t,x,\xi) 
= \frac{\partial^2 \gamma}{\partial \xi_j \partial \xi_k}(t,x,\xi) 
- (I-\psi(h^\delta D_x))\frac{\partial^2 \gamma}{\partial \xi_j \partial \xi_k}(t,x,\xi).
\end{equation}
Setting $\gamma_{jk}= \frac{\partial^2 \gamma}{\partial \xi_j \partial \xi_k}$ 
we  have, since $\psi(0) =1$, 
$$
(I-\psi(h^\delta D_x))\gamma_{jk}(t,x,\xi) 
= h^{-\delta d}\int_{\xR^d} \overline{\mathcal{F}}\psi \Big(\frac{y}{h^\delta}\Big)
\big[\gamma_{jk}(t,x,\xi) - \gamma_{jk}(t,x-y,\xi)\big] dy,
$$
where $\overline{\mathcal{F}}$ denotes the inverse Fourier transform with respect to $x$. 
Then 
\begin{equation*}
\begin{aligned}
&\Vert (I-\psi(h^\delta D_x))\gamma_{jk}(t,\cdot,\xi)  \Vert_{L^\infty (\xR^d)} \\
&\qquad \leq h^{-\delta d}\Big( \int_{\xR^d} \vert \overline{\mathcal{F}}\psi 
\big(\frac{y}{h^\delta}\big)\vert  \vert y \vert^{s_0} dy\Big) \Vert  \gamma_{jk}(t,\cdot,\xi) \Vert_{W^{s_0,\infty}(\xR^d)}\\
&\qquad \leq C h^{\delta s_0} \sup_{t\in I} \sup_{\vert \xi \vert \leq 2}
\Vert \gamma_{jk}(t,\cdot,\xi)\Vert_{W^{s_0,\infty}(\xR^d)}. 
\end{aligned}
\end{equation*}
Then Proposition \ref{hess} follows from Corollary \ref{hess1} and \eqref{perturb} 
if $h_0$ is small enough.
\end{proof}

\section{The pseudo-differential symbol}

In this paragraph, we study the pseudo-differential symbol of $T_{\gamma_\delta}$

Let $\chi \in C_0^\infty(\xR^d \times \xR^d) $ be such that
\begin{equation}\label{chi}
\begin{aligned}
&(i)\,  &&\chi(-\zeta, \eta) = \chi(\zeta,\eta),\\
&(ii)\,  &&\chi \text{ is homogeneous of order zero }, \\
&(iii)\, && \chi(\zeta, \eta) =1 \text{ if } \vert \zeta \vert \leq \eps_1 \vert \eta \vert,  
\quad\chi(\zeta, \eta) =0 \text{ if } \vert \zeta \vert \geq \eps_2 \vert \eta \vert 
\end{aligned}
\end{equation}
where $0<\eps_1 <\eps_2$ are small constants. 
Then modulo an operator of order zero we have
\begin{equation}\label{pseudo}
T_{\gamma_\delta}u  = \sigma_{\gamma_\delta}(t,x, D_x)u 
\end{equation}
with
$$
\sigma_{\gamma_\delta}(t,x, \eta) = \Big( \int_{\xR^d} 
e^{i x\cdot \zeta} \chi(\zeta, \eta) \widehat{\gamma}_\delta(t, \zeta, \eta) \,d\zeta \Big) \psi_0(\eta)
$$
where $\widehat{\gamma}_\delta$ denotes the Fourier transform of $\gamma_\delta$ 
with respect to  the variable $x$ and $\psi_0$ is a cut-off function such that $\psi_0(\eta) = 0$ 
for $\vert \eta \vert \leq \frac{1}{4}, \psi_0(\eta) = 1$  for $\vert \eta \vert \geq \frac{1}{3}.$ (Notice that $\chi$ can be so chosen that 
Remark $2.2$ in \cite{ABZ3} remains true.)

We shall use in the sequel the following result.
\begin{lemm}\label{l0.6}
For all $N\in \xN, M>0$ and all $\alpha, \beta \in \xN^d$ there exists $C  >0$ 
such that
$$\sup_{\vert \eta \vert \leq M} \vert D_\mu^\alpha 
D_\eta^\beta \widehat{\chi}(\mu, \eta)\vert \leq \frac{C}{\langle \mu \rangle^N}$$
where the Fourier transform is taken with respect to the first variable of $\chi$.
\end{lemm}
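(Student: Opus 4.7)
The approach is a classical integration by parts argument: for each fixed $\eta$ with $|\eta|\le M$, the function $\zeta\mapsto\chi(\zeta,\eta)$ is smooth and, by condition (iii), supported in the $\eta$-uniform compact set $\{|\zeta|\le\eps_2 M\}$, so its partial Fourier transform in $\zeta$ is Schwartz-class in $\mu$, uniformly in $\eta$ on $\{|\eta|\le M\}$.

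Concretely, I move the derivatives inside the integral. Since $D_\mu^\alpha e^{-i\mu\cdot\zeta}=(-\zeta)^\alpha e^{-i\mu\cdot\zeta}$ and $D_\eta^\beta$ hits $\chi$ directly,
\begin{equation*}
D_\mu^\alpha D_\eta^\beta\widehat{\chi}(\mu,\eta)=\int_{\xR^d}e^{-i\mu\cdot\zeta}(-\zeta)^\alpha D_\eta^\beta\chi(\zeta,\eta)\,d\zeta.
\end{equation*}
To produce decay in $\mu$ I use the identity $(1-\Delta_\zeta)^N e^{-i\mu\cdot\zeta}=\langle\mu\rangle^{2N}e^{-i\mu\cdot\zeta}$ and integrate by parts $2N$ times; boundary terms vanish by the compact $\zeta$-support of $\chi(\cdot,\eta)$. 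This yields
\begin{equation*}
\langle\mu\rangle^{2N}D_\mu^\alpha D_\eta^\beta\widehat{\chi}(\mu,\eta)=\int_{\xR^d}e^{-i\mu\cdot\zeta}(1-\Delta_\zeta)^N\bigl[(-\zeta)^\alpha D_\eta^\beta\chi(\zeta,\eta)\bigr]\,d\zeta.
\end{equation*}

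To conclude, I bound the right-hand side uniformly in $|\eta|\le M$. The integrand is supported in the fixed compact set $\{|\zeta|\le\eps_2 M\}$, and since $\chi\in C_0^\infty(\xR^d\times\xR^d)$, the bracket and all its $\zeta$-derivatives are uniformly bounded on $\xR^d\times\{|\eta|\le M\}$. The integral is therefore $O(1)$ uniformly in $\eta$, giving
\begin{equation*}
|D_\mu^\alpha D_\eta^\beta\widehat{\chi}(\mu,\eta)|\le C_N\langle\mu\rangle^{-2N}
\end{equation*}
for $|\eta|\le M$, and choosing $2N$ at least as large as the target $N$ of the statement finishes the proof. There is no real obstacle here; the argument is a routine Schwartz-class decay estimate, whose sole ingredient is that the supports and all seminorms of the cut-off are controlled uniformly as $\eta$ ranges over the compact set $\{|\eta|\le M\}$.
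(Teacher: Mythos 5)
Your argument is correct and is essentially the paper's own proof: both integrate by parts in $\zeta$ using a power of the Laplacian (the paper uses $|\mu|^{2k}$ with $(-\Delta_\zeta)^k$, you use $\langle\mu\rangle^{2N}$ with $(1-\Delta_\zeta)^N$, a purely cosmetic difference) and then exploit that for $|\eta|\le M$ the support condition $|\zeta|\le\eps_2|\eta|$ confines the integration to the fixed compact set $\{|\zeta|\le\eps_2 M\}$. Nothing further is needed.
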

\begin{proof}
Recall that we have $\supp \chi \subset \{(\zeta, \eta): \vert \zeta \vert \leq \eps_2 \vert \eta \vert\}.$ Then for $\vert \eta \vert \leq M$ we see easily that for any $k\in \xN$ we can write
$$
\vert \mu\vert^{2k} D_\mu^\alpha D_\eta^\beta \widehat{\chi}(\mu, \eta) 
= \int_{\xR^d} e^{-i \mu\cdot \zeta} 
(-\Delta_\zeta)^k\big [(-\zeta)^\alpha D_\eta^\beta \chi(\zeta, \eta)\big] \,d\zeta.
$$
The result follows then from the fact that when $\vert \eta \vert \leq M$ 
the domain of integration is contained in the set $\{\zeta : \vert \zeta \vert\leq \eps_2 M\}.$
\end{proof}
\section{Frequency localization}
Let us recall the Littlewood-Paley decomposition. 
There exists a function $\psi\in C^\infty_0(\xR^d)$ such that 
$\psi(\xi)=1$ for $\la \xi\ra \le 1/2$ and $\psi(\xi)=0$ for $\la \xi\ra\ge 1$, and a function 
$\varphi\in C^\infty_0(\xR^d)$ whose support is contained 
in the shell $\mathcal{C}_0 := \{\xi : \mez \leq \vert \xi \vert \leq2\}$ such that
$$
\psi(\xi)+\sum_{k=0}^{N-1}\varphi(2^{-k}\xi)=\psi(2^{-N}\xi).
$$
We set
$$
\Delta_{-1}u=\psi(D)u,\quad 
\Delta_j u = \varphi(2^{-j}D)u,\quad S_j u =\sum_{k=-1}^{j-1} \Delta_k u=\psi(2^{-j}D)u.
$$

Let us consider the operator
\begin{equation*}
L = \partial_t +\mez(T_V\cdot \nabla + \nabla \cdot T_V)+i T_\gamma
\end{equation*}
where
$$
V = (V_1,\ldots,V_d), \quad V_j \in C^0(I; H^s(\xR^d))\cap L^2(I; W^{1,\infty}(\xR^d)).
$$
  
Let $u \in L^\infty(I; H^s(\xR^d))$ be such that
$$
Lu = f \in L^2(I;H^s(\xR^d)).
$$
Then we have
\begin{equation}\label{commuta}
 L \Delta_j u = \Delta_j f + \mez \big( [T_V, \Delta_j]\cdot \nabla u +\nabla\cdot  [T_V, \Delta_j] u\big) +i [T_\gamma, \Delta_j]u.
\end{equation}
Now we have the following lemma.
\begin{lemm}\label{TV-S}
\begin{align*}
 T_V \cdot \nabla \Delta_j u&= S_j(V)\cdot\nabla \Delta_ju + R_ju\\
 \nabla \cdot T_V \Delta_j u= &= \nabla \cdot S_j(V) \Delta_j u + R'_j u
 \end{align*}
where $R_ju, R'_ju$ have their  spectrum contained in a ball $B(0, C2^j)$ and satisfies the estimate
$$
\Vert R_ju \Vert_{H^s(\xR^d)} + \Vert R'_ju \Vert_{H^s(\xR^d)} \leq   C\Vert V \Vert_{W^{1,\infty}(\xR^d)} \Vert u \Vert_{H^s(\xR^d)}.
$$
\end{lemm}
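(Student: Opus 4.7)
The plan is to exploit the spectral localization built into both the paraproduct and the Littlewood--Paley blocks, combined with Bernstein's inequality. The two identities are structurally identical, so I would treat the first in detail and indicate the variation for the second.

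For the spectral localization claim, the key observation is that the cut-off $\chi(\xi-\eta,\eta)$ in \eqref{eq.para} forces $\la\xi-\eta\ra\le\eps_2\la\eta\ra$, so whenever $w$ has Fourier support in the shell $\la\eta\ra\sim 2^j$ the output $T_V w$ has Fourier support in $\la\xi\ra\le C 2^j$. Applied to $w=\nabla\Delta_j u$ and $w=\Delta_j u$ this gives the spectral containment of $R_j u$ and $R'_j u$ in $B(0,C 2^j)$ (since $S_j(V)\Delta_j u$ is also spectrally localized in such a ball).

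For the main identity I would compute $R_j u=(T_V-S_j(V))\cdot\nabla\Delta_j u$ on the Fourier side. On the support of $\widehat{\nabla\Delta_j u}$ one has $\psi(\eta)=1$ for $j$ large, so the symbol multiplying $\widehat{V}(\xi-\eta)$ reduces to $\chi(\xi-\eta,\eta)-\psi(2^{-j}(\xi-\eta))$, which (for appropriate $\eps_1,\eps_2$) is supported in the variable $\xi-\eta$ in an annulus of size $\sim 2^j$. Decomposing this annulus dyadically, $R_j u$ becomes a finite sum
\[
R_j u=\sum_{|\ell-j|\le N_0}\sum_{|k-j|\le 1} c_{\ell,k}\,\Delta_\ell V_i\cdot\partial_i\Delta_k\Delta_j u,
\]
with $N_0$ and $c_{\ell,k}$ independent of $j$. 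Each summand is spectrally localized in $B(0,C 2^j)$, so Bernstein's inequality gives
\[
\lA \Delta_\ell V_i\cdot\partial_i\Delta_k\Delta_j u\rA_{H^s}\le C 2^{js}\lA \Delta_\ell V\rA_{L^\infty}\lA \partial_i\Delta_k\Delta_j u\rA_{L^2}\le C 2^{js}\cdot 2^{-\ell}\lA \nabla V\rA_{L^\infty}\cdot 2^j\lA \Delta_j u\rA_{L^2}.
\]
Since $\ell\sim j$, the factors $2^{-\ell}$ and $2^j$ cancel and summation in $\ell,k$ yields $\lA R_j u\rA_{H^s}\le C\lA V\rA_{W^{1,\infty}}\lA u\rA_{H^s}$.

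For the second identity I would apply the same decomposition without the gradient, writing $T_V\Delta_j u=S_j(V)\Delta_j u+\widetilde R_j u$ where $\widetilde R_j u$ is a finite sum of products $\Delta_\ell V_i\cdot\Delta_k\Delta_j u$. Bernstein now gives $\lA \widetilde R_j u\rA_{H^s}\le C 2^{-j}\lA V\rA_{W^{1,\infty}}\lA u\rA_{H^s}$; since $\widetilde R_j u$ is spectrally supported in $B(0,C 2^j)$, a further application of Bernstein yields $\lA R'_j u\rA_{H^s}=\lA \nabla\cdot\widetilde R_j u\rA_{H^s}\le C 2^j\lA \widetilde R_j u\rA_{H^s}\le C\lA V\rA_{W^{1,\infty}}\lA u\rA_{H^s}$. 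The main technicality is bookkeeping: Bony's formula naturally produces the coefficient $S_{j-N_0}(V)$ for some integer shift $N_0$ depending on the cut-offs $\chi,\psi$, rather than exactly $S_j(V)$; but the difference $S_j(V)-S_{j-N_0}(V)$ is itself a finite sum of dyadic blocks $\Delta_\ell V$ with $\ell\sim j$, which is absorbed into the remainder by the same Bernstein estimate.
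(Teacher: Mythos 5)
Your proof is correct and follows essentially the same route as the paper's: write the remainder as a finite sum of products $\Delta_\ell V\cdot\partial\Delta_k\Delta_j u$ with $\ell,k\sim j$, and trade the derivative loss $2^j$ against the Bernstein gain $\Vert\Delta_\ell V\Vert_{L^\infty}\lesssim 2^{-\ell}\Vert\nabla V\Vert_{L^\infty}$. The only (harmless) deviation is in the second identity, which the paper disposes of by the commutation $\nabla\cdot T_V\Delta_j u = T_V\cdot\nabla\Delta_j u + T_{\cnx V}\Delta_j u$ and thus reduces to the first, whereas you redo the block decomposition without the gradient and recover the factor $2^j$ by Bernstein afterwards.
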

\begin{proof}
It is sufficient to  prove the first line. Indeed we have
$ \nabla \cdot T_V \Delta_j u = T_V \cdot \nabla \Delta_j u + T_{\text{div}V} \Delta_j u$ and $\Vert T_{\text{div}V} \Delta_j u\Vert_{H^s(\xR^d)} \leq C \Vert V \Vert_{W^{1,\infty}(\xR^d)} \Vert u \Vert_{H^s(\xR^d)}.
.$

Since $\Delta_k \Delta_j = 0$ if $\vert k-j\vert \geq 2$ we can write
 \begin{align*}
 T_V \cdot \Delta_j \nabla u  &= \sum_{\vert j-k \vert \leq 1} S_{k-3}(V) \cdot \Delta_k \Delta_j \nabla u = S_j(V)\cdot\sum_{\vert j-k \vert \leq 1}   \Delta_k \Delta_j \nabla u + R_ju\\
 &   = S_j(V)\Delta_j \nabla u +R_j u, 
  \end{align*}
  where $R_j u = \sum_{\vert j-k \vert \leq 1} \big( S_{k-3}(V)-S_j(V) \big)  \cdot \Delta_k \Delta_j \nabla u.$ We have three terms in the sum defining $R_ju$.  Each of them is a finite sum of terms of the form $A_j = \Delta_{j+\mu}(V)\cdot   \Delta_{j+ \nu} \Delta_j \nabla u.$ Since the spectrum of $A_j$ is contained in a ball of radius $C2^j$ we can write
  \begin{align*}
  \Vert A_j \Vert_{H^s(\xR^d)} &\leq C_1 2^{js}\Vert F_j \Vert_{L^2(\xR^d)} \leq C_1 2^{js}\Vert \Delta_{j+\mu}V\Vert_{L^\infty(\xR^d)} \Vert \Delta_{j }\nabla u\Vert_{L^2(\xR^d)} \\
  &\leq C_2 2^{js} 2^{-j} \Vert V \Vert_{W^{1,\infty}(\xR^d)} 2^{-js} 2^j \Vert u \Vert_{H^s(\xR^d)} \leq C_2\Vert V \Vert_{W^{1,\infty}(\xR^d)}  \Vert u \Vert_{H^s(\xR^d)}
  \end{align*}
  which completes the proof of the lemma.
\end{proof}
It follows from the lemma that we have 
\begin{multline}
 \big(\partial_t +\mez( S_j(V)\cdot \nabla + \nabla  \cdot  S_j(V))+i T_\gamma\big) \Delta_j u =\\
   \Delta_j f+\mez \big( [T_V, \Delta_j]\cdot \nabla u +\nabla\cdot  [T_V, \Delta_j] u\big) +i [T_\gamma, \Delta_j]u +R_j u + R'_ju
 \end{multline}
 As already mentioned, we need to smooth out the symbols. To do so, 
we replace 
$S_j(V)$ by $S_{\delta j}(V)=\psi(2^{-j\delta}D)V$ and $\gamma$ by $\gamma_\delta = \psi(2^{-j\delta}D)\gamma.$  We set
\begin{equation}\label{Ldelta}
L_\delta = \partial_t +\mez( S_{j\delta}(V)\cdot \nabla + \nabla  \cdot  S_{\delta j}(V)) +i T_{\gamma_\delta}.
\end{equation}
Then we have
\begin{equation}\label{Ldeltau}
L_\delta \Delta_ju = F_j
\end{equation}
where 
  \begin{multline}\label{Fj}
F_j = \Delta_jf +[T_V, \Delta_j]\cdot \nabla u + i [T_\gamma, \Delta_j]u +R_j u + R'_ju + \\ \mez \big\{ \big(S_{j\delta}(V)  - S_j(V)\big)\cdot\nabla \Delta_ju +\nabla \cdot \big(S_{j\delta}(V)  - S_j(V)\big) \Delta_ju\big\}
  +i\big(T_{\gamma_\delta} - T_\gamma \big)\Delta_j u.
\end{multline}
  Then, for all fixed $t,$ the function $F_j(t,\cdot)$ 
has its spectrum contained in a ball $B(0, C2^j)$.  

\section{Straightening the vector field}

We want to straighten the vector field $\partial_t + S_{j\delta}(V)\cdot \nabla.$
Consider the system of differential equations
\begin{equation}\label{eqdif}
\left \{
\begin{aligned}
&\dot{X}_k(s) =  S_{j\delta}(V_k)(s,X(s)), \quad 1 \leq k \leq d,\quad X=(X_1,\ldots,X_d) \\
&X_k(0) =x_k.
\end{aligned}
\right.
\end{equation}
For $k=1,\ldots,d$ we have $S_{j\delta}(V_k) \in L^\infty(I;H^\infty(\xR^d))$ and
$$
\vert S_{j\delta}(V_k)(s,x) \vert \leq C\Vert V_k \Vert_{L^\infty(I\times \xR^d)} \quad   \forall (s,x) \in I \times \xR^d.
$$
Therefore System \eqref{eqdif} has a unique solution defined on $I$ 
which will be denoted $X(s; x,h) (h=2^{-j})$ or sometimes simply $X(s).$

 We shall set 
 \begin{equation}\label{E0}
 E_0 = L^p(I; W^{1,\infty}(\xR^d))^d
 \end{equation}
  where $p= 4$ if $d=1$, $p=2$ if $d \geq 2$, endowed with its natural norm.
\begin{prop}\label{estX}
For fixed $(s,h)$ the map $x \mapsto X(s;x,h)$ 
belongs to $C^\infty (\xR^d, \xR^d).$ Moreover there exist functions $\mathcal{F}, \mathcal{F}_\alpha: \xR^+ \to \xR^+$ such that  
\begin{equation*}
\begin{aligned}
&(i) \quad &&\Big\Vert \frac{\partial X}{\partial x}(s; \cdot, h) - Id 
\Big\Vert_{L^\infty(\xR^d)} \leq \mathcal{F}(\Vert V \Vert_{E_0})\vert s \vert^\mez, \\  
&(ii)\quad  &&\Vert (\partial_x^\alpha X) (s; \cdot, h)  \Vert_{L^\infty(\xR^d)} 
\leq \mathcal{F}_\alpha(\Vert V \Vert_{E_0}) h^{-\delta(\vert \alpha \vert -1)} \vert s \vert^\mez, \quad    \vert \alpha \vert \geq 2 \\
\end{aligned}
\end{equation*}
for   all $(s,h) \in I\times (0, h_0]$.
\end{prop}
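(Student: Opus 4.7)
The plan is straightforward ODE analysis combined with Bernstein estimates for the low-pass filter $S_{j\delta}=\psi(2^{-j\delta}D)$. Since $S_{j\delta}(V)(s,\cdot)$ is a $C^\infty$ function of $x$ whose Fourier transform has compact support at scale $h^{-\delta}$, and is uniformly bounded in $(s,x)$, the classical Cauchy--Lipschitz theorem applied to \eqref{eqdif} produces a unique solution $X(\cdot;x,h)$ on $I$ depending smoothly on $x$. I proceed in two steps.

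\emph{Step 1: proof of (i).} Differentiating \eqref{eqdif} in $x$, the Jacobian $J(s)=\partial_x X(s)$ solves the linear system $\dot J=(\partial_x S_{j\delta}(V))(s,X(s))\,J$ with $J(0)=\text{Id}$. Setting $Y=J-\text{Id}$ and integrating,
\begin{equation*}
\lA Y(s)\rA_{L^\infty}\le\int_0^s\lA\partial_x S_{j\delta}(V)(\tau,\cdot)\rA_{L^\infty}\big(1+\lA Y(\tau)\rA_{L^\infty}\big)\,d\tau.
\end{equation*}
Since the multiplier $\psi(h^\delta D)$ is bounded on $L^\infty$, the integrand is controlled by $\lA V(\tau,\cdot)\rA_{W^{1,\infty}}$. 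H\"older in time gives $\int_0^s\lA V(\tau,\cdot)\rA_{W^{1,\infty}}\,d\tau\le|s|^{1/p'}\lA V\rA_{E_0}$ with $1/p'\in\{\mez,\tq\}$, and Gronwall produces $\lA Y(s)\rA_{L^\infty}\le\mathcal{F}(\lA V\rA_{E_0})|s|^{1/p'}$. Since $|s|\le T_0\le1$ one has $|s|^{1/p'}\le|s|^{\mez}$, which gives (i) uniformly for $p=2$ and $p=4$.

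\emph{Step 2: proof of (ii), by induction on $|\alpha|\ge2$.} Applying $\partial_x^\alpha$ to \eqref{eqdif} and using the Fa\`a di Bruno formula,
\begin{equation*}
\partial_s\partial_x^\alpha X=\big(\partial_x S_{j\delta}(V)\big)(s,X)\cdot\partial_x^\alpha X+R_\alpha,
\end{equation*}
where $R_\alpha$ is a finite linear combination of products of the form $(\partial_x^\beta S_{j\delta}(V))(s,X)\prod_{i}\partial_x^{\gamma_i}X$ with $2\le|\beta|\le|\alpha|$, $|\gamma_i|\ge1$ and $\sum_i|\gamma_i|=|\alpha|$. The essential tool is Bernstein's inequality for the low-pass cutoff:
\begin{equation*}
\lA\partial_x^\beta S_{j\delta}(V)(s,\cdot)\rA_{L^\infty}\le C_\beta\,h^{-\delta(|\beta|-1)}\lA V(s,\cdot)\rA_{W^{1,\infty}},\qquad|\beta|\ge1.
\end{equation*}
Combining this with (i) for $|\gamma_i|=1$ and the induction hypothesis for $2\le|\gamma_i|<|\alpha|$, the combinatorial identity $(|\beta|-1)+\sum_i(|\gamma_i|-1)=|\alpha|-1$ yields
\begin{equation*}
\lA R_\alpha(s,\cdot)\rA_{L^\infty}\le\mathcal{F}_\alpha(\lA V\rA_{E_0})\,h^{-\delta(|\alpha|-1)}\,\lA V(s,\cdot)\rA_{W^{1,\infty}}.
\end{equation*}
Integrating the linear ODE for $\partial_x^\alpha X$ and applying Gronwall together with the H\"older bound from Step 1 gives the claimed estimate with time factor $|s|^{\mez}$.

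The only delicate point is the combinatorial bookkeeping in Fa\`a di Bruno. The identity $(|\beta|-1)+\sum_i(|\gamma_i|-1)=|\alpha|-1$ ensures that, regardless of how derivatives distribute between $S_{j\delta}(V)$ (each extra derivative contributing $h^{-\delta}$ via Bernstein) and the inductive factors $\partial_x^{\gamma_i}X$, the total loss is exactly $h^{-\delta(|\alpha|-1)}$, matching the claimed exponent. The rest is routine ODE and H\"older estimates.
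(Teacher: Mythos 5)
Your proof is correct and follows essentially the same route as the paper: differentiate the flow equation in $x$, use H\"older in time plus Gronwall for $(i)$ (with the observation that $\lvert s\rvert^{3/4}\le \lvert s\rvert^{1/2}$ on $I\subset[0,1]$ when $d=1$), and for $(ii)$ run an induction via Fa\`a di Bruno, the Bernstein-type bound $\lVert \partial_x^\beta S_{j\delta}(V)\rVert_{L^\infty}\lesssim h^{-\delta(\lvert\beta\rvert-1)}\lVert V\rVert_{W^{1,\infty}}$, and the exponent bookkeeping $(\lvert\beta\rvert-1)+\sum_i(\lvert\gamma_i\rvert-1)=\lvert\alpha\rvert-1$. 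The only cosmetic differences are that the paper first bounds $\lvert\nabla X\rvert$ uniformly before extracting the $\lvert s\rvert^{1/2}$ factor, and runs the induction with the weaker (time-independent) hypothesis; both variants are equivalent.
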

\begin{proof}
To prove $(i)$ we differentiate the system \label{eqdiff} with respect to $x_l$ and we obtain
\begin{equation*}
\left\{
\begin{aligned}
&\dot{\frac{\partial X_k}{\partial x_l}}(s) = 
 \sum_{q = 1}^d S_{j\delta}\Big (\frac{\partial V_k}{\partial x_q}\Big)(s,X(s))\frac{\partial X_q}{\partial x_l}(s)\\
&\frac{\partial X_k}{\partial x_l}(0) = \delta_{kl}
\end{aligned}
\right. 
\end{equation*}
from which we deduce 
\begin{equation}\label{dX}
\frac{\partial X_k}{\partial x_l}(s) = \delta_{kl} 
+ \int_0^s  \sum_{q = 1}^d S_{j\delta}
\Big (\frac{\partial V_k}{\partial x_q}\Big)(\sigma,X(\sigma))\frac{\partial X_q}{\partial x_l}(\sigma) \, d \sigma.
\end{equation}
Setting 
$\vert \nabla X\vert = \sum_{k,l =1}^d \vert \frac{\partial X_k}{\partial x_l} \vert$ 
we obtain from \eqref{dX}
$$
\vert \nabla X(s) \vert \leq C_d + \int_0^s \vert \nabla V(\sigma, X(\sigma)) \vert  \, \vert \nabla X(\sigma) \vert \, d \sigma.
$$
The Gronwall inequality implies that
\begin{equation}\label{Gron}
 \vert \nabla X(s) \vert \leq \mathcal{F} (\Vert V \Vert_{E_0}) \quad \forall s \in I. 
 \end{equation}
Coming back to \eqref{dX} and using \eqref{Gron} we can write
$$
\la \frac{\partial X }{\partial x}(s) - Id \ra \leq  \mathcal{F} (\Vert V \Vert_{E_0}) \int_0^s \Vert \nabla V(\sigma, \cdot)  \Vert_{L^\infty(\xR^d)} 
\, d\sigma \leq \mathcal{F}_1 (\Vert V \Vert_{E_0})\vert s \vert^\mez.
$$
Notice that in dimension one we have used the inequality $\vert s \vert^{\frac{3}{4}} \leq C\vert s \vert^\mez $ when $s\in I.$ 

To prove $(ii)$ we shall show by induction on $\vert \alpha \vert$ that the estimate
$$
\Vert(\partial_x^\alpha X)(s;\cdot, h)\Vert_{L^\infty(\xR^d)} \leq \mathcal{F}_\alpha(\Vert V \Vert_{E_0}) h^{-\delta(\vert \alpha \vert -1)}
$$
for  $1 \leq \vert \alpha \vert \leq k$ implies $(ii)$ for  $\vert \alpha \vert = k+1.$ 
The above estimate is true for  $\vert \alpha \vert =1$ by $(i)$. 
Let us differentiate  $\vert \alpha \vert$ times the system \eqref{eqdif}. 
We obtain
\begin{equation}\label{rec1}
 \frac{d}{ds}\big(\partial_x^\alpha X\big)(s) = S_{j\delta}(\nabla V)(s,X(s)) \partial_x^\alpha X +(1) 
 \end{equation}
where the term $(1)$ is a finite linear combination of terms of the form
$$
A_\beta (s,x) = \partial_x^\beta \big(S_{j\delta}(V)\big)(s,X(s)) \prod_{i=1}^q \big(\partial_x^{L_i} X(s)\big)^{K_i}
$$
where 
$$
2 \leq \vert \beta \vert \leq \vert \alpha \vert, \quad  1 
\leq q \leq \vert \alpha \vert, \quad \sum_{i=1}^q \vert K_i \vert L_i = \alpha, \quad \sum_{i=1}^q K_i = \beta.
$$
Then  $1 \leq \vert L_i \vert < \vert \alpha \vert$ 
which allows us to use the induction. Therefore we can write
\begin{equation*}
\begin{aligned}
\Vert A_\beta (s,\cdot) \Vert_{L^\infty(\xR^d)} 
&\leq \big\Vert \partial_x^\beta \big(S_{j\delta}(V)\big)(s,\cdot)\big\Vert_{L^\infty(\xR^d)} \prod_{i=1}^q 
\Big\Vert \partial_x^{L_i} X(s, \cdot)\Big\Vert _{L^\infty(\xR^d)}^{\vert K_i \vert}\\[1ex]
&\leq C h^{-\delta (\vert \beta \vert -1)}\Vert V(s,\cdot) \Vert_{W^{1, \infty}(\xR^d)} 
h^{-\delta \sum_{i=1}^q \vert K_i\vert(\vert L_i \vert -1)}\mathcal{F} (\Vert V \Vert_{E_0}) \\[1ex]
&\leq \mathcal{F} (\Vert V \Vert_{E_0})h^{-\delta(\vert \alpha \vert -1)}\Vert V(s,\cdot) \Vert_{W^{1, \infty}(\xR^d)}.
\end{aligned}
\end{equation*}
It follows then from \eqref{rec1} that
\begin{equation}
\begin{aligned}
\vert \partial_x^\alpha X(s) \vert &\leq   \mathcal{F} (\Vert V \Vert_{E_0})h^{-\delta(\vert \alpha \vert -1)} 
\int_0^s\Vert V(\sigma,\cdot) \Vert_{W^{1, \infty}(\xR^d)} \,d\sigma\\
&\quad + C \int_0^s\Vert V(\sigma,\cdot) \Vert_{W^{1, \infty} \vert (\xR^d)}\vert \partial_x^\alpha X(\sigma) \vert \, d\sigma.
\end{aligned}
\end{equation}
The H\" older and Gronwall inequalities imply immediately $(ii)$.
\end{proof}

\begin{coro}
There exist  $T_0>0,h_0>0 $ such that for $t\in [0,T_0]$ and $0<h \leq h_0$ the map $x\mapsto X(t; x,h)$ 
from $\xR^d$ to $\xR^d$ is a $C^\infty$ diffeomorphism.
\end{coro}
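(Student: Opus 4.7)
The corollary follows from Proposition~\ref{estX} combined with the flow structure of the ODE~\eqref{eqdif}.

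First, I pick $T_0\in(0,T]$ and $h_0>0$ so small that $\mathcal{F}(\Vert V\Vert_{E_0})\,T_0^{\mez}\le \mez$, where $\mathcal{F}$ is the function appearing in Proposition~\ref{estX}(i). Then, for every $t\in[0,T_0]$, $x\in\xR^d$ and $0<h\le h_0$,
$$
\Bigl\Vert \tfrac{\partial X}{\partial x}(t;\cdot,h)-\mathrm{Id}\Bigr\Vert_{L^\infty(\xR^d)}\le \mez,
$$
so the Jacobian matrix is invertible at every point with a uniform bound on the norm of its inverse. Combined with the higher-order smoothness provided by Proposition~\ref{estX}(ii), this shows that $x\mapsto X(t;x,h)$ is $C^\infty$ and a local diffeomorphism.

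To upgrade this local statement to a global one, I exploit the flow structure. The regularized velocity $W(s,x):=S_{j\delta}(V)(s,x)$ is $C^\infty$ in $x$, continuous in $s$, and bounded on $I\times\xR^d$ by $C\Vert V\Vert_{L^\infty}$; in particular its integral curves never blow up, forward or backward. By standard ODE theory, the associated flow $\Phi_{s_0,s}$ is defined on all of $\xR^d$ for every $s_0,s\in I$, depends $C^\infty$-smoothly on the space variable, and satisfies the group relation $\Phi_{s,s_0}\circ\Phi_{s_0,s}=\mathrm{Id}$. Since $X(t;\cdot,h)=\Phi_{0,t}$, its inverse is $\Phi_{t,0}$, which is itself $C^\infty$. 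Hence $X(t;\cdot,h)$ is a $C^\infty$ diffeomorphism of $\xR^d$ onto itself.

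There is essentially no obstacle: once Proposition~\ref{estX} is in hand, the corollary is a packaging step in ODE theory. An equally clean alternative is to invoke Hadamard's global inversion theorem, using that $X(t;\cdot,h)$ is a local diffeomorphism and is proper thanks to $|X(t;x,h)-x|\le C\,t\,\Vert V\Vert_{L^\infty}$. The role of the smallness of $T_0$ is not the diffeomorphism property itself, but the quantitative closeness $\Vert \partial X/\partial x-\mathrm{Id}\Vert_{L^\infty}\le\mez$, which will be crucial later when performing the paradifferential change of variables and comparing Sobolev norms before and after straightening the vector field $\partial_t+S_{j\delta}(V)\cdot\nabla$.
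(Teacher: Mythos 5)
Your proof is correct, but your primary argument is genuinely different from the paper's. The paper invokes Hadamard's global inversion theorem: Proposition~\ref{estX}(i) (for $T_0$ small) makes $\frac{\partial X}{\partial x}$ invertible everywhere, and the bound $\vert X(t;x,h)-x\vert\le T_0\Vert V\Vert_{L^\infty}$ makes the map proper, so it is a global diffeomorphism. You instead use the flow structure: since $S_{j\delta}(V)$ is bounded, smooth in $x$ with bounded derivatives (for fixed $j$), and measurable/continuous in time, the two-parameter flow $\Phi_{s_0,s}$ is globally defined and satisfies $\Phi_{t,0}\circ\Phi_{0,t}=\mathrm{Id}$ by uniqueness, so the inverse of $X(t;\cdot,h)=\Phi_{0,t}$ is simply the backward flow $\Phi_{t,0}$, which is itself $C^\infty$. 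This is more elementary (no inversion theorem needed) and, as you correctly observe, it establishes the diffeomorphism property for \emph{all} $t\in I$ without any smallness of $T_0$; the smallness is only needed for the quantitative estimate $\Vert\frac{\partial X}{\partial x}-\mathrm{Id}\Vert\le\frac{1}{2}$ used later in the paracomposition. You also mention the Hadamard route as an alternative, which is exactly the paper's proof, so both approaches are at your disposal; the flow-reversal argument buys simplicity and a stronger conclusion, while the paper's argument is the one that generalizes to maps that are not flows.
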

\begin{proof}
This follows from a result by Hadamard (see~\cite{Be}). 
Indeed if $T_0$ is small enough, Proposition~\ref{estX} 
shows that the matrix $\big( \frac{\partial X_k}{\partial x_j}(t; x,h) \big)$ 
is invertible. On the other hand since
$$
\vert X(t; x,h) - x \vert \leq
\int_0^t \vert S_{j\delta}(V)(\sigma, X(\sigma))\vert \, d \sigma 
\leq T_0 \Vert V \Vert_{L^\infty([0,T_0]\times \xR^d)}
$$
we see that the map  $x\mapsto X(t; x,h)$ is proper.
\end{proof}
  \section{Reduction to a semi-classical form}
Now according to \eqref{pseudo} and \eqref{Ldeltau} we see that the function 
$ {U}\defn \Delta_ju$ is a solution of the equation 
$$ \big( \partial_t +\mez \big\{S_{j\delta}(V)\cdot \nabla + \nabla  \cdot S_{j\delta}(V)\big\} +i  \sigma_{\gamma_\delta}(t,x, D_x)\varphi_1(hD_x)\big)
 {U}(t,x) =F_j(t,x), \quad h=2^{-j}
$$
where 
\begin{equation}\label{phi1}
\varphi_1 \in C^\infty(\xR^d), \quad   
 \supp \varphi_1 \subset \{ \xi: \frac{1}{4} \leq \vert \xi \vert \leq 4\}, \quad  \varphi_1 = 1 \text{ on }  \{ \xi: \frac{1}{3} \leq \vert \xi \vert \leq 3\}  
\end{equation}
and $F_j$ has been defined in \eqref{Fj}. Notice that 
$$\mez \big(S_{j\delta}(V)\cdot \nabla + \nabla  \cdot S_{j\delta}(V)\big\} = S_{j\delta}(V)\cdot \nabla + \mez S_{j\delta}(\text{div}V).$$
We shall set 
\begin{equation}\label{a}
a(t,x,h,\xi) =  \sigma_{\gamma_\delta}(t,x, \xi)\varphi_1(h\xi).
\end{equation}
We make now the change of variable $x = X(t;y,h).$ Let us set
\begin{equation}
v_h(t,y) = {U}(t,X(t;y,h)) \quad t \in [0,T_0].
\end{equation}
Then it follows from \eqref{eqdif} that
\begin{equation}\label{Dv}
\partial_t v_h(t,y) = -i \big( a(t,x,h,D_x) {U} \big) (t, X(t;y,h)) + F_j(t,X(t;y,h)).
\end{equation}
Our next purpose is to give another expression to the quantity
\begin{equation}\label{A}
A= \big(a(t,x,h,D_x) {U} \big) (t, X(t;y,h)).
\end{equation}
In  the computation below, $t\in [0,T_0]$ and $h$ being  fixed, we will omit them. We have 
$$
A = (2\pi)^{-d} \iint e^{i(X(y)-x')\cdot \eta } a(X(y), \eta) {U}(x') \,dx'\, d\eta.
$$
{\bf Notations}: we set
\begin{equation}\label{notation}
\begin{aligned}
H(y,y') &= \int_0^1 \frac{\partial X}{\partial x}(\lambda y +(1-\lambda) y') \, d\lambda, 
\quad M(y,y') =\big(\, {}\!^tH(y,y') \big)^{-1} \\
M_0(y)  &= \Big(\, {}\!^t  \Big(\frac{\partial X}{\partial x}(y)\Big)  \Big)^{-1}, 
\quad  J(y,y') = \Big\vert \det \Big(\frac{\partial X}{\partial x}(y')\Big)\Big\vert \vert \det M(y,y') \vert.
\end{aligned}
\end{equation}
Let us remark that $M,M_0$ are well defined by Proposition~\ref{estX}. 
Moreover $M_0(y) = M(y,y)$ and $J(y,y) =1.$

Now, in the integral defining $A,$ we make the change of variables $x' =X(y')$.  
Then using the equality $X(y) - X(y') = H(y,y')(y-y')$ and setting $\eta = M(y,y')\zeta$ we obtain,
$$
A = (2\pi)^{-d} \iint e^{i (y-y')\cdot \zeta } a\big(X(y), M(y,y')\zeta\big) J(y,y') v_h(y') \,dy' \,d\zeta.
$$
Now we set
\begin{equation}\label{htilde}
z = h^{-\mez} y, \quad w_h(z) = v_h(h^\mez z), \quad \htilde  = h^\mez.
\end{equation}
Then
$$A = (2\pi)^{-d} \iint e^{i (\htilde z-y')\cdot \zeta } 
a\big(X(\htilde z), M\big(\htilde z,y'\big)\zeta\big) J\big(\htilde z,y'\big) v_h(y') \, dy' d\zeta.$$
Then setting $y' = \htilde z'$ and $\htilde \zeta = \zeta'$ 
we obtain
\begin{equation}\label{A1}
 A = (2\pi)^{-d} \iint e^{i (z-z')\cdot \zeta'  } 
 a\big(X(\htilde x), M\big(\htilde z,\htilde z'\big)\htilde ^{-1}\zeta'\big)
 J\big(\htilde z,\htilde z'\big) w_h(z') \,dz' \,d\zeta'.
 \end{equation}
Our aim is to reduce ourselves to a semi-classical form, after multiplying 
the equation by $\htilde .$ However this not straightforward since the symbol $a$ 
is not homogeneous in $\xi$ although $\gamma$ is homogeneous of order $\mez.$ We proceed as follows.
 
First of all on the support of the function $\varphi_1$ 
in the definition of $a$ (see \eqref{a}) the function $\psi_0$ 
appearing in the definition of $\sigma_{\gamma_\delta}$ (see \eqref{pseudo}) is equal to one. 
 
Therefore we can write for $X\in \xR^d, \rho\in \xR^d$,  (skipping the variable $t$),
\begin{equation*} 
\begin{aligned}
\sigma_{\gamma_\delta}(X, \htilde ^{-1}\rho) 
&=\int e^{iX\cdot \zeta}\chi\big(\zeta, \htilde ^{-1}\rho\big)
\widehat{\gamma}_\delta(\zeta, \htilde ^{-1}\rho) \, d \zeta\\
&= \iint e^{i(X-y)\cdot\zeta} \chi
\big(\zeta, \htilde ^{-1}\rho\big) {\gamma}_\delta(y, \htilde ^{-1}\rho)\,dy \,d\zeta\\
 &=\int \widehat{\chi}\big(\mu, \htilde ^{-1}\rho\big){\gamma}_\delta
 \big(X-\mu, \htilde ^{-1}\rho\big)\,d\mu.
\end{aligned}
\end{equation*}
Now since $\chi$ is homogeneous of degree zero we have,
$$\widehat{\chi}(\mu, \lambda \eta) = \lambda^d \widehat \chi(\lambda \mu, \eta), $$
which follows from the fact that $\chi(\zeta, \lambda \eta) = \chi(\lambda\lambda^{-1}\zeta, \lambda \eta) = \chi(\lambda^{-1}\zeta,   \eta).$

Applying this equality with $\lambda = \htilde ^{-2}$ and $\eta = \htilde  \rho$ we obtain,
\begin{equation*} 
\begin{aligned}
 \sigma_{\gamma_\delta}(X, \htilde ^{-1}\rho)
 &= \htilde ^{-2d}\int \widehat{\chi}(\htilde ^{-2} \mu, \htilde  \rho){\gamma}_\delta(X-\mu, \htilde ^{-1}\rho)\,d\mu\\
 &= \int \widehat{\chi}(\mu' , \htilde  \rho){\gamma}_\delta(X-\htilde ^{2}\mu'  , \htilde ^{-1}\rho)
 \,d\mu'. 
 \end{aligned}
\end{equation*}
Using the fact that $\gamma$ and $\gamma_\delta$ are  homogeneous of order $\mez$ in $\xi$  we obtain 
$$ \htilde  \sigma_{\gamma_\delta}(X, \htilde ^{-1}\rho) 
= \int \widehat{\chi}(\mu  , \htilde  \rho){\gamma}_\delta(X-\htilde ^{2}\mu   , \htilde  \rho)\, d\mu
$$ 
and since $\htilde ^{-1}h=\htilde $, we obtain
$$
\htilde  a(X,\htilde ^{-1}\rho) =  \Big(\int \widehat{\chi}(\mu  , \htilde  \rho){\gamma}_\delta(X-\htilde ^{2}\mu,\htilde\rho)\,d\mu\Big) 
\varphi_1(\htilde \rho).
$$
It follows then from \eqref{Dv},\eqref{A},\eqref{A1} that the function $w_h$ defined in \eqref{htilde} 
is solution of the equation
\begin{equation}\label{Eqfin}
(\htilde \partial_t + \htilde c+iP)w_h(t,z) = \htilde F_j(t,X(t,\htilde z,h))
\end{equation}
where $c(t,z, \htilde) =  \mez S_{j\delta}(\text{div}V)(t, X(t,\htilde z))$ and 
\begin{equation}\label{operateur}
Pw(t,z) = (2\pi \htilde )^{-d}\iint e^{i{\htilde}^{-1}(z-z')\cdot\zeta}
\widetilde{p}(t,z,z',\zeta,\htilde )w(t,z') \, dz' \,d\zeta
\end{equation}
with
\begin{equation}\label{ptilde}
\begin{aligned}
 \widetilde{p}(t,z,z',\zeta,\htilde ) =    \int \widehat{\chi}\big(\mu  , &M(t,\htilde z,\htilde z')\zeta\big)
 {\gamma}_\delta\big (t, X(t,\htilde z)-\htilde ^{2}\mu, M(t,\htilde z,\htilde z')\zeta\big)d\mu \\
 & \times \varphi_1( M(t,\htilde z,\htilde z')\zeta)J(t, \htilde z,\htilde z').
 \end{aligned}
\end{equation}
We shall set in what follows
\begin{equation*}
p(t,z,\zeta,\htilde ) = \widetilde{p}(t,z,z,\zeta,\htilde ).
\end{equation*}
Since $M(t,\htilde z,\htilde z)= M_0(t,\htilde z)$ and $J(t,\htilde z,\htilde z) = 1$ we obtain
\begin{equation}\label{encorep}
p(t,z,\zeta,\htilde ) =    \int \widehat{\chi}\big(\mu  , M_0(t, \htilde z)\zeta\big)
{\gamma}_\delta\big (t, X(t,\htilde z)-\htilde ^{2}\mu, M_0(t,\htilde z) \zeta\big)\,d\mu \\
\cdot\varphi_1( M_0(t,\htilde z)\zeta).
\end{equation}

Notice that since the function $\chi$ is even with respect to its first variable the symbol $p$ is real.

Summing up we have proved that
\begin{equation}\label{semiclass}
   \htilde \big\{ \partial_t +\mez \big( S_{j\delta}(V)\cdot \nabla  + \nabla  \cdot S_{j\delta}(V)\big) +i  T_{\gamma_\delta} \big\}{U}(t,x) = \big(\htilde \partial_t + \htilde c + iP\big)w_h(t,z)
  \end{equation}
where 
\begin{equation}\label{c=}
x = X(t, \htilde z), \quad c(t,z, \htilde) =  \mez S_{j\delta}(\text{div}V)(t, X(t,\htilde z)), \quad w_h(t,z) = U(t, X(t, \htilde z)).
\end{equation}

 \subsection{ Estimates on the pseudo-differential symbol} 
 Let $I_{\htilde }:=[0,\htilde ^{ \delta}]$. We introduce   norms on the para-differential symbol $\gamma.$ For $k \in \xN$ we set
  \begin{equation}\label{norme:gamma}
\mathcal{N}_k(\gamma) = \sum_{\vert \beta  \vert \leq k} \Vert D_\xi^\beta \gamma\Vert_{L^\infty(I_{\htilde}\times \xR^d \times \mathcal{C}_3)}.  
\end{equation}

We   estimate now the derivatives of the  symbol of the operator  appearing in the right hand side of \eqref{semiclass}. Recall that $E_0$ is defined in \eqref{E0}.
\begin{lemm}\label{est:c}
For any $\alpha \in \xN^d$ there exists $\mathcal{F}_\alpha: \xR^+ \to \xR^+$ such that for $t\in I_{\htilde}$
$$\Vert (D_z^\alpha c)(t, \cdot)\Vert_{L^\infty(\xR^d)}  \leq \mathcal{F}_\alpha (\Vert V \Vert_{E_0}) \htilde^{\vert \alpha \vert (1-2 \delta)}\Vert V(t, \cdot) \Vert_{W^{1, \infty}(\xR^d)}.$$
\end{lemm}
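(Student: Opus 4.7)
The plan is to apply the chain rule (Faà di Bruno) to $c(t,z) = \tfrac{1}{2} S_{j\delta}(\operatorname{div} V)(t, X(t,\htilde z))$, exploiting the semiclassical rescaling $y = \htilde z$ and the estimates already available on the flow $X$ and on the frequency‑truncated coefficient $S_{j\delta}$. Throughout, the factor $\|V(t,\cdot)\|_{W^{1,\infty}}$ on the right will come from the $S_{j\delta}(\operatorname{div} V)$ factor, while the constants depending on $\|V\|_{E_0}$ will come from derivatives of $X$.

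Concretely, I would introduce $f(t,x) \defn \tfrac{1}{2} S_{j\delta}(\operatorname{div} V)(t,x)$ and $\Phi(t,z) \defn X(t,\htilde z)$, so that $c(t,z) = f(t,\Phi(t,z))$. Faà di Bruno's formula expands
\begin{equation*}
D_z^\alpha c(t,z) = \sum C_{\beta,L_i,K_i} \, (D_x^\beta f)(t,\Phi(t,z)) \prod_{i=1}^q \bigl(D_z^{L_i}\Phi(t,z)\bigr)^{K_i},
\end{equation*}
the sum ranging over $1\le q\le|\alpha|$, $|L_i|\ge 1$, $\sum_i |K_i| L_i = \alpha$, $\sum_i K_i = \beta$, $1\le|\beta|\le|\alpha|$. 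Since $\Phi(t,z) = X(t,\htilde z)$ depends on $z$ only through $\htilde z$, one has $D_z^{L_i}\Phi(t,z) = \htilde^{|L_i|}(\partial_y^{L_i} X)(t,\htilde z)$, which already accounts for the overall factor $\htilde^{\sum |K_i||L_i|} = \htilde^{|\alpha|}$.

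Next I would plug in the two ingredients. First, Proposition~\ref{estX} yields $\bigl|(\partial_y^{L_i} X)(t,\cdot)\bigr|_{L^\infty}\le \mathcal{F}(\|V\|_{E_0})$ if $|L_i|=1$ and $\bigl|(\partial_y^{L_i} X)(t,\cdot)\bigr|_{L^\infty}\le \mathcal{F}_{L_i}(\|V\|_{E_0})\, h^{-\delta(|L_i|-1)} |t|^{\mez}$ for $|L_i|\ge 2$; the factor $|t|^{\mez}\le 1$ is discarded. Second, the symbol $S_{j\delta} = \psi(h^\delta D_x)$ is a convolution by a kernel of $L^1$‑norm $O(1)$, so Bernstein's inequality for functions whose Fourier support lies in $\{|\xi|\le h^{-\delta}\}$ gives
\begin{equation*}
\|D_x^\beta S_{j\delta}(\operatorname{div} V)(t,\cdot)\|_{L^\infty} \le C h^{-\delta|\beta|}\, \|\operatorname{div} V(t,\cdot)\|_{L^\infty} \le C h^{-\delta|\beta|}\, \|V(t,\cdot)\|_{W^{1,\infty}}.
\end{equation*}

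Combining these in each term of the expansion, the total power of $h^{-\delta}$ picked up is
\begin{equation*}
\delta|\beta| + \delta\sum_i |K_i|(|L_i|-1) = \delta|\beta| + \delta(|\alpha|-|\beta|) = \delta|\alpha|,
\end{equation*}
and multiplying by the common factor $\htilde^{|\alpha|} = h^{|\alpha|/2}$ gives the scaling $h^{|\alpha|(\mez-\delta)} = \htilde^{|\alpha|(1-2\delta)}$, as claimed. The only bookkeeping subtlety—and the place where I would be most careful—is verifying that the Faà di Bruno constraint $\sum |K_i|(|L_i|-1) = |\alpha|-|\beta|$ really does absorb all the extra $h^{-\delta}$ losses coming from the higher derivatives of $X$; otherwise the proof is a direct combination of Proposition~\ref{estX}(i)--(ii) with the Bernstein bound above.
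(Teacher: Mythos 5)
Your proposal is correct and follows essentially the same route as the paper: Fa\`a di Bruno applied to $c(t,z)=\tfrac12 S_{j\delta}(\cnx V)(t,X(t,\htilde z))$, the factor $\htilde^{|\alpha|}$ from the rescaling $y=\htilde z$, Proposition~\ref{estX} for the derivatives of $X$, and the Bernstein-type bound $h^{-\delta|\beta|}\Vert V\Vert_{W^{1,\infty}}$ for $D_x^\beta S_{j\delta}(\cnx V)$, with the exponent bookkeeping $\sum_i|K_i|(|L_i|-1)=|\alpha|-|\beta|$ following directly from the constraints $\sum_i|K_i||L_i|=|\alpha|$ and $\sum_i|K_i|=|\beta|$. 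The "subtlety" you flag is thus automatic, and the combination indeed yields $\htilde^{|\alpha|(1-2\delta)}$ exactly as in the paper.
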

\begin{proof}
By the Faa-di-Bruno formula $D_z^\alpha c$ is a finite linear combination of terms of the form 
\begin{equation}\label{(1)=}
 (1) = \htilde^{\vert \alpha \vert} D_x^a \big[S_{j\delta}(\text{div}V)\big](t, X(t,\htilde z)) \prod_{j=1}^r \Big(\big(D_x^{l_j}X\big) (t,\htilde z)\Big)^{p_j} 
 \end{equation}
where $1\leq \vert a \vert \leq \vert \alpha \vert, \quad \vert l_j \vert \geq 1, \quad \sum_{j=1}^r \vert p_j\vert l_j = \alpha, \quad \sum_{j=1}^r p_j = a.$ Now for fixed $t$ we have 
\begin{equation}\label{est:Sjdelta}
 \vert D_x^a \big[S_{j\delta}(\text{div}V)\big](t, \cdot)\vert \leq C_{\alpha} \htilde^{-2 \delta \vert a \vert}\Vert V(t, \cdot) \Vert_{W^{1, \infty}(\xR^d)}
 \end{equation}
and by Proposition \ref{estX}
$$\vert (D_x^{l_j}X\big) (t, \cdot)\vert \leq   \mathcal{F}_\alpha (\Vert V \Vert_{E_0}) \htilde^{-2\delta (\vert l_j \vert -1)}.$$
Then the product appearing in the term $(1)$ is bounded by $ \mathcal{F}_\alpha (\Vert V \Vert_{E_0}) \htilde ^M$  where $ M= -2 \delta\sum_{j=1}^r \vert p_j \vert  (\vert l_j \vert -1)  = -2 \delta \vert \alpha \vert + 2 \delta \vert a \vert$. Then the lemma follows from \eqref{(1)=} and \eqref{est:Sjdelta}.

\end{proof}

\begin{lemm}\label{derp}
For every $k \in \xN$   there exist $ \mathcal{F}_{k}: \xR^+ \to \xR^+$ such that,  
 $$\vert  D_z ^\alpha D_\zeta^\beta p(t,z,\zeta,\htilde )\vert \leq  \mathcal{F}_{ k}(\Vert V \Vert_{E_0})\sum_{\vert a \vert \leq   k}  \sup_{\xi \in \mathcal{C}_3} \Vert D^a_\xi \gamma(t,\cdot,\xi)\Vert_{L^{\infty}(\xR^d)} \htilde^{\vert \alpha \vert(1-2 \delta)} 
 $$
 for   all $\vert \alpha\vert + \vert \beta \vert \leq k$  and all $(t,z, \zeta,\htilde) \in I_{\htilde }\times \xR^d \times  \mathcal{C}_1\times (0, \htilde _0]$.
 \end{lemm}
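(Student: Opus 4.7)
My plan is to write $p$ as a composition and apply Faà di Bruno, combining the smoothing bounds of Lemma~\ref{estgamma-delta} with the diffeomorphism estimates of Proposition~\ref{estX}. Set
\[
G(t,y,\xi) \defn \varphi_1(\xi) \int_{\xR^d} \widehat{\chi}(\mu,\xi)\, \gamma_\delta(t, y-\htilde^2\mu, \xi)\, d\mu,
\]
so that $p(t,z,\zeta,\htilde) = G(t, X(t,\htilde z), M_0(t,\htilde z)\zeta)$.

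The first step would be to bound mixed derivatives of $G$ itself. Since $\zeta \in \mathcal{C}_1$ and, by Proposition~\ref{estX}(i), $M_0(t,\htilde z) = \mathrm{Id} + O(\htilde^{\delta/2})$ uniformly on $I_{\htilde}$, the argument $\xi = M_0\zeta$ stays in the slightly larger annulus $\mathcal{C}_3$. Combining the rapid decay in $\mu$ of $D_\xi^c \widehat{\chi}(\mu,\xi)$ for $\xi$ bounded (Lemma~\ref{l0.6}) with Lemma~\ref{estgamma-delta}(i), I would obtain
\[
\big|\partial_y^a \partial_\xi^b G(t,y,\xi)\big| \leq C_{a,b}\,\htilde^{-2\delta|a|}\sum_{|d|\leq|b|}\sup_{\xi\in\mathcal{C}_3}\|D_\xi^d \gamma(t,\cdot,\xi)\|_{L^\infty(\xR^d)}.
\]

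Next I would expand $D_z^\alpha D_\zeta^\beta p$ via Faà di Bruno. The $\zeta$-derivatives are harmless: each $\partial_\zeta$ adds a bounded factor $M_0$ and one extra $\partial_\xi$ on $G$, with no $\htilde$-power. For the $z$-derivatives, set $Y(z)\defn X(t,\htilde z)$ and $N(z)\defn M_0(t,\htilde z)$. Proposition~\ref{estX}(ii) gives $|\partial_z^l Y(z)| \leq \mathcal{F}(\|V\|_{E_0})\htilde^{l(1-2\delta)+2\delta}$ for $l\ge 1$, while a straightforward induction from $M_0=(DX)^{-T}$ yields $|\partial_x^l M_0| \leq \mathcal{F}\,\htilde^{-2\delta l}$ and hence $|\partial_z^l N(z)|\leq \mathcal{F}\,\htilde^{l(1-2\delta)}$ for $l\geq 1$. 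A generic Faà di Bruno monomial arising from $D_z^\alpha$ distributes $\alpha_Y+\alpha_N=|\alpha|$ derivatives between the two paths, with $a_Y$ and $a_N$ counting the derivatives of $G$ in the $y$- and $\xi$-slots, respectively, and its prefactor reads
\[
\htilde^{\alpha_Y(1-2\delta)+2\delta a_Y}\cdot \htilde^{\alpha_N(1-2\delta)} = \htilde^{|\alpha|(1-2\delta)+2\delta a_Y}.
\]

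Multiplying by the bound on $\partial_y^{a_Y}\partial_\xi^{a_N+|\beta|}G$ from the first step, the factor $\htilde^{2\delta a_Y}$ cancels exactly the loss $\htilde^{-2\delta a_Y}$ produced by the $x$-derivatives of $\gamma_\delta$, leaving the target prefactor $\htilde^{|\alpha|(1-2\delta)}$. The total number of $\xi$-derivatives on $\gamma$ is $a_N+|\beta|\leq |\alpha|+|\beta|\leq k$, matching the sum in the statement. The main---though routine---obstacle is precisely this combinatorial cancellation: it relies on the asymmetry that each $D_z$-derivative reaching the $y$-slot of $G$ gets a bonus $\htilde^{2\delta}$ from Proposition~\ref{estX}(ii) (because $\partial_x X$ is bounded, not of size $\htilde^{-2\delta}$), which exactly absorbs the $\htilde^{-2\delta}$ loss from the corresponding $\partial_x$-derivative of $\gamma_\delta$ supplied by Lemma~\ref{estgamma-delta}(i). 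Without this matching, no uniform $\htilde$-power could be obtained.
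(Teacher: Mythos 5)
Your proposal is correct and follows essentially the same route as the paper: the paper likewise writes $p$ as the composition of the frozen symbol (its $G_1G_2G_3$ decomposition of the integrand $F$) with $X(t,\htilde z)$ and $M_0(t,\htilde z)$, and runs Fa\`a di Bruno with \eqref{est2}, \eqref{est3}, Lemma~\ref{l0.6} and the bound on $D^\alpha m_{ij}$. The only cosmetic difference is that the paper proves the bound $\htilde^{-2\delta|\alpha|}$ in the unscaled variable and recovers $\htilde^{|\alpha|(1-2\delta)}$ from the chain-rule factor $\htilde^{|\alpha|}$ of the dilation $z\mapsto\htilde z$, whereas you track the same cancellation (the $+1$ gain per derivative of $X$ absorbing the $\htilde^{-2\delta}$ loss per $x$-derivative of $\gamma_\delta$) directly in the scaled variable.
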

 \begin{coro}\label{derp'}
For every $k \in \xN$   there exist $ \mathcal{F}_{k}: \xR^+ \to \xR^+$ such that, 
$$\int_0^s \vert  D_z ^\alpha D_\zeta^\beta p(t,z,\zeta,\htilde )\vert \, dt \leq   \mathcal{F}_{ k}(\Vert V \Vert_{E_0})\,  \mathcal{N}_k(\gamma) \, \htilde^{\vert \alpha \vert(1-2 \delta) + \delta} 
 $$
  for all $\vert \alpha\vert + \vert \beta \vert \leq k$  and all $(s,z, \zeta,\htilde) \in I_{\htilde }\times \xR^d \times  \mathcal{C}_1\times (0, \htilde _0]$.
 \end{coro}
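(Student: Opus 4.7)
The plan is to derive Corollary \ref{derp'} directly from Lemma \ref{derp} by an elementary time integration. The only additional input needed is the fact that the time interval $I_{\htilde} = [0,\htilde^\delta]$ has length at most $\htilde^\delta$, which is precisely where the extra gain of $\htilde^\delta$ in the exponent comes from.

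First, I would fix $(z,\zeta,\htilde)$ and apply Lemma \ref{derp} pointwise in $t$. This yields, for every $t \in I_{\htilde}$ and every $\vert\alpha\vert + \vert\beta\vert \le k$,
\begin{equation*}
\vert D_z^\alpha D_\zeta^\beta p(t,z,\zeta,\htilde)\vert \le \mathcal{F}_k(\Vert V\Vert_{E_0})\,\Bigl(\sum_{\vert a\vert\le k}\sup_{\xi\in \mathcal{C}_3}\Vert D_\xi^a \gamma(t,\cdot,\xi)\Vert_{L^\infty(\xR^d)}\Bigr)\,\htilde^{\vert\alpha\vert(1-2\delta)}.
\end{equation*}
The factor depending on $t$ is the only $t$-dependent quantity on the right-hand side, since $\mathcal{F}_k(\Vert V\Vert_{E_0})$ and $\htilde^{\vert\alpha\vert(1-2\delta)}$ are $t$-independent.

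Next, by the very definition of $\mathcal{N}_k(\gamma)$ in \eqref{norme:gamma}, the $L^\infty$-norm is taken over $I_{\htilde} \times \xR^d \times \mathcal{C}_3$, so one has the uniform bound
\begin{equation*}
\sum_{\vert a\vert \le k}\sup_{\xi\in \mathcal{C}_3}\Vert D_\xi^a \gamma(t,\cdot,\xi)\Vert_{L^\infty(\xR^d)} \le \mathcal{N}_k(\gamma),\qquad \forall t \in I_{\htilde}.
\end{equation*}
Integrating the pointwise bound from $0$ to $s$ and using this uniform bound gives
\begin{equation*}
\int_0^s \vert D_z^\alpha D_\zeta^\beta p(t,z,\zeta,\htilde)\vert\,dt \le \mathcal{F}_k(\Vert V\Vert_{E_0})\,\mathcal{N}_k(\gamma)\,\htilde^{\vert\alpha\vert(1-2\delta)}\cdot s.
\end{equation*}

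Finally, since $s \in I_{\htilde} = [0,\htilde^\delta]$ we have $s \le \htilde^\delta$, which supplies the additional factor $\htilde^\delta$ and produces the claimed exponent $\vert\alpha\vert(1-2\delta)+\delta$. There is no substantive obstacle here: the corollary is merely a trivial time-integration of the pointwise estimate in Lemma \ref{derp}, and the only step worth naming is the identification of the length $\htilde^\delta$ of $I_{\htilde}$ as the source of the improvement in the power of $\htilde$.
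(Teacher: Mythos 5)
Your proof is correct and is precisely the argument the paper intends (the corollary is stated without proof as an immediate consequence of Lemma \ref{derp}): apply the pointwise bound of the lemma, dominate the $t$-dependent factor $\sum_{\vert a\vert\le k}\sup_{\xi\in\mathcal{C}_3}\Vert D_\xi^a\gamma(t,\cdot,\xi)\Vert_{L^\infty}$ by $\mathcal{N}_k(\gamma)$ uniformly on $I_{\htilde}$, and integrate over $[0,s]$ with $s\le\htilde^\delta$ to pick up the extra factor $\htilde^\delta$.
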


 \begin{proof}[Proof of Lemma \ref{derp}]
  Here $t$ is considered as a  fixed parameter which will be skipped, keeping in mind that the estimates should be uniform with respect to $t \in [0, \htilde^\delta].$
On the other hand we recall that, by Proposition \ref{estX} and Lemma \ref{estgamma-delta}, we have (since $h= \widetilde{h}^2$)
  \begin{align}
 & \vert D_x^\alpha X(x) \vert \leq  \mathcal{F}_\alpha(\Vert V \Vert_{E_0})\, \widetilde{h}^{-2 \delta(\vert \alpha \vert -1)}, \quad \vert \alpha \vert \geq 1, \beta \in \xN^d \label{est2} \\
 & \vert  D_x^\alpha D_\xi^\beta \gamma_\delta( x,\xi)\vert \leq C_{\alpha, \beta}\,\htilde^{-2\delta \vert \alpha \vert}  \Vert D_\xi^\beta \gamma( \cdot, \xi) \Vert_{L^\infty(\xR^d)}, \quad \alpha, \beta \in \xN^d.\label{est3} 
 \end{align}
Set
$$
F(\mu, z, \zeta, \htilde) = \widehat{\chi}\big(\mu  , M_0(z)\zeta\big) \varphi_1( M_0(z)\zeta){\gamma}_\delta\big ( X(z)-\htilde ^{2}\mu, M_0(z) \zeta\big),  
$$
the lemma will follow immediately from the fact that for every $N \in \xN$ we have
\begin{equation}\label{est:F}
  \vert  D_z ^\alpha D_\zeta^\beta F(\mu, z, \zeta, \htilde) \vert \leq \mathcal{F}_{ \alpha,\beta}(\Vert V \Vert_{E_0})\sum_{\vert a \vert \leq   \vert \alpha\vert+ \vert \beta \vert}  \sup_{\xi \in \mathcal{C}_3} \Vert D^a_\xi \gamma(\cdot,\xi)\Vert_{L^{\infty}(\xR^d)} \htilde^{  -2\delta \vert\alpha \vert }C_N \langle \mu \rangle^{-N}.
\end{equation}
If we call $ m_{ij}(z) $ the entries of the matrix $M_0(z)$ we see easily that $ D_\zeta^\beta F$ is a finite linear combination of terms of the form 
\begin{equation}\label{DbetaF}
  (D_\xi^{\beta_1}(\widehat{\chi} \varphi_1)) (\mu, M_0(z)\zeta) \cdot (D_\xi^{\beta_2}\gamma_\delta)(X(z)-\htilde ^{2}\mu, M_0(z) \zeta)\cdot P_{\vert \beta\vert}(m_{ij}(z)) := G_1 \cdot G_2 \cdot G_3 
  \end{equation}
where $P_{\vert \beta \vert}$ is a polynomial of order $\vert \beta \vert.$

The estimate \eqref{est:F} will follow from the following ones.
\begin{align} 
&\vert D^\alpha_z G_1  \vert \leq \mathcal{F}_{\alpha, \beta} (\Vert V \Vert_{E_0})\widetilde{h}^{- 2\delta \vert \alpha \vert} C_N \langle \mu \rangle^{-N},\label{est:G1}\\ 
 &\vert D^\alpha_z G_2  \vert \leq \mathcal{F}_{\alpha, \beta} (\Vert V \Vert_{E_0}))\sum_{\vert a \vert \leq   \vert \alpha\vert+ \vert \beta \vert}  \sup_{\xi \in \mathcal{C}_3} \Vert D^a_\xi \gamma(\cdot,\xi)\Vert_{L^{\infty}(\xR^d)} \htilde^{-2\delta \vert\alpha \vert } \label{est:G2}\\
 &\vert D^\alpha_z G_3  \vert \leq \mathcal{F}_{\alpha, \beta} (\Vert V \Vert_{E_0})\widetilde{h}^{- 2\delta \vert \alpha \vert}. \label{est:G3}
\end{align}
 Using the  equality $^t{}\! \big(\frac{\partial X}{\partial z}\big)( z) M_0(  z) \zeta = \zeta,$  Proposition \ref{estX} and an induction we see that
\begin{equation}\label{est1}
\vert D^\alpha_z  m_{ij}(z) \vert \leq \mathcal{F}_\alpha (\Vert V \Vert_{E_0})\widetilde{h}^{- 2\delta \vert \alpha \vert    + \frac{\delta}{2}} 
\end{equation}
from which  \eqref{est:G3} follows since $G_3$ is  polynomial.
Now according to the Faa-di-Bruno formula $D_z^\alpha G_1$ is a finite linear combination of terms of the form 
$$ D_\xi^{\beta_1 + b}(\widehat{\chi} \varphi_1)) (\mu, M_0(z)\zeta)\prod_{j=1}^r \Big(D_z^{l_j}M_0(z)\zeta)\Big)^{p_j}, 1\leq \vert b \vert \leq \vert \alpha \vert, \vert l_j \vert \geq 1, \sum_{j=1}^r \vert p_j \vert l_j = \alpha, \sum _{j=1}^r p_j =b. 
$$
 Then \eqref{est:G1} follows immediately from Lemma \ref{l0.6} and \eqref{est1}. By the same formula we see that $D_z^\alpha G_2$ is a linear combination of terms of the form
 $$(D^a_z D_\xi^{\beta_2+b}\gamma_\delta)(X(z)-\htilde ^{2}\mu, M_0(z) \zeta)\prod_{j=1}^r \Big(D_z^{l_j}X(z)\Big)^{p_j}\Big( D_z^{l_j}M_0(z)\zeta\Big)^{q_j}$$
 where $1 \leq \vert a \vert + \vert b \vert \leq \vert \alpha \vert, \quad \sum_{j=1}^r (\vert p_j \vert + \vert q_j \vert) l_j = \alpha, \quad \sum_{j=1}^r p_j = a,  \quad\sum_{j=1}^r q_j = b.$
 Then \eqref{est:G2} follows  \eqref{est2}, \eqref{est3} and \eqref{est1}. The proof is complete.
      \end{proof}
\begin{rema}\label{rem:est:ptilde}
By exactly the same method we can show that we have the estimate
\begin{equation}\label{est:ptilde}
 \vert  D_z ^{\alpha_1}   D_{z'} ^{\alpha_2}  D_\zeta^\beta \widetilde{p}(t,z,z',\zeta,\htilde )\vert \leq  \mathcal{F}_{ k}(\Vert V \Vert_{E_0} \, \mathcal{N}_k(\gamma)) \, \htilde^{ (\vert \alpha_1 \vert + \vert \alpha_2\vert) (1-2 \delta)} 
\end{equation} 
 for   all $\vert \alpha_1\vert + \vert \alpha_2\vert +\vert \beta \vert \leq k$  and all $(t,z,z', \zeta,\htilde) \in I_{\htilde }\times \xR^d  \times \xR^d \times  \mathcal{C}_1\times (0, \htilde _0]$.
  \end{rema}
 
\begin{prop}\label{Hessp}
There exist $T_0>0, c_0>0, \htilde _0>0$ such that
$$
\la \det \Big( \frac{\partial^{2}p}{\partial \zeta_j \partial \zeta_k} (t,z,\zeta,\htilde ) \Big) \ra \geq c_0
$$
for any $t\in [0,T_0], z\in \xR^d, \zeta \in \mathcal{C}_0= \{\mez \leq \vert \zeta \vert \leq 2\}, 0<\htilde  \leq \htilde _0.$  
\end{prop}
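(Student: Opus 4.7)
The plan is to split $p$ into a principal part $p_0$ whose Hessian in $\zeta$ is the pull-back of $\Hess_\xi\gamma_\delta$ through a near-identity linear map, plus a remainder whose Hessian is uniformly small. Writing $X=X(t,\htilde z)$ and $M_0=M_0(t,\htilde z)$, I decompose
\begin{equation*}
p = p_0 + p_1,\qquad p_0(t,z,\zeta,\htilde)= \kappa\,\gamma_\delta(t,X,M_0\zeta)\,\varphi_1(M_0\zeta),
\end{equation*}
where $\kappa=\int\widehat{\chi}(\mu,M_0\zeta)\,d\mu$ is, by Fourier inversion and \eqref{chi}(iii), a fixed positive constant independent of $(t,z,\zeta,\htilde)$, and $p_1$ is defined by difference:
\begin{equation*}
p_1 = \varphi_1(M_0\zeta)\int\widehat{\chi}(\mu,M_0\zeta)\bigl[\gamma_\delta(t,X-\htilde^{2}\mu,M_0\zeta)-\gamma_\delta(t,X,M_0\zeta)\bigr]d\mu.
\end{equation*}

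By Proposition~\ref{estX}(i), one may choose $T_0$ so small that $\|M_0-I\|_{L^\infty(\xR^d)}\le 1/10$. Then for $\zeta\in\mathcal{C}_0=\{\mez\le|\zeta|\le 2\}$, the vector $M_0\zeta$ remains in a fixed compact subset of $\{\frac{1}{3}\le|\xi|\le 3\}$, so $\varphi_1(M_0\zeta)\equiv 1$ by \eqref{phi1} and Proposition~\ref{hess} applies at $M_0\zeta$. A direct computation gives
\begin{equation*}
\Hess_\zeta p_0 = \kappa\,M_0^{T}\,(\Hess_\xi\gamma_\delta)(t,X,M_0\zeta)\,M_0,
\end{equation*}
hence $|\det\Hess_\zeta p_0|=\kappa^{d}(\det M_0)^{2}|\det\Hess_\xi\gamma_\delta(t,X,M_0\zeta)|$. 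Proposition~\ref{hess} yields the uniform lower bound on $|\det\Hess_\xi\gamma_\delta|$ and $|\det M_0|$ stays close to $1$, so $|\det\Hess_\zeta p_0|\ge 2c_1$ for some $c_1>0$ independent of $(t,z,\zeta,\htilde)$.

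For $p_1$, I Taylor-expand
\begin{equation*}
\gamma_\delta(t,X-\htilde^{2}\mu,M_0\zeta)-\gamma_\delta(t,X,M_0\zeta)=-\htilde^{2}\int_0^1\mu\cdot(\nabla_x\gamma_\delta)(t,X-s\htilde^{2}\mu,M_0\zeta)\,ds,
\end{equation*}
the prefactor $\htilde^{2}$ providing the essential smallness. Applying $\partial_{\zeta_j}\partial_{\zeta_k}$ and using Fa\`a di Bruno to treat the inner composition $M_0\zeta$, each resulting term is a product of: derivatives of $\widehat{\chi}(\mu,\cdot)$ (rapidly decaying in $\mu$ by Lemma~\ref{l0.6}), of $\nabla_x\gamma_\delta$ (controlled by $\mathcal{N}_k(\gamma)\,\htilde^{-2\delta}$ via Lemma~\ref{estgamma-delta}(i) with $h=\htilde^{2}$), of $\varphi_1$, and entries of $M_0$ (all bounded). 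The $\mu$-integral is absolutely convergent, and summation yields
\begin{equation*}
\|\Hess_\zeta p_1\|_{L^\infty}\le \mathcal{F}(\|V\|_{E_0})\,\mathcal{N}_k(\gamma)\,\htilde^{\,2-2\delta},
\end{equation*}
arbitrarily small for $\htilde\le\htilde_0$ small enough, since $\delta<1$.

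The main obstacle is this final error estimate: one must verify carefully that every way of distributing the two $\zeta$-derivatives across the three $\zeta$-dependent factors composed with $M_0\zeta$ yields a term that remains integrable in $\mu$ and retains the full $\htilde^{2-2\delta}$ gain from the Taylor expansion. Once $\|\Hess_\zeta p-\Hess_\zeta p_0\|_{L^\infty}=O(\htilde^{2-2\delta})$, Lipschitz continuity of $\det$ on a bounded set gives $|\det\Hess_\zeta p|\ge c_1$ for all $\htilde\le\htilde_0$, uniformly in $(t,z,\zeta)\in[0,T_0]\times\xR^d\times\mathcal{C}_0$, which is the claim.
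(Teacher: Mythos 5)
Your proof is correct and follows essentially the same route as the paper: both isolate the principal term $(2\pi)^d\,{}^t\!M_0\,(\Hess_\xi\gamma_\delta)\,M_0$ via the moment identity $\int\partial_\zeta^\alpha\widehat{\chi}(\mu,\rho)\,d\mu=(2\pi)^d(\partial_\zeta^\alpha\chi)(0,\rho)$ and treat the contribution of $\gamma_\delta(X-\htilde^2\mu,\cdot)-\gamma_\delta(X,\cdot)$ as a uniformly small perturbation, concluding with Proposition~\ref{hess}. The only (harmless) difference is the smallness estimate for the remainder: you Taylor-expand and pay $\htilde^{-2\delta}$ for $\nabla_x\gamma_\delta$ via Lemma~\ref{estgamma-delta}, getting $O(\htilde^{2-2\delta})$, whereas the paper uses only the $W^{s_0,\infty}$ H\"older regularity of $\gamma_\delta$ in $x$ to get $O(\htilde^{2s_0})$; both exponents are positive, so both work.
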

\begin{proof}
By Proposition \ref{estX}, \eqref{notation} and  \eqref{phi1} we have $\varphi_1( M_0(t,\htilde z)\zeta) =1.$ 
Let us set
$$
M_0(t,\htilde z) =(m_{ij}) \text{ and }
M_0(t,\htilde z) \zeta= \rho.
$$
Then we have,
$$
\frac{\partial^{2}p}{\partial \zeta_j \partial \zeta_k} (t,z,\zeta,\htilde ) = A_1 +A_2+A_3,
$$
where
\begin{equation}
\begin{aligned}
A_1&=\sum_{l,r=1}^d\int  \frac{\partial^{2}\widehat{\chi}}{\partial \zeta_l \partial\zeta_r}(\mu,\rho) 
m_{lj}m_{rk} {\gamma}_\delta\big (t, X(t,\htilde z)-\htilde ^{2}\mu, \rho\big)\,d\mu,\\
A_2&= 2\sum_{l,r=1}^d\int  \frac{\partial\widehat{\chi}}{\partial \zeta_l }(\mu,\rho)  
\frac{\partial {\gamma_\delta}}{\partial \zeta_r}
\big (t, X(t,\htilde z)-\htilde ^{2}\mu, \rho\big)m_{lj}m_{rk}\,d\mu,\\
A_3&= \sum_{l,r=1}^d\int \widehat{\chi}(\mu, \rho) \frac{\partial^{2}\gamma_\delta}{\partial \zeta_l \partial\zeta_r}
\big (t, X(t,\htilde z)-\htilde ^{2}\mu, \rho\big)m_{lj}m_{rk} \, d\mu.
\end{aligned}
\end{equation}
Now we notice that by \eqref{chi} we have 
$$\int (\partial_\zeta^\alpha \widehat{\chi})(\mu, \rho) d\mu = (2\pi)^d  (\partial_\zeta^\alpha  {\chi})(0,\rho)= \left \{ \begin{array}{ll} 0, & \alpha \neq 0\\   (2\pi)^d, & \alpha =0. \end{array} \right.$$
Using this remark we can write
$$
 A_1 =\sum_{l,r=1}^d\int  \frac{\partial^{2}\widehat{\chi}}{\partial \zeta_l \partial \zeta_r}(\mu,\rho) 
 m_{lj}m_{rk}
 \Big[{\gamma}_\delta\big (t, X(t,\htilde z)-\htilde ^{2}\mu, \rho\big) - {\gamma}_\delta\big (t, X(t,\htilde z) , \rho\big)\Big]\, d\mu.
$$
Now recall (see \eqref{EFG} and Lemma \ref{estgamma}) that for bounded $\vert \zeta\vert $ (considered as a parameter) 
we have for all $\alpha\in \xN^d$
$$
\partial_\zeta ^\alpha \gamma_\delta \in L^\infty(I; H^{s-\mez}(\xR^d))
\subset L^\infty (I; W^{s_0,\infty}(\xR^d)), \quad s_0>0,
$$
uniformly in~$\zeta$. 
Since, by Proposition~\ref{estX},  $\Vert M_0(t, \htilde z)\Vert$ is uniformly bounded we can write
$$
\vert A_1 \vert \leq C \htilde ^{2s_0}\sum_{l,r=1}^d\int \vert \mu \vert^{s_0}
\la  \frac{\partial^{2}\widehat{\chi}}{\partial \zeta_l \partial \zeta_r}(\mu,\rho) \ra d\mu,
$$
the integral in the right hand side being bounded by Lemma~\ref{l0.6}.
 
By exactly the same argument we see that we have the following   inequality
$$
\vert A_2 \vert \leq C\htilde ^{2s_0}.
$$
Moreover one can write
$$
A_3 =  \sum_{l,r=1}^d\frac{\partial^{2}\gamma_\delta}{\partial \zeta_l \partial \zeta_r}\big (t, X(t,\htilde z) , \rho\big)
m_{lj}m_{rk} \Big(\int \widehat{\chi}(\mu, \rho) \, d\mu\Big) + \mathcal{O}(\htilde ^{2s_0}).
$$
Gathering the estimates we see that
$$
(2\pi)^{-d} \Big(\frac{\partial^{2}p}{\partial \zeta_j \partial \zeta_k} (t,z,\zeta,\htilde )\Big) 
=   {}\!^t\!M_0(t,\htilde z)\text{Hess}_\zeta (\gamma_\delta)(t,z,\zeta,\htilde )M_0(t, \htilde z) + \mathcal{O}(\htilde ^{2s_0}).
$$
Then our claim follows from Proposition~\ref{hess} 
and Proposition~\ref{estX} if $\htilde _0 $ is small enough.
\end{proof}

\section{The parametrix}

Our aim is to construct a parametrix for the operator 
$L = \htilde \partial_t +\htilde c+ iP$ on a time interval of size $\htilde ^\delta$  where $\delta = \frac{2}{3}.$  This parametrix will be of the following form
$$
\mathcal{K}v(t,z) = (2\pi\htilde )^{-d} \iint e^{i{\htilde}^{-1}(\phi(t,z,\xi,\htilde ) -y\cdot \xi)} \widetilde{b}(t,z,y,\xi,\htilde )v(y)\, dyd\xi.
$$
Here $\phi$  is a real valued phase such that 
$\phi\arrowvert _{t=0} = z\cdot \xi,$ $\widetilde{b} $ is  of the form
\begin{equation}\label{tildeb}
\widetilde{b}(t,z,y,\xi,\htilde) = b(t,z,\xi,\htilde ) \Psi_0\Big( \frac{\partial \phi}{\partial \xi}(t,z,\xi,\htilde ) -y\Big ) 
\end{equation}
where 
$b\arrowvert_{t=0} = \chi(\xi), \chi \in C_0^\infty(\xR^d\setminus\{0\})$ and 
$\Psi_0\in C_0^\infty(\xR^d)$ is such that $ \Psi_0(t) = 1$ if $\vert t \vert \leq 1$.
\subsection{Preliminaries}

An important step in this construction is to compute the expression
\begin{equation}\label{J}
J (t,z,y,\xi,\htilde) = e^{-i{\htilde}^{-1}\phi(t,z,\xi,\htilde )}P(t,z,D_z)\big(e^{i{\htilde}^{-1}\phi(t,\cdot,\xi,\htilde )}\widetilde{b}(t,z,y,\xi,\htilde) \big).
\end{equation}
In this computation since $(t,y, \xi,\htilde )$ are fixed we shall 
skip them and   write $\phi = \phi(z), \widetilde{b} = \widetilde{b} (z).$

Using \eqref{operateur} we obtain
$$
J= (2\pi\htilde )^{-d}\iint e^{i{\htilde}^{-1}(\phi(z') - \phi(z) 
+(z-z')\cdot \zeta)} \widetilde{p} (z,z',\zeta)\widetilde{b} (z') \, dz\, 'd\zeta.
$$
Then we write
\begin{equation}\label{teta}
\phi(z') - \phi(z) = \theta (z,z')\cdot(z'-z), \quad  \theta (z,z') 
= \int_0^1 \frac{\partial \phi}{\partial z}(\lambda z + (1-\lambda) z') \, d \lambda.
\end{equation}
Using this equality and setting $\zeta - \theta (z,z') = \eta$ in the integral we obtain
$$
J=(2\pi\htilde )^{-d}\iint e^{i{\htilde}^{-1}(z-z')\cdot\eta}\widetilde{p}(z,z',\eta + \theta(z,z')) \widetilde{b} (z') \, dz' d\eta.
$$
The phase that we will obtain will be uniformly bounded, 
say $\vert \frac{\partial \phi}{\partial z}\vert \leq C_0.$
It also can be seen that, due to the cut-off $\varphi_1$ in the expression of $\widetilde{p}$ and to Proposition~\ref{estX},  
we also have $\vert \eta +\theta(z,z') \vert \leq C_0$.  
Therefore $\vert \eta \vert \leq 2C_0.$ Let $\kappa \in C_0^\infty(\xR^d)$ be such that 
$\kappa(\eta) = 1$ if $\vert \eta \vert \leq 2C_0$. Then we can write
$$
J =(2\pi\htilde )^{-d}\iint e^{i{\htilde}^{-1}(z-z')\cdot\eta}\kappa(\eta)
\widetilde{p}(z,z',\eta + \theta(z,z')) \widetilde{b} (z') \, dz' \, d\eta.
$$
By the Taylor formula we can write
\begin{equation*}
\begin{aligned}
 &\widetilde{p}(z,z',\eta +\theta(z,z')) = 
 \sum_{\vert \alpha \vert \leq {N-1}} \frac{1}{\alpha!}(\partial^\alpha_\eta \widetilde{p})(z,z',\theta(z,z'))\eta^\alpha +r_N \\
 &r_N = \sum_{\vert \alpha \vert =N} \frac{N}{\alpha!}
 \int_0^1 (1-\lambda)^{N-1}(\partial^\alpha_\eta \widetilde{p})(z,z',\eta+ \lambda\theta(z,z'))\eta^\alpha \, d\lambda.
\end{aligned}
\end{equation*}
It follows that
\begin{equation}\label{J=}
\left\{
\begin{aligned}
&J =J_N + R_N\\
&J_N =  \sum_{\vert \alpha \vert \leq {N-1}} \frac{(2\pi\htilde )^{-d}}{\alpha!}
\iint e^{i{\htilde}^{-1}(z-z')\cdot\eta}\kappa(\eta)(\partial^\alpha_\eta \widetilde{p})(z,z',\theta(z,z'))\eta^\alpha \widetilde{b} (z') \, dz' d\eta \\
&R_N = (2\pi \htilde )^{-d} \iint e^{i{\htilde}^{-1}(z-z')\cdot\eta} 
\kappa(\eta) r_N(z,z',\eta) \widetilde{b} (z') \, dz' d\eta.
 \end{aligned}
 \right.
\end{equation}
Using the fact that $\eta^\alpha e^{i{\htilde}^{-1}(z-z')\cdot\eta}= (-\htilde D_{z'})^\alpha e^{i{\htilde}^{-1} (z-z')\cdot\eta}$ and integrating by parts in the integral with respect to $z$ we get
$$
J_N =  (2\pi\htilde )^{-d}\sum_{\vert \alpha \vert \leq {N-1}}\frac{\htilde ^{\vert \alpha \vert}}{\alpha!}
\iint e^{i{\htilde}^{-1}(z-z')\cdot\eta}\kappa(\eta)D_{z'}^\alpha
\big[(\partial^\alpha_\eta \widetilde{p})(z,z',\theta(z,z')) \widetilde{b} (z')\big] \, dz' d\eta.
$$
Therefore we can write
$$
J_N =  (2\pi\htilde )^{-d}\sum_{\vert \alpha \vert 
\leq {N-1}}\frac{\htilde ^{\vert \alpha \vert}}{\alpha!}
\int \widehat{\kappa}\big(\frac{z'-z}{\htilde }\big)
D_{z'}^\alpha\big[(\partial^\alpha_\eta \widetilde{p})(z,z',\theta(z,z')) \widetilde{b} (z')\big] \, dz'.
$$ 
Let us set
\begin{equation}\label{falpha}
f_\alpha(z,z',\htilde ) = D_{z'}^\alpha\big[(\partial^\alpha_\eta \widetilde{p})(z,z',\theta(z,z')) \widetilde{b} (z')\big] 
\end{equation}
and then, $z'-z = \htilde \mu$ in the integral. We obtain
$$
J_N = (2\pi)^{-d}\sum_{\vert \alpha \vert 
\leq {N-1}}\frac{\htilde ^{\vert \alpha \vert}}{\alpha!}
\int \widehat{\kappa}(\mu)f_\alpha(z,z+ \htilde \mu, \htilde )\, d\mu.
$$
By the Taylor formula we can write
$$
J_N = (2\pi)^{-d}\sum_{\vert \alpha \vert \leq {N-1}}
\frac{\htilde ^{\vert \alpha \vert}}{\alpha!}\sum_{\vert \beta \vert \leq {N-1}}
\frac{\htilde ^{\vert \beta \vert}}{\beta!}
\Big(\int \mu^\beta \widehat{\kappa}(\mu) \, d\mu\Big)\big(\partial_{z'}^\beta f_\alpha\big)(z,z,\htilde ) +S_N,
$$
with
\begin{equation}\label{SN}
S_N = (2\pi)^{-d}\sum_{\substack{\vert \alpha \vert \leq {N-1} \\ \vert \beta \vert=N}} 
N \frac{\htilde ^{\vert \alpha \vert+\vert \beta \vert}}{\alpha! \beta!} 
 \int \int_0^1 
 (1-\lambda)^{N-1}\mu^\beta \widehat{\kappa}(\mu) \big(\partial_{z'}^\beta f_\alpha\big) 
 (z,z+\lambda\htilde \mu,\htilde )  \, d\lambda \, d\mu.
\end{equation}
Noticing that
$$
\int \mu^\beta \widehat{\kappa}(\mu) \, d\mu = (2\pi)^d (D^\beta \kappa)(0) 
= \left \{ \begin{array}{ll} 0 & \text{if } \beta \neq 0 \\(2\pi)^d & \text{if } \beta =0 \end{array} \right.
$$
we conclude that
$$
J_N = \sum_{\vert \alpha \vert \leq {N-1}}\frac{\htilde ^{\vert \alpha \vert}}{\alpha!} f_\alpha(z,z,\htilde ) +S_N.
$$
It follows from \eqref{J=}, \eqref{falpha} and \eqref{SN} that 
\begin{equation}\label{Jfinal}
J =  \sum_{\vert \alpha \vert \leq {N-1}}\frac{\htilde ^{\vert \alpha \vert}}{\alpha!} 
D_{z'}^\alpha\big[(\partial^\alpha_\eta \widetilde{p})(z,z',\theta(z,z')) \widetilde{b} (z')\big]\arrowvert_{z'=z} +R_N +S_N
\end{equation}
where $R_N$ and $S_N$ are defined in \eqref{J=} and \eqref{SN}.

Reintroducing the variable $(t,y,\xi,\htilde)$ we conclude from \eqref{J} that 
\begin{equation}\label{conjugaison}
e^{-i{\htilde}^{-1}\phi(t,z,\xi,\htilde )}(\htilde  \partial_t +\htilde c+iP)\big(e^{i{\htilde}^{-1}\phi(t,z,\xi,\htilde )}\widetilde{b} \big)
=\Big[i\frac{\partial \phi}{\partial t} \widetilde{b}  + iJ + \htilde \frac{\partial \widetilde{b} }{\partial t} +\htilde c \widetilde{b}\Big]
\big(t,z,y,\xi,\htilde\big).
\end{equation}
We shall gather the terms the right hand side of \eqref{conjugaison} according to the power of $\htilde $. 
The term corresponding to  $\htilde ^0$ leads to the eikonal equation.

\subsection{The eikonal equation}
It is the equation
\begin{equation}\label{eikonale}
\frac{\partial \phi}{\partial t} + p\Big(t,z, \frac{\partial \phi}{\partial z}, \htilde \Big) =0 \quad \phi(0,z,\xi,\htilde ) = z \cdot \xi
\end{equation}
where $p$ is defined by the formula
\begin{equation}\label{p}
  p(t,z,\zeta,\htilde ) =    \int \widehat{\chi}\big(\mu  , M_0(t, \htilde z)\zeta\big)
{\gamma}_\delta\big (t, X(t,\htilde z)-\htilde ^{2}\mu, M_0(t,\htilde z) \zeta\big)  
\cdot\varphi_1( M_0(t,\htilde z)\zeta)\,d\mu.
\end{equation}
We set
 $$
q(t,z,\tau,\zeta,\htilde ) = \tau + p(t,z,\zeta,\htilde ) 
$$
and for $j\geq 1$ we denote by $\mathcal{C}_j$  the ring
$$
\mathcal{C}_j = \{\xi \in \xR^d: 2^{-j} \leq \vert \xi \vert \leq 2^j\}.
$$
Moreover in all what follows we shall have
\begin{equation}\label{delta}
  \delta = \frac{2}{3}.
  \end{equation} 
 
    \subsubsection{The solution of the eikonal equation}
Recall that $I_{\htilde} = [0, \htilde^\delta] $ is the time interval,  (where $\delta = \frac{2}{3} $) and $\mathcal{C}_j$   the ring $\{2^{-j} \leq \vert \xi \vert \leq 2^j\}.$   Consider the null-bicharacteristic flow of $q$. It is defined by the system 
\begin{equation}\label{bicar}
\left\{
\begin{aligned}
\dot{t}(s) &=1,\quad t(0) =0,\\
\dot{z}(s) &= \frac{\partial p}{\partial \zeta}\big(t(s),z(s), \zeta(s), \htilde \big), \quad z(0) =z_0,\\
\dot{\tau}(s) &=- \frac{\partial p}{\partial t}\big(t(s),z(s),\zeta(s), \htilde \big), \quad \tau(0) = - p(0,z_0,\xi,\htilde ),\\
\dot{\zeta}(s) &=- \frac{\partial p}{\partial z}\big(t(s),z(s), \zeta(s), \htilde \big), \quad \zeta(0) =\xi.
\end{aligned}
\right.
\end{equation}
Then $t(s) =s$  and this system has a unique solution defined on $ I_{\htilde},$ depending on $(s,z_0,\xi,\tilde{h}).$

We claim that for all fixed $ s \in I_{\htilde }$ and $ \xi \in \xR^d,$ the map
\begin{equation}\label{diffeo}
z_0 \mapsto z(s; z_0,\xi ,\htilde )
\end{equation}
is a global diffeomorphism from $\xR^d$ to $\xR^d$. 
This will follow from the facts that this map is proper and the 
matrix $\big( \frac{\partial z}{\partial z_0}(s;z_0,\xi,\htilde )\big)$ is invertible. 
Let us begin by the second point. 

Let us set $m(s) =(s, z(s), \zeta(s),\htilde )$. 
Differentiating System \eqref{bicar} with respect to $z_0$, we get
\begin{equation}\label{Dbicar}
\begin{aligned}
\dot{\frac{\partial z}{\partial z_0}}(s) 
&= p''_{z \zeta}(m(s)) \frac{\partial z}{\partial z_0}(s) + p''_{\zeta \zeta} (m(s))\frac{\partial \zeta}{\partial z_0}(s), \quad \frac{\partial z}{\partial z_0}(0)= Id\\
\dot{\frac{\partial \zeta}{\partial z_0}}(s) 
&= -p''_{zz}(m(s))\frac{\partial z}{\partial z_0}(s) -p''_{z\zeta}(m(s))\frac{\partial \zeta}{\partial z_0}(s), 
\quad \frac{\partial \zeta}{\partial z_0}(0)=0.
\end{aligned}
\end{equation}
Setting $U(s) =(\frac{\partial z}{\partial z_0}(s),\frac{\partial \zeta}{\partial z_0}(s))$ and 
\begin{equation}\label{matrice}
\mathcal{A}(s)=\begin{pmatrix} 
 \hphantom{-}p''_{z\zeta}(m(s))&    \hphantom{-} p''_{\zeta \zeta}(m(s))\\[1ex] -p''_{zz}(m(s)) &- p''_{z\zeta}(m(s))   
\end{pmatrix}
. 
\end{equation}
The system \eqref{Dbicar} can be written as $\dot{U}(s) = \mathcal{A}(s)U(s), U(0) = (Id,0).$ 
 Lemma \ref{derp} gives
\begin{equation}
  \vert  p''_{\zeta \zeta}(m(s))\vert  +    \vert  p''_{z \zeta}(m(s))\vert   +  \vert  p''_{zz}(m(s))\vert \leq \\   \mathcal{F} (\Vert V \Vert_{E_0})
   \sum_{\vert \beta \vert \leq  2}  \sup_{\xi \in \mathcal{C}_3}\Vert D^\beta_\xi \gamma(t,\cdot,\xi)\Vert_{L^\infty (\xR^d)}  \htilde^{2(1-2\delta)}, 
 \end{equation}
therefore
$$
\Vert \mathcal{A}(s) \Vert 
\leq \mathcal{F} (\Vert V \Vert_{E_0})
   \sum_{\vert \beta \vert \leq  2}  \sup_{\xi \in \mathcal{C}_3}\Vert D^\beta_\xi \gamma(t,\cdot,\xi)\Vert_{L^ \infty (\xR^d)} \htilde^{2(1-2\delta)}. 
  $$ 
Using the equality $2(1-2\delta) + \delta =0$, we deduce  that for $ s \in I_{\htilde }= (0, \widetilde{h}^\delta) $ we have
\begin{equation}\label{calA}
\int_0^s \left\Vert \mathcal{A}(\sigma)  \right\Vert d \sigma 
\leq  \mathcal{F} (\Vert V \Vert_{E_0}) \mathcal{N}_2(\gamma).
 \end{equation}
The Gronwall inequality shows that $\Vert U(s)\Vert$ 
is uniformly bounded on $I_{\htilde }.$ 
Coming back to   \eqref{Dbicar} we see that we have
\begin{equation}\label{estDx}
  \la \frac{\partial \zeta}{\partial z_0} (s) \ra \leq  \mathcal{F} \big(\Vert V \Vert_{E_0} +   \mathcal{N}_2(\gamma)\big) , \quad 
  \la \frac{\partial z}{\partial z_0}(s) - Id \ra \leq
 \mathcal{F} \big(\Vert V \Vert_{E_0} +   \mathcal{N}_2(\gamma)\big)   \htilde^{\frac{\delta}{2}}.
  \end{equation}
Taking $\htilde $ small enough we obtain the invertibility of the 
matrix  $\big( \frac{\partial z}{\partial z_0}(s;z_0,\xi,\htilde )\big)$.

Now we have
$$
\vert z(s; z_0,\xi,\htilde ) -z_0 \vert \leq \int_0^s \vert \dot{z}(\sigma,x_0,\xi,\htilde ) \vert \, d\sigma.
$$
Since the right hand side is uniformly bounded for $s \in \big[0, \htilde ^{\delta}\big]$, 
we see that our map is proper. 
Therefore we can write
\begin{equation}\label{diffeo1}
z(s;z_0,\xi,\htilde ) = z \Longleftrightarrow z_0=\kappa(s;z,\xi,\htilde ). 
\end{equation}
Let us set for $t\in\big[0, \htilde ^{\delta}\big]$
\begin{equation}\label{phi}
\phi(t,z,\xi,\htilde ) = z\cdot\xi - 
\int_0^t p\big(\sigma,z,\zeta(\sigma;\kappa(\sigma;z,\xi,\htilde ),\xi,\htilde ), \tilde{h} \big)\, d\sigma.
\end{equation}
\begin{prop}\label{eqeiko}
The function $\phi$ defined in \eqref{phi} is the solution of the eikonal equation \eqref{eikonale}.
\end{prop}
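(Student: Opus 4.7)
The plan is to verify the initial condition and then compute $\partial_t\phi$ and $\partial_z\phi$ directly from \eqref{phi}, concluding via the method of characteristics.

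The initial condition $\phi(0,z,\xi,\htilde) = z\cdot\xi$ is immediate since the integral vanishes at $t=0$. Because the integrand in \eqref{phi} depends on $t$ only through the upper limit,
\begin{equation*}
\partial_t \phi(t,z,\xi,\htilde) = -p\bigl(t, z, \zeta(t;\kappa(t;z,\xi,\htilde),\xi,\htilde), \htilde\bigr).
\end{equation*}
The eikonal equation \eqref{eikonale} therefore reduces to establishing the identity
\begin{equation*}
\partial_z \phi(t,z,\xi,\htilde) = \zeta(t;\kappa(t;z,\xi,\htilde),\xi,\htilde), \qquad (\star)
\end{equation*}
after which $\partial_t\phi + p(t,z,\partial_z\phi,\htilde) = 0$ follows at once.

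To prove $(\star)$, introduce the auxiliary vector field
\begin{equation*}
u(t,z) \defn \zeta(t;\kappa(t;z,\xi,\htilde),\xi,\htilde),
\end{equation*}
with $\xi$ and $\htilde$ viewed as parameters. The crucial observation is that $u$ is precisely the classical method-of-characteristics solution of the quasilinear transport equation
\begin{equation*}
\partial_t u + (\partial_\zeta p)(t,z,u,\htilde)\cdot\partial_z u = -(\partial_z p)(t,z,u,\htilde), \qquad u|_{t=0} = \xi,
\end{equation*}
whose characteristics are exactly the bicharacteristics \eqref{bicar} of $p$. Indeed, differentiating the identity $z(t;\kappa(t;z,\xi,\htilde),\xi,\htilde) = z$ and using $\dot z = \partial_\zeta p$, $\dot\zeta = -\partial_z p$ along the bicharacteristic gives the transport equation after a direct chain-rule computation; the initial condition holds since $\kappa(0;z,\xi,\htilde)=z$ and $\zeta(0;z,\xi,\htilde) = \xi$. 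The diffeomorphism property \eqref{diffeo} of $z_0\mapsto z(t;z_0,\xi,\htilde)$, obtained via Proposition~\ref{estX} and \eqref{estDx}, ensures that $u$ is well-defined and smooth on $I_\htilde\times\xR^d$.

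With $u$ in hand, differentiating $\phi$ in $z$ under the integral sign yields
\begin{equation*}
\partial_z \phi(t,z,\xi,\htilde) = \xi - \int_0^t \Bigl[(\partial_z p)(\sigma,z,u(\sigma,z),\htilde) + (\partial_\zeta p)(\sigma,z,u(\sigma,z),\htilde)\cdot\partial_z u(\sigma,z)\Bigr] d\sigma.
\end{equation*}
By the transport equation, the bracket equals $-\partial_\sigma u(\sigma,z)$, so the integral telescopes to $\xi - u(t,z)$, giving $\partial_z\phi = \xi - (\xi - u(t,z)) = u(t,z)$, which is $(\star)$. The only real obstacle is justifying the smoothness of $u$ and the differentiation under the integral sign; these rely on the $C^1$ estimates for the bicharacteristic flow and for its inverse, provided by Proposition~\ref{estX} and \eqref{estDx} once $\htilde \leq \htilde_0$ is taken small enough.
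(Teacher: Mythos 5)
Your overall strategy matches the paper's: check the initial condition, compute $\partial_t\phi$, and reduce everything to the identity $\partial_z\phi(t,z,\xi,\htilde)=\zeta(t;\kappa(t;z,\xi,\htilde),\xi,\htilde)=:u(t,z)$. The setup of $u$ as the characteristic solution of the transport equation is also correct. But the final telescoping step has a genuine gap when $d\ge 2$. Writing out the indices, the chain rule gives
\begin{equation*}
\partial_{z_j}\bigl[p(\sigma,z,u(\sigma,z),\htilde)\bigr]
=(\partial_{z_j}p)(\sigma,z,u)+\sum_{k}(\partial_{\zeta_k}p)(\sigma,z,u)\,\partial_{z_j}u_k ,
\end{equation*}
whereas the $j$-th component of your transport equation gives
\begin{equation*}
-\partial_\sigma u_j=(\partial_{z_j}p)(\sigma,z,u)+\sum_{l}(\partial_{\zeta_l}p)(\sigma,z,u)\,\partial_{z_l}u_j .
\end{equation*}
These two right-hand sides differ by $\sum_k(\partial_{\zeta_k}p)\,(\partial_{z_j}u_k-\partial_{z_k}u_j)$, so the bracket equals $-\partial_\sigma u_j$ only if the Jacobian matrix $\partial_z u$ is symmetric, i.e.\ the field $u(t,\cdot)$ is curl-free. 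The transport equation alone does not give this for $d\ge 2$ (for $d=1$ it is vacuous and your argument is complete), and this symmetry is exactly the nontrivial content of the statement $\partial_z\phi=u$, since a priori $u$ need not be a gradient.

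The paper closes precisely this gap by observing that the graph
$\Sigma=\{(t,z,\tau,\zeta)\}$ swept out by the bicharacteristics is a Lagrangean manifold: the vanishing of the coefficient of $\di t\wedge\di z_j$ in $\di t\wedge\di F_0+\di z\wedge\di F=0$ yields directly $-\partial_{z_j}[p(t,z,u(t,z),\htilde)]=\partial_t u_j$, which is the identity your telescoping requires. (The vanishing of the coefficients of $\di z_j\wedge\di z_k$ is equivalently the curl-free property you are implicitly assuming.) To repair your proof you would need either to invoke this Lagrangean/Hamilton--Jacobi argument, or to prove separately that $\omega_{jk}=\partial_{z_j}u_k-\partial_{z_k}u_j$ satisfies a linear homogeneous system along the characteristics with zero initial data and hence vanishes identically; as written, the step ``by the transport equation, the bracket equals $-\partial_\sigma u$'' is unjustified.
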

\begin{proof}
The initial condition is trivially satisfied. Moreover we have
$$
\frac{\partial \phi}{\partial t}\big(t,z,\xi,\htilde\big) 
= - p\big(t,z,\zeta(t;\kappa\big(t;z,\xi,\htilde\big),\xi,\htilde), \tilde{h}\big).
$$
Therefore it is sufficient to prove that
\begin{equation}\label{eiksuite}
\frac{\partial \phi}{\partial z}\big(t,z,\xi,\htilde\big) 
= \zeta\big(t;\kappa\big(t;z,\xi,\htilde\big),\xi,\htilde\big).
\end{equation}
Let us consider the Lagrangean manifold
\begin{equation}\label{Lagrange}
\Sigma =\Big\{\big(t,z(t; z_0,\xi ,\htilde ),\tau(t; z_0,\xi ,\htilde ),\zeta(t; z_0,\xi ,\htilde )\big):   
t \in I_{\htilde}  , (z_0,\xi ) \in \xR^{2d} \Big\}.
\end{equation}
According to \eqref{diffeo1} we can write
$$
\Sigma = \Big\{(t,z, \tau(t;\kappa(t;z,\xi,\htilde ),\xi,\htilde ), \zeta(t;\kappa(t;z,\xi,\htilde ),\xi,\htilde ))
: t\in I_{\htilde }, (z,\xi)\in \xR^{2d} \Big\}.
$$
Let us set 
\begin{equation*}
\begin{aligned}
F_0\big(t,z,\xi,\htilde\big) &= \tau\big(t;\kappa(t;z,\xi,\htilde ),\xi,\htilde\big),\\
F_j\big(t,z,\xi,\htilde\big)  &= \zeta_j\big(t;\kappa(t;z,\xi,\htilde ),\xi,\htilde\big).
\end{aligned}
\end{equation*}
Since the symbol $q$ is constant along its bicharacteristic   and $q(0, z(0), \tau(0), \zeta(0), \htilde)= 0$ we have
$$
F_0\big(t,z,\xi,\htilde\big) = - p\big(t,z,\zeta(t;\kappa(\sigma;z,\xi,\htilde ),\xi,\htilde\big), \htilde \big).
$$
Now $\Sigma$ beeing Lagrangean we have
$$
\di t \wedge \di F_0 + \di z \wedge \di F =0.
$$
Thus $\partial_{z_j} F_0 - \partial_t F_j =0$ since it is the coefficient of $\di t\wedge \di z_j$ in the above expression. 
Therefore using \eqref{phi} we can write
\begin{equation*}
\begin{aligned}
\frac{\partial \phi}{\partial z_j}(t,z,\xi,\htilde ) &=\xi_j - \int_0^t \frac{\partial}{\partial z_j} 
\big[p\big(\sigma,z,\zeta(\sigma;\kappa(\sigma;z,\xi,\htilde ),\xi,\htilde )\big)\big]\, d \sigma\\
&= \xi_j + \int_0^t \frac{\partial}{\partial \sigma}\big[ \zeta_j\big(\sigma;\kappa(\sigma;z,\xi,\htilde\big),\xi,\htilde )\big] \, d \sigma\\
&= \zeta_j\big(t;\kappa(t;z,\xi,\htilde ),\xi,\htilde\big).
\end{aligned}
\end{equation*}
    \end{proof}
    
\subsubsection{The Hessian of the phase}

Let us recall that the phase $\phi$ is the solution of the problem
\begin{equation}\label{phase}
\left\{
\begin{aligned}
&\frac{\partial \phi}{\partial t} (t,z,\xi,\htilde ) + p\Big(t,z,\frac{\partial \phi}{\partial z}(t,z,\xi,\htilde ), \htilde \Big) =0\\
&\phi \arrowvert_{t=0} = z \cdot \xi.
\end{aligned}
\right.
\end{equation}
On the other hand the map $(t,z,\xi) \mapsto \phi(t,z,\xi,\htilde )$ is $C^1$ in time and $C^\infty$ in $(x,\xi)$. 
Differentiating twice, with respect to $\xi$, the above equation we obtain
\begin{equation*}
\begin{aligned}
\frac{\partial }{\partial t} \Big( \frac{\partial^2 \phi }{\partial  \xi_i \partial \xi_j}\Big) 
&= - \sum_{k,l = 1}^d  
\frac{\partial^2 p}{\partial  \zeta_k \partial \zeta_l}\Big(t,z,\frac{\partial \phi}{\partial z},\htilde \Big)  
\frac{\partial^2 \phi }{\partial  z_k \partial \xi_i}   \frac{\partial^2 \phi }{\partial z_l \partial \xi_j} \\
& \quad- \sum_{k=1}^d \frac{\partial p}{ \partial \zeta_k}\Big(t,z,\frac{\partial \phi}{\partial z},\htilde \Big)
\frac{\partial^3 \phi}{\partial z_k \partial \xi_i \partial \xi_j}\cdot
\end{aligned}
\end{equation*}
By the initial condition in \eqref{phase} we have
$$
\frac{\partial^2 \phi}{\partial z_k \partial \xi_i}\Big  \arrowvert_{t=0} 
= \delta_{k i}\quad   \frac{\partial^2 \phi}{\partial z_l \partial \xi_j} \Big  \arrowvert_{t=0} 
= \delta_{lj}, \quad \frac{\partial^3 \phi}{\partial z_k \partial \xi_i \partial \xi_j} \Big   \arrowvert_{t=0} = 0, 
\quad  \frac{\partial^2 \phi }{\partial  \xi_i \partial \xi_j} \Big   \arrowvert_{t=0} =0.
$$
It follows that 
$$
\frac{\partial }{\partial t} \Big( \frac{\partial^2 \phi }{\partial  \xi_i \partial \xi_j}\Big)\Big\arrowvert_{t=0} 
= -\frac{\partial^2 p}{\partial  \xi_i \partial \xi_j}\big(0,z,\xi,\htilde \big)
$$
from which we deduce that 
$$
\frac{\partial^2 \phi }{\partial  \xi_i \partial \xi_j}(t,z,\xi,\htilde ) 
= -t \frac{\partial^2 p}{\partial  \xi_i \partial \xi_j}(0,z,\xi,\htilde ) +o(t).
$$
It follows from Proposition \ref{Hessp} that one can find $M_0>0$ such that
\begin{equation}\label{HessphiND}
\la \det \Big( \frac{\partial^2 \phi}{\partial  \xi_i \partial \xi_j}(t,z,\xi,\htilde )\Big) \ra \geq M_0 t^d,
\end{equation}
for $t\in I_{\htilde }, z \in \xR^d, \xi \in \mathcal{C}_0, 0<\htilde  \leq \htilde _0$.

Our goal now is to prove estimates of higher order on the phase (see Corollary \ref{estkappa} below.)
 \subsubsection{Classes of symbol and symbolic calculus}
Recall here that $\delta = \frac{2}{3}$ and that $ \mathcal{N}_{k}(\gamma)  $ has been defined in \eqref{norme:gamma}.
\begin{defi}
Let $m\in \xR, \mu_0 \in \xR^+ $ and $a =  a(t, z,\xi, \htilde)$ be a smooth function defined on $ \Omega =[0, \htilde^\delta]\times \xR^d \times \mathcal{C}_0\times (0, \htilde_0]$. We shall say that
 
 $(i)$ \quad  $a \in S^m_{\mu_0}$ if for every $k\in \xN$ one can find $\mathcal{F}_k:\xR^+ \to \xR^+$ such that for   all $(t,z,\xi,\htilde) \in \Omega $
 \begin{equation}\label{est:symb}
  \vert D_z^\alpha D_\xi^\beta a(t,z,\xi,\htilde)\vert \leq \mathcal{F}_k( \Vert V\Vert_{E_0} + \mathcal{N}_{k+1}(\gamma)) \, \htilde^{m - \vert \alpha \vert \mu_0},\quad \vert \alpha \vert + \vert \beta \vert =k,
  \end{equation}
 \\
 $(ii)$ \quad   $a \in \dot{S}^m_{\mu_0}$  if \eqref{est:symb} holds for every $k\geq1$.
     \end{defi}
\begin{rema}\label{remarque1}
\begin{enumerate}
\item If $m\geq m'$ then $ S^m_{\mu_0} \subset  S^{m'}_{\mu_0} $ and  $ \dot{S}^m_{\mu_0} \subset  \dot{S}^{m'}_{\mu_0}$.
\item Let $a(t,z, \xi, \htilde) = z$ and $b(t,z, \xi, \htilde) = \xi$. Then   $a \in \dot{S}^{\delta/2}_{2\delta -1}$,    $b \in \dot{S}^0_{2\delta -1}$.
\item If $a \in S^m_{\mu_0}$ with $m \geq 0$ then $b=e^a \in S^0_{\mu_0}.$
\end{enumerate}
\end{rema}
We study now the composition of such symbols.
 \begin{prop}\label{est:compo}
Let $m \in \xR , f \in S^{m}_{2\delta-1} (\text{resp.}\dot{S}^{m}_{2\delta-1}),  Ê U \in \dot{S}^{\delta/2}_{2\delta-1}, V\in \dot{S}^{0}_{2\delta-1} $ and assume that $V\in \mathcal{C}_0.$  Set 
$$F(t,z,\xi, \htilde) = f(t, U(t,z,\xi,\htilde), V(t,z,\xi,\htilde), \htilde).$$ Then $F\in S^{m}_{2\delta-1} (\text{resp.\ }\dot{S}^{m}_{2\delta-1}).$
\end{prop}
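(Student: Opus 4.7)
The plan is to apply the multivariate Faà di Bruno formula to $F = f(t,U,V,\htilde)$ and then carry out a careful bookkeeping of powers of $\htilde$. For $|\alpha|+|\beta|=k$, I write
\[
D_z^\alpha D_\xi^\beta F = \sum (D_z^{a_1}D_\xi^{a_2} f)(t,U,V,\htilde)\prod_i (D_z^{L_i}D_\xi^{M_i} U)^{p_i}\prod_j (D_z^{L_j'}D_\xi^{M_j'} V)^{q_j},
\]
as a finite linear combination with $\htilde$-independent combinatorial coefficients, where the multi-indices satisfy the standard Faà di Bruno constraints $|a_1|=\sum_i p_i$, $|a_2|=\sum_j q_j$, $\sum_i p_i L_i + \sum_j q_j L_j' = \alpha$, $\sum_i p_i M_i + \sum_j q_j M_j' = \beta$, and $|L_i|+|M_i|\ge 1$, $|L_j'|+|M_j'|\ge 1$. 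The hypothesis $V\in\mathcal{C}_0$ is precisely what lets us evaluate $f$ in the set where its symbol estimates hold, and $U$ ranges freely in $\xR^d$, which is allowed.

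Next I plug in the three families of estimates provided by the symbol classes,
\begin{align*}
|(D_z^{a_1}D_\xi^{a_2}f)(U,V)| &\lesssim \htilde^{m-|a_1|(2\delta-1)},\\
|D_z^{L_i}D_\xi^{M_i}U| &\lesssim \htilde^{\delta/2 - |L_i|(2\delta-1)},\\
|D_z^{L_j'}D_\xi^{M_j'}V| &\lesssim \htilde^{-|L_j'|(2\delta-1)},
\end{align*}
the constants being of the form $\mathcal{F}(\Vert V\Vert_{E_0}+\mathcal{N}_{k+1}(\gamma))$. Multiplying these in a generic term and invoking $|a_1|=\sum_i p_i$ together with $\sum_i p_i|L_i|+\sum_j q_j|L_j'|=|\alpha|$, the total exponent of $\htilde$ telescopes to
\[
m - |a_1|(2\delta-1) + |a_1|\tfrac{\delta}{2} - |\alpha|(2\delta-1) = m + |a_1|\bigl(1 - \tfrac{3\delta}{2}\bigr) - |\alpha|(2\delta-1).
\]
With $\delta = 2/3$ fixed in \eqref{delta}, the factor $1-3\delta/2$ vanishes identically, and the exponent reduces to $m - |\alpha|(2\delta-1)$, which is exactly what $S^m_{2\delta-1}$ requires. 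Summing over the finitely many Faà di Bruno terms and absorbing all constants into a single $\mathcal{F}_k$ concludes the estimate.

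For the dotted case $\dot{S}^m_{2\delta-1}$ one has to verify that the missing $k=0$ estimate on $f$ is never invoked. If $|\alpha|+|\beta|\ge 1$ and a term in the expansion had $|a_1|=|a_2|=0$, then $\sum_i p_i=\sum_j q_j = 0$, so no $U$- or $V$-factors appear and the Faà di Bruno constraints force $\alpha=\beta=0$, a contradiction. Hence $|a_1|+|a_2|\ge 1$ in every surviving term, and the dotted estimate for $f$ alone suffices.

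There is no real obstacle here; the argument is entirely a tracking of exponents. The crucial point is the algebraic identity $1-3\delta/2 = 0$ at $\delta = 2/3$, which explains why this specific value was fixed in \eqref{delta}: it is precisely the threshold at which composition closes inside $S^m_{2\delta-1}$ without any loss. A smaller $\delta$ would yield a strictly better exponent, while a larger $\delta$ would break the symbolic calculus.
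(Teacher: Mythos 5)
Your argument is correct and is essentially the paper's own proof: a Faà di Bruno expansion of $D_z^\alpha D_\xi^\beta F$, substitution of the symbol estimates for $D^A f$, $D^L U$, $D^{L'} V$, and the observation that the total exponent collapses to $m-|\alpha|(2\delta-1)$ because $1-2\delta+\frac{\delta}{2}=0$ at $\delta=\frac{2}{3}$. Your explicit check that the dotted case never invokes the undifferentiated bound on $f$ (since $1\le |A|$ in every term when $k\ge 1$) is a detail the paper leaves implicit, but it is the same route.
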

\begin{proof}
Let $\Lambda = ( \alpha, \beta)\in \xN^d \times \xN^d, \vert \Lambda\vert =k$. If $k=0$ the estimate of $F$ follows easily from the hypothesis on $f$. Assume  $k  \geq 1$. Then $D^\Lambda F$ is a finite linear combination of terms of the form
$$(1)= (D^Af)( \cdots) \prod_{j=1}^r (D^{L_j} U)^{p_j} (D^{L_j} V)^{q_j} $$
where $A=(a,b), \quad 1\leq \vert A \vert \leq  \vert \Lambda \vert, \quad L_j= (l_j,m_j)$ and
$$\sum_{j=1}^r p_j = a, \quad \sum_{j=1}^r q_j = b, \quad \sum_{j=1}^r (\vert p_j\vert + \vert q_j \vert)L_j = \Lambda$$
By the hypothesis on $f$   we have 
\begin{equation}\label{est:DAf3}
 \vert D^A f(\cdots)\vert \leq \mathcal{F}_k( \Vert V \Vert_{E_0} + \mathcal{N} _{k+1}(\gamma))\, \htilde^{m-\vert a \vert (2 \delta -1)}.
\end{equation}
By the hypotheses on $U,V$, the product occuring in the definition of $(1)$ is bounded by $  \mathcal{F}_k( \Vert V \Vert_{E_0} + \mathcal{N}_{k+1}(\gamma))\, \htilde^M$ where 
$$M= \sum_{j=1}^r \vert p_j \vert  \big(\frac{\delta}{2} -\vert l_j \vert (2 \delta -1) \big)  - \sum_{j=1}^r \vert q_j \vert ( \vert l_j \vert (2 \delta -1)) = -\vert \alpha \vert (2 \delta -1) + \frac{\delta}{2} \vert a \vert.$$
 Using \eqref{est:DAf3} and the fact that $1-2\delta +Ê\frac{\delta}{2} = 0$ we obtain the desired conclusion.
\end{proof}
 
  \subsubsection{Further estimates on the flow}
   We shall denote by  $z(s) = z(s; z, \xi, \htilde), \zeta(s) = \zeta(s; z, \xi, \htilde)$ the solution of \eqref{bicar} with $z(0) = z, \zeta(0) = \xi.$  Recall  that $\delta = \frac{2}{3} $. 
        \begin{prop}\label{estflow}
There exists $\mathcal{F}: \xR^+ \to \xR^+$ non decreasing such that
\begin{alignat*}{2}
&(i) \quad &&\la\frac{\partial z}{\partial {z }}(s) - Id \ra  \leq   \mathcal{F} \big(\Vert V \Vert_{E_0} +   \mathcal{N}_2(\gamma)\big)   \htilde^{\frac{\delta}{2}}, \quad \la\frac{\partial \zeta}{\partial {z }}(s)\ra\leq   \mathcal{F} \big(\Vert V \Vert_{E_0} +   \mathcal{N}_2(\gamma)\big),  \\
  &(ii) &&\la\frac{\partial z}{\partial \xi}(s) \ra   + \la\frac{\partial \zeta}{\partial \xi}(s) - Id \ra \leq \mathcal{F} \big(\Vert V \Vert_{E_0} +   \mathcal{N}_2(\gamma)\big) \, \htilde^{\frac{\delta}{2}}.
  \end{alignat*}
for all $s\in I_{\htilde } = \big[0, \htilde ^\delta\big], z\in \xR^d, \xi \in \mathcal{C}_0.$

    For any $k\geq 1$ there exists $\mathcal{F}_k: \xR^+ \to \xR^+$ non decreasing such that for $\alpha, \beta \in \xN^ d$ with $ \vert \alpha \vert + \vert \beta \vert =k $ 
  \begin{equation}\label{est:z:zeta}
  \left\{ 
  \begin{aligned}
   &  \la D_z^\alpha D_\xi^\beta  z(s) \ra \leq \mathcal{F}_k\big(\Vert V \Vert_{E_0} +   \mathcal{N}_{k+1}(\gamma)\big) \htilde ^{ \vert \alpha \vert  (1-2 \delta) + \frac{\delta}{2}},\\
  & \la D_z^\alpha D_\xi^\beta  \zeta(s)  \ra\leq  \mathcal{F}_k\big(\Vert V \Vert_{E_0} +   \mathcal{N}_{k+1}(\gamma)\big) \htilde ^{ \vert \alpha \vert  (1-2 \delta)}.
  \end{aligned}
  \right.
   \end{equation} 
\end{prop}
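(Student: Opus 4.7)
The plan is to treat both parts by the same mechanism that produced \eqref{estDx}: differentiate the bicharacteristic system \eqref{bicar} with respect to the initial data, obtain a linear inhomogeneous ODE for the derivatives, and close it by Gronwall, the decisive input being the time-integrated Hessian estimate of Corollary \ref{derp'}. The algebraic identity $1 - \frac{3\delta}{2} = 0$ (forced by $\delta = \frac{2}{3}$) is what ultimately cancels the accumulated powers of $\htilde$.

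For part (i), the $z$-derivatives are handled verbatim as in the derivation of \eqref{estDx}. The $\xi$-derivatives satisfy the \emph{same} linear system $\dot U = \mathcal{A}(s)U$ with the matrix $\mathcal{A}$ of \eqref{matrice}, but with initial condition $U(0) = (0, Id)$. Gronwall combined with \eqref{calA} yields uniform boundedness of $(\partial_\xi z, \partial_\xi \zeta)$, and the $\htilde^{\delta/2}$ factor on $\partial_\xi z$ and on $\partial_\xi \zeta - Id$ then falls out of the integral formulation via $\int_0^{\htilde^\delta} |p''_{z\zeta}(\sigma)|\, d\sigma \leq C\htilde^{\delta/2}$ (Corollary \ref{derp'} with $|\alpha|=1$, noting $1 - \delta = \delta/2$).

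For part (ii) I would induct on $k = |\alpha| + |\beta|$, the case $k = 1$ being already contained in (i). For the inductive step, apply $D^\Lambda := D_z^\alpha D_\xi^\beta$ to \eqref{bicar} and expand the right-hand side by Faa-di-Bruno. Separating the top-order occurrences of $D^\Lambda z$, $D^\Lambda \zeta$, one arrives at
\begin{equation*}
\frac{d}{ds}\begin{pmatrix} D^\Lambda z \\ D^\Lambda \zeta \end{pmatrix} = \mathcal{A}(s)\begin{pmatrix} D^\Lambda z \\ D^\Lambda \zeta \end{pmatrix} + \Phi(s), \qquad \begin{pmatrix} D^\Lambda z \\ D^\Lambda \zeta \end{pmatrix}(0) = 0 \text{ for } |\Lambda| \geq 2,
\end{equation*}
where $\Phi$ is a linear combination of Faa-di-Bruno terms
$(D^A p'_\zeta)(\sigma, z(\sigma), \zeta(\sigma), \htilde)\prod_j (D^{L_j} z)^{p_j} (D^{L_j} \zeta)^{q_j}$
(and analogously for the $\zeta$-equation with $p'_z$), subject to $1 \leq |L_j| < k$, $\sum_j (|p_j|+|q_j|) l_{j, z_0} = \alpha$, and $a_z = \sum_j p_j$. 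Applying the inductive estimates to $D^{L_j} z$ and $D^{L_j} \zeta$ and Lemma \ref{derp} to $D^A p'_\zeta$, and using that for multi-indices in $\xN^d$ one has the exact equality $\sum_j (|p_j|+|q_j|)|l_{j,z_0}| = |\alpha|$, each such term is bounded pointwise by
\begin{equation*}
\htilde^{a_z(1-2\delta) + (1-2\delta)|\alpha| + (\delta/2)a_z} \;=\; \htilde^{|\alpha|(1-2\delta) + a_z(1 - \frac{3\delta}{2})} \;=\; \htilde^{|\alpha|(1-2\delta)}.
\end{equation*}
Time-integration over $[0, \htilde^\delta]$ then yields $\|\Phi\|_{L^1(I_{\htilde})} \leq \mathcal{F}\, \htilde^{|\alpha|(1-2\delta)+\delta}$, and Gronwall with $\int \|\mathcal{A}\|\,d\sigma \leq \mathcal{F}$ closes the bound on $D^\Lambda z$, stronger than claimed by a factor $\htilde^{\delta/2}$. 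The parallel computation with $p'_z$ in place of $p'_\zeta$ adds one extra factor $\htilde^{1-2\delta}$ and gives the bound $\htilde^{|\alpha|(1-2\delta)+\delta/2}$ on $D^\Lambda \zeta$, again stronger than the claimed $\htilde^{|\alpha|(1-2\delta)}$.

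The main obstacle is the combinatorial bookkeeping: one has to verify that \emph{every} multi-partition $(A, \{L_j\}, \{p_j, q_j\})$ of $\Lambda$ produced by Faa-di-Bruno yields an admissible power of $\htilde$, not merely the extreme cases. The cancellation $a_z(1 - \frac{3\delta}{2}) = 0$ is exactly what forces the final exponent to depend only on $|\alpha|$ (not on $a_z$ or on $|\beta|$), and this is precisely why the value $\delta = \frac{2}{3}$ is the one selected in \eqref{delta}.
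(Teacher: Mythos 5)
Your proposal is correct and follows essentially the same route as the paper: part (i)/(ii) by the Gronwall argument already used for \eqref{estDx} combined with the time-integrated bounds of Corollary \ref{derp'}, and the higher-order estimates by induction on $k$, Faa-di-Bruno, the coupled linear system $\dot U=\mathcal{A}(s)U+F$ with $U(0)=0$, and the cancellation $1-\tfrac{3\delta}{2}=0$; you also correctly identify the $p'_z$ terms as the worst ones. The only (immaterial) quibble is that the coupled Gronwall yields the bound $\htilde^{\vert\alpha\vert(1-2\delta)+\delta/2}$ for \emph{both} components rather than the extra $\htilde^{\delta/2}$ you claim for $D^\Lambda z$ alone, but this is exactly what the statement requires.
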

\begin{proof}
The estimates of the first terms in  $(i)$ and $(ii)$ have been proved in \eqref{estDx}.
By exactly the same argument one deduces the estimates  on the second terms. 

We shall prove \eqref{est:z:zeta} by induction on $k$. According to $(i)$ and $(ii)$ it is  true for $k =1.$
   Assume it is true up to the order $k$ and let $\vert \alpha \vert +\vert \beta \vert =k+1\geq 2$. 
Let us set $\Lambda =(\alpha, \beta),$ $D^\Lambda = D_z^\alpha D_\zeta^\beta$ and  $m(s) =(s, z(s),\zeta(s), \htilde ).$  
By the Faa-di-Bruno formula we have
\begin{equation}\label{F1-F2}
\left\{
\begin{aligned}
 &D^\Lambda[p'_\zeta(m(s))] = p''_{z\zeta}(m(s)) D^\Lambda z(s) + p''_{\zeta\zeta}(m(s)) D^\Lambda \zeta(s) + F_1(s),\\
 &D^\Lambda[p'_z(m(s))] = p''_{zz}(m(s)) D^\Lambda z(s) + p''_{z\zeta}(m(s)) D^\Lambda \zeta(s) + F_2(s).  
 \end{aligned}
\right.
\end{equation}
It follows that $U(s) = (D^\Lambda z(s),D^\Lambda \zeta(s)$ is the solution of the problem 
$$\dot{U}(s) = \mathcal{A}(s) U(s) + F(s), \quad U(0) = 0$$
where $\mathcal{A}(s)$ has been defined in \eqref{matrice} and $F(s) = (F_1(s), F_2(s)).$

According to the estimates of the symbol $p$ given in Lemma \ref{derp} the worse term is $F_2.$ By the   formula mentionned above we see that  $F_1$ is a finite linear combination of terms of the form
$$
\big(D^A p'_z \big)(m(s)) \prod_{i=1}^r \big(D^{L_i}z(s) \big)^{p_i} \prod_{i=1}^{r} \big(D^{L_i} \zeta(s) \big)^{q_i},
$$
where $ A=(a,b),\quad  2\leq \vert A \vert \leq \vert \Lambda \vert$ and
\begin{equation*}
  L_i=(l_i,l'_i), \quad 1 \leq \vert L_i \vert \leq \vert \Lambda \vert -1, \quad \sum_{i=1}^r p_i = a,\quad   \sum_{i=1}^{r} q_i = b,   \quad \sum_{i=1}^k (\vert p_i \vert +\vert q_i \vert )L_i   = \Lambda. 
  \end{equation*}
It follows from Corollary \ref{derp'} that for $s$ in $[0, \htilde^\delta]$ we have, 
\begin{equation}\label{estDqxi}
\int_0^s \la \big(D^Ap'_z \big)(m(\sigma)) \ra d \sigma \leq  \mathcal{N}_{\vert A \vert +1}(\gamma) \htilde^{(\vert a \vert +1)(1-2 \delta) + \delta}  \leq  \mathcal{N}_{\vert A \vert +1}(\gamma) \htilde^{ \vert a \vert (1-2 \delta) + \frac{\delta}{2}}.
 \end{equation}
 since $1-2 \delta +\delta =  \frac{\delta}{2}.$ Now since $1 \leq \vert L_i \vert \leq \vert \Lambda \vert -1= k$ we have, by the induction,
 \begin{align*}
 & \vert D^{L_i}z(s)\vert \leq \htilde^{\vert l_i\vert (1-2 \delta) + \frac{\delta}{2}} \mathcal{F}_k\big(\Vert V \Vert_{E_0} +   \mathcal{N}_{k+1}(\gamma)\big),\\
 &\vert D^{L_i}\zeta(s)\vert \leq \htilde^{\vert l_i\vert (1-2 \delta)}  \mathcal{F}_k\big(\Vert V \Vert_{E_0} +   \mathcal{N}_{k+1}(\gamma)\big). 
  \end{align*}
 It follows that
  $$
\int_0^s \vert F_2(\sigma) \vert d \sigma 
\leq   \Big(\int_0^s \la \big(D^Ap'_z \big)(m(\sigma)) \ra d \sigma \Big) \mathcal{F}_{k+1}\big(\Vert V \Vert_{E_0} +   \mathcal{N}_{k+1}(\gamma)\big) \htilde^{M} 
     $$
     where $M = \sum_{i=1}^r \big( (\vert p_i \vert + \vert q_i \vert) \vert l_i\vert (1- 2 \delta) \vert  + \vert p_i \vert \frac{\delta}{2}\big) = \vert \alpha \vert (1-2 \delta) + \vert a \vert \frac{\delta}{2}.$ It follows from \eqref{estDqxi} and the fact that $1 - \frac{3 \delta}{2} = 0$ that 
     $$\int_0^s \vert F_2(\sigma) \vert d \sigma  \leq \mathcal{F}_{k+1}\big(\Vert V \Vert_{E_0} +   \mathcal{N}_{k+1}(\gamma)\big)\htilde^{ \vert \alpha \vert (1-2 \delta) + \frac{\delta}{2}}.
 $$
 Since $D^Ap'_\zeta$ has even a better estimate, the same computation shows that 
 $$\int_0^s \vert F_1(\sigma) \vert d \sigma  \leq \mathcal{F}_{k+1}\big(\Vert V \Vert_{E_0} +   \mathcal{N}_{k+1}(\gamma)\big)\htilde^{ \vert \alpha \vert (1-2 \delta) + \frac{\delta}{2}}.
 $$ 
Then we write
 $$
U(s) = \int_0^s F(\sigma) d \sigma + \int_0^s \mathcal{A}(\sigma)U(\sigma)d \sigma 
$$    
 and we use the above estimates on $F_1, F_2$, \eqref{calA} and   the Gronwall lemma to see that the step $k+1$ of the induction is achieved. This completes the proof of Proposition \ref{estflow}.
  \end{proof}
\begin{coro}\label{estkappa}
For every $k \geq 1$ there exists $\mathcal{F}_k: \xR^+ \to \xR^+$ non decreasing such that for every   $(\alpha, \beta) \in \xN^d \times \xN^d$ with  $ \vert \alpha \vert +  \vert \beta \vert =k$     we have
 \begin{equation*}
\begin{aligned}
& (i) \quad &&\vert D_z^\alpha D_\xi^\beta \kappa(s,z,\xi,\htilde) \vert \leq \mathcal{F}_k\big(\Vert V \Vert_{E_0} +   \mathcal{N}_{k+1}(\gamma)\big) \htilde ^{ \vert \alpha \vert  (1-2 \delta) + \frac{\delta}{2}},\\
&(ii) \quad &&\Big\vert D_z^\alpha D_\xi^\beta \big(\frac{\partial \phi}{\partial z}\big)(s,z,\xi,\htilde )\Big\vert
 \leq \mathcal{F}_k\big(\Vert V \Vert_{E_0} +   \mathcal{N} _{k+1}(\gamma)\big) \htilde ^{ \vert \alpha \vert  (1-2 \delta)},\\
 &(iii)\quad && \vert D_\xi^\beta \phi (s,z,\xi,\htilde) \vert \leq \mathcal{F}_k\big(\Vert V \Vert_{E_0} +   \mathcal{N} _{k+1}(\gamma)\big)\vert s \vert, \quad \vert \beta \vert \geq 2,
  \end{aligned}
\end{equation*}
for all $s \in I_{\htilde }, z \in \xR^d, \xi \in \mathcal{C}_0.$   This implies that $\kappa \in \dot{S}_{2\delta-1}^{\delta/2}$ and $  \frac{\partial \phi}{\partial z}  \in  {S}_{2\delta-1}^0.$
\end{coro}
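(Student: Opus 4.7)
My plan is to prove $(i)$ by induction on $k=|\alpha|+|\beta|$, then deduce $(ii)$ from the formula $\partial\phi/\partial z=\zeta(t;\kappa,\xi,\htilde)$ by invoking the symbolic composition rule, and finally establish $(iii)$ by differentiating the integral representation of $\phi$ and bounding the integrand pointwise. The starting point for $(i)$ is the defining identity
\[
z(s;\kappa(s,z,\xi,\htilde),\xi,\htilde)=z,
\]
from which differentiation produces a recursion for the derivatives of $\kappa$. The base cases $k=1$ give the two linear relations
\[
\frac{\partial z}{\partial z_0}(s;\kappa,\xi,\htilde)\,\nabla_z\kappa=I,\qquad
\frac{\partial z}{\partial z_0}(s;\kappa,\xi,\htilde)\,\nabla_\xi\kappa=-\partial_\xi z(s;\kappa,\xi,\htilde).
\]
Proposition~\ref{estflow}(i) gives $\partial z/\partial z_0=I+O(\htilde^{\delta/2})$ with uniformly bounded inverse, and combined with $|\partial_\xi z|\le\mathcal{F}\htilde^{\delta/2}$ this matches the right-hand side of $(i)$ at the base step.

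For $|\Lambda|=k\ge 2$ with $\Lambda=(\alpha,\beta)$, applying $D^\Lambda$ to the defining identity and using the multivariate Faà di Bruno formula will produce an equation of the form
\[
\frac{\partial z}{\partial z_0}(s;\kappa,\xi,\htilde)\cdot D^\Lambda\kappa=R_\Lambda,
\]
where $R_\Lambda$ is a finite linear combination of terms of shape $(D_{z_0}^a D_\xi^b z)(s;\kappa,\xi,\htilde)\prod_j(D^{L_j}\kappa)^{p_j}$ with $1\le|a|+|b|\le k$ and $1\le|L_j|<k$. I would bound each such factor using the induction hypothesis on $D^{L_j}\kappa$ together with Proposition~\ref{estflow}, carefully tracking the powers of $\htilde$ thanks to the algebraic identity $1-2\delta+\delta/2=-\delta/2$ coming from $\delta=2/3$. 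Inverting the nearly-identity matrix $\partial z/\partial z_0$ then propagates the bound to $D^\Lambda\kappa$, closing the induction and establishing $\kappa\in\dot{S}^{\delta/2}_{2\delta-1}$.

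Part $(ii)$ should then follow at once from Proposition~\ref{eqeiko}: the identity $\partial\phi/\partial z_j=\zeta_j(t;\kappa(t;z,\xi,\htilde),\xi,\htilde)$ realises $\partial\phi/\partial z$ as a composition in the exact format of Proposition~\ref{est:compo}. One first observes that $(t,z_0,\xi,\htilde)\mapsto\zeta_j$ lies in $S^0_{2\delta-1}$: the uniform bound comes from $\zeta(s)=\xi+O(\htilde^{1/3})$ via integrating the ODE and applying Corollary~\ref{derp'} with $|\alpha|=1$, and the derivative bounds come directly from Proposition~\ref{estflow}. With $\kappa\in\dot{S}^{\delta/2}_{2\delta-1}$ from step~1 and the trivial $\xi\in\dot{S}^0_{2\delta-1}\cap\mathcal{C}_0$, the composition rule yields $\partial\phi/\partial z\in S^0_{2\delta-1}$, which unpacks as the estimate $(ii)$. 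For $(iii)$, since $|\beta|\ge 2$ annihilates the bilinear term $z\cdot\xi$ in~\eqref{phi},
\[
D_\xi^\beta\phi(s,z,\xi,\htilde)=-\int_0^s D_\xi^\beta\bigl[p\bigl(\sigma,z,\zeta(\sigma;\kappa(\sigma;z,\xi,\htilde),\xi,\htilde),\htilde\bigr)\bigr]\,d\sigma.
\]
Expanding the integrand by the chain rule produces a finite sum of products of $D_\zeta^a p$ (uniformly bounded by Lemma~\ref{derp}) with $D_\xi^{L_j}$-derivatives of $\zeta(\sigma;\kappa,\xi,\htilde)$, each uniformly bounded by the just-proved $(ii)$ taken with $|\alpha|=0$; the integrand is therefore $O(1)$ and integration over $[0,s]$ gives the factor $|s|$ claimed in $(iii)$.

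I expect the main obstacle to lie in the inductive step of $(i)$: the Faà di Bruno expansion entangles high-order derivatives of the bicharacteristic flow (measured in the variables $(z_0,\xi)$) with high-order derivatives of the inverse map $\kappa$ (measured in $(z,\xi)$), and the powers of $\htilde$ only balance because of the precise choice $\delta=2/3$ making $1-2\delta+\delta/2=-\delta/2$. Once this combinatorial bookkeeping is carried out, parts $(ii)$ and $(iii)$ reduce to direct applications of Propositions~\ref{estflow} and~\ref{est:compo} and of Lemma~\ref{derp}, with no further surprises.
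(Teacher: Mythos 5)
Your proposal follows essentially the same route as the paper: induction on the defining identity $z(s;\kappa(s;z,\xi,\htilde),\xi,\htilde)=z$ for $(i)$, the composition rule of Proposition~\ref{est:compo} applied to $\frac{\partial \phi}{\partial z}=\zeta(s;\kappa,\xi,\htilde)$ for $(ii)$, and differentiation of the integral formula \eqref{phi} for $(iii)$; the paper merely presents $(ii)$--$(iii)$ before $(i)$, whereas your order reflects the actual logical dependency. One small slip: the balancing identity you need is $1-2\delta+\frac{\delta}{2}=0$ (it is $1-2\delta$ alone that equals $-\frac{\delta}{2}$ when $\delta=\frac{2}{3}$), and this is exactly the equality the paper invokes to close the induction.
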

\begin{proof}
We first show $(ii)$ and  $(iii)$. Recall that 
$$
\frac{\partial \phi}{\partial z}(s,z,\xi,\htilde ) = \zeta(s; \kappa(s; z,\xi,\htilde ), \xi, \htilde ).
$$
By   Proposition \ref{estflow} (since $\zeta$ is bounded) we have $\zeta\in S^0_{2\delta-1}.$ By $(i)$ we have $\kappa \in \dot{S}^{\delta/2}_{2\delta-1}$ and by Remark \ref{remarque1} we have $\xi \in \dot{S}^0_{2\delta-1}.$  Then Proposition \ref{est:compo} implies that $\frac{\partial \phi}{\partial z} \in \dot{S}_{2\delta-1}^0.$ Moreover $\frac{\partial \phi}{\partial z} $ is bounded since $\vert \zeta(s) - \xi \vert \leq \int_0^s \vert\frac{ \partial p}{\partial z}(t, \ldots) \vert dt \leq \mathcal{F} \big(\Vert V \Vert_{E_0} +   \mathcal{N}_2(\gamma)\big) \widetilde{h}^\frac{\delta}{2}$ and $\xi \in \mathcal{C}_0.$ Now $(iii)$ follows from the definition \eqref{phi} of the phase, the facts that $p \in S^0_{2\delta-1}, z\in \dot{S}^\mez_{2\delta-1}, \zeta(s; \kappa(s; z,\xi,\htilde ), \xi, \htilde )\in \dot{S}^0_{2\delta-1}$ and Proposition \ref{est:compo}.
 
  We are left with the proof of $(i)$. We proceed by induction on   $\vert \alpha \vert + \vert \beta \vert = k \geq1.$ Recall that  by definition of $\kappa$ we have the equality $z(s; \kappa(s;z,\xi,\htilde ), \xi, \htilde ) = z$. It follows that  
 $$\frac{\partial z}{\partial z} \cdot \frac{\partial \kappa}{\partial z} = Id, \quad  \frac{\partial z}{\partial z} \cdot\frac{\partial \kappa}{\partial \xi} =  -\frac{\partial z}{\partial \xi} .$$
 Then the estimate for $k =1$ follows from $(i)$ in Proposition \ref{estflow}. Assume the estimate true up to the order $k$ and let $\Lambda =(\alpha, \beta), \vert \Lambda \vert = k+1 \geq 2.$ Then differentiating  $\vert \Lambda \vert $ times the first above equality we see that $\frac{\partial z}{\partial z} \cdot D^\Lambda \kappa$ is a finite linear combination of terms of the form
 $$ (2)=  D^A z(\cdots) \prod_{j=1}^r \big(D^{L_j}\kappa\big)^{p_j} \prod_{j=1}^r \big(D^{L_j} \xi)^{q_j}$$
where $A=(a,b) \quad 2 \leq \vert A \vert \leq \vert \Lambda \vert, \quad L_j =(l_j,m_j),\quad  1 \leq \vert L_j \vert \leq k$ and 
$$ \sum_{j=1}^r p_j = \alpha,\quad \sum_{j=1}^r q_j = \beta, \quad \sum_{j=1}^r (\vert p_j \vert +\vert q_j \vert )L_j = (\alpha, \beta).$$
 We use the estimate (given by Proposition \ref{estflow})
$$ \vert D^A z(\cdots) \vert \leq  \mathcal{F}_{k+1}\big(\Vert V \Vert_{E_0} +   \mathcal{N}^0_{k+2}(\gamma)\big)\htilde ^{ \vert a\vert  (1-2 \delta) + \frac{\delta}{2}},$$ 
 the induction, the fact that $\xi \in \dot{S}^0_{2\delta-1}$  and the equality $1-2\delta + \frac{\delta}{2} =0$ to see that  
$$ \vert (2) \vert \leq \mathcal{F}_{k+1}\big(\Vert V \Vert_{E_0} +   \mathcal{N}^0_{k+2}(\gamma)\big)\htilde ^{ \vert \alpha\vert  (1-2 \delta) + \frac{\delta}{2}}.$$
Then we use Proposition \ref{estflow} $(i)$ to conclude the induction.
\end{proof}
\begin{rema}\label{rem:est:theta}
Since $\theta(t,z,z',\xi,\htilde) =  
\int_0^1 \frac{\partial \phi}{\partial z}(t, \lambda z + (1-\lambda) z' , \xi,\htilde) \, d \lambda $ we have also the estimate
\begin{equation}\label{est:theta}
\Big\vert D_z^{\alpha_1} D_{z'}^{\alpha_2}D_\xi^\beta \theta(s,z,z',\xi,\htilde )\Big\vert
 \leq \mathcal{F}_k\big(\Vert V \Vert_{E_0} +   \mathcal{N} _{k+1}(\gamma)\big) \htilde ^{ (\vert \alpha_1 \vert + \vert \alpha_2 \vert) (1-2 \delta)}.
\end{equation}
for $ \vert \alpha_1 \vert + \vert \alpha_2 \vert + \vert \beta \vert =k$
\end{rema}

\subsection{The transport equations}

According to \eqref{Jfinal} and \eqref{conjugaison} if $\phi$ satisfies the eikonal equation we have
\begin{equation}\label{RS}
\begin{aligned}
&e^{-i{\htilde}^{-1}\phi }(\htilde  \partial_t + \htilde c+iP)\big(e^{i{\htilde}^{-1}\phi }\widetilde{b} \big) \\
&\quad=\htilde  \partial_t \widetilde{b}  +\htilde c \widetilde{b} +   i\sum_{\vert \alpha \vert=1}^{N-1}\frac{\htilde ^{\vert \alpha \vert}}{\alpha!}
D_{z'}^\alpha\Big[(\partial^\alpha_\eta \widetilde{p})\big(t,z,z',\theta(t,z,z',\htilde )),\htilde \big) \widetilde{b} (z')\Big]\Big\arrowvert_{z'=z} \\
&\quad\quad+R_N +S_N
\end{aligned}
\end{equation}

Recall (see \eqref{tildeb}) that $\widetilde{b}  = b \Psi_0$. Let us set 
\begin{equation} \label{CT1}
T_N=   \partial_t b + c +i \sum_{1\leq \vert \alpha \vert \leq {N-1}}\frac{\htilde ^{\vert \alpha \vert - 1}}{\alpha!} D_{z'}^\alpha\Big[(\partial^\alpha_\eta \widetilde{p})\big(t,z,z',\theta(t,z,z',\htilde ), \htilde \big) b(z')\Big]\Big\arrowvert_{z'=z}. 
\end{equation}
Then 
\begin{equation}\label{UN}
e^{-i{\htilde}^{-1}\phi }(\htilde  \partial_t + \htilde c +iP)\big(e^{i{\htilde}^{-1}\phi }\widetilde{b} \big) = \htilde T_N \Psi_0  +U_N + R_N+S_N.
\end{equation}

Our purpose is to show that one can find a symbol $b$ such that, in a sense to be explained,
\begin{equation}\label{OhN}
T_N = \mathcal{O}({\htilde }^M), \quad \forall M \in \xN.
\end{equation}
 We set
\begin{equation*}
\mathcal{L}b = \partial_t b + c + \sum_{i=1}^n \frac{\partial}{\partial {z'}_i} 
\Big[ \frac{\partial \widetilde{p}}{\partial \zeta_i}\big(t,z,z',\theta(t,z,z',\htilde ), \htilde \big) b(t,z',\xi,\htilde )\Big]\Big\arrowvert_{z'=z}
\end{equation*}
Then  we can write
\begin{equation}\label{mathcalL}
\mathcal{L} = \frac{\partial}{\partial t} + \sum_{i=1}^d a_j(t,z,\xi,\htilde ) \frac{\partial}{\partial {z}_i} + c_0(t,z,\xi,\htilde )
\end{equation}
where
\begin{equation}\label{coeffL}
\left\{
\begin{aligned}
a_i(t,z,\xi,\htilde ) &= \frac{\partial p}{\partial \zeta_i}\Big(t,z, \frac{\partial \phi}{\partial z}(t,z,\xi,\htilde ), \htilde \Big),\\
c_0(t,z,\xi,\htilde )&= \sum_{i=1}^d\frac{\partial}{\partial z'_i} 
\Big[ \frac{\partial \widetilde{p}}{\partial \zeta_i}\big(t,z,z',\theta(t,z,z',\xi, \htilde ), \htilde \big)  \Big]\Big\arrowvert_{z'=z} + c(t,z,\htilde),\\
\theta(t,z,z',\xi, \htilde )&= \int_0^1 \frac{\partial \phi}{\partial z}\big(t,\lambda z+(1-\lambda) z',\xi,\htilde \big)\, d\lambda  
\end{aligned}
\right.
\end{equation}
and $c$ has been defined in \eqref{c=}.

Notice that, with $m=(t,z,\xi,\htilde)$ we have 
\begin{equation}\label{c0}
c_0(m) = \sum_{i=1}^d \frac{\partial^2 \widetilde{p}}{\partial\zeta_i \partial z'_i}(t,z,z,\frac{\partial \phi}{\partial z}(m), \htilde) + \mez \sum_{i,j=1}^d \frac{\partial^2  {p}}{\partial\zeta_i \partial \zeta_j}(t,z,\frac{\partial \phi}{\partial z}(m),\htilde))\frac{\partial^2\phi}{\partial z_i \partial z_j}(m) +c.
\end{equation}
Then we can write
\begin{equation}\label{K=L}
T_N = \mathcal{L}b  + i \sum_{2\leq \vert \alpha \vert 
\leq {N-1}}\frac{\htilde ^{\vert \alpha \vert - 1}}{\alpha!} 
D_{z'}^\alpha\Big[(\partial^\alpha_\zeta \widetilde{p})\big(t,z,z', \theta(t,z,z',\htilde), \htilde \big) b(t,z',\xi, \htilde )\Big]\Big\arrowvert_{z'=z}. 
\end{equation}

We shall seek $b$ on the form
\begin{equation}\label{formedeb}
b = \sum_{j=0}^N \htilde ^j \, b_j. 
\end{equation}
Including this expression of $b$ in \eqref{K=L} after a change of indices we obtain
$$
T_N= \sum_{k=0}^N \htilde ^k \mathcal{L}b_k 
+ i  \sum_{k=1}^{N+1} \htilde ^k\sum_{2\leq \vert \alpha \vert \leq N-1}\frac{\htilde ^{\vert \alpha \vert - 2}}{\alpha!}D_{z'}^\alpha \big[(\partial^\alpha_\zeta \widetilde{p})(\cdots)b_{k-1}\big]\big\arrowvert_{z'=z}.
$$
We will take $b_j$ for  $ j=0, \ldots, N,$ as solutions of the following problems
\begin{equation}\label{bk}
\left\{
\begin{aligned}
\mathcal{L}b_0 &= 0, \quad b_0 \arrowvert_{t=0} = \chi, \quad \chi \in C_0^\infty(\xR^d),\\
\mathcal{L}b_j & = F_{j-1} := -i  \sum_{2\leq \vert \alpha \vert \leq N-1}
\frac{\htilde ^{\vert \alpha \vert - 2}}{\alpha!}D_{z'}^\alpha 
\big[(\partial^\alpha_\zeta \widetilde{p})(\cdots)b_{j-1}\big]\big\arrowvert_{z'=z}, \quad   b_j \arrowvert_{t=0} =0.
\end{aligned}
\right.
\end{equation}
This choice will imply that
\begin{equation}\label{n200}
T_N = i\htilde ^{N+1}\sum_{2\leq \vert \alpha \vert \leq N-1}
\frac{\htilde ^{\vert \alpha \vert - 2}}{\alpha!}
D_{z'}^\alpha \big[(\partial^\alpha_\zeta \widetilde{p})(\cdots)b_{N}\big]\big\arrowvert_{z'=z}.
\end{equation}

 \begin{prop}\label{b:existe}
 The system \eqref{bk} has a unique solution   with $ b_j \in S^0_{2\delta-1}.$
     \end{prop}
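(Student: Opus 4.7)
The plan is to solve the system \eqref{bk} explicitly along the characteristics of $\mathcal{L}$, and then verify $b_j \in S^0_{2\delta-1}$ by induction on $j$ using the symbolic calculus of Proposition~\ref{est:compo}. First, by \eqref{coeffL} the coefficient $a_i$ of $\partial_{z_i}$ in $\mathcal{L}$ equals $(\partial_{\zeta_i} p)(t,z,\partial_z\phi(t,z,\xi,\htilde),\htilde)$, which along the bicharacteristic issued from $(z_0,\xi)$ equals $\dot z_i(t;z_0,\xi,\htilde)$ thanks to \eqref{eiksuite}. Hence the integral curves of $\partial_t + a\cdot\nabla_z$ are $s\mapsto z(s;z_0,\xi,\htilde)$, inverted as $z_0=\kappa(t;z,\xi,\htilde)$ via \eqref{diffeo1}. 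Setting
$$E(t,s,z,\xi,\htilde) \defn \exp\Bigl(-\int_s^t c_0\bigl(\sigma,z(\sigma;\kappa(t;z,\xi,\htilde),\xi,\htilde),\xi,\htilde\bigr)\,d\sigma\Bigr),$$
the unique smooth solutions of \eqref{bk} are $b_0(t,z,\xi,\htilde) = \chi(\xi)\,E(t,0,z,\xi,\htilde)$ and, for $j\geq 1$,
$$b_j(t,z,\xi,\htilde) = \int_0^t E(t,s,z,\xi,\htilde)\,F_{j-1}\bigl(s,z(s;\kappa(t;z,\xi,\htilde),\xi,\htilde),\xi,\htilde\bigr)\,ds.$$
Uniqueness is immediate from the method of characteristics applied to a linear transport equation with smooth bounded coefficients.

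Next, I would verify that the coefficients $a_i$ and $c_0$ of $\mathcal{L}$ appearing in \eqref{c0} belong to $S^0_{2\delta-1}$. These are built from $\partial_\zeta^\alpha p$ and $\partial_\zeta^\alpha\partial_{z'}^\beta \widetilde p$ (controlled by Lemma~\ref{derp} and Remark~\ref{rem:est:ptilde}) evaluated at $\partial_z\phi\in S^0_{2\delta-1}$ (Corollary~\ref{estkappa}(ii)), from second derivatives of $\phi$ that are bounded through the flow estimates by differentiating \eqref{phase}, and from $c\in S^0_{2\delta-1}$ (Lemma~\ref{est:c}); Proposition~\ref{est:compo} then gives $a_i,c_0\in S^0_{2\delta-1}$.

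Combining Proposition~\ref{estflow} (which gives $z_0\mapsto z(s;z_0,\xi,\htilde)-z_0$ in $\dot{S}^{\delta/2}_{2\delta-1}$) with Corollary~\ref{estkappa}(i) ($\kappa\in \dot{S}^{\delta/2}_{2\delta-1}$) and $\xi\in\dot{S}^0_{2\delta-1}$, Proposition~\ref{est:compo} yields that the map $(t,z,\xi,\htilde)\mapsto c_0\bigl(s,z(s;\kappa(t;z,\xi,\htilde),\xi,\htilde),\xi,\htilde\bigr)$ lies in $S^0_{2\delta-1}$ uniformly in $s\in[0,\htilde^\delta]$. Integrating in $s$ keeps the class, and Remark~\ref{remarque1}(3) then gives $E(t,s,\cdot)\in S^0_{2\delta-1}$. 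For the induction, assuming $b_{j-1}\in S^0_{2\delta-1}$, each term in $F_{j-1}$ carries a prefactor $\htilde^{|\alpha|-2}$ with $|\alpha|\geq 2$ and $|\alpha|$ derivatives $D_{z'}^\alpha$ hitting $\widetilde p$, its argument $\theta$ (Remark~\ref{rem:est:theta}), and $b_{j-1}$, each $z$-derivative costing $\htilde^{-(2\delta-1)}$. Thus each term is of order $\htilde^{(|\alpha|-2)-|\alpha|(2\delta-1)} = \htilde^{|\alpha|(2-2\delta)-2}$; with $\delta=2/3$ the worst case $|\alpha|=2$ gives $\htilde^{-2/3}$, so $F_{j-1}\in S^{-2/3}_{2\delta-1}$. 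Composition with the flow preserves this class, and integration on $[0,\htilde^{2/3}]$ recovers $S^0_{2\delta-1}$, closing the induction.

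The technical core, and the main obstacle, is the last step: the careful Faà di Bruno bookkeeping of the powers of $\htilde$ produced by each derivative falling on $\widetilde p$, on $\theta$, and on $b_{j-1}$ inside $F_{j-1}$, and the verification that the time integration on $[0,\htilde^\delta]$ exactly absorbs the resulting loss. This depends crucially on the choice $\delta=2/3$, equivalent to $\delta/2 = 2\delta-1 = 1/3$, which is the very arithmetic identity underlying Proposition~\ref{est:compo} and guaranteeing that the recursion in \eqref{bk} preserves the class $S^0_{2\delta-1}$.
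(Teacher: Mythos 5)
Your proposal follows essentially the same route as the paper: solve \eqref{bk} explicitly by the method of characteristics (the paper introduces the flow $Z$ of $\partial_t+a\cdot\nabla_z$ directly, whereas you identify it with the spatial projection of the bicharacteristics via \eqref{eiksuite} — a valid and slightly cleaner observation), write the Duhamel-type formulas \eqref{bj=}, and close an induction in the class $S^0_{2\delta-1}$ using Proposition~\ref{est:compo} and the arithmetic $1-2\delta+\delta=\delta/2$, exactly as in Lemma~\ref{derb}. One inaccuracy to correct: $c_0$ is \emph{not} in $S^0_{2\delta-1}$. By \eqref{c0} and Remark~\ref{rem:est:ptilde}, the term $\partial^2_{\zeta_i z'_i}\widetilde p$ is only $\mathcal{O}(\htilde^{1-2\delta})=\mathcal{O}(\htilde^{-\delta/2})$, the term $\partial^2_{\zeta\zeta}p\cdot\partial^2_{zz}\phi$ is of the same size, and the zeroth-order bound on $c$ involves $\lVert V(t,\cdot)\rVert_{W^{1,\infty}}$, which is only $L^p$ in time; so ``integrating in $s$ keeps the class'' is the wrong mechanism. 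What actually saves the exponential factor is the paper's Step~1: the time integral over $[0,\htilde^\delta]$ \emph{gains} $\htilde^\delta$ (and the H\"older inequality in time for the $c$ contribution), so that $\int_s^\sigma c_0\,dt\in S^{\delta/2}_{2\delta-1}$, and then Remark~\ref{remarque1}(3) applies since $\delta/2\ge 0$. This is the same absorption-by-time-integration device you correctly invoke for $F_{j-1}$, so the slip is repairable, but as written that step of your argument does not go through.
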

 We  prove this result by induction.   To solve these equations   we use the method of characteristics and we begin by preliminaries. 
\begin{lemm}\label{aj}
 We have $a_i \in S^0_{2\delta-1} $ for $i=1,\ldots,d.$
  \end{lemm}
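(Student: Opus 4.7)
The plan is to realise $a_i$ as a composition to which Proposition~\ref{est:compo} applies, with all ingredients already controlled by earlier results. Writing
$$a_i(t,z,\xi,\htilde)=f\bigl(t,U(t,z,\xi,\htilde),V(t,z,\xi,\htilde),\htilde\bigr),\qquad f(t,u,v,\htilde)\defn \frac{\partial p}{\partial\zeta_i}(t,u,v,\htilde),$$
with $U(t,z,\xi,\htilde):=z$ and $V(t,z,\xi,\htilde):=\frac{\partial\phi}{\partial z}(t,z,\xi,\htilde)$, it suffices to check the three hypotheses of Proposition~\ref{est:compo} with $m=0$: namely $f\in S^0_{2\delta-1}$ in the composed variables $(u,v)$, $U\in\dot S^{\delta/2}_{2\delta-1}$, and $V\in\dot S^0_{2\delta-1}$ with $V$ taking values in a fixed ring.

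The outer symbol is handled by Lemma~\ref{derp}: for any $k\ge 0$ and any $(\alpha,\beta)$ with $|\alpha|+|\beta|\le k$, shifting the index by one to absorb the extra $\zeta_i$-derivative gives
$$\bigl|D_u^\alpha D_v^\beta f(t,u,v,\htilde)\bigr|\le \mathcal{F}_{k+1}\bigl(\|V\|_{E_0}+\mathcal{N}_{k+2}(\gamma)\bigr)\,\htilde^{|\alpha|(1-2\delta)},$$
uniformly for $v$ in any fixed compact ring, so $f\in S^0_{2\delta-1}$. The first inner symbol is trivial: by Remark~\ref{remarque1}(2), $U(t,z,\xi,\htilde)=z$ belongs to $\dot S^{\delta/2}_{2\delta-1}$. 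The second inner symbol is exactly what Corollary~\ref{estkappa}(ii) established: $\partial\phi/\partial z\in\dot S^0_{2\delta-1}$. Moreover, the proof of that corollary already yields the range estimate $|V(t,z,\xi,\htilde)-\xi|\le \mathcal{F}(\|V\|_{E_0}+\mathcal{N}_2(\gamma))\,\htilde^{\delta/2}$, so for $\htilde_0$ small enough and $\xi\in\mathcal{C}_0$ the values of $V$ stay in some fixed compact ring (e.g.\ $\mathcal{C}_1$) where the uniform estimates on $f$ are valid. A direct application of Proposition~\ref{est:compo} then gives $a_i\in S^0_{2\delta-1}$.

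The only minor obstacle is cosmetic: Proposition~\ref{est:compo} is stated under the assumption $V\in\mathcal{C}_0$, whereas $\partial\phi/\partial z$ only lies in a slight enlargement of $\mathcal{C}_0$. This causes no difficulty because the proof of Proposition~\ref{est:compo} uses the ring $\mathcal{C}_0$ solely to guarantee that the outer symbol's estimates can be evaluated at $V$; replacing $\mathcal{C}_0$ throughout by any fixed compact ring (on which Lemma~\ref{derp} provides the same estimates) the argument is unchanged. Hence, after taking $\htilde_0$ small enough so that the range of $V$ is contained in such a ring, the conclusion $a_i\in S^0_{2\delta-1}$ follows.
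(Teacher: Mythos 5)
Your proposal is correct and follows exactly the paper's route: the paper's proof of this lemma is precisely the one-line invocation of Lemma~\ref{derp}, Proposition~\ref{est:compo} with $f=\partial p/\partial\zeta_i$, $U=z$, $V=\partial\phi/\partial z$, and Corollary~\ref{estkappa}. You have simply supplied the details (the index shift in Lemma~\ref{derp}, Remark~\ref{remarque1}(2) for $U=z$, and the range check for $\partial\phi/\partial z$) that the paper leaves implicit.
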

\begin{proof}
This follows from Lemma \ref{derp}, Proposition \ref{est:compo} with $f = \frac{\partial p}{\partial \zeta_i}, U(t,z,\xi,\htilde) = z, V(\cdots) =  \frac{\partial \phi}{\partial z}$ and Corollary \ref{estkappa}.
  \end{proof}

Consider now the system of differential equations
$$
\dot{Z}_j(s) = a_j(s,Z(s),\xi,\htilde ), \quad Z_j(0) = z_{j}, \quad  1 \leq j \leq d.
$$
By Lemma \ref{aj}  $a_j   $   is bounded. Therefore this system has a unique solution 
defined on $I_{\htilde }$. Differentiating with respect to $z$   we obtain 
$$
\la \frac{\partial Z}{\partial z}(s) \ra \leq  C+   \mathcal{F}_{ 2 }(\Vert V \Vert_{E_0}+ \mathcal{N}_{2}(\gamma))\int_0^s \htilde ^{1- 2 \delta}\la \frac{\partial Z}{\partial z }(\sigma) \ra d\sigma, \quad 0< s \leq \htilde ^{\delta},
$$
since $ \vert s \vert \, \htilde ^{1- 2 \delta} \leq \htilde ^{1- \delta} = \htilde ^{ \frac{\delta}{2}},$  the Gronwall inequality  shows that $ \la \frac{\partial Z}{\partial z }(s) \ra$ is uniformly bounded.
Using again the equation satisfied by $\frac{\partial Z}{\partial z }(s)$ we deduce that
\begin{equation}\label{est:dZ}
\la \frac{\partial Z}{\partial z }(s) - Id \ra 
\leq     \mathcal{F}_{ 2 }(\Vert V \Vert_{E_0}+ \mathcal{N}_{2}(\gamma)) \, {\htilde }^{ \frac{\delta}{2}}, \quad 0< s \leq \htilde ^{\delta}.
\end{equation}
This shows that the map $z  \mapsto Z(s;z , \xi, \htilde)$ is a global diffeomorphism from $\xR^d$ to itself so
\begin{equation}\label{diffeo3}
Z(s;z , \xi, \htilde ) = z \Longleftrightarrow z  = \omega(s; Z;\xi,\htilde ).
\end{equation}
An analogue computation shows that 
\begin{equation}\label{est:dxi}
\la \frac{\partial Z}{\partial \xi}(s) \ra 
\leq     \mathcal{F}_{ 2 }(\Vert V \Vert_{E_0}+ \mathcal{N}_{2}(\gamma)) \, {\htilde }^{ \delta}, \quad 0< s \leq \htilde ^{\delta}.
\end{equation}
 \begin{lemm}\label{est:derZ}
 
 The function $(s,z,\xi,\htilde) \mapsto Z(s,z,\xi,\htilde)$ belongs to $\dot{S}^{\delta/2}_{2\delta-1}.$
 \end{lemm}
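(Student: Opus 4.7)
The plan is to carry out an induction on $k = |\alpha|+|\beta|$, mimicking the strategy used in the proof of Proposition \ref{estflow}, but applied to the simpler ODE $\dot Z(s) = a(s, Z(s), \xi, \htilde)$ with $Z(0)=z$, where by Lemma \ref{aj} the components of $a$ lie in $S^0_{2\delta-1}$. The target estimate is
\[
|D_z^\alpha D_\xi^\beta Z(s, z, \xi, \htilde)| \leq \mathcal{F}_k\bigl(\Vert V\Vert_{E_0} + \mathcal{N}_{k+1}(\gamma)\bigr) \, \htilde^{\delta/2 - |\alpha|(2\delta-1)},
\]
uniformly for $(s,z,\xi,\htilde) \in I_\htilde \times \xR^d \times \mathcal{C}_0 \times (0, \htilde_0]$. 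Since $\delta = 2/3$ yields $\delta/2 = 2\delta-1 = 1/3$, the base case $k=1$ reduces to the two inequalities $|\partial_z Z| \leq \mathcal{F}$ and $|\partial_\xi Z| \leq \mathcal{F}\,\htilde^{1/3}$, both provided directly by \eqref{est:dZ} and \eqref{est:dxi}.

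For the inductive step, fix $\Lambda = (\alpha, \beta)$ with $|\Lambda| = k+1 \geq 2$ and differentiate the ODE. The multivariate Faà di Bruno formula, noting that the only nontrivial derivatives of the third argument of $a(s,w,\xi',\htilde)$ evaluated at $\xi'=\xi$ are of order one, yields
\[
\frac{d}{ds}(D^\Lambda Z)(s) = \mathcal{B}(s)\, D^\Lambda Z(s) + G(s), \qquad D^\Lambda Z(0) = 0,
\]
where $\mathcal{B}(s) = (\partial_w a)(s, Z(s), \xi, \htilde)$ is the linearization (absorbing the top order contribution in which the full $\Lambda$ falls on a single factor of $Z$) and $G(s)$ is a finite linear combination of terms of the form
\[
(D_w^{A_1} D_{\xi'}^{A_2} a)(s, Z(s), \xi, \htilde) \, \prod_{i=1}^{r} D^{L_i} Z(s),
\]
with $L_i=(\alpha^{(i)},\beta^{(i)})$ satisfying $1 \leq |L_i| \leq k$, $r = |A_1| \geq 1$, and $\sum_i \alpha^{(i)} = \alpha$.

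The key bookkeeping is now the following. Since $a \in S^0_{2\delta-1}$ one has $|D_w^{A_1} D_{\xi'}^{A_2} a| \leq \mathcal{F}\,\htilde^{-r(2\delta-1)}$, while the induction hypothesis gives $\prod_i |D^{L_i} Z| \leq \mathcal{F}\,\htilde^{r\delta/2 - |\alpha|(2\delta-1)}$ (using that the $\alpha^{(i)}$ sum to $\alpha$). Multiplying, each term of $G$ is bounded by $\mathcal{F}\,\htilde^{E}$ with
\[
E = -r(2\delta-1) + r\tfrac{\delta}{2} - |\alpha|(2\delta-1) = r\bigl(1 - \tfrac{3\delta}{2}\bigr) - |\alpha|(2\delta-1) = -|\alpha|(2\delta-1),
\]
thanks to the cancellation $1 - 3\delta/2 = 0$ at $\delta = 2/3$. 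Integrating over $[0,s] \subset [0,\htilde^\delta]$ gives $\int_0^s |G(\sigma)|\,d\sigma \leq \mathcal{F}\,\htilde^{\delta - |\alpha|(2\delta-1)} \leq \mathcal{F}\,\htilde^{\delta/2 - |\alpha|(2\delta-1)}$, and since $\int_0^s \|\mathcal{B}(\sigma)\|\,d\sigma \leq \htilde^\delta \cdot \mathcal{F}\,\htilde^{-(2\delta-1)} = \mathcal{F}\,\htilde^{\delta/2}$ is uniformly bounded, Gronwall's inequality closes the induction. The main obstacle is therefore the exponent accounting in the Faà di Bruno expansion: the gain $r\delta/2$ coming from the induction must exactly balance the loss $-r(2\delta-1)$ from differentiating $a$, which is precisely why the choice $\delta = 2/3$ has been forced throughout.
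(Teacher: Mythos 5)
Your proof is correct and follows essentially the same route as the paper's: induction on $k=|\alpha|+|\beta|$ with base case from \eqref{est:dZ}--\eqref{est:dxi}, separation of the top-order term into a linear ODE $\dot U = \mathcal{B}U + G$ with $U(0)=0$, Faà di Bruno bookkeeping for $G$ using $a\in S^0_{2\delta-1}$ together with the cancellation $1-\tfrac{3\delta}{2}=0$, and Gronwall via the uniform bound $\int_0^s\Vert\mathcal{B}\Vert\,d\sigma\lesssim\htilde^{\delta/2}$. The exponent $\htilde^{\delta/2-|\alpha|(2\delta-1)}$ coincides with the paper's $\htilde^{|\alpha|(1-2\delta)+\delta/2}$, so nothing further is needed.
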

 \begin{proof}
We have to prove that for   $  \vert \alpha \vert + \vert \beta \vert =  k \geq 1 $ we have the estimate
\begin{equation}\label{est:dalphaZ}
\vert D^\alpha_z D^\beta_\xi Z(s;z,\xi,\htilde)  \vert \leq \mathcal{F}_{ k}(\Vert V \Vert_{E_0}+ \mathcal{N}_{k+1} (\gamma))\, \htilde^{\vert \alpha \vert (1-2\delta) + \frac{\delta}{2}}.
\end{equation}
  Indeed this is true for $k=1$ by \eqref{est:dZ}, \eqref{est:dxi}. Assume this is true up to the order $k$ and let $\vert \alpha \vert + \beta \vert =k+1\geq 2.$ Set $U(s) =D^\Lambda  Z(s;z,\xi,\htilde)$ where $\Lambda =(\alpha, \beta).$  It satifies the system $\dot{U}(s) = \frac{\partial a}{\partial z} (s;Z(s),\xi,\htilde)U(s) + F(s),    U(0)=0 $ where $F(s)$ is a finite linear combination of terms of the form 
$$(1) =(D^A a) (\cdots) \prod_{j=1}^r (\partial^{L_j}Z(s))^{p_j}  (\partial^{L_j}\xi)^{q_j} $$
where $A =(a,b),   2 \leq \vert A \vert \leq \vert \Lambda \vert, L_j = (l_j,m_j), 1 \leq \vert L_j \vert \leq k$ and
$$\sum_{j=1}^r (\vert p_j \vert + \vert q_j \vert)  L_j = (\alpha, \beta), \quad \sum_{j=1}^r p_j = \alpha, \quad \sum_{j=1}^r q_j = \beta.$$
 First of all, by Lemma \ref{aj} we can write
 $$ \vert (D^A a) (\cdots) \vert \leq \mathcal{F}_k\big(\Vert V \Vert_{E_0} +   \mathcal{N}_{k+1}(\gamma)\big)\htilde^{\vert a \vert (1-2 \delta)}.$$
 Using the induction and the fact that $\xi \in \dot{S}^0_{2\delta-1}$ we can estimate the product occuring in $(1)$ by $\mathcal{F}_k\big(\Vert V \Vert_{E_0} +   \mathcal{N}_{k+1}(\gamma)\big) \htilde^{M} $ where $$M = \sum_{j=1}^r \Big\{\vert p_j\vert  \big (\vert l_j \vert (1-2 \delta) + \frac{\delta}{2}\big ) + \vert q_j \vert \vert l_j \vert (1-2 \delta)\Big\} = \vert \alpha \vert   (1-2 \delta) + \vert a \vert \frac{\delta}{2}.$$
 It follows that $  \int_0^s \vert F(t)\vert dt  \leq \mathcal{F}_k\big(\Vert V \Vert_{E_0} +   \mathcal{N}_{k+1}(\gamma)\big)\htilde^{\vert \alpha \vert (1-2 \delta) + \delta}$ and we conclude by the Gronwall inequality.
 \end{proof}
 \begin{coro}\label{est:omega}
 The function $\omega$ defined in \eqref{diffeo3} belongs to $ \dot{S}^{\delta/2}_{2\delta-1}.$
\end{coro}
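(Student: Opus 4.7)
The strategy is exactly parallel to the proof of Corollary~\ref{estkappa}~(i), replacing the identity $z(s;\kappa(s;z,\xi,\htilde),\xi,\htilde)=z$ by the defining relation
\begin{equation*}
Z\bigl(s;\omega(s;Z,\xi,\htilde),\xi,\htilde\bigr)=Z,
\end{equation*}
differentiating it, and combining the symbolic estimates on $Z$ provided by Lemma~\ref{est:derZ} with the Gronwall-type invertibility of $\partial_z Z$ given by~\eqref{est:dZ}. I proceed by induction on $k=|\alpha|+|\beta|\ge 1$ where the estimate to be shown reads
\begin{equation*}
\bigl|D_Z^\alpha D_\xi^\beta \omega(s;Z,\xi,\htilde)\bigr|
\le \mathcal{F}_k\bigl(\Vert V\Vert_{E_0}+\mathcal{N}_{k+1}(\gamma)\bigr)\,
\htilde^{|\alpha|(1-2\delta)+\delta/2}.
\end{equation*}

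The base case $k=1$ is handled by differentiating the identity once in $Z$ and once in $\xi$:
\begin{equation*}
\frac{\partial Z}{\partial z}(s;\omega,\xi,\htilde)\cdot\frac{\partial \omega}{\partial Z}=\mathrm{Id},
\qquad
\frac{\partial Z}{\partial z}(s;\omega,\xi,\htilde)\cdot\frac{\partial \omega}{\partial \xi}
=-\frac{\partial Z}{\partial \xi}(s;\omega,\xi,\htilde).
\end{equation*}
The estimate~\eqref{est:dZ} shows that $\partial_z Z$ is within $\mathcal{F}\,\htilde^{\delta/2}$ of the identity for $s\in I_{\htilde}$ and $\htilde$ small, hence invertible with uniformly bounded inverse. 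Combined with~\eqref{est:dxi}, this yields $|\partial_Z\omega|\le \mathcal{F}$ and $|\partial_\xi\omega|\le \mathcal{F}\,\htilde^{\delta}$, which are consistent with (and actually better than) the announced bounds $\htilde^{1-2\delta+\delta/2}=\htilde^{0}$ and $\htilde^{\delta/2}$ (using $\delta=2/3$, so $1-3\delta/2=0$).

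For the inductive step, I apply $D^\Lambda=D_Z^\alpha D_\xi^\beta$ with $|\Lambda|=k+1\ge 2$ to the identity $Z(s;\omega,\xi,\htilde)=Z$. Since $|\Lambda|\ge 2$, the right-hand side contributes $0$. Faà di Bruno expresses the left-hand side as
\begin{equation*}
\frac{\partial Z}{\partial z}(s;\omega,\xi,\htilde)\cdot D^\Lambda \omega \;+\; \text{(remainder)},
\end{equation*}
where the remainder is a finite linear combination of terms of the form
\begin{equation*}
\bigl(D^A Z\bigr)(s;\omega,\xi,\htilde)\prod_{j=1}^r \bigl(D^{L_j}\omega\bigr)^{p_j}\bigl(D^{L_j}\xi\bigr)^{q_j},
\end{equation*}
with $A=(a,b)$, $2\le |A|\le |\Lambda|$, $1\le |L_j|\le k$, and the standard constraints $\sum_j p_j=\alpha$, $\sum_j q_j=\beta$, $\sum_j(|p_j|+|q_j|)L_j=\Lambda$. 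All the factors $D^{L_j}\omega$ are of order $\le k$, so the induction hypothesis applies to them; Lemma~\ref{est:derZ} bounds $|D^A Z|$ by $\mathcal{F}_{|A|}\,\htilde^{|a|(1-2\delta)+\delta/2}$, and the monomials in $\xi$ are in $\dot S^0_{2\delta-1}$ by Remark~\ref{remarque1}. Exactly as in the counting at the end of the proof of Lemma~\ref{est:derZ} and Corollary~\ref{estkappa}, the powers of $\htilde$ combine to
\begin{equation*}
\htilde^{|a|(1-2\delta)+\delta/2+\sum_j|p_j|(|l_j|(1-2\delta)+\delta/2)+\sum_j|q_j||l_j|(1-2\delta)}
=\htilde^{|\alpha|(1-2\delta)+(1+|a|)\delta/2},
\end{equation*}
which, using $|a|\ge 0$, is at most of the required order $\htilde^{|\alpha|(1-2\delta)+\delta/2}$ in absolute value. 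Inverting $\partial_z Z$ (bounded uniformly thanks to~\eqref{est:dZ}) then gives the desired bound on $D^\Lambda\omega$, closing the induction.

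The main obstacle is bookkeeping the multi-index combinatorics produced by Faà di Bruno so that the exponents balance; this is where the specific choice $\delta=2/3$ (equivalently $1-3\delta/2=0$) is essential, exactly as in Corollary~\ref{estkappa}. Once this accounting is in place, the argument is entirely symbolic: invertibility of $\partial_z Z$ near the identity, the symbol estimate $Z\in\dot S^{\delta/2}_{2\delta-1}$ of Lemma~\ref{est:derZ}, the membership $\xi\in\dot S^0_{2\delta-1}$, and the induction hypothesis, assembled via Proposition~\ref{est:compo}.
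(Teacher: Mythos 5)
Your proposal is correct and is exactly the argument the paper intends: its proof of this corollary consists of the single sentence ``The proof is the same as that of Corollary~\ref{estkappa}'', and you have carried out that proof verbatim for the pair $(Z,\omega)$ in place of $(z,\kappa)$ — differentiating the defining identity, inverting $\partial_z Z$ via \eqref{est:dZ}, and closing the Fa\`a di Bruno induction with Lemma~\ref{est:derZ} and the relation $1-\tfrac{3\delta}{2}=0$. (The only blemish is a harmless arithmetic slip in the final exponent, which simplifies to exactly $|\alpha|(1-2\delta)+\delta/2$ rather than $|\alpha|(1-2\delta)+(1+|a|)\delta/2$; either way the required bound holds.)
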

\begin{proof}
The proof is the same as that of Corollary \ref{estkappa}.
\end{proof}

 \begin{proof}[Proof of Proposition \ref{b:existe}]
Now, with the notations in \eqref{coeffL} and \eqref{bk} we have 
$$
\frac{d}{ds} \big[b_j(s,Z(s)) \big] = \big( \frac{\partial u}{\partial t} + a \cdot \nabla u \big)(s, Z(s) )= -(c_0 u)(s,Z(s)) +F_{j-1}(s,z(s)),\quad  j \geq 0
$$
with $F_{-1} =0.$ It follows that 
$$
\frac{d}{ds} \Big[e^ {  \int_0^s c_0(\sigma,Z(\sigma)) d \sigma } b_j(s,Z(s)) \Big]
= e^{  \int_0^s c_0(\sigma,Z(\sigma)) \, d \sigma  }F_{j-1}(s,Z(s)),
$$
Using \eqref{diffeo3} we see that the unique solution of \eqref{bk} is given by 
\begin{equation}\label{bj=}
\left\{
\begin{aligned}
b_0(s,z,\xi,\htilde ) &= \chi(\xi)  \,  \text{exp} \Big({ \int_0^s   c_0(t, Z(t; \omega(s,z,\xi,\htilde ), \xi, \htilde))  \,dt  }\Big), 
 \\
   b_j(s,z,\xi,\htilde ) &= \int_0^s 
e^{ \int_s^\sigma   c_0(t, Z(t; \omega(s,z,\xi,\htilde ), \xi, \htilde)  \,dt  } 
F_{j-1}( \sigma, Z(\sigma; \omega(s,z,\xi, \htilde ),\xi, \htilde)  \,d \sigma.
\end{aligned}
\right.
\end{equation}

 The last step in the proof of Proposition \ref{b:existe} is contained in the following lemma.
 \begin{lemm}\label{derb}
We have $b_j \in S^0_{2\delta-1}.$
\end{lemm}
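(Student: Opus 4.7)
The argument is by induction on $j$, and both the base case and the inductive step exploit three ingredients: \emph{(i)} that $c_0\in S^0_{2\delta-1}$; \emph{(ii)} an extension of Proposition~\ref{est:compo} allowing both inner symbols to lie in $\dot{S}^{\delta/2}_{2\delta-1}$, which gives $W(t,s,z,\xi,\htilde):=Z(t,\omega(s,z,\xi,\htilde),\xi,\htilde)\in\dot{S}^{\delta/2}_{2\delta-1}$ uniformly for $t\in I_{\htilde}$; \emph{(iii)} the gain of a factor $\htilde^{\delta}$ arising from integrating over $[0,s]\subset[0,\htilde^{\delta}]$.

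For (i), formula \eqref{c0} writes $c_0$ as a sum of three pieces: $c\in S^0_{2\delta-1}$ by Lemma~\ref{est:c}; the term $\partial_{\zeta_i}\partial_{z'_i}\widetilde{p}(t,z,z,\partial_z\phi,\htilde)$ lies in $S^{1-2\delta}_{2\delta-1}\subset S^0_{2\delta-1}$ by Remark~\ref{rem:est:ptilde}, Corollary~\ref{estkappa}(ii) and Proposition~\ref{est:compo}; and $\partial^2_{\zeta_i\zeta_j}p(t,z,\partial_z\phi,\htilde)\,\partial^2_{z_iz_j}\phi$ is the product of an $S^0_{2\delta-1}$ factor (Lemma~\ref{derp}) with an $S^{1-2\delta}_{2\delta-1}$ factor (Corollary~\ref{estkappa}(ii)), hence also in $S^0_{2\delta-1}$. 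For the base case $j=0$, composition of $c_0$ with $W\in\dot{S}^{\delta/2}_{2\delta-1}$ and with $\xi\in\dot{S}^0_{2\delta-1}\cap\mathcal{C}_0$ yields $c_0(t,W,\xi,\htilde)\in S^0_{2\delta-1}$ uniformly for $t\in[0,s]$; integrating in $t$ over an interval of length $|s|\le\htilde^{\delta}$ produces the exponent appearing in \eqref{bj=}, which therefore lies in $S^{\delta}_{2\delta-1}\subset S^0_{2\delta-1}$. Remark~\ref{remarque1}(3) then places $\exp(\cdots)$ in $S^0_{2\delta-1}$, and multiplying by $\chi(\xi)\in S^0_{2\delta-1}$ gives $b_0\in S^0_{2\delta-1}$.

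For the inductive step, assume $b_{j-1}\in S^0_{2\delta-1}$. Leibniz together with Remark~\ref{rem:est:ptilde}, Remark~\ref{rem:est:theta} and the induction hypothesis show that, for each $\alpha$ with $2\le|\alpha|\le N-1$, the corresponding summand
\[
\htilde^{|\alpha|-2}\,D_{z'}^\alpha\bigl[(\partial_\zeta^\alpha\widetilde{p})(t,z,z',\theta(t,z,z',\xi,\htilde),\htilde)\,b_{j-1}(t,z',\xi,\htilde)\bigr]\big|_{z'=z}
\]
lies in $S^{(|\alpha|-2)+|\alpha|(1-2\delta)}_{2\delta-1}=S^{2|\alpha|/3-2}_{2\delta-1}$, since each of the $|\alpha|$ derivatives in $z'$ costs a factor $\htilde^{1-2\delta}$ and collapses into the $z$-symbol class after setting $z'=z$. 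The worst term corresponds to $|\alpha|=2$, so $F_{j-1}\in S^{-\delta}_{2\delta-1}$. Applying again the composition result to $F_{j-1}(\sigma,Z(\sigma,\omega(s,\cdot),\cdot,\htilde),\cdot,\htilde)$ and multiplying by the exponential prefactor in \eqref{bj=} (which belongs to $S^0_{2\delta-1}$ by the base case) keeps the integrand in $S^{-\delta}_{2\delta-1}$; the integration over $\sigma\in[0,s]\subset[0,\htilde^{\delta}]$ then gains precisely a factor $\htilde^{\delta}$, so that $b_j\in S^{-\delta+\delta}_{2\delta-1}=S^0_{2\delta-1}$ and the induction closes.

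The main technical obstacle is ingredient (ii): Proposition~\ref{est:compo} as stated requires the second inner symbol to lie in $\dot{S}^0_{2\delta-1}$, whereas the composition $W=Z\circ\omega$ plugs a function from $\dot{S}^{\delta/2}_{2\delta-1}$ into the $z$-slot of $Z\in\dot{S}^{\delta/2}_{2\delta-1}$. This is settled by a Faa--di--Bruno bookkeeping identical in structure to the proofs of Proposition~\ref{est:compo} and Lemma~\ref{est:derZ}; the choice $\delta=\frac{2}{3}$ enters through the arithmetic identities $1-\frac{3\delta}{2}=0$ and $(1-2\delta)+\frac{\delta}{2}=-\frac{\delta}{2}$, which guarantee that no unfavourable power of $\htilde$ accumulates under differentiation, yielding the required estimates and completing the proof.
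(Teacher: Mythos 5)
Your overall architecture coincides with the paper's: induction on $j$, stability of the classes $S^m_{2\delta-1}$ under composition, and the gain of a factor $\htilde^{\delta}$ from integrating over a time interval of length $\htilde^{\delta}$. Two points, however, need correction.

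First, the claim $c_0\in S^0_{2\delta-1}$ is false, and the inclusion $S^{1-2\delta}_{2\delta-1}\subset S^0_{2\delta-1}$ you invoke goes the wrong way: with $\delta=\frac{2}{3}$ one has $1-2\delta=-\frac{1}{3}<0$, and by Remark~\ref{remarque1} it is $S^{0}_{2\delta-1}$ that is contained in $S^{1-2\delta}_{2\delta-1}$, not conversely. The terms $\partial_{\zeta_i}\partial_{z'_i}\widetilde p$ and $\partial^2_{\zeta\zeta}p\cdot\partial^2_{zz}\phi$ in \eqref{c0} are only of order $1-2\delta=-\delta/2$; moreover the term $c$ is controlled by Lemma~\ref{est:c} only through $\Vert V(t,\cdot)\Vert_{W^{1,\infty}}$, which is merely $L^p$ in $t$, so $c_0$ is not even uniformly bounded pointwise in time with constants depending only on $\Vert V\Vert_{E_0}$. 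This is precisely why the paper never estimates $c_0$ itself but only $\int_s^\sigma c_0(t,Z(\cdots))\,dt$, which lands in $S^{\delta/2}_{2\delta-1}$: the time integration simultaneously supplies the H\"older-in-time control of $V$ and the factor $\htilde^{\delta}$ compensating $\htilde^{-\delta/2}$, and Remark~\ref{remarque1}(3) then applies since $\delta/2\geq 0$. Your final conclusion for the base case survives because you do integrate in time, but the intermediate assertion and its justification are wrong as written.

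Second, the ``main technical obstacle'' you identify does not exist: in Proposition~\ref{est:compo} it is the first slot $U$ that is allowed to lie in $\dot{S}^{\delta/2}_{2\delta-1}$, while the second slot $V$ must lie in $\dot{S}^{0}_{2\delta-1}$. In $Z\bigl(t;\omega(s;z,\xi,\htilde),\xi,\htilde\bigr)$ the function $\omega\in\dot{S}^{\delta/2}_{2\delta-1}$ occupies the first slot and $\xi\in\dot{S}^{0}_{2\delta-1}$ the second, so the proposition applies verbatim with $f=Z$ and $m=\delta/2$, exactly as the paper uses it; no additional Faa--di--Bruno argument is required. Your inductive step ($F_{j-1}\in S^{-\delta}_{2\delta-1}$ with the worst term at $\vert\alpha\vert=2$, composition with the flow, then time integration gaining $\htilde^{\delta}$) is correct and matches the paper's Step~2.
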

\begin{proof}
  Step1: we show that 
  \begin{equation}\label{est:e^c0}   
 e^{ \int_s^\sigma   c_0(t, Z(t; \omega(s,z,\xi,\htilde ), \xi, \htilde)  \,dt  } \in S^0_{2\delta-1}.
 \end{equation}
 According to Remark \ref{remarque1} this will be implied by $ \int_s^\sigma c_0(t, Z(t; \omega(s,z,\xi,\htilde ), \xi, \htilde)  \,dt   \in S^{\delta/2}_{2\delta-1}.$  By Lemma \ref{est:derZ} we have $Z\in \dot{S}^{\delta/2}_{2\delta-1}$ and $\omega\in \dot{S}^{\delta/2}_{2\delta-1}.$ Moreover $\xi \in \dot{S}^{0}_{2\delta-1}$.  By Proposition \ref{est:compo} the function $Z(t; \omega(s; z,\xi,\htilde),\xi,\htilde)$ belongs to $\dot{S}^{\delta/2}_{2\delta-1}$. Now by Corollary \ref{estkappa} we have $\frac{\partial \phi}{\partial z} \in \dot{S}^0_{2\delta-1}$ and $\frac{\partial^2 \phi}{\partial z^2} \in  {S}^{-\delta/2}_{2\delta-1} $ (since $1-2\delta= -\delta/2.$)  It follows from Proposition \ref{est:compo} that  for $s\in[0, \htilde^\delta]$ 
\begin{equation}\label{U=}
\left\{
\begin{aligned}
 &U_1(t;  z,\xi,\htilde )=  \frac{\partial \phi}{\partial z}(t,Z(t; \omega(s,z,\xi,\htilde ), \xi, \htilde), \xi,\htilde)\in  \dot{S}^0_{2\delta-1},\\
  &U_2(t;  z,\xi,\htilde)=  \frac{\partial^2 \phi}{\partial z^2}(t,Z(t; \omega(s,z,\xi,\htilde ), \xi, \htilde), \xi,\htilde)\in   {S}^{-\delta/2}_{2\delta-1}.
 \end{aligned}
 \right.
 \end{equation}
Now by Lemma \ref{derp}   the functions 
$\frac{\partial^2 \widetilde{p}}{\partial\zeta \partial z' }(t,z,z,\zeta, \htilde) $ 
(resp.\ $\frac {\partial^2  {p}}{\partial\zeta  \partial \zeta }(t,z,\zeta,\htilde) $) 
satisfy the condition of Proposition \ref{est:compo} with $m = 1-2\delta$ 
(resp.\ $m=0.$) Using \eqref{U=} and the fact that 
$z\in  \dot{S}^{\delta/2}_{2\delta-1}$ 
we deduce that 
\begin{align*}
&\int_s^\sigma \frac{\partial^2 \widetilde{p}}{\partial\zeta \partial z' }\big(t,z,z,U_1\big(t,z,\xi,\htilde\big), \htilde\big)\, dt 
\in {S}^{ \frac{\delta}{2}}_{2\delta-1},\\
& \int_s^\sigma \frac{\partial^2  {p}}{\partial\zeta^2   }
\big(t,z, U_1\big(t,z,\xi,\htilde\big), \htilde\big)U_2\big(t;  z,\xi,\htilde\big)\, dt \in {S}^{\frac {\delta}{2}}_{2\delta-1}.
\end{align*}
This shows that $\int_s^\sigma   c_0\big(t, Z(t; \omega\big(s,z,\xi,\htilde \big), \xi, \htilde\big)  \,dt \in S^ {\frac{\delta}{2}}_{2\delta-1}$ as claimed.

Step 2: we show that for $\vert a \vert +\vert b \vert = k \geq 0$ we have, with $\Lambda =(a,b)\in \xN^d \times \xN^d$
\begin{equation}\label{est:Fj}
\int_0^s \la D^\Lambda \big [G_{j-1}\big( \sigma;Z\big(\sigma; \omega\big(s;z,\xi, \htilde\big ),\xi, \htilde\big)\big)\big]\ra \,d \sigma 
\leq \mathcal{F}_{ k}\big(\Vert V \Vert_{E_0}+ \mathcal{N}_{k+1} (\gamma)\big) \htilde^{\vert a \vert(1-2\delta)}
 \end{equation}
 where for $\vert \rho \vert \geq 2,$
\begin{equation}\label{FjGj}
G_{j-1}(\sigma, z, \xi, \htilde)=  
\htilde ^{\vert \rho \vert - 2} D_{z'}^\rho
\big[(\partial^\rho_\zeta \widetilde{p})\big(\sigma;z,z', \theta\big(\sigma;z,z',\xi, \htilde\big), \htilde \big)b_{j-1}(\sigma; z', \xi, \htilde)\big]\big\arrowvert_{z'=z}. 
  \end{equation}
 We claim that for $\Lambda = (\alpha, \beta), \vert \alpha \vert + \vert \beta\vert = k \geq 0,$
 \begin{equation}\label{est:Gj1}
\la D^\Lambda G_{j-1}(\sigma, z, \xi, \htilde)\ra \leq \htilde^{-\delta + \vert \alpha \vert (1-2\delta)} 
\mathcal{F}_{ k}\big(\Vert V \Vert_{E_0}+ \mathcal{N}_{k+1} (\gamma)\big) .\end{equation}
 Indeed $ D^\Lambda G_{j-1}$ is a finite sum of terms of the form $H_1 \times H_2$ with
     $$ H_1  =  \htilde ^{\vert \rho \vert - 2} D^{\Lambda_1}D_{z'}^{\rho_1}  \big[(\partial^\rho_\zeta \widetilde{p})(\sigma,,z,z', \theta(\sigma;z,z',\xi, \htilde), \htilde \big)) \big] \arrowvert_{z'=z}, \quad 
   H_2   = D^{\Lambda_2}D_{z}^{\rho_2}b_{j-1}(\sigma, z, \xi, \htilde),
$$
where $ \Lambda_i =(\alpha_i, \beta_i) \quad \vert \Lambda_1 \vert + \vert \Lambda_2 \vert = \vert \Lambda  \vert, \quad \vert \rho_1  \vert +\vert \rho_2 \vert = \vert \rho  \vert.$

By the induction we have Ä
$$
\vert H_2 \vert \leq 
\mathcal{F}_{ k}\big(\Vert V \Vert_{E_0}+ \mathcal{N}_{k+1} (\gamma)\big)
\, \htilde^{(\vert \alpha_2 \vert + \vert \rho_2\vert)(1-2\delta)}
$$
and since $z'\in \dot{S}_{2\delta-1}, \theta \in \dot{S}^0_{2\delta-1}$ 
using Proposition \ref{derp} we see that 
$$
\vert H_1 \vert \leq  
\mathcal{F}_{ k}\big(\Vert V \Vert_{E_0}+ \mathcal{N}_{k+1} (\gamma)\big) 
\, \htilde^{\vert \rho \vert -2 + (\vert \alpha_1 \vert + \vert \rho_1\vert)(1-2\delta)}.
$$
Now since $ \vert \rho \vert\geq 2$ and $\delta = \frac{2}{3},$ we have 
$\vert \rho \vert - 2 + \vert \alpha \vert (1-2\delta) + \vert \rho \vert (1-2\delta) \geq \vert \alpha \vert (1-2\delta) -\delta$ which proves \eqref{est:Gj1}.
   
Eventually since the function $Z\big(t; \omega(s; z,\xi,\htilde),\xi,\htilde\big)$ 
belongs to $\dot{S}^{\delta/2}_{2\delta-1}$ (see  Step 1), 
we deduce from  Proposition \ref{est:compo}, 
with $m=-\delta,$ that \eqref{est:Fj} holds. 
Then Step 1 and Step 2 prove the lemma. Notice that $b_j$ can be written  $ \chi(\xi)b_j^0.$
\end{proof}
Thus the proof of Proposition \ref{b:existe} is complete.
\end{proof}

Summing up we have proved that with the choice of $\phi$ and $b$ 
made in Proposition \ref{eqeiko} and in \eqref{formedeb} we have 
\begin{equation}\label{UN1}
e^{-i{\htilde}^{-1}\phi }(\htilde  \partial_t + \htilde c +iP)\big(e^{i{\htilde}^{-1}\phi }\widetilde{b} \big) = \htilde T_N \Psi_0  +U_N + R_N+S_N 
\end{equation}
 where $R_N$ is defined in \eqref{J=}, $S_N$ in \eqref{SN},   $T_N$ in \eqref{n200} and $U_N$ in \eqref{UN}.

\section{The dispersion estimate}

The purpose of this section is to prove the following result. Recall that $\delta = \frac{2}{3}.$
\begin{theo}\label{dispersive}
Let $\chi \in C_0^\infty(\xR^d)$ be such that $\supp \chi \subset \{\xi: \mez \leq \vert \xi \vert \leq 2\} $. Let $t_0 \in \xR$, 
$u_0 \in L^1(\xR^d)$ and set $u_{0,h} = \chi(hD_x)u_0$. 
Denote by $S(t,t_0)u_{0,h}$ the solution of the problem
\begin{equation*}
\Big(  \partial_t + \mez( T_{V_\delta} \cdot \nabla + \nabla \cdot T_{V_\delta}) 
+i T_{\gamma_\delta} \Big)U_{h}(t,x) =0,  \quad
U_{h}(t_0,x) = u_{0,h}(x).
\end{equation*}
Then there exist  $\mathcal{F}: \xR^+ \to \xR^+$ $ k= k(d) \in \xN$ and $h_0 >0$ such that  
$$
\Vert S(t,t_0)u_{0,h} \Vert_{L^\infty(\xR^d)} 
\leq  \mathcal{F}\big(\Vert V\Vert_{E_0} + \mathcal{N}_k(\gamma)\big) \, h^{-\frac{3d}{4}}\vert t- t_0 \vert^{-\frac{d}{2}} \Vert u_{0,h} \Vert_{L^1(\xR^d)},
$$
for all $0<\vert t-t_0 \vert \leq h^{\frac{\delta}{2}}$  and all $0<  h  \leq h_0.$
\end{theo}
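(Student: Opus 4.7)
The plan is to exploit the semi-classical parametrix $\mathcal{K}$ built in Section~7. Set $U(t) \defn S(t,t_0) u_{0,h}$. After the straightening change of variables $x = X(t,\htilde z, h)$ with $\htilde = h^\mez$, the function $w_h(t,z) \defn U(t, X(t, \htilde z, h))$ solves the semi-classical equation $(\htilde \partial_t + \htilde c + iP) w_h = 0$ with data $w_h(t_0, z) = u_{0,h}(X(t_0,\htilde z,h))$. Because $X(t,\cdot,h)$ is a $C^\infty$ diffeomorphism with Jacobian bounded above and below uniformly for $t \in I_{\htilde}$ and small $h$ (Proposition~\ref{estX} and its corollary), composition with $X$ preserves the $L^\infty$ norm, while the rescaling $z = \htilde^{-1} y$ inflates $L^1$ by a factor $\htilde^{-d}$. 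Concretely,
\[
\lA w_h(t)\rA_{L^\infty_z} \asymp \lA U(t)\rA_{L^\infty_x}, \qquad \lA w_h(t_0)\rA_{L^1_z} \asymp \htilde^{-d} \lA u_{0,h}\rA_{L^1_x}.
\]
It therefore suffices to establish the semi-classical dispersive bound
\[
\lA w_h(t)\rA_{L^\infty_z} \les \htilde^{-d/2} |t-t_0|^{-d/2} \lA w_h(t_0)\rA_{L^1_z},
\]
since the combined factor $\htilde^{-d/2} \cdot \htilde^{-d}$ equals $h^{-3d/4}$.

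Next I would approximate $w_h(t) \approx \mathcal{K}(t,t_0) w_h(t_0)$, where $\mathcal{K}$ is the FIO with real phase $\phi$ (Proposition~\ref{eqeiko}) and amplitude $\tilde{b} = b\, \Psi_0$ with $b = \sum_{j=0}^N \htilde^j b_j$ defined by~\eqref{bk}. By identity~\eqref{UN1} together with the symbol estimates of Lemma~\ref{derp} and Proposition~\ref{b:existe}, and with repeated integrations by parts in the $\mu$- and $\eta$-variables appearing in $R_N, S_N, U_N$ (justified by the rapid decay of $\widehat{\chi}$ in Lemma~\ref{l0.6}), the residual applied to $w_h(t_0)$ has $L^2_z$-norm bounded by $\htilde^{N}\lA w_h(t_0)\rA_{L^1_z}$. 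An $L^2$ energy estimate for the essentially self-adjoint operator $P$ on the time interval of length $\htilde^\delta$, combined with Sobolev embedding (costing a fixed power of $\htilde^{-1}$), yields
\[
\lA w_h(t) - \mathcal{K}(t,t_0) w_h(t_0)\rA_{L^\infty_z} \les \htilde^{N-C(d)} \lA w_h(t_0)\rA_{L^1_z},
\]
harmless once $N$ is large enough.

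The heart of the proof is the pointwise estimate on the Schwartz kernel
\[
K(t,t_0;z,y) = (2\pi\htilde)^{-d} \int e^{i\htilde^{-1}(\phi(t,z,\xi) - y\cdot\xi)}\, \tilde{b}(t,z,y,\xi,\htilde)\, d\xi.
\]
The phase $\Phi_{z,y}(\xi) \defn \phi(t,z,\xi) - y\cdot\xi$ has Hessian equal to $\Hess_\xi \phi(t,z,\xi)$, which by~\eqref{HessphiND} satisfies $|\det \Hess_\xi \Phi_{z,y}| \ge M_0 |t-t_0|^d$ uniformly. Since $\tilde{b}$ is compactly supported in $\xi$ and belongs (uniformly in $z,y$) to the symbol class $S^0_{2\delta-1}$ by Proposition~\ref{b:existe} and Corollary~\ref{estkappa}, the stationary phase lemma applied in $\xi$ yields
\[
|K(t,t_0;z,y)| \les \htilde^{-d} \cdot \htilde^{d/2} |t-t_0|^{-d/2} = \htilde^{-d/2} |t-t_0|^{-d/2}
\]
in the regime $|t-t_0| \ge \htilde$. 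In the complementary regime $|t-t_0| < \htilde$, the trivial volume bound $|K| \les \htilde^{-d}$ already implies the target inequality, since $\htilde^{-d} \le \htilde^{-d/2} |t-t_0|^{-d/2}$ there. Schur's test then delivers the $L^1_z \to L^\infty_z$ bound for $\mathcal{K}(t,t_0)$, and transferring back to the $x$-variable completes the proof.

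The main obstacle is reconciling the symbol losses $|\partial_z^\alpha \tilde{b}| \les \htilde^{-|\alpha|(2\delta-1)} = \htilde^{-|\alpha|/3}$ inherited from the class $S^0_{2\delta-1}$ with the stationary phase asymptotic expansion: each integration by parts in the proof of stationary phase must save a full factor $\htilde^{1/3}$ against these losses. This is precisely what the choice $\delta = 2/3$ ensures, since the scale identity $2(1-2\delta) + \delta = 0$ underpinning all estimates of Section~7 renders the expansion uniformly valid down to $|t-t_0| = \htilde$, and produces a constant of the form $\mathcal{F}(\lA V\rA_{E_0} + \mathcal{N}_k(\gamma))$ for some $k = k(d)$.
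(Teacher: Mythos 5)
Your proposal is correct and follows essentially the same route as the paper: reduce to the semiclassical form via the straightening/rescaling change of variables (accounting for the $\htilde^{-d}$ loss in $L^1$, which combines with $\htilde^{-d/2}$ to give $h^{-3d/4}$), approximate the evolution by the parametrix $\mathcal{K}$ with the residual absorbed through Sobolev embedding and Duhamel, and bound the kernel by stationary phase using the Hessian lower bound \eqref{HessphiND}. The only step you leave implicit is the initial-time mismatch $\mathcal{K}w_{0,\htilde }(t_0,\cdot)-w_{0,\htilde }$ caused by the cutoffs $\Psi_0$ and $\chi_1$, which the paper disposes of by a separate non-stationary phase argument in \eqref{rh}--\eqref{estrh}.
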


This result will be a consequence of the following one in the variables $(t,z)$. 
\begin{theo}\label{dispersive1}
Let $\chi \in C_0^\infty(\xR^d)$ be such that $\supp \chi \subset \{\xi: \mez \leq \vert \xi \vert \leq 2\} $. 
Let $t_0 \in \xR$, 
$w_0 \in L^1(\xR^d)$ and set $w_{0,\htilde } = \chi(\htilde D_z)w_0$. 
Denote by $\widetilde{S}(t,t_0)w_{0,\htilde }$ the solution of the problem
\begin{equation*}
\big( \htilde  \partial_t + \htilde c+iP\big) \widetilde{U}(t,z) =0, \quad \widetilde{U}(t_0,z) = w_{0,\htilde }(z).
\end{equation*}
Then there exist  $\mathcal{F}: \xR^+ \to \xR^+$, $ k= k(d) \in \xN$ and $h_0 >0$ such that  
$$
\Vert \widetilde{S}(t, t_0)w_{0,\htilde } \Vert_{L^\infty(\xR^d)} 
\leq  \mathcal{F}\big(\Vert V\Vert_{E_0} + \mathcal{N}_k(\gamma)\big)\,   \htilde ^{-\frac{d}{2}}\vert t-t_0 \vert^{-\frac{d}{2}} \Vert w_{0,\htilde } \Vert_{L^1(\xR^d)}
$$
for all $0<\vert t -t_0 \vert\leq {\htilde }^{ \delta}$ and $0<{\htilde }  \leq \htilde _0$.
\end{theo}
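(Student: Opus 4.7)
The plan is to use the semiclassical WKB parametrix constructed in Section 8, bound its Schwartz kernel by stationary phase in $\xi$, and absorb the remainder via an $L^2$ energy estimate plus Bernstein's inequality. By time translation I may assume $t_0 = 0$.

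For $0 < t \leq \htilde$ the dispersive bound is essentially trivial: the real principal symbol of $P$ makes $L_\delta = \htilde\partial_t + \htilde c + iP$ essentially self-adjoint modulo zeroth order, so the standard $L^2$ energy estimate together with Gronwall on $[0,\htilde^\delta]$ yields $\Vert \widetilde S(t,0)w_{0,\htilde}\Vert_{L^2} \leq C\Vert w_{0,\htilde}\Vert_{L^2}$. Two applications of Bernstein (both $w_{0,\htilde}$ and the propagated function are frequency localized at scale $\htilde^{-1}$) then give $\Vert \widetilde S(t,0)w_{0,\htilde}\Vert_{L^\infty} \leq C\htilde^{-d}\Vert w_{0,\htilde}\Vert_{L^1} \leq C\htilde^{-d/2}t^{-d/2}\Vert w_{0,\htilde}\Vert_{L^1}$ since $t \leq \htilde$. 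I therefore concentrate on the dispersive regime $\htilde \leq t \leq \htilde^\delta$.

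On this range I take the parametrix $\mathcal K v$ with phase $\phi$ from Proposition \ref{eqeiko} and amplitude $b = \sum_{j=0}^N \htilde^j b_j \in S^0_{2\delta-1}$ from Proposition \ref{b:existe}, so $\phi\!\mid_{t=0} = z\cdot\xi$ and $b\!\mid_{t=0} = \chi(\xi)$. Its Schwartz kernel is the oscillatory integral
\[
K(t,z,y) = (2\pi\htilde)^{-d}\int e^{i\htilde^{-1}(\phi(t,z,\xi,\htilde) - y\cdot\xi)} b(t,z,\xi,\htilde)\,\Psi_0(\partial_\xi\phi - y)\, d\xi,
\]
whose critical equation $\partial_\xi\phi = y$ defines a unique stationary point $\xi_c$ on the support of the integrand (thanks to $\Psi_0$). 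The Taylor expansion leading to \eqref{HessphiND}, together with Proposition \ref{Hessp}, gives $\partial_\xi^2\phi = -t\,\partial_\xi^2 p(0,z,\xi,\htilde) + o(t)$ with all eigenvalues of order $t$. Rescaling $\xi = \xi_c + (\htilde/t)^{1/2}\eta$ turns the integrand into a Gaussian of mass $O((\htilde/t)^{d/2})$ with Taylor remainders controlled by the higher $\xi$-derivative bounds of Corollary \ref{estkappa}; the amplitude remains uniformly bounded. Stationary phase then yields
\[
\sup_{z,y}\vert K(t,z,y)\vert \leq \mathcal F(\Vert V\Vert_{E_0} + \mathcal N_k(\gamma))\,\htilde^{-d/2}t^{-d/2}
\]
for some $k = k(d)$, which is the desired $L^1 \to L^\infty$ bound for the main term $\mathcal K$.

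To pass from $\mathcal K w_0$ to $\widetilde S(t,0)w_{0,\htilde}$ I use \eqref{UN1}: by construction $L_\delta(\mathcal K w_0) = \htilde T_N\Psi_0 + U_N + R_N + S_N$ has $L^2$ norm of order $\htilde^N\Vert w_{0,\htilde}\Vert_{L^2}$ for $N$ arbitrary, and $\mathcal K w_0\!\mid_{t=0} - w_{0,\htilde}$ is of the same order (the $\Psi_0$ cutoff at $t=0$ equals $1$ in a neighborhood of the stationary set $y=z$, and is removed elsewhere by repeated integration by parts in $\xi$). The energy estimate and Gronwall over $|t| \leq \htilde^\delta \leq 1$ give $\Vert \widetilde S(t,0)w_{0,\htilde} - \mathcal K w_0(t)\Vert_{L^2} \leq C\htilde^{N-1}\Vert w_{0,\htilde}\Vert_{L^2}$, and two Bernstein inequalities turn this into an $L^1\to L^\infty$ bound of size $\htilde^{N-1-d}$, negligible once $N$ is large. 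The main obstacle is the stationary phase step, where one must quantify the Morse reduction uniformly down to $t \sim \htilde$ and verify that the higher $\xi$-derivative bounds of Corollary \ref{estkappa} (which degenerate only linearly in $t$) keep the Taylor remainder of the phase sufficiently small after rescaling.
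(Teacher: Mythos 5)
Your construction of the main term coincides with the paper's: the WKB parametrix $\mathcal{K}$ with phase from Proposition~\ref{eqeiko} and amplitude from Proposition~\ref{b:existe}, the Hessian lower bound \eqref{HessphiND} giving $\la\det\partial_\xi^2\phi\ra\geq M_0t^d$, and stationary phase with effective parameter $t/\htilde$ yielding $\sup_{z,y}\vert K\vert\leq \mathcal{F}(\cdots)\htilde^{-d/2}t^{-d/2}$; the uniformity down to $t\sim\htilde$ that you flag is indeed supplied by Corollary~\ref{estkappa}~$(iii)$ (all $\xi$-derivatives of the phase of order $\ge 2$ are $O(t)$), and for $t\lesssim\htilde$ the trivial bound $\vert K\vert\le C\htilde^{-d}$ on the kernel already beats $\htilde^{-d/2}t^{-d/2}$, so your separate energy-plus-Bernstein treatment of that regime is unnecessary.

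The genuine gap is in your conversion of the remainder to $L^\infty$. You apply Bernstein to $\widetilde{S}(t,0)w_{0,\htilde}-\mathcal{K}w_0(t)$ on the grounds that "the propagated function is frequency localized at scale $\htilde^{-1}$", but the exact propagator $\widetilde{S}(t,s)$ of the variable-coefficient equation $\htilde\partial_t+\htilde c+iP$ does not preserve compact spectral support (the coefficients $c$ and the $z$-dependence of $\widetilde{p}$ produce spectral leakage), and neither is $\mathcal{K}w_0(t)$ band-limited for $t>0$ since $\phi$ is nonlinear in $z$. So the second Bernstein inequality, which is what produces the $L^\infty$ bound on the error, is unjustified as stated. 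The paper avoids this entirely: it estimates the source $F_{\htilde}=L(\mathcal{K}w_{0,\htilde})$ and the initial discrepancy $r_{\htilde}$ directly in $H^{\sigma_0}$ with $\sigma_0>d/2$, with $\htilde^N\Vert w_{0,\htilde}\Vert_{L^1}$ on the right-hand side (Proposition~\ref{EQK} and \eqref{estrh}); this requires the weighted kernel bounds of Lemma~\ref{estRNSN}, where the factor $\langle\partial_\xi\phi(t,z,\xi,\htilde)-y\rangle^{-k_0}$ together with Lemma~\ref{estint} gives integrability in $z$ uniformly in $y$, hence the $L^1_y\to H^{\sigma_0}_z$ bound. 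Then Duhamel, the $H^{\sigma_0}$-boundedness of $\widetilde{S}(t,s)$, and the embedding $H^{\sigma_0}\hookrightarrow L^\infty$ finish the proof. Your argument is repairable along these lines — since the remainder carries $\htilde^N$ for arbitrary $N$, one can afford both the Sobolev-embedding loss and the $\htilde^{-2\delta}$ cost of each extra $z$-derivative needed to upgrade your $L^2$ bound to $H^{\sigma_0}$ — but as written the Bernstein step does not go through, and the $L^2\to L^2$ bound on the remainder itself already requires the $\langle\partial_\xi\phi-y\rangle^{-k_0}$ decay (Schur's test), which your sketch omits.
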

Indeed suppose that Theorem \ref{dispersive1} is proved.  We can assume $t_0 =0$. 
According to the two change of variables $x =X(t,y)$ and $z= \widetilde {h}^{-1}y$ we have for any smooth function $U$ (see \eqref{semiclass} and \eqref{c=})
$$
\big( \htilde  \partial_t +\htilde c+ iP\big) \big [U (t,X(t, \htilde z)\big] 
= \htilde   \Big( \big( \partial_t + \mez( T_{V_\delta} \cdot \nabla + \nabla \cdot T_{V_\delta}) 
+i T_{\gamma_\delta}\big) U \Big)\big(t, X(t, \htilde z)\big).
$$
It follows that 
$$
\big( \widetilde{S}(t, 0)w_{0,\htilde }\big) (t,z) = \big( S(t,0)u_{0,h}\big) (t,X(t,\htilde z)).
$$
Moreover since $w_0(z) = u_0(\htilde z)$ we have
$$
w_{0,\htilde }(z)=(\chi(\htilde D_z)w_0) (z) = ( \chi(hD_x)u_0)( \htilde z) =u_{0,h}(\htilde z).
$$
 
Therefore using Theorem \ref{dispersive1} we obtain
\begin{equation*}
\begin{aligned}
\Vert  S(t, 0)u_{0,h} \Vert_{L^\infty(\xR^d)}  &= \Vert  \tilde{S}(t, 0)w_{0,\htilde }\Vert _{L^\infty(\xR^d)}  
\leq \mathcal{F}(\cdots)\,   \htilde ^{-\frac{d}{2}}t^{-\frac{d}{2}}\Vert  w_{0,\htilde } \Vert_{L^1(\xR^d)}   \\
& \leq \mathcal{F}(\cdots)\, \htilde ^{-\frac{d}{2}}t^{-\frac{d}{2}} \htilde ^{-d}\Vert u_{0,h} \Vert_{L^1(\xR^d)}= \mathcal{F}(\cdots)\,  h^{-\frac{3d}{4}}t^{-\frac{d}{2}} \Vert u_{0,h} \Vert_{L^1(\xR^d)}
\end{aligned}
\end{equation*}
since $\htilde  = h^\mez.$ Thus Theorem \ref{dispersive} is proved.
 \begin{proof}[Proof of Theorem \ref{dispersive1}]
We set 
\begin{equation}\label{K1}
\mathcal{K}w_{0,\htilde }(t,z) 
= (2 \pi \htilde )^{-d} \iint e^{i{\htilde}^{-1}(\phi(t,z,\xi,\htilde ) -y\cdot \xi)}\widetilde{b} \big(t,z,y,\xi,\htilde\big)
\chi_1(\xi)w_{0,\htilde}(y) \,dy \, d\xi
\end{equation}
where $\chi_1$ belongs to $C^\infty_0(\xR^d)$ with $\chi_1 \equiv 1$ 
on the support of $\chi$ and $\tilde{b}$ is defined in \eqref{tildeb}. We can write
\begin{equation} \label{K2}
\begin{aligned}
\mathcal{K}w_{0,\htilde }(t,z) &= \int K\big(t,z,y,\htilde \big) w_{0,\htilde }(y) dy \quad \text{with}\\
K(t,z,y,\htilde ) 
&=  (2 \pi \htilde )^{-d} \int e^{i\htilde ^{-1} (\phi (t,z,\xi,\htilde )-y\cdot \xi )}\widetilde{b}  (t,z,y,\xi,\htilde\big)
\chi_1(  \xi) \, d\xi.
\end{aligned}
\end{equation}
It follows from the \eqref{HessphiND} for the hessian, \eqref{formedeb}, 
Proposition~\ref{b:existe} and the stationary 
phase theorem that 
$$
\big\vert  K(t,z,y,\htilde ) \big\vert 
\leq \mathcal{F}(\cdots)\,  \htilde ^{-d} \htilde ^{\frac{d}{2}} t^{- \frac{d}{2}} \leq  \mathcal{F}(\cdots)  \htilde ^{-\frac{d}{2}} t^{- \frac{d}{2}}
$$
for all $0<t \leq h^{ \frac{\delta}{2}}$, $z,y \in \xR^d$ and $0< \htilde  \leq \htilde _0$.

Therefore we obtain
\begin{equation}\label{estimK}
\big\Vert  \mathcal{K}w_{0,\htilde }(t,\cdot) \big\Vert_{L^\infty(\xR^d)}
\leq \mathcal{F}(\cdots)\,   \htilde ^{-\frac{d}{2}} t^{- \frac{d}{2}} \big\Vert w_{0,\htilde } \big\Vert_{L^1(\xR^d)}
\end{equation}
for all $0<t \leq \htilde^{\delta} $ and $0< \htilde  \leq \htilde _0.$

We can state now the following result.
\begin{prop}\label{EQK}
Let $\sigma_0 $ be an integer such that $\sigma_0> \frac{d}{2}.$ Set
$$
\big( \htilde  \partial_t + \htilde c+ iP\big)\Big( \mathcal{K}w_{0,\htilde } \Big)(t,z) = F_{\htilde }(t,z).
$$
Then there exists  $k= k(d) \in \xN$ and  for any $N \in \xN,$  $\mathcal{F}_N:\xR^+ \to \xR^+$  such that
$$
\sup_{0< t \leq \htilde ^{\delta}} \big\Vert F_{\htilde }(t, \cdot ) \big\Vert_{H^{\sigma_0}(\xR^d)}
\leq  \mathcal{F}_N\big(\Vert V\Vert_{E_0} + \mathcal{N}_k(\gamma)\big)\,  \htilde ^N \Vert w_{0,\htilde } \Vert_{L^1(\xR^d)}.
$$
\end{prop}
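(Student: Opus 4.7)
The plan is to apply $\htilde\partial_t + \htilde c + iP$ directly under the integral in \eqref{K1} and invoke the conjugation identity \eqref{UN1}: since $\phi$ solves the eikonal equation \eqref{eikonale} and the $b_j$ solve the transport equations \eqref{bk}, the expansion of $e^{-i\htilde^{-1}\phi}(\htilde\partial_t + \htilde c + iP)(e^{i\htilde^{-1}\phi}\widetilde{b})$ collapses to $\htilde T_N\Psi_0 + U_N + R_N + S_N$. Consequently
$$F_{\htilde}(t,z) = (2\pi\htilde)^{-d}\iint e^{i\htilde^{-1}(\phi(t,z,\xi,\htilde)-y\cdot\xi)}\bigl[\htilde T_N\Psi_0 + U_N + R_N + S_N\bigr]\chi_1(\xi)w_{0,\htilde}(y)\,dy\,d\xi.$$
Writing this as $F_{\htilde}(t,z) = \int K_F(t,z,y,\htilde)w_{0,\htilde}(y)\,dy$ and invoking Minkowski's inequality, my task reduces to bounding $\sup_y \|K_F(t,\cdot,y,\htilde)\|_{H^{\sigma_0}}$ by $\mathcal{F}_N(\cdots)\htilde^N$ uniformly in $t\in[0,\htilde^\delta]$.

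I will estimate each of the four pieces separately. For the $T_N$ contribution, \eqref{n200} provides an explicit prefactor $\htilde^{N+1}$, with the derivatives of $\widetilde p$ and $b_N$ controlled by Remark \ref{rem:est:ptilde} and Proposition \ref{b:existe}; moreover the factor $\Psi_0(\partial_\xi\phi-y)\chi_1(\xi)$, together with the relation $\partial_\xi\phi(t,z,\xi,\htilde)-z = O(t)$ provided by the flow estimates, localizes $z$ to a bounded neighborhood of $y$, so any $L^\infty_z$ bound of the kernel converts freely into an $L^2_z$ bound. Each $z$-derivative on the kernel loses at most $\htilde^{-1}$ (from $\htilde^{-1}\partial_z\phi$) or $\htilde^{1-2\delta}$ (from the amplitude, by the symbol classes of Corollary \ref{estkappa}), so $\sigma_0$ derivatives leave a net bound $\mathcal{F}(\cdots)\htilde^{N+1-d-\sigma_0}$, arbitrarily small once $N$ is large.

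For the $U_N$ contribution, the key structural point is that it collects precisely those terms in which $\partial_t$ or some $D_{z'}^\alpha$ from \eqref{CT1} has fallen on the cutoff $\Psi_0(\partial_\xi\phi-y)$ rather than on $b$; consequently its integrand is supported in the region $|\partial_\xi\phi(t,z,\xi,\htilde)-y|\geq 1$, where the $\xi$-gradient of the total phase $\phi-y\cdot\xi$ is bounded away from zero. Repeated non-stationary phase integrations by parts in $\xi$ with the transposed operator $L^* = -\htilde\,\nabla_\xi\cdot\bigl(\tfrac{\partial_\xi\phi-y}{|\partial_\xi\phi-y|^2}\,\cdot\bigr)$ then produce an arbitrary power $\htilde^{N'}$, all amplitudes remaining under control thanks to the symbol estimates of Corollary \ref{estkappa}. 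The remainders $R_N$ (Taylor truncation in $\eta$, see \eqref{J=}) and $S_N$ (Taylor expansion in $\mu$, see \eqref{SN}) come with explicit $\htilde^N$ factors, and the same Minkowski-plus-symbolic-loss argument applies using Remark \ref{rem:est:ptilde} for $\widetilde p$ and Remark \ref{rem:est:theta} for $\theta$.

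The main obstacle will be the careful bookkeeping for $U_N$: one must expand each $D_{z'}^\alpha$ hitting the composite $\Psi_0(\partial_\xi\phi(t,z',\xi,\htilde)-y)$ via Fa\`a di Bruno, verify that every resulting piece still lives in the symbol classes $S^m_{2\delta-1}$ of the preceding section, and confirm that the integration by parts in $\xi$ preserves these estimates --- in particular that $(\partial_\xi\phi-y)/|\partial_\xi\phi-y|^2$ has bounded symbol seminorms on the support $\{|\partial_\xi\phi-y|\geq 1\}$, which follows from Corollary \ref{estkappa} together with the pointwise lower bound.
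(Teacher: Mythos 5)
Your proposal is correct and follows essentially the same route as the paper: the conjugation identity \eqref{UN1} reduces everything to bounding the four pieces $\htilde T_N\Psi_0$, $U_N$, $R_N$, $S_N$; the cutoff $\Psi_0(\partial_\xi\phi-y)$ is what converts $L^\infty_z$ bounds into $L^2_z$ bounds (this is the paper's Lemma~\ref{estint}, via the change of variables $X=\partial_\xi\phi$); and $U_N$ is killed by non-stationary phase in $\xi$ with exactly the vector field you write down. The one detail you elide is that for $R_N$ and $S_N$ the cutoff sits at the inner variable $z'$ under the $dz'\,d\eta$ integral, so the paper must first insert $\bigl(\partial_\xi\phi(t,z,\xi,\htilde)-y\bigr)^{\gamma}$, Taylor-expand the difference $\partial_\xi\phi(t,z,\cdot)-\partial_\xi\phi(t,z',\cdot)$, and absorb the resulting powers of $(z-z')$ by integrations by parts against $\kappa$ before the localization at the outer variable $z$ becomes available.
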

 We shall use the following result.
\begin{lemm}\label{estint}
Let $k_0>\frac{d}{2}$. Let us set $m(t,z,y,\xi,\htilde ) = \frac{\partial \phi}{\partial \xi} (t,z ,\xi, \htilde ) -y$ 
and 
$$
\Sigma = \left\{(t,y,\xi,\htilde ):  0< \htilde \leq {\tilde {h}}_0, 0
\leq t \leq \htilde ^{\delta}, y \in \xR^d, \vert \xi \vert \leq C \right\}.
$$
Then
$$
\sup_ {(t,y,\xi,\htilde ) \in \Sigma} \int_{\xR^d} \frac{dz}{\langle m(t,z,y,\xi, \htilde )\rangle^{2k_0}} \leq  \mathcal{F}\big(\Vert V\Vert_{E_0} + \mathcal{N}_2(\gamma)\big).
$$
\end{lemm}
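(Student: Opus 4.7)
The strategy is to use $z \mapsto m(t,z,y,\xi,\htilde) = \frac{\partial\phi}{\partial \xi}(t,z,\xi,\htilde) - y$ as a change of variables. Its Jacobian is $\frac{\partial^2 \phi}{\partial z \partial \xi}(t,z,\xi,\htilde)$, and the initial condition $\phi\arrowvert_{t=0} = z\cdot\xi$ gives $\frac{\partial^2 \phi}{\partial z \partial \xi}\arrowvert_{t=0} = \mathrm{Id}$, so we expect this matrix to remain close to $\mathrm{Id}$ for $t \in [0,\htilde^\delta]$ and small $\htilde$.

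First I would show that uniformly in $(t,z,\xi,\htilde) \in \Sigma$ one has
$$
\Big\vert \frac{\partial^2 \phi}{\partial z \partial \xi}(t,z,\xi,\htilde) - \mathrm{Id}\Big\vert \leq \mathcal{F}(\Vert V \Vert_{E_0} + \mathcal{N}_2(\gamma))\, \htilde^{\delta/2}.
$$
Differentiating the identity $\phi(t,z,\xi,\htilde) = z\cdot\xi - \int_0^t p(\sigma,z,\frac{\partial \phi}{\partial z}(\sigma,z,\xi,\htilde),\htilde)\,d\sigma$ once in $z_j$ and once in $\xi_k$ produces an integrand consisting of $p''_{z\zeta}$, $p''_{\zeta\zeta}$, and derivatives of $\frac{\partial \phi}{\partial z}$. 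The latter are controlled by Corollary~\ref{estkappa} (the statement $\frac{\partial\phi}{\partial z} \in S^0_{2\delta-1}$), while Corollary~\ref{derp'} gives the time integral of the relevant derivatives of $p$ with a gain of at least $\htilde^{\delta}$; combining these one gets an overall gain of a positive power of $\htilde$.

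Next I would use Hadamard's global inversion theorem (as in the corollary following Proposition~\ref{estX}) to conclude that for each fixed $(t,y,\xi,\htilde) \in \Sigma$, the map $z\mapsto m(t,z,y,\xi,\htilde)$ is a $C^\infty$ diffeomorphism of $\xR^d$. The invertibility of the Jacobian follows from the estimate above, and properness follows from
$$
\Big\vert \frac{\partial \phi}{\partial \xi}(t,z,\xi,\htilde) - z \Big\vert = \Big\vert \int_0^t \frac{\partial p}{\partial \xi}(\sigma, z, \tfrac{\partial \phi}{\partial z}, \htilde)\,d\sigma \Big\vert \leq \mathcal{F}(\Vert V\Vert_{E_0} + \mathcal{N}_2(\gamma))\,\htilde^{\delta},
$$
again by Corollary~\ref{derp'}, which implies $\vert m(t,z,y,\xi,\htilde)\vert \to \infty$ as $\vert z\vert \to \infty$.

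Finally, performing the change of variables $w = m(t,z,y,\xi,\htilde)$ gives
$$
\int_{\xR^d} \frac{dz}{\langle m(t,z,y,\xi,\htilde)\rangle^{2k_0}} = \int_{\xR^d} \frac{1}{\langle w\rangle^{2k_0}}\, \Big\vert \det \frac{\partial^2 \phi}{\partial z \partial \xi}\Big\vert^{-1}\,dw \leq 2\int_{\xR^d}\frac{dw}{\langle w\rangle^{2k_0}},
$$
finite since $2k_0 > d$, with a bound depending only on $\Vert V\Vert_{E_0} + \mathcal{N}_2(\gamma)$. The main (mild) obstacle is simply verifying cleanly the $o(1)$ closeness to identity of $\frac{\partial^2\phi}{\partial z\partial\xi}$ using the chain rule through the implicit formula for $\phi$; once this is done, the rest is a textbook Hadamard-plus-change-of-variables argument.
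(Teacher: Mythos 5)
Your proposal is correct and follows essentially the same route as the paper: show $\frac{\partial^2\phi}{\partial z\partial\xi}$ is $O(\htilde^{\delta/2})$-close to the identity, conclude that $z\mapsto\frac{\partial\phi}{\partial\xi}(t,z,\xi,\htilde)$ is a global diffeomorphism, and change variables. The only (immaterial) difference is that the paper gets the Jacobian estimate from the identity $\frac{\partial\phi}{\partial z}=\zeta(t;\kappa(t;z,\xi,\htilde),\xi,\htilde)$ together with Proposition~\ref{estflow} and Corollary~\ref{estkappa}, rather than by differentiating the integral formula for $\phi$; note also that once $\bigl\vert\frac{\partial^2\phi}{\partial z\partial\xi}-Id\bigr\vert\le\mez$ the map is automatically bi-Lipschitz, so your separate properness step is not needed.
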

\begin{proof}
By \eqref{eiksuite} we have $\frac{\partial \phi}{\partial z}\big(t,z,\xi,\htilde\big) 
= \zeta\big(t;\kappa\big(t;z,\xi,\htilde\big),\xi,\htilde\big)$ so 
$\frac{\partial^2 \phi}{\partial\xi \partial z} = \frac{\partial \zeta}{\partial z}  \frac{\partial \kappa}{\partial \xi}  +  \frac{\partial \zeta}{\partial \xi} .$ We deduce from Proposition \ref{estflow} and Corollary \ref{estkappa} that  $\vert \frac{\partial^2 \phi}{\partial\xi \partial z} - Id\vert \leq  \mathcal{F} \big(\Vert V \Vert_{E_0} +   \mathcal{N}_2(\gamma)\big) \, \htilde^{\frac{\delta}{2}}. $ It follows  that the  map $z \mapsto  \frac{\partial \phi}{\partial \xi} (t,z,\xi, \htilde )$  is proper and therefore is a global diffeomorphism from $\xR^d $ to itself. Consequently, 
we can perform the change of variable $X=  \frac{\partial \phi}{\partial \xi} (t,z,\xi, \htilde )$ and the lemma follows.
\end{proof}
\begin{proof}[Proof of Proposition \ref{EQK}]
 
According to \eqref{K2} Proposition \ref{EQK} will be proved if we have
\begin{equation}\label{estK}
\sup_{(t,y,\tilde{h}) \in \Sigma}  \big\Vert \big(\tilde{h} \partial_t +\htilde c+  iP\big)K(t,\cdot,y,\tilde{h}\big)\big\Vert_{H^{\sigma_0}}
\leq \mathcal{F}_N(\cdots) \, \tilde{h}^N.
\end{equation}
Now, setting $L= \tilde{h} \partial_t + \htilde c +  iP(t,z,D_z)$, we have
\begin{equation}\label{egalite1}
LK\big(t,z,y,\tilde{h}\big)= \big(2\pi \tilde{h}\big)^{-d}\int  e^{-i{\htilde}^{-1}  y\cdot \xi}
L\Big(e^{i{\htilde}^{-1} \phi(t,z,\xi,\tilde{h})}\widetilde{b} \big(t,z,y,\xi,\tilde{h}\big)\Big) \chi_1(\xi)\, d\xi 
\end{equation}
and according to \eqref{RS}, \eqref{CT1}, \eqref{n200} we have,
\begin{equation}\label{egalite2}
L\Big(e^{i{\htilde}^{-1}\phi(t,z,\xi,\tilde{h})}\widetilde{b} \big(t,z,y,\xi,\tilde{h}\big)\Big)
= e^{i{\htilde}^{-1}\phi(t,z,\xi,\tilde{h})}\big(R_N +S_N + \tilde{h}T_N \Psi_0 +U_N\big)(t,z,y,\xi,\htilde) 
\end{equation}
where $R_N$, $S_N$, $T_N$, $U_N$ are defined in \eqref{J=}, \eqref{SN}, \eqref{CT1}, \eqref{UN}.

\begin{lemm}\label{estRNSN}
Let $\sigma_0,k_0 $ be   integers, $\sigma_0 > \frac{d}{2},$  $ k_0 >\frac{d}{2}$. There exists  a fixed integer  $N_0(d)$   such that for any $N \in \xN$  there exists $C_N >0$ such that, if we set $\Xi = (t,z,y,\xi,\htilde),$ then
\begin{equation}\label{estRN}
  \big\langle m\big(\Xi\big)\big\rangle^{k_0} 
\left\{\vert D^\beta_z R_N(\Xi) \vert 
+ \vert D^\beta_z S_N (\Xi)\vert + \vert D_z^\beta (\tilde{h}(T_N \Psi_0) (\Xi)\big)\vert  \right\} 
 \leq\mathcal{F}_N(\cdots) \, {\tilde{h}}^{\delta N-N_0}, 
\end{equation}
 for all $\vert \beta \vert \leq \sigma_0$,  all $(t,y,\xi, \tilde{h}) \in \Sigma $ and all $z\in \xR^d$. 
  \end{lemm}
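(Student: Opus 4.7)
All three summands carry an intrinsic gain of $\htilde^{N+O(1)}$ built in by the asymptotic construction (the Taylor remainder in $R_N$, the $\mu^\beta$ moment with $|\beta|=N$ in $S_N$, and the explicit prefactor $\htilde^{N+1}$ in $T_N$). The task is therefore twofold: (a) extract the weight $\langle m(\Xi)\rangle^{k_0}$, which will come from combining the support condition in the cutoff $\Psi_0\bigl(\tfrac{\partial\phi}{\partial\xi}(\cdot)-y\bigr)$ hidden in $\widetilde b$ with a non-stationary argument (Schwartz decay of $\widehat\kappa$ for $S_N$, integration by parts in $\eta$ for $R_N$) on the region $|m|\geq 2$; (b) account for the derivatives $D_z^\beta$ with $|\beta|\leq\sigma_0$, each of which by the symbol calculus developed in $\dot S^\cdot_{2\delta-1}$ costs at most $\htilde^{1-2\delta}=\htilde^{-1/3}$. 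The latter loss is uniform in $N$ and is absorbed into a fixed $N_0=N_0(d,\sigma_0,k_0)$. Throughout, the required symbol bounds on $\widetilde p$, $\theta$, $b_j$ and $\tfrac{\partial\phi}{\partial\xi}$ are provided respectively by Remark \ref{rem:est:ptilde}, Remark \ref{rem:est:theta}, Proposition \ref{b:existe} and Corollary \ref{estkappa}.

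\textbf{The term $\htilde T_N\Psi_0$.} From \eqref{n200}, $\htilde T_N=i\htilde^{N+2}\sum_{2\leq|\alpha|\leq N-1}\frac{\htilde^{|\alpha|-2}}{\alpha!}D_{z'}^\alpha\bigl[(\partial_\zeta^\alpha\widetilde p)(\cdots)b_N(z')\bigr]\big|_{z'=z}$, which is bounded pointwise by $\mathcal{F}(\cdots)\htilde^{N+2}$ modulo symbol losses in $\dot S^0_{2\delta-1}$. The cutoff $\Psi_0$ forces $|m|\leq 1$ on the support, hence $\langle m\rangle^{k_0}\leq 2^{k_0}$. Applying $D_z^\beta$ with $|\beta|\le\sigma_0$ and using the symbol estimates gives a total bound $\mathcal{F}_N(\cdots)\htilde^{N+2-C(\sigma_0+N_1)}$, which for $N_0$ fixed sufficiently large (depending only on $\sigma_0$, $k_0$, $d$, $N_1$) dominates $\htilde^{\delta N - N_0}$ since $\delta<1$.

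\textbf{The term $S_N$.} From \eqref{SN}, $S_N$ carries a factor $\htilde^{N+|\alpha|}$ and an integral $\int \mu^\beta\widehat\kappa(\mu)(\partial_{z'}^\beta f_\alpha)(z,z+\lambda\htilde\mu,\htilde)\,d\lambda\,d\mu$ where $f_\alpha$ contains $\widetilde b(z+\lambda\htilde\mu)$, which is supported in $\bigl|\tfrac{\partial\phi}{\partial\xi}(z+\lambda\htilde\mu)-y\bigr|\leq 1$. When $|m(\Xi)|\geq 2$, the estimate $\bigl|\tfrac{\partial^2\phi}{\partial z\partial\xi}-\mathrm{Id}\bigr|\leq C\htilde^{\delta/2}$ from Corollary \ref{estkappa} combined with the mean value theorem gives $|\lambda\htilde\mu|\geq |m|/2$, hence $|\mu|\geq |m|/(2\htilde)$. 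The Schwartz decay $|\widehat\kappa(\mu)|\leq C_M\langle\mu\rangle^{-M}$ for any $M$ then furnishes a factor $(\htilde/\langle m\rangle)^M$; choosing $M=k_0+C$ delivers both the $\langle m\rangle^{-k_0}$ weight and any additional power of $\htilde$ to absorb derivative losses. For $|m|\leq 2$ the weight is harmless and the intrinsic $\htilde^N$ gain together with $\sigma_0$ applications of $D_z$ in the $\dot S_{2\delta-1}$ calculus suffices.

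\textbf{The term $R_N$ and the main obstacle.} Write $R_N=(2\pi\htilde)^{-d}\iint e^{i\htilde^{-1}(z-z')\cdot\eta}\kappa(\eta)\, r_N(z,z',\eta)\,\widetilde b(z')\,dz'\,d\eta$ as in \eqref{J=}. Using $\eta^\alpha e^{i\htilde^{-1}(z-z')\cdot\eta}=(-\htilde D_{z'})^\alpha e^{i\htilde^{-1}(z-z')\cdot\eta}$ and integrating by parts in $z'$ (with $|\alpha|=N$), the explicit factor $\eta^\alpha$ in the Taylor remainder is converted into $\htilde^N D_{z'}^\alpha$ acting on $(\partial_\eta^\alpha\widetilde p)(z,z',\eta+\lambda\theta(z,z'))\widetilde b(z')$; the symbol estimates of Remark \ref{rem:est:ptilde}, Remark \ref{rem:est:theta} and Proposition \ref{b:existe} show that these $N$ derivatives cost at most $\htilde^{-N(2\delta-1)}=\htilde^{-N/3}$, producing a net intrinsic gain $\htilde^{2N/3}=\htilde^{\delta N}$. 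To obtain the $\langle m\rangle^{k_0}$ weight, perform $k_0$ further integrations by parts in $\eta$ via the operator $L^*=\tfrac{\htilde(z'-z)}{i|z-z'|^2}\cdot\nabla_\eta$, each of which gains $\htilde/|z-z'|$. On the support of $\widetilde b(z')$, when $|m|\geq 2$ the same mean value argument gives $|z-z'|\geq c|m|$, hence a total factor $(\htilde/\langle m\rangle)^{k_0}$; for $|m|\leq 2$ no $\eta$-IBP is needed. Combining with $D_z^\beta$, $|\beta|\leq\sigma_0$, and tracking all symbol losses, we get $\langle m(\Xi)\rangle^{k_0}|D_z^\beta R_N|\leq \mathcal{F}_N(\cdots)\htilde^{\delta N - N_0}$ for a fixed $N_0$. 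The main obstacle is precisely this bookkeeping: one must verify that none of the three mechanisms (symbol differentiation, $\eta$-IBP, $z'$-IBP) interacts badly with the cutoff $\Psi_0$ through the chain rule and that all losses are bounded by constants times $\htilde^{-C}$ with $C$ independent of $N$, so that $N_0=N_0(d,\sigma_0,k_0)$ can be chosen once and for all.
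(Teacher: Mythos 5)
Your proposal is correct and follows essentially the same route as the paper: the intrinsic $\htilde^{N}$ gain from the construction combined with the exponent identity $N+N(1-2\delta)=\delta N$, the weight $\langle m\rangle^{k_0}$ extracted from the support of $\Psi_0$ together with the near-identity bound on $\partial^2\phi/\partial z\,\partial\xi$, and all derivative losses absorbed into a fixed $N_0(d)$. The only cosmetic difference is in $R_N$, where you integrate by parts in $\eta$ with the normalized operator $L^*$ on the region $\vert m\vert\geq 2$, whereas the paper inserts the monomial $\bigl(\tfrac{\partial\phi}{\partial\xi}(z)-y\bigr)^{\gamma}$, Taylor-expands it into powers of $(z-z')$ and then converts those to $(\htilde D_\eta)^{\nu}$ --- the same mechanism in a different packaging.
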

\begin{proof}
According to \eqref{J=}, $  D^\beta_z R_N(\Xi)$ is a finite linear combination 
of terms of the form \begin{equation*}
\begin{aligned}
R_{N, \beta}(\Xi)  = \tilde{h} ^{-d-\vert \beta_1\vert} 
\iint e^{i{\htilde}^{-1}(z-z')\cdot\eta} \eta^{\beta_1}\kappa(\eta) D_z^{\beta_2}&r_N(t,z, z',\eta, \xi,\htilde)\\
&{b}(t,z', \xi,\tilde{h}) \Psi_0\Big(\frac{\partial \phi}{\partial \xi} \big(t,z',\xi, \tilde{h}\big) -y\Big)\,dz' \,d\eta\\
\end{aligned}
\end{equation*}
where  $\beta_1 + \beta_2 = \beta$  and 
\begin{equation*}
D_z^{\beta_2}r_N (\cdots) = \sum_{\vert \alpha \vert =N} 
\frac{N}{\alpha!}\int_0^1 (1-\lambda)^{N-1} D_z^{\beta_2}
\Big[\big(\partial^\alpha_\eta \widetilde{p}\big)(t,z,z',\eta+ \lambda\theta(t,z,z', \xi,\htilde)) \Big ]\eta^\alpha \,d\lambda.
\end{equation*}

Using the equality $\eta^{\alpha + \beta_1}  e^{i{\htilde}^{-1}(z-z')\cdot\eta} 
= (-\tilde{h}D_{z'})^{\alpha+ \beta_1}   e^{i{\htilde}^{-1}(z-z')\cdot\eta}$ 
we see that $R_{N, \beta}$ is a linear combination of  terms of the form
\begin{equation}
\begin{aligned}
R'_{N, \beta}(\Xi)= \tilde{h}^{N   -d}\int_0^1 
\iint  e^{i{\htilde}^{-1}(z-z')\cdot\eta}&\kappa(\eta)D_{z'}^{\alpha + \beta_1} 
D_z^{\beta_2}\Big[(\partial_\eta^\alpha  \widetilde{p})(t,z,z',\eta+ \lambda\theta(t,z,z', \xi,\htilde))\\
&b(t,z',\xi,\tilde{h}) \Psi_0\big(\frac{\partial \phi}{\partial \xi} (t,z'\xi, \tilde{h}) -y\big)\Big] 
\, d\lambda \, dz' \, d\eta.
\end{aligned}
\end{equation}
Then we insert in the integral  the quantity 
$  \big (\frac{\partial \phi}{\partial {\xi }} (t,z ,\xi, \tilde{h}) -y \big)^{\gamma }$ where $  \vert \gamma\vert =  k_0. $  It is a finite linear combination of terms of the form
$$
\Big(\frac{\partial \phi}{\partial {\xi }} (t,z ,\xi, \tilde{h}) 
- \frac{\partial \phi}{\partial{\xi }} (t,z',\xi, \tilde{h})\Big)^{\gamma_1 } 
\Big(\frac{\partial \phi}{\partial {\xi }} (t,z',\xi, \tilde{h}) -y\Big)^{\gamma_2 }.
$$
Using the Taylor formula we see that $ \langle m(\Xi) \rangle^{k_0} R'_{N, \beta}(\Xi)$ 
is a finite linear combination of terms of the form
\begin{equation*}
\begin{aligned}
& {\tilde{h}}^{N -d} \int_0^1 \iint  (z-z')^{\nu}  e^{i{\htilde}^{-1}(z-z')\cdot\eta} F(t,z,z',\xi,\tilde{h}) 
\kappa(\eta)\Big(\frac{\partial \phi}{\partial {\xi }} (t,z',\xi, \tilde{h}) -y\Big)^{l}\\ 
&\qquad\qquad D_{z'}^{\alpha+ \beta_1} D_z^{\beta_2} \Big[(\partial_\eta^\alpha  \widetilde{p})(t,z,z',\eta+ \lambda\theta(t,z,z',\xi,\htilde)) 
b(t,z',\xi,\tilde{h}) \Psi_0\big(\frac{\partial \phi}{\partial \xi} (t,z',\xi, \tilde{h}) -y\big)\Big] \, d\lambda \, dy \, d\eta,  
\end{aligned}
\end{equation*}
where, by Corollary \ref{estkappa} (ii), $F$  is a bounded function.

Eventually we use  the identity 
$(z-z')^{\nu}  e^{i{\htilde}^{-1}(z-z')\cdot\eta} = (\htilde D_\eta)^\nu  e^{i{\htilde}^{-1}(z-z')\cdot\eta},$ we 
integrate by parts in the integral with respect to $\eta$ and we use Remark \ref{rem:est:ptilde}, Remark \ref{rem:est:theta}, the estimate $(ii)$ in Corollary \ref{estkappa}, the fact that $b\in S^0_{2\delta-1}$  and the fact that $N+ N(1-2\delta) = \delta N$ to deduce that   
\begin{equation}\label{est:RN}
  \big\langle m\big(\Xi\big)\big\rangle^{k_0}  \vert D_z^\beta R_N(\Xi) \vert \leq \mathcal{F}_N(\cdots) \, {\htilde }^{\delta N -N_d}
\end{equation}
where    $N_d$ is a fixed number depending only on the dimension.
 
Let us consider the term $S_N.$ Recall that $D_z^\beta S_N$ 
is a finite linear combination for $\vert \alpha \vert \leq N-1$ and $ \vert \gamma \vert = N$ of terms of the form
\begin{multline*}
  S_{N,\alpha, \beta,Ê\gamma} = {\htilde }^{N+\vert \alpha \vert}\int_0^1 \int (1-\lambda)^{N-1}\mu^\gamma \widehat{\kappa}(\mu) D_{z'}^{\alpha + \beta + \gamma}
\Big[\big(\partial^\alpha_\zeta \widetilde{p}\big)\big(t,z,z',\theta (t,z,z',\xi,\htilde),\htilde\big)\\
b\big(t,z',\xi,\htilde\big) \Psi_0\Big(\frac{\partial \phi}{\partial \xi }\big(t,z',\xi,\htilde\big)
-y\Big)\Big] \Big\arrowvert_{z'=z+\lambda \htilde  \mu}\, d\lambda \, d\mu.
  \end{multline*}
Then we multiply  $S_{N,\alpha, \beta, \gamma}$ by $ \langle m(\Xi) \rangle^{ k_0}$ 
and we write
$$
\frac{\partial \phi}{\partial \xi }\big(t,z,\xi,\htilde\big) -y
= \frac{\partial \phi}{\partial \xi }\big(t,z,\xi,\htilde\big)-
\frac{\partial \phi}{\partial \xi }\big(t,z+\lambda \htilde \mu,\xi,\htilde\big) 
+ \frac{\partial \phi}{\partial \xi }\big(t,z+\lambda \htilde \mu,\xi,\htilde\big)-y.
$$
By the Taylor formula the first term will give rise to a power of $\lambda \htilde  \mu$ 
which will be absorbed by $\hat{\kappa}(\mu)$ and the second term will be absorbed by $\Psi_0.$ Then we use again Remark \ref{rem:est:ptilde}, Remark \ref{rem:est:theta} to conclude that 
\begin{equation}\label{est:SN}
 \big\langle m\big(\Xi\big)\big\rangle^{k_0}  \vert D_z^\beta S_N(\Xi) \vert \leq \mathcal{F}_N(\cdots) \,{\htilde }^{\delta N -N'_d}.
\end{equation}
Let us look now to the term $\htilde D_z^\beta T_N \Psi_0$. 
According to \eqref{n200} this expression is a linear combination for $2 \leq \vert \alpha \vert  \leq N$ of terms of the form
\begin{equation*}
{\htilde }^{\vert \alpha\vert +N }
D_z^\beta \Big\{D_{z'}^ \alpha \Big[(\partial_\zeta^\alpha \widetilde{p}(t,x,z ,z',\theta(t,z,z',\xi,\htilde),\htilde )
b_N(t,z',\xi,\htilde )\Big]\Big\arrowvert_{z'=z} \Psi_0\Big(\frac{\partial \phi}{\partial \xi }(t,z',\xi,\htilde ) -y\Big)\Big\}.
\end{equation*}
It follows from Remark \ref{rem:est:ptilde}, Remark \ref{rem:est:theta} and Corollary \ref{estkappa} that we have 
\begin{equation}\label{est:TN}
\big \langle m (\Xi ) \big\rangle^{ k_0}
\big\vert \htilde D_z^\betaÊ\big[T_N \Psi_0 \big] \big\vert \leq \mathcal{F}_N(\cdots) \, {\htilde }^{\delta N}.
\end{equation}
Lemma \ref{estRNSN} follows from \eqref{est:RN}, \eqref{est:SN},  \eqref{est:TN}.
\end{proof}
From   Lemma \ref{estRNSN} we can write
\begin{equation}\label{estint2}
{\htilde }^{-d}\int  \big\Vert e^{i{\htilde}^{-1}( \phi(t,z,\xi,\htilde )- y\cdot \xi)} 
\big(R_N + S_N + \htilde  T_N \Psi_0\big)\big\Vert_{H^{\sigma_0}_z} \vert \chi_1(\xi) \vert \, d \xi 
\leq \mathcal{F}_N(\cdots) \, {\htilde }^{\delta N-N_1(d)}
\end{equation}
where $N_1(d)$ is a fixed number depending only on the dimension.

To conclude the proof of Proposition \ref{EQK} we have to estimate the integrals
$$
I_{N, \beta} =   (2\pi \htilde )^{-d}\int D_z^\beta \big[ e^{i{\htilde}^{-1}( \phi(t,z,\xi,\htilde )- y\cdot \xi)}
U_N\big(t,z,y,\xi,\htilde\big)\big] \chi_1(\xi) \,d \xi.
$$
 
Now according to \eqref{RS}, \eqref{CT1} and \eqref{UN} on the support of $U_N$ 
the function $\Psi_0$ 
is differentiated at least one time. Thus on this support 
one has $\big\vert \frac{\partial \phi}{\partial \xi} (t,z ,\xi, \htilde ) -y\big\vert \geq 1$.  
Then we can use the vector field
$$
X= \frac{\htilde }{\big\vert \frac{\partial \phi}{\partial \xi} (t,z ,\xi, \htilde ) -y\big\vert^2} 
\sum_{j=1}^d \Big(\frac{\partial \phi}{\partial {\xi_j}} (t,z ,\xi, \htilde ) -y_j\Big) D_{\xi_j}
$$
to integrate by parts in $I_{N,\beta}$   to obtain
$$
\vert I_{N,\beta} \vert \leq \mathcal{F}_N(\cdots) \,{\htilde }^{\delta N}.
$$
The proof of Proposition \ref{EQK} is complete.
\end{proof}

We show now that
\begin{equation}\label{rh}
\mathcal{K}w_{0,\htilde }(0,z)=  w_{0,\htilde }(z) + r_{\htilde }(z)
\end{equation}
with
\begin{equation}\label{estrh}
\big\Vert r_{\htilde }\big\Vert_{H^{\sigma_0}(\xR^d)}
\leq \mathcal{F}_N(\cdots) \, {\htilde }^N \big\Vert w_{0,\htilde }\big\Vert_{L^1(\xR^d)} \quad \forall N \in \xN.
\end{equation}
 
It follows from the initial condition on $\phi$ given in \eqref{eikonale} and the initial condition on $b$ that \eqref{rh} is true with
$$
r_{\htilde }(z)=  (2 \pi \htilde )^{-d} \iint e^{i \htilde ^{-1}(z -y) \cdot \xi} \chi_1(\xi)(1-\Psi_0(z-y))w_{0,\htilde }(y) \, dy \, d\xi.
$$
We see easily that for $\vert \beta \vert \leq \sigma_0, D_z^\beta r_{\htilde }(z)$ is a finite linear combination of terms of the form
$$
r_{\htilde ,\beta}(z) = {\htilde }^{-d-\vert \beta_1\vert}  
\iint e^{i \htilde ^{-1}(z -y) \cdot \xi} \xi^{\beta_1}\chi_1(\xi)\Psi_{\beta}(z-y)
w_{0,\htilde }(y) \, dy \,d\xi, \quad \vert \beta_1  \vert \leq \vert \beta \vert
$$
where $\Psi_\beta \in C^\infty_b(\xR^d)$ and $\vert z-y \vert \geq 1$ on the support of $\Psi_\beta $. Then one can write
$$
r_{\htilde ,\beta}(z) = \big(F_{\htilde }\ast w_{0, \htilde }\big)(z)
$$
where 
$$
F_{\htilde }(X) = {\htilde }^{-d-\vert \beta_1\vert}  \int e^{i \htilde ^{-1}X \cdot \xi} \xi^{\beta_1}\chi_1(\xi)\Psi_{\beta}(X)\, d\xi
$$
and $\vert X \vert \geq 1$ on the support of $\Psi_\beta(X).$
Then we remark that if we set $L = \frac{1}{\vert X \vert^2} \sum_{j=1}^d X_j \frac{\partial}{\partial \xi_j}$ we have \, $\htilde L e^{i \htilde ^{-1}X \cdot \xi} =e^{i \htilde ^{-1}X \cdot \xi}$. 
Therefore one can write
$$
F_{\htilde }(X) = {\htilde }^{M-d-\vert \beta_1\vert}  
\int e^{i \htilde ^{-1}X \cdot \xi} (-L)^M\big[\xi^{\beta_1}\chi_1(\xi)\big]\Psi_{\beta}(X) \, d\xi
$$ 
from which we deduce
$$
\vert F_{\htilde }(X) \vert \leq \mathcal{F}_M(\cdots) \, {\htilde }^{M-d-\vert \beta_1 \vert} 
\frac{\vert \tilde{\Psi} (X)\vert}{\vert X \vert^M}, \quad \forall M\in \xN
$$
where $\tilde{\Psi} \in C^\infty_b(\xR^d)$ is equal to $1$ on the support of $\Psi_\beta.$

It follows then that $ \Vert F_{\htilde }\Vert_{L^2(\xR^d)} \leq \mathcal{F}_M(\cdots) \, {\htilde }^{M-d-\vert \beta_1 \vert}$ 
from which we deduce that for $\vert \beta \vert \leq \sigma_0$, 
$$
\big\Vert D^\beta r_{\htilde }\big\Vert_{L^2(\xR^d)}
\leq \mathcal{F}_N(\cdots) \, \big\Vert F_{\htilde }\big\Vert_{L^2(\xR^d)}
\big\Vert w_{0,\htilde }\big\Vert_{L^1(\xR^d)}
\leq \mathcal{F}_N(\cdots) \, {\htilde }^{M-d-\vert \beta_1 \vert}\big\Vert w_{0,\htilde }\big\Vert_{L^1(\xR^d)}
$$
which proves \eqref{estrh}.

Using Proposition \ref{EQK} and the Duhamel formula one can write
$$
\tilde{S}(t,0)w_{0,\htilde }(z)
=\mathcal{K}w_{0,\htilde }(t,z)  - \tilde{S}(t,0) r_{\htilde }(z) - \int_0^t  \tilde{S}(t,s)[F_{\htilde }(s,z)] \,ds.
$$
Now we can write
\begin{equation*}
\begin{aligned}
 \lA \int_0^t  \tilde{S}(t,s)\big[F_{\htilde }(s,z)\big] \,ds\rA_{L^\infty(\xR^d)}
 &\leq \int_0^t  \lA \tilde{S}(t,s)\big[F_{\htilde }(s,z)\big]\rA_{H^{\sigma_0}(\xR^d)} \, ds \\
 &\leq C \int_0^t\lA  F_{\htilde }(s,z) \rA_{H^{\sigma_0}(\xR^d)} \, ds 
 \leq \mathcal{F}_N(\cdots) \, \htilde ^N \big\Vert w_{0,\htilde }\big\Vert_{L^1(\xR^d)}
\end{aligned}
\end{equation*}
and, for every $N \in \xN$,
$$
\big\Vert \tilde{S}(t,0) r_{\htilde}\big\Vert_{L^\infty(\xR^d)}
\leq C \big\Vert \tilde{S}(t,0) r_{\htilde}\big\Vert_{H^{\sigma_0}(\xR^d)}
\leq C'  \big\Vert r_{\htilde}\big\Vert_{H^{\sigma_0}(\xR^d)}
\leq \mathcal{F}_N(\cdots) \,{\htilde }^N \Vert w_{0,\htilde }\big\Vert_{L^1(\xR^d)}.
$$
Then Theorem \ref{dispersive1} follows from these estimates and \eqref{estimK}. 
\end{proof}

  \section{The Strichartz estimates}

\begin{theo}\label{semicl}
Consider the problem
\begin{equation*}
\Big(  \partial_t + \mez( T_{V_\delta} \cdot \nabla + \nabla \cdot T_{V_\delta}) +i T_{\gamma_\delta} \Big)u_{h}(t,x) =f_h (t,x),  \quad
u_{h}(t_0,x) = u_{0,h}(x).
\end{equation*}
where $u_h, u_{0,h} $ and $f_h$ 
have spectrum  in  $\big\{ \xi : c_1h^{-1} \leq \vert \xi \vert \leq c_2 h^{-1}\big\}$. Let  $I_h = (0, h^{ \frac{\delta}{2}}).$
 
Then there exists  $k = k(d), h_0>0$ such that for any $s \in \xR$ and $ \eps>0$ there exists $\mathcal{F}, \mathcal{F}_\eps :\xR^+ \to \xR^+$,  such that  
\begin{align*}
& (i) \quad \text{if }  d=1: \\
  &  \quad \Vert u_h \Vert_{L^{4}(I_h, W^{s - \frac{3}{8} , \infty}(\xR ))} 
\leq \mathcal{F}\big(\Vert V\Vert_{E_0} + \mathcal{N}_k(\gamma)\big) \, \Big( \Vert u_{0,h}\Vert_{H^s(\xR)} + \Vert f_h \Vert_{L^1(I_h, H^s (\xR))} \Big), \\
& (ii) \quad \text{if }  d\geq 2:\\
&   \quad \Vert u_h \Vert_{L^{2+\eps}(I_h, W^{s - \frac{d}{2} + \frac{1}{4} - \eps, \infty}(\xR^d))} 
\leq \mathcal{F}_\eps\big(\Vert V\Vert_{E_0} + \mathcal{N}_k(\gamma)\big) \,  \Big( \Vert u_{0,h}\Vert_{H^s(\xR^d)} + \Vert f_h \Vert_{L^1(I_h, H^s (\xR^d))} \Big),    
  \end{align*}
  for any  $0<h \leq h_0$  
 \end{theo}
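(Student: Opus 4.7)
The strategy is to deduce the Strichartz estimate from the dispersive estimate of Theorem~\ref{dispersive} via the standard $TT^{*}$ / Keel--Tao machinery, combined with an $L^{2}$ energy estimate for the propagator and, in dimension $d \geq 2$, a Bernstein step to convert the $L^{q}_{x}$ output into $L^{\infty}_{x}$. Denote by $S(t, t_{0})$ the propagator of the homogeneous equation. By Duhamel's formula,
\begin{equation*}
u_{h}(t) = S(t, t_{0}) u_{0, h} + \int_{t_{0}}^{t} S(t, s) f_{h}(s) \, ds,
\end{equation*}
and applying Minkowski's inequality to the integral term reduces matters to the homogeneous estimate, the inhomogeneous contribution being then controlled by $\Vert f_{h}\Vert_{L^{1}(I_{h}; H^{s})}$. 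An $L^{2}$ energy estimate $\Vert S(t, t_{0}) v_{h}\Vert_{L^{2}} \leq C \Vert v_{h}\Vert_{L^{2}}$ (uniform in $h$ and $t \in I_{h}$) is classical: since $V_{\delta}$ and $\gamma_{\delta}$ are real, both $\tfrac{1}{2}(T_{V_{\delta}} \cdot \nabla + \nabla \cdot T_{V_{\delta}})$ and $iT_{\gamma_{\delta}}$ are antisymmetric on $L^{2}$ modulo bounded paradifferential remainders, and an integration by parts plus Gr\"onwall yield the claim.

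Interpolating this $L^{2} \to L^{2}$ bound with the $L^{1} \to L^{\infty}$ dispersive bound of Theorem~\ref{dispersive} gives, for every $q \in [2, \infty]$,
\begin{equation*}
\Vert S(t, t_{0}) g\Vert_{L^{q}} \leq C \bigl[h^{-3d/4} |t - t_{0}|^{-d/2}\bigr]^{1 - 2/q} \Vert g\Vert_{L^{q'}}.
\end{equation*}
The $TT^{*}$ argument, combined with the Hardy--Littlewood--Sobolev inequality in the time variable, then produces the Strichartz estimate
\begin{equation*}
\Vert S(\cdot, t_{0}) v_{h}\Vert_{L^{p}(I_{h}; L^{q})} \leq C h^{-\frac{3d}{8}(1 - 2/q)} \Vert v_{h}\Vert_{L^{2}}
\end{equation*}
for every Schr\"odinger-admissible pair satisfying $\frac{2}{p} + \frac{d}{q} = \frac{d}{2}$ with $p > 2$. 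The non-endpoint condition $p > 2$ is essential in order to apply HLS to the convolution kernel $|t|^{-(d/2)(1 - 2/q)}$.

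For $d = 1$, the non-endpoint choice $(p, q) = (4, \infty)$ directly produces $\Vert u_{h}\Vert_{L^{4}(I_{h}; L^{\infty})} \les h^{-3/8} \Vert u_{0, h}\Vert_{L^{2}}$, and the equivalences $\Vert v_{h}\Vert_{W^{s - 3/8, \infty}} \sim h^{-(s - 3/8)} \Vert v_{h}\Vert_{L^{\infty}}$ and $\Vert v_{h}\Vert_{H^{s}} \sim h^{-s} \Vert v_{h}\Vert_{L^{2}}$, valid for data spectrally localized at scale $h^{-1}$, make all powers of $h$ cancel exactly. For $d \geq 2$ the endpoint $(2, \infty)$ is forbidden by Keel--Tao, so we select $(p, q) = (2 + \eps, q(\eps))$ with $q(\eps) < \infty$ admissible, and recover $L^{\infty}_{x}$ via Bernstein: $\Vert v_{h}\Vert_{L^{\infty}} \les h^{-d/q(\eps)} \Vert v_{h}\Vert_{L^{q(\eps)}}$. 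A short arithmetic check gives the combined $h$-loss $\frac{d}{2} - \frac{1}{2(2 + \eps)} = \frac{d}{2} - \frac{1}{4} + \frac{\eps}{8} + O(\eps^{2})$ in the bound for $\Vert u_{h}\Vert_{L^{2 + \eps}_{t} L^{\infty}_{x}}$, from which the stated loss $\frac{d}{2} - \frac{1}{4} - \eps$ follows after accounting for Sobolev derivatives on frequency-localized data. The principal technical obstacle is precisely the absence of the Keel--Tao endpoint in dimension $d \geq 2$: it forces both the $\eps$-loss in the Lebesgue exponent and the additional Bernstein step that introduces the $d/q$ derivative cost.
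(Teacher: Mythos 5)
Your proposal is correct and follows essentially the same route as the paper: the $TT^*$ argument applied to the dispersive bound of Theorem~\ref{dispersive}, the non-endpoint pair $(4,\infty)$ for $d=1$, a non-endpoint pair $(2+\eps, q(\eps))$ followed by a Bernstein/Sobolev step to reach $L^\infty_x$ for $d\ge 2$, and cancellation of the powers of $h$ via the spectral localization; your explicit Duhamel, $L^2$ energy bound and interpolation steps are exactly what the paper leaves implicit in the phrase ``by the $TT^*$ argument''. The only quibble is cosmetic: your computed loss $\frac{d}{2}-\frac{1}{2(2+\eps)}=\frac{d}{2}-\frac{1}{4}+\frac{\eps}{8}+O(\eps^2)$ yields the regularity index $s-\frac{d}{2}+\frac{1}{4}-\eps$ of the statement (a slightly weaker norm), so the sign in your phrase ``the stated loss $\frac{d}{2}-\frac{1}{4}-\eps$'' should read $\frac{d}{2}-\frac{1}{4}+\eps$.
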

\begin{proof}
If $d =1,$   by the $TT^*$ argument  we deduce from the dispersive estimate   given in Theorem \ref{dispersive} that 
\begin{equation*}
\Vert u_h \Vert_{L^4(I_h, L^\infty(\xR))} \leq \mathcal{F}(\cdots)\, h^{-\frac{3}{8}}
\Big(  \Vert u_{0,h}\Vert_{L^2(\xR)} + \Vert f_h \Vert_{L^1(I_h,L^2 (\xR))} \Big).
\end{equation*}
Then multiplying this estimate by $h^s$   and using the fact that $u_h,u_{0,h},f_h$ 
are spectrally supported in     $ \{ \xi : c_1h^{-1} \leq \vert \xi \vert \leq c_2 h^{-1} \}$  we deduce
$(i).$  
 
If $d\geq 2$ we use the same argument. Then   if $(q,r)\in \xR^2$ is such that $ q>2$ and 
$\frac{2}{q} = \frac{d}{2} - \frac{d}{r}$  we obtain
\begin{equation}\label{strich1}
\Vert u_h \Vert_{L^q(I_h, L^r(\xR^d))} \leq \mathcal{F}(\cdots)\, h^{-\frac{3}{2q}}
\Big(  \Vert u_{0,h}\Vert_{L^2(\xR^d)} + \Vert f_h \Vert_{L^1(I_h,L^2 (\xR^d))} \Big).
\end{equation}
Taking $q = 2+ \eps$ we find $r= 2+ \frac{8}{(2+\eps)d-4}.$  Moreover $ h^{-\frac{3}{2q}} \leq h^{-\frac{3}{4}}.$ Then multiplying both members of \eqref{strich1} by $h^s$ we obtain 
$$
\Vert u_h \Vert_{L^{2+ \eps}(I_h,W^{s - \frac{3}{4}, r}(\xR^d))} 
\leq \mathcal{F}(\cdots)\,  \Big(  \Vert u_{0,h}\Vert_{H^s(\xR^d)} + \Vert f_h \Vert_{L^1(I_h,H^s (\xR^d))} \Big).
$$
On the other hand the Sobolev embedding shows that $W^{a+b,r}(\xR^d) \subset W^{b,\infty} (\xR^d)$ 
provided that $a >  \frac{d}{r} =  \frac{d}{2} -1 + \frac{\eps}{2+ \eps}$. 
In particular we can take $a = \frac{d}{2} -1 + \eps$. Taking $ \frac{d}{2} -1+ \eps +b =s - \frac{3}{4}$
we obtain the conclusion of the Theorem.
\end{proof}

\begin{coro}\label{strich2}
With  the notations in Theorem \ref{semicl} and  $\delta = \frac{2}{3}, I=[0,T]$ we have

  $(i)  \quad   \text{if  }   d=1$: 
  \begin{equation*}
  \begin{aligned}
     \quad  \Vert  u_{h} \Vert_{L^4(I; W^{s - \frac{3}{8}-\frac{\delta}{8}, \infty}(\xR))} \leq \mathcal{F}\big(\Vert V\Vert_{E_0} +  \mathcal{N}_k(\gamma)\big) \,  
     &\Big(
     \Vert  f_h\Vert_{L^4(I; H^{s-\frac{\delta}{2}}(\xR))} 
     +  \Vert u_h\Vert_{C^0(I; H^{s }(\xR))}\Big),
  \end{aligned}
  \end{equation*} 
     $(ii)  \quad     \text{if  }  d\geq 2$: 
     \begin{equation*}
  \begin{aligned}
     \quad   \Vert  u_{h} \Vert_{L^2(I; W^{s - \frac{d}{2} +  \frac{1}{4} -\frac{\delta}{4} - \eps, \infty}(\xR^d))}  
  \leq \mathcal{F}_\eps\big(\Vert V\Vert_{E_0} +  \mathcal{N}_k(\gamma)\big) \,  &\Big(\Vert  f_h\Vert_{L^2(I; H^{s-\frac{\delta}{2}}(\xR^d))} 
  +  \Vert u_h\Vert_{C^0(I; H^s(\xR^d))}\Big)
    \end{aligned}
 \end{equation*}
   for any $\eps>0.$ 
\end{coro}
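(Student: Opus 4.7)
The plan is to obtain the global Strichartz estimate on $I=[0,T]$ by gluing the semi-classical estimates from Theorem~\ref{semicl}, which hold only on intervals of length $h^{\delta/2}$. I partition $I$ into $N\sim T h^{-\delta/2}$ consecutive subintervals $I_j=[t_j,t_{j+1}]$ of length $h^{\delta/2}$, apply Theorem~\ref{semicl} on each $I_j$ with initial data $u_h(t_j)$, use H\"older to replace the $L^1$ source norm by $L^q$, sum the $p$-th powers, and finally trade the resulting powers of $h$ against Sobolev derivatives using spectral localization.

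Concretely, on each $I_j$ Theorem~\ref{semicl} yields, with $(p,\sigma)=(4,3/8)$ if $d=1$ and $(p,\sigma)=(2+\eps,d/2-1/4+\eps)$ if $d\ge 2$,
\[
\Vert u_h\Vert_{L^p(I_j;W^{s-\sigma,\infty})}\le \mathcal{F}(\cdots)\Big(\Vert u_h(t_j)\Vert_{H^s}+\Vert f_h\Vert_{L^1(I_j;H^s)}\Big).
\]
H\"older on $I_j$ with $q=4$ if $d=1$ and $q=2$ if $d\ge 2$ gives $\Vert f_h\Vert_{L^1(I_j;H^s)}\le h^{(\delta/2)(1-1/q)}\Vert f_h\Vert_{L^q(I_j;H^s)}$. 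For $d\ge 2$ I also need to pass from $L^{2+\eps}(I_j)$ to $L^2(I_j)$, which costs the factor $|I_j|^{\eps/(2(2+\eps))}\le 1$ and is therefore absorbed. Raising to the $p$-th power and summing in $j$, using $\sum_j\Vert u_h(t_j)\Vert_{H^s}^p\le N\Vert u_h\Vert_{C^0(I;H^s)}^p$, yields
\[
\Vert u_h\Vert_{L^p(I;W^{s-\sigma,\infty})}\le \mathcal{F}'(\cdots)\Big(h^{-\delta/(2p)}\Vert u_h\Vert_{C^0(I;H^s)}+h^{(\delta/2)(1-1/q)}\Vert f_h\Vert_{L^q(I;H^s)}\Big).
\]
The exponents give $-\delta/8$ and $+3\delta/8$ for $d=1$, and $-\delta/4$ and $+\delta/4$ for $d\ge 2$.

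To finish, I exploit the spectral localization of $u_h$ and $f_h$ in $\{|\xi|\sim h^{-1}\}$: for any $\beta\ge 0$, $h^{\beta}\Vert v_h\Vert_{W^{\rho,\infty}}\simeq \Vert v_h\Vert_{W^{\rho-\beta,\infty}}$ and $h^{\beta}\Vert v_h\Vert_{H^\rho}\simeq \Vert v_h\Vert_{H^{\rho-\beta}}$. Absorbing $h^{-\delta/(2p)}$ into the regularity index of the left-hand side shifts $s-\sigma$ to $s-\sigma-\delta/(2p)$, producing $W^{s-3/8-\delta/8,\infty}$ for $d=1$ and $W^{s-d/2+1/4-\delta/4-\eps,\infty}$ for $d\ge 2$, exactly as stated. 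The remaining factor in front of the source is $h^{(\delta/2)(1-1/q)}$ multiplied by $h^{-\delta/(2p)}$ from the regularity shift, which in both cases equals $h^{\delta/2}$; by spectral localization, $h^{\delta/2}\Vert f_h\Vert_{L^q(I;H^s)}\simeq \Vert f_h\Vert_{L^q(I;H^{s-\delta/2})}$, matching the source norm in the statement.

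No conceptual difficulty is expected beyond bookkeeping; the only step that requires a little care is the $d\ge 2$ case, where one must keep track of the (trivially harmless) inclusion $L^{2+\eps}(I_j)\hookrightarrow L^2(I_j)$ so that the final estimate is indeed in $L^2(I)$ rather than $L^{2+\eps}(I)$. All powers of $h$ balance out exactly because $\delta=2/3$ and the relations $1-1/4-1/8=5/8$ (for $d=1$), $(1-1/2)-1/4=1/4$ (for $d\ge 2$) conspire with the energy loss $-\delta/(2p)$ to produce the net gain $\delta/2$ in the source term.
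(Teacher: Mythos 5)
Your argument is correct and is essentially the paper's own proof: partition $[0,T]$ into $\sim Th^{-\delta/2}$ intervals of length $h^{\delta/2}$, apply Theorem~\ref{semicl} on each, use H\"older to pass from the $L^1$ to the $L^p$ norm of $f_h$ in time, sum the $p$-th powers, and trade the resulting powers of $h$ for regularity via the spectral localization. The only (immaterial) difference is that the paper uses overlapping smooth time cutoffs $\chi_{h,k}$, so that each localized piece has zero initial data and the term $h^{-\delta/2}\chi' u_h$ plays the role of your $\Vert u_h(t_j)\Vert_{H^s}$ contributions; both devices yield the same count $N^{1/p}h^{\delta/(2p)}\sim T^{1/p}$ (and note the balance of exponents comes from $p=q$, not from the specific value $\delta=2/3$).
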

\begin{proof}
Let $T>0$ and  $\chi \in C_0^\infty(0,2)$ equal to one on $[ \mez, \frac{3}{2}].$ For $0 \leq k \leq [Th^{-\frac{\delta}{2}}]-2$ define
$$
I_{h,k} = [kh^\frac{\delta}{2}, (k+2)h^\frac{\delta}{2}], \quad \chi_{h,k}(t) = \chi\Big( \frac{t-kh^\frac{\delta}{2}}{h^\frac{\delta}{2}}\Big), \quad  u_{h,k} = \chi_{h,k}(t)u_h.
$$
Then
\begin{equation*} 
\Big(  \partial_t + \mez( T_{V_\delta} \cdot \nabla + \nabla \cdot T_{V_\delta}) 
+i T_{\gamma_\delta} \Big)u_{h,k} 
= \chi_{h,k}f_h   +h^{-\frac{\delta}{2}}\chi'\Big( \frac{t-kh^\frac{\delta}{2}}{h^\frac{\delta}{2}}\Big)u_h
\end{equation*}
and $u_{h,k}(kh^\frac{\delta}{2},\cdot)=0$. 

Consider first the case $d=1$. Applying Theorem \ref{semicl}, $(i)$ to each $u_{h,k}$ on the interval $I_{h,k}$   we obtain, since $\chi_{h,k}(t) = 1$ for $(k+\mez)h^\frac{\delta}{2}\leq t \leq (k+\frac{3}{2})h^\frac{\delta}{2}$, 
\begin{equation*} 
\begin{aligned}
\Vert &u_{h } \Vert_{L^4(  (k+\mez)h^\frac{\delta}{2},(k+\frac{3}{2})h^\frac{\delta}{2});W^{s - \frac{3}{8} , \infty}(\xR))}  \\
& \leq \mathcal{F}(\cdots) \Big( \Vert   f_h \Vert_{L^1(  (kh^\frac{\delta}{2},(k+2)h^\frac{\delta}{2} );H^s(\xR))} 
+ h^{-\frac{\delta}{2}} \Vert \chi'\Big( \frac{t-kh^\frac{\delta}{2}}{h^\frac{\delta}{2}}\Big)u_h \Vert_{L^1(\xR;H^s(\xR))}\Big)\\
&\leq \mathcal{F}(\cdots)  \Big(h^{ \frac{3\delta}{8}} 
\Vert   f_h \Vert_{L^4(   (kh^\frac{\delta}{2},(k+2)h^\frac{\delta}{2} );H^s(\xR))} 
+ \Vert u_h\Vert_{L^\infty(I; H^s(\xR))}\Big).
\end{aligned}
\end{equation*}
    Multiplying both members of the above inequality by $h^\frac{\delta}{8}$ 
and taking into account that $u_h$ and $f_h$ 
are spectrally supported in a ring of size $h^{-1}$ we obtain 
\begin{equation}\label{global3}
\begin{aligned}
\Vert &u_{h } \Vert_{L^4((k+\mez)h^\frac{\delta}{2} ,(k+\frac{3}{2})h^\frac{\delta}{2}); W^{s - \frac{3}{8}-\frac{\delta}{8}, \infty}(\xR))} \\
&\leq    \mathcal{F}(\cdots)   \Big(\Vert  f_h\Vert_{L^4((kh^\frac{\delta}{2},(k+2)h^\frac{\delta}{2}  ); H^{s-\frac{\delta}{2}}(\xR))} 
+ h^\frac{\delta}{8}  \Vert u_h\Vert_{L^\infty(I; H^s(\xR))}\Big).
\end{aligned}
\end{equation}
Taking the power $4$ of \eqref{global3}, summing in $k$ from $0$ to  $[Th^{-\frac{\delta}{2}}]-2$ 
  we obtain (since there are $\approx T h^{-\frac{\delta}{2}}$ intervals)
\begin{equation*}\label{global4}
 \Vert  u_{h} \Vert_{L^4(I; W^{s - \frac{3}{8}-\frac{\delta}{8}, \infty}(\xR))}\\
  \leq   \mathcal{F}(\cdots)  \Big(
  \Vert  f_h\Vert_{L^4(I; H^{s-\frac{\delta}{2}}(\xR))} 
+  \Vert u_h\Vert_{C^0(I; H^s(\xR))}\Big).
  \end{equation*}
This completes the proof of $(i)$. 

The proof of $(ii)$ follows exactly the same path.  We apply Theorem \ref{semicl}, $(ii)$ to each $u_{h,k}$ on the interval $I_{h,k}.$ The only difference with the  case $d=1$ is that,   passing from the $L^1$ norm in $t$ of $f_h$  to the  $L^2$ norm, it  gives rise to a $h^\frac{\delta}{4}$ factor. Therefore we multiply the inequality by  $h^\frac{\delta}{4},$ we take the square of the new inequality and we sum in $k$.
\end{proof}

\begin{coro}\label{strich3}
Consider the problem
\begin{equation*}
\Big(  \partial_t + \mez( T_{V } \cdot \nabla + \nabla \cdot T_{V }) 
+i T_{\gamma } \Big)u_{h}(t,x) =F_h (t,x),  \quad
u_{h}(t_0,x) = u_{0,h}(x).
\end{equation*}
where $u_h, u_{0,h} $ and $F_h$ is spectrally supported in 
$ \{ \xi : c_1h^{-1} \leq \vert \xi \vert \leq c_2 h^{-1} \}.$  

  Then there exists  $k = k(d), h_0>0$ such that for any $s \in \xR$, for any $T>0$, $\eps>0$  one can find $\mathcal{F}, \mathcal{F}_\eps: \xR^+ \to\xR^+ $ such that with $I =[0,T]$
  
  $(i)$ if $d=1$: 
\begin{equation*}
\begin{aligned}
  \Vert  u_{h} \Vert_{L^4(I; W^{s - \frac{3}{8}-\frac{\delta}{8}, \infty}(\xR))} \leq \mathcal{F}\big(\Vert V\Vert_{E_0} +  \mathcal{N}_k(\gamma)\big) \,  
  \Big( \Vert  F_h\Vert_{L^4(I; H^{s }(\xR))}
+  \Vert u_h\Vert_{C^0(I; H^{s }(\xR))}\Big), 
\end{aligned} 
\end{equation*}
   $(ii)$ if $d\geq 2$: 
 \begin{equation*}
 \begin{aligned}
 \Vert  u_{h} \Vert_{L^2(I; W^{s - \frac{d}{2} +  \frac{1}{4} -\frac{\delta}{4} - \eps, \infty}(\xR^d))}  
  \leq \mathcal{F}\big(\Vert V\Vert_{E_0} +  \mathcal{N}_k(\gamma)\big) \,  
  \Big( \Vert  F_h\Vert_{L^2(I; H^{s }(\xR^d))}
+  \Vert u_h\Vert_{C^0(I; H^s(\xR^d))}\Big) .
\end{aligned}
   \end{equation*}
    \end{coro}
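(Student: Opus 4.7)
The plan is to deduce Corollary \ref{strich3} from Corollary \ref{strich2} by absorbing the discrepancy between the original and the smoothed coefficients into the source term. Setting
\begin{equation*}
L \defn \partial_t + \mez(T_V\cdot \nabla + \nabla\cdot T_V) + iT_\gamma,\quad
L_\delta \defn \partial_t + \mez(T_{V_\delta}\cdot \nabla + \nabla\cdot T_{V_\delta}) + iT_{\gamma_\delta},
\end{equation*}
the hypothesis $Lu_h = F_h$ rewrites as $L_\delta u_h = F_h - R_h$, where
\begin{equation*}
R_h \defn \mez\bigl(T_{V - V_\delta}\cdot \nabla + \nabla\cdot T_{V - V_\delta}\bigr) u_h + iT_{\gamma - \gamma_\delta} u_h.
\end{equation*}
One then applies Corollary \ref{strich2} with $f_h := F_h - R_h$. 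Since $F_h$ is spectrally localized at $\la \xi\ra \sim h^{-1}$, one has $\lA F_h\rA_{H^{s-\delta/2}}\les \lA F_h\rA_{H^s}$, so everything reduces to proving
\begin{equation*}
\lA R_h\rA_{L^p(I; H^{s-\delta/2})}\leq \mathcal{F}\bigl(\lA V\rA_{E_0}+\mathcal{N}_k(\gamma)\bigr)\lA u_h\rA_{C^0(I; H^s)}.
\end{equation*}

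For the transport contribution, note that $V - V_\delta = (I-\psi(h^\delta D))V$ has Fourier support at frequencies $\gtrsim h^{-\delta}$, so Bernstein's inequality yields $\lA V - V_\delta\rA_{L^\infty}\les h^\delta\lA \nabla V\rA_{L^\infty}$ and $\lA \nabla(V - V_\delta)\rA_{L^\infty}\les \lA \nabla V\rA_{L^\infty}$. Paraproduct continuity, combined with the spectral localization of $u_h$ at $\la\xi\ra\sim h^{-1}$ (which turns any derivative into a factor $h^{-1}$ and any Sobolev shift by $\delta/2$ into $h^{\delta/2}$), gives
\begin{equation*}
\lA T_{V-V_\delta}\cdot \nabla u_h\rA_{H^{s-\delta/2}}\les h^\delta\cdot h^{-1+\delta/2}\lA \nabla V\rA_{L^\infty}\lA u_h\rA_{H^s}.
\end{equation*}
With $\delta = 2/3$, the exponent $\delta + \delta/2 - 1$ vanishes, and the symmetric term $\nabla\cdot T_{V-V_\delta}u_h = T_{V-V_\delta}\cdot\nabla u_h + T_{\cnx(V-V_\delta)}u_h$ is handled identically. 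Integrating in time and invoking the definition of $E_0$ bounds this part of $R_h$ by $\lA V\rA_{E_0}\lA u_h\rA_{C^0(I; H^s)}$.

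For the $T_\gamma$ piece, Lemma \ref{estgamma} provides $\gamma \in C^0(I; W^{s_0,\infty})$ uniformly for $\xi$ in a ring, with $s_0 = \mez - \sigma_d > \uq$. The same Bernstein-type argument used in Lemma \ref{estgamma-delta}, applied to $I - \psi(h^\delta D)$ in place of $\psi(h^\delta D)$, yields $\lA \gamma - \gamma_\delta\rA_{L^\infty_x}\les h^{\delta s_0}$ uniformly in $t$ and $\xi$; the $\xi$-derivatives are controlled analogously by $\mathcal{N}_k(\gamma)$ for $k$ large enough. Since $T_{\gamma - \gamma_\delta}$ has order $\mez$ and $u_h$ is localized at $\la\xi\ra\sim h^{-1}$,
\begin{equation*}
\lA T_{\gamma - \gamma_\delta} u_h\rA_{H^{s-\delta/2}}\les h^{\delta s_0}\lA u_h\rA_{H^{s+\mez-\delta/2}}\les h^{\delta s_0 + \delta/2 - \mez}\lA u_h\rA_{H^s},
\end{equation*}
and with $\delta = 2/3$ the exponent equals $(2/3)s_0 - 1/6$, which is nonnegative by the choice of $s_0$. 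Integrating in time (absorbing the harmless factor $T^{1/p}$ into $\mathcal{F}$) completes the bound on $R_h$.

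The only real obstacle is arranging the exponent balances, and this is precisely why $\delta = 2/3$ was fixed earlier: it is the unique value making both $3\delta/2 - 1$ and $\delta/2 - \mez + \delta s_0$ (with $s_0 > \uq$) nonnegative, so the smoothing losses $h^\delta$ and $h^{\delta s_0}$ exactly offset the frequency-localization gains $h^{-1}$ and $h^{-\mez}$ coming from the transport derivative and the order-$\mez$ paradifferential operator respectively. No tool beyond Corollary \ref{strich2}, Lemma \ref{estgamma}, and standard paradifferential continuity is needed.
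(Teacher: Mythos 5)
Your proposal is correct and follows essentially the same route as the paper: reduce to Corollary \ref{strich2} by moving the differences $T_V-T_{V_\delta}$, $T_{\nabla V}-T_{\nabla V_\delta}$ and $T_\gamma-T_{\gamma_\delta}$ into the source term, and show each is bounded in $L^p(I;H^{s-\delta/2})$ by $\mathcal{F}(\cdots)\Vert u_h\Vert_{C^0(I;H^s)}$ using the smoothing gain $h^\delta$ (resp.\ $h^{\delta s_0}$) against the frequency-localization losses, with $\delta=\frac{2}{3}$ making the exponents balance. The only cosmetic difference is that you phrase the transport estimate via Bernstein on $V-V_\delta$ plus paraproduct continuity, whereas the paper writes out the Littlewood--Paley decomposition and the convolution kernel of $S_j-S_{j\delta}$ explicitly; the computations are identical.
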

 \begin{proof}
Applying Corollary \ref{strich2} we see that Corollary \ref{strich3} will follow from the following estimates with $ p =  4$ if $d=1$ and $p=2$ if $d\geq 2.$
\begin{equation}\label{diffV} 
\left\{ 
\begin{aligned}
&\Vert  \big( T_V - T_{V_\delta}\big).\nabla u_h \Vert_{L^p(I; H^{s-\frac{\delta}{2}}(\xR^d))} \leq \mathcal{F}(\cdots)  \Vert u_h\Vert_{L^\infty(I; H^s(\xR^d))} \\
&\Vert  \big( T_{\nabla V} - T_{{\nabla V}_\delta}\big)  u_h \Vert_{L^p(I; H^{s-\frac{\delta}{2}}(\xR^d))} \leq \mathcal{F}(\cdots)  \Vert u_h\Vert_{L^\infty(I; H^s(\xR^d))} \\
&\Vert  \big( T_\gamma - T_{\gamma _\delta}\big) u_h \Vert_{L^p(I; H^{s-\frac{\delta}{2}}(\xR^d))} \leq \mathcal{F}(\cdots)  \Vert u_h\Vert_{L^\infty(I; H^s(\xR^d))}.  
\end{aligned}
\right.
\end{equation}
  These three estimates are proved along the same lines. Let us prove the first one.

We have 
$$
\big(T_V - T_{V_\delta}\big).\nabla u_h 
= \sum_{k=-1}^{+\infty} \big( (S_k(V) - S_{k \delta}(V))\cdot \nabla\Delta_k u_h \big).
$$
Since $u_h = \Delta_j u$ where $h=2^{-j}$ we see easily that the above sum is reduced 
to a finite number of terms where $\vert k-j \vert \leq 3.$ All the terms beeing estimated in the same way 
it is sufficient to consider for simplicity the term where $k=j.$ 
Let us set 
$$
A_j(t,x) = \big((S_j(V_i) - S_{j \delta}(V_i))\cdot \partial_i\Delta_ju_h\big)(t,x) \quad i=1,\ldots, d 
$$
where $V=(V_1,\ldots,  V_d)$.

Since the spectrum of $A_j  $ is contained in a ball of radius $C\, 2^j$ we can write for fixed $t$
\begin{equation*}
\begin{aligned}
\Vert A_j(t,\cdot) \Vert_{H^{s-\frac{\delta}{2}}(\xR^d)} &\leq C\, 2^{j(s-\frac{\delta}{2})} 
\Vert  (S_j(V_i) - S_{j \delta}(V_i))\cdot \partial_i\Delta_ju_h)(t,\cdot) \Vert_{L^2(\xR^d)}\\
& \leq C\, 2^{j(s-\frac{\delta}{2})}\Vert  (S_j(V_i) - S_{j \delta}(V_i))(t,\cdot) \Vert_{L^\infty(\xR^d)} 
2^{j(1-s)}  \Vert u_h(t,\cdot) \Vert_{H^s(\xR^d)}. 
\end{aligned}
\end{equation*}
  Now we can write  
$$
(S_j(V_i) - S_{j \delta}(V_i))(t,x) 
= \int_{\xR^d} \hat{\psi}(z) \big(V_i(t,x-2^{-j }z) - V_i(t,x-2^{-j\delta} z) \big) \,dz.
$$
where $\psi \in C_0^\infty(\xR^d)$ has its support contained in a ball of radius $1$. 
It follows easily, since $0<\delta<1,$ that 
$$
\Vert (S_j(V_i) - S_{j \delta}(V_i))(t,\cdot) \Vert_{L^\infty(\xR^d)} 
\leq \mathcal{F}(\cdots)\, 2^{-j \delta} \Vert V_i(t,\cdot) \Vert_{W^{1,\infty}(\xR^d)}.
$$
Therefore we obtain with $p= 2,4$ and $I=[0,T],$
$$
\Vert A_j\Vert_{L^p(I; H^{s-\frac{\delta}{2}}(\xR^d))} \leq \mathcal{F}(\cdots)\,  2^{j(1-\frac{3\delta}{2})}  
\Vert V_i\Vert_{L^p(I; W^{1,\infty}(\xR^d))}  \Vert u_h \Vert_{L^\infty(I; H^s(\xR^d))},
$$
and the estimate \eqref{diffV} for the first term follows from the fact that $ \frac{3\delta}{2} = 1$.
\end{proof}
The theorem below has been stated in Theorem \ref{T4} but for the convenience of the reader we state it again.
\begin{theo}\label{T20}
Let $I= [0,T]$, $d\geq 1$. 
Let $\mu$ be such that 
$\mu=\frac{1}{24}$ if $d=1$ and $\mu<\frac{1}{12}$ if $d\ge 2$.

Let  $s>1+\frac{d}{2}-\mu$ 
and $f \in L^\infty(I; H^s(\xR^d))$. 
Let $u\in C^0(I; H^s(\xR^d))$ be a solution of the problem 
$$
\Big(  \partial_t + \mez( T_{V } \cdot \nabla + \nabla \cdot T_{V }) +i T_{\gamma } \Big)u  =f.
$$
Then one can find $k = k(d)$ such that
\begin{equation*}
\begin{aligned}
\Vert  u  \Vert_{L^p(I;  W^{s-\frac{d}{2}+\mu, \infty}(\xR^d))}
\leq    \mathcal{F}\big(\Vert V\Vert_{E_0} +  \mathcal{N}_k(\gamma)\big) 
\Big\{  \Vert  f\Vert_{L^p(I; H^{s }(\xR^d))} 
+ \Vert  u \Vert_{C^0(I; H^s(\xR^d))}\Big\}
\end{aligned}
\end{equation*}
where   $p=4$ if $ d=1 $ and $p=2$ if $d \geq 2.$  
\end{theo}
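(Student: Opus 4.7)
The plan is to Littlewood--Paley decompose the solution and apply Corollary \ref{strich3} scale by scale. Write $u=\sum_{j\ge -1}u_j$ with $u_j=\Delta_j u$ spectrally supported in $\{|\xi|\sim 2^j\}$ for $j\ge 0$; the low-frequency piece $u_{-1}=\Delta_{-1}u$ is harmless by Bernstein's inequality and Sobolev embedding. Setting $h=2^{-j}$, each dyadic piece satisfies
\begin{equation*}
\Bigl(\partial_t+\tfrac12(T_V\cdot\nabla+\nabla\cdot T_V)+iT_\gamma\Bigr)u_j=\Delta_j f+R_j,
\end{equation*}
where the remainder $R_j=\tfrac12\bigl([T_V,\Delta_j]\cdot\nabla u+\nabla\cdot[T_V,\Delta_j]u\bigr)+i[T_\gamma,\Delta_j]u$ has Fourier support in a ball of radius $C2^j$, so that Corollary \ref{strich3} applies to $u_j$.

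Applying Corollary \ref{strich3} with $h=2^{-j}$ and $\delta=\tfrac{2}{3}$ yields, in dimension $d=1$, an estimate of $\|u_j\|_{L^4(I;W^{s-11/24,\infty})}$ by $\mathcal F(\cdots)\bigl(\|\Delta_j f\|_{L^4(I;H^s)}+\|R_j\|_{L^4(I;H^s)}+\|u_j\|_{C^0(I;H^s)}\bigr)$, and the loss $\tfrac{3}{8}+\tfrac{\delta}{8}=\tfrac{11}{24}$ equals $\tfrac{d}{2}-\mu$ for $\mu=\tfrac{1}{24}$. For $d\ge 2$ one chooses $\varepsilon=\tfrac{1}{12}-\mu>0$ in the corresponding part of Corollary \ref{strich3}, so that the loss $\tfrac{d}{2}-\tfrac{1}{4}+\tfrac{\delta}{4}+\varepsilon$ equals $\tfrac{d}{2}-\mu$. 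This is precisely where the constraints on $\mu$ come from.

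It then remains to control $R_j$ in $L^p(I;H^s)$ and to sum over $j$. Using the spectral localization of $R_j$ together with standard paradifferential commutator estimates, one bounds $\|R_j(t)\|_{H^s}$ by a constant times $\bigl(\|V(t)\|_{W^{1,\infty}}+\|\gamma(t,\cdot,\cdot)\|_{W^{1/2,\infty}_x L^\infty_\xi(\mathcal{C}_0)}\bigr)\|u(t)\|_{H^s}$; integrating in time and invoking $V\in E_0$ together with the $L^2(I;W^{1/2,\infty}_x)$ bound on $\gamma$ provided by Lemma \ref{estgamma} absorbs this into $\mathcal F\bigl(\|V\|_{E_0}+\mathcal N_k(\gamma)\bigr)\|u\|_{C^0(I;H^s)}$. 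To sum, since $s-d/2+\mu\notin\mathbb N$ for generic parameters, the Besov characterization
\begin{equation*}
\|u\|_{L^p(I;W^{s-d/2+\mu,\infty})}\sim\sup_{j\ge -1}2^{j(s-d/2+\mu)}\|u_j\|_{L^p(I;L^\infty)}
\end{equation*}
combined with the per-scale inequality and the trivial bounds $\|\Delta_j f\|_{L^p H^s}\le C\|f\|_{L^p H^s}$, $\|\Delta_j u\|_{C^0 H^s}\le C\|u\|_{C^0 H^s}$ yields the theorem.

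The real analytic work has already been carried out in the previous sections: the construction of the phase and amplitude solving the eikonal and transport equations, the dispersion estimate of Theorem \ref{dispersive}, the $TT^*$ and gluing arguments giving Corollary \ref{strich3}, and the passage from the smoothed symbols $V_\delta,\gamma_\delta$ back to $V,\gamma$. The present synthesis is essentially bookkeeping; the only points requiring care are the arithmetic of losses (which pins down the admissible values of $\mu$) and the absorption of the Littlewood--Paley commutator errors into the $C^0(I;H^s)$ norm on the right-hand side.
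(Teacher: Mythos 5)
Your overall route is the same as the paper's: apply $\Delta_j$ to the equation, put the Littlewood--Paley commutators $\tfrac12[T_V\cdot\nabla+\nabla\cdot T_V,\Delta_j]u+i[T_\gamma,\Delta_j]u$ into the source term, check they are spectrally localized and bounded in $L^p(I;H^s)$ by $\mathcal{F}(\Vert V\Vert_{E_0}+\mathcal{N}_k(\gamma))\Vert u\Vert_{C^0(I;H^s)}$, apply Corollary \ref{strich3} at each scale $h=2^{-j}$, and do the arithmetic $\tfrac38+\tfrac\delta8=\tfrac12-\tfrac1{24}$, $\tfrac d2-\tfrac14+\tfrac\delta4=\tfrac d2-\tfrac1{12}$. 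All of that matches the paper.

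The one genuine gap is your summation step. The asserted equivalence
\begin{equation*}
\|u\|_{L^p(I;W^{\rho,\infty})}\sim\sup_{j\ge -1}2^{j\rho}\|\Delta_j u\|_{L^p(I;L^\infty)}
\end{equation*}
is false in the direction you need: pointwise in $t$ one has $\|u(t)\|_{W^{\rho,\infty}}\sim\sup_j 2^{j\rho}\|\Delta_j u(t)\|_{L^\infty}$, but the supremum over $j$ does not commute with the $L^p$ norm in time, and only the inequality $\sup_j 2^{j\rho}\|\Delta_j u\|_{L^p_tL^\infty}\lesssim\|u\|_{L^p_tW^{\rho,\infty}}$ survives. (Take $N$ dyadic blocks, the $j$-th one active only on a time interval of length $T/N$, the intervals disjoint: the right-hand side of your equivalence is $(T/N)^{1/p}$ while the left-hand side is $\approx T^{1/p}$.) So uniform-in-$j$ control of each $\|\Delta_j u\|_{L^pW^{s-\sigma_d,\infty}}$ does not by itself control $\|u\|_{L^pW^{s-\sigma_d,\infty}}$. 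The correct conclusion, and the one the paper draws, is obtained by sacrificing an $\eps$: write $\|\Delta_j u\|_{L^p(I;W^{s-\sigma_d-\eps,\infty})}\le 2^{-j\eps}\|\Delta_j u\|_{L^p(I;W^{s-\sigma_d,\infty})}$ and sum the geometric series via Minkowski. For $d\ge 2$ this costs nothing provided you reserve part of your slack: take $\eps$ in Corollary \ref{strich3}$(ii)$ strictly smaller than $\tfrac1{12}-\mu$ and use the remainder for the summation, which is possible precisely because $\mu<\tfrac1{12}$ is strict. For $d=1$ your claim of hitting $\mu=\tfrac1{24}$ exactly with no further loss is not justified by this argument; the series summation requires $\eps>0$ of room beyond the per-frequency exponent $s-\tfrac12+\tfrac1{24}$, so you should either state the result with $\mu<\tfrac1{24}$ or supply a genuinely different summation device.

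One cosmetic point: the source terms fed into Corollary \ref{strich3} must be supported in a dyadic \emph{ring}, not merely a ball of radius $C2^j$; the commutators $[T_V,\Delta_j]\cdot\nabla u$ and $[T_\gamma,\Delta_j]u$ do satisfy this (both terms carry a $\Delta_j$ or a nearby block), but as written your justification only gives the upper bound on the support.
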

\begin{proof}
The function  $  \Delta_j u  $ is a solution of 
$$
\Big(  \partial_t  + \mez( T_{V } \cdot \nabla + \nabla \cdot T_{V })  
+i T_{\gamma }  \Big)(\Delta_ju)  =F_j ,  \quad (\Delta_ju)\arrowvert_{t=0} = \Delta_ju_0, 
$$
where
$$
F_j = \Delta_j f +\mez [ T_{V } \cdot \nabla + \nabla \cdot T_{V }, \Delta_j]u +i[T_\gamma, \Delta_j]u.
$$
Then $F_j$ is spectrally supported in a ring 
$\big\{\xi: c_1 2^{-j} \leq \vert \xi \vert \leq c_2 2^{-j}\big\}$ 
and since
$$
V_i \in L^p(I; W^{1 , \infty}(\xR^d)),\quad \gamma \in L^2\big(I; \Gamma_{1/2}^{1/2}(\xR^d)\big)\cap  L^\infty\big(I; \Gamma_{0}^{1/2}(\xR^d)\big)
$$ 
it follows that, with constants independent of $j$ we 
have 
\begin{equation*}
\begin{aligned}
&\Vert [ T_{V } \cdot \nabla + \nabla \cdot T_{V }, \Delta_j]u\Vert_{L^p(I; H^s(\xR^d))} 
\leq C\Vert V \Vert_{L^p(I; W^{1, \infty}(\xR^d))} \Vert \widetilde \Delta_j u \Vert_{L^\infty(I; H^s(\xR^d))}\\
& \Vert [T_\gamma, \Delta_j]u \Vert_{L^p(I; H^s(\xR^d))} \leq C\Vert \gamma \Vert_{E_1} 
\Vert \widetilde \Delta_j u  \Vert_{L^\infty(I; H^s(\xR^d))} 
\end{aligned}
\end{equation*}
with $\widetilde \Delta_j u = \widetilde{\varphi}(2^{-j}D)u$  where $\widetilde{\varphi}$ 
is supported in a ring $\{\xi: 0< c'_1  \leq \vert \xi \vert \leq c'_2 \}$ and $\widetilde{\varphi} = 1$ on the support of $\varphi$.

It follows then from Corollary \ref{strich3} that, with constants independent of $j$ we have 
\begin{equation}\label{finale}
\begin{aligned}
\Vert  \Delta_j u  \Vert_{L^p(I;  W^{s -\sigma_d, \infty}(\xR^d))}
&\leq    \mathcal{F}(\cdots) \big\{\Vert \Delta_j f\Vert_{L^p(I; H^{s }(\xR^d))} 
+   \Vert \Delta_j u \Vert_{C^0(I; H^s(\xR^d))} \big\}
\end{aligned}
 \end{equation}
 where $\sigma_1 = \frac{3}{8} + \frac{\delta}{8}= \mez - \frac{1}{24}, \,  \sigma_d = \frac{d}{2} - \frac{1}{4} +\frac{\delta}{4}= \frac{d}{2}- \frac{1}{12}$ if $ d\geq 2.$
 
The right hand side of \eqref{finale} is bounded by 
\begin{equation}\label{finale1}
A :=  \mathcal{F} (\cdots) \Big\{ 
 \Vert f\Vert_{L^p(I; H^{s }(\xR^d))} 
+   \Vert  u \Vert_{C^0(I; H^s(\xR^d))} 
\Big\}.
\end{equation}

On the other hand we have
$$
\Vert  \Delta_j u  \Vert_{L^p(I;  W^{s - \sigma_d - \eps, \infty}(\xR^d))} 
\leq 2^{-j\eps} \Vert  \Delta_j u  \Vert_{L^p(I;  W^{s - \sigma_d, \infty}(\xR^d))}\leq 2^{-j\eps}A.
$$
Since $\sum_{j=-1}^\infty 2^{-j\eps}<+\infty$, summing on $j$, this completes the proof.
\end{proof}

\section{A priori estimates}

In this section we prove Theorem~\ref{T2}. To do so, 
we shall combine the Strichartz estimate proved above, together with the estimates already established in \cite{ABZ3}, to prove a priori estimates for the solutions of the Craig-Sulem-Zakharov system:
\begin{equation}\label{n400}
\left\{
\begin{aligned}
&\partial_{t}\eta-G(\eta)\psi=0,\\[1ex]
&\partial_{t}\psi+g \eta
+ \smash{\frac{1}{2}\la\partialx \psi\ra^2  -\frac{1}{2}
\frac{\bigl(\partialx  \eta\cdot\partialx \psi +G(\eta) \psi \bigr)^2}{1+|\partialx  \eta|^2}}
= 0.
\end{aligned}
\right.
\end{equation}
For the sake of clarity we recall here our assumptions and notations.  
\begin{assu}\label{T:22}
We consider smooth solutions of \eqref{n400} such that
\begin{enumerate}[i)]
\item $(\eta,\psi)$ belongs to $C^1([0,T_0]; H^{s_0}(\xR^d)\times H^{s_0}(\xR^d))$ for some 
$T_0$ in $(0,1]$ and some 
$s_0$ large enough;
\item there exists $h>0$ such that \eqref{n1} holds for any $t$ in $[0,T_0]$ (this is the assumption 
that there exists a curved strip of width $h$ separating the free surface from the bottom);
\item there exists $c>0$ such that the Taylor coefficient $a(t,x)=-\partial_y P\arrowvert_{y=\eta(t,x)}$ is bounded from below by $c$ 
from any $(t,x)$ in $[0,T_0]\times \xR^d$.
\end{enumerate}
\end{assu}
We work with the horizontal and vertical traces of the velocity 
on the free boundary, namely
\begin{gather*}
B= (\partial_y \phi)\arrowvert_{y=\eta},\quad 
V = (\nabla_x \phi)\arrowvert_{y=\eta},
\end{gather*}
which can be defined in terms of $\eta$ and $\psi$ by means of
\begin{equation}\label{n405}
B\defn \frac{\partialx \eta \cdot\partialx \psi+ G(\eta)\psi}{1+|\partialx  \eta|^2},
\qquad
V\defn \partialx \psi -B \partialx\eta.
\end{equation}
Let $s$ and $r$ be two positive real numbers such that
\begin{equation}\label{n406}
s>\frac{3}{4}+\frac{d}{2}, \quad s+\frac{1}{4}-\frac{d}{2} > r>1,\quad 
r\not\in\mez \xN. 
\end{equation}
Define, for $T$ in $(0,T_0]$, the norms
\begin{equation}\label{n410}
\begin{aligned}
M_s(T)&\defn 
\lA (\psi,\eta,B,V)\rA_{C^0([0,T];H^{s+\mez}\times H^{s+\mez}\times H^s\times H^s)},\\
Z_r(T)&\defn \lA \eta\rA_{L^p([0,T];W^{r+\mez,\infty})}
+\lA (B,V)\rA_{L^p([0,T];W^{r,\infty}\times W^{r,\infty})},
\end{aligned}
\end{equation}
where as above $p=4$ if $d=1$ and $p=2$ for $d\ge 2$. Recall also that, for $\rho= k + \sigma$ with $k\in\xN$ and $\sigma \in (0,1)$, one denotes 
by~$W^{\rho,\infty}(\xR^d)$ 
the space of functions whose derivatives up to order~$k$ are bounded and uniformly H\"older continuous with exponent~$\sigma$. Hereafter, we always consider indexes $\rho\not\in\xN$. 

Our goal is to estimate $M_s(T)+Z_r(T)$, for some $s,r$ satisfying \eqref{n406} and 
for $T$ in $(0,T_0]$, in terms of
\begin{equation}\label{n411}
M_{s,0}\defn \lA (\psi(0),\eta(0),B(0),V(0))\rA_{H^{s+\mez}\times H^{s+\mez}\times H^s\times H^s}.
\end{equation}

In \cite{ABZ3} we proved that, for any $s$ and $r$ satisfying \eqref{n406}, there exists 
a continuous non-decreasing 
function~$\mathcal{F}\colon \xR^+\rightarrow\xR^+$ such that, for 
all smooth solution 
$(\eta,\psi)$ of \eqref{n400} defined on the time interval~$[0,T_0]$ and satisfying 
Assumption~\ref{T:22} on this time interval, for any~$T\in (0,T_0]$,
\begin{equation}\label{n415}
M_s(T)\le \mathcal{F}\bigl(\mathcal{F}(M_{s,0})+T\mathcal{F}\bigl(M_s(T)+Z_r(T)\bigr)\bigr).
\end{equation}
If~$s>1+d/2$ then the Sobolev embedding implies that $Z_r(T)\les M_s(T)$ for~$r=s-d/2$. 
Therefore, for $s>1+d/2$, \eqref{n415} implies that  
$M_s(T)\le \mathcal{F}\bigl(\mathcal{F}(M_{s,0})+T\mathcal{F}\bigl(M_s(T)\bigr)\bigr)$, 
which implies (using classical arguments recalled in the proof of Corollary~\ref{T:25.5} below) that there exist two positive real numbers $T_1$ and $B$ depending only 
on $M_{s,0}$ such that
$M_s(T_1)\le B$. 

\begin{rema}
For the sake of completeness, let us mention that in \cite{ABZ3} we proved the estimate \eqref{n415} with $Z_r(T)$ replaced by 
$$
\widetilde{Z}_r(T)\defn \lA \eta\rA_{L^2([0,T];W^{r+\mez,\infty})}
+\lA (B,V)\rA_{L^2([0,T];W^{r,\infty}\times W^{r,\infty})}.
$$
This expression coincides with the one given above for $Z_r(T)$ for $d\ge 2$. However, for 
$d=1$, these two expressions differ because the definition of $Z_r(T)$ 
in \eqref{n410} involves the $L^4$-norm in time. 
However, for $T\le 1$, the $L^2$-norm in time is smaller than the $L^4$-norm in time 
and hence $\widetilde{Z}_r(T)\le Z_r(T)$. The estimate \eqref{n415} is thus an immediate corollary of the corresponding estimate in \cite{ABZ3}.
\end{rema}

We shall consider here the case where $s$ might be smaller than $1+d/2$. More precisely, we consider the case when $s>1+d/2-\mu$ for some positive 
real number $\mu$ given by the Strichartz estimate established previously (see Theorem~\ref{T20}). 
We shall show in this section that Theorem~\ref{T20} implies that $Z_r(T)$ satisfies an estimate 
analogous to \eqref{n415} for some $r>1$. This in turn will allow us 
to prove an {\em a priori} estimate for $M_s(T)+Z_r(T)$.

Denote by $\mu$ the ``gain'' provided by the Stichartz estimate established previously. 
Namely, fix a positive real number $\mu$ 
such that $\mu=\frac{1}{24}$ if $d=1$ and $\mu<\frac{1}{12}$ for $d\ge 2$. 
We also fix two real positive 
numbers $s$ and $r$ such that
\begin{equation}\label{n416}
s>1+\frac{d}{2}-\mu, \quad 1<r<s+\mu-\frac{d}{2},\quad 
r\not\in\mez \xN. 
\end{equation}
(\eqref{n406} corresponds to \eqref{n416} with $\mu$ replaced by $1/4$ so 
\eqref{n416} is stronger than \eqref{n406}.) 
We want to prove that there exists a continuous non-decreasing 
function~$\mathcal{F}\colon \xR^+\rightarrow\xR^+$ such that
\begin{equation}\label{n417}
M_s(T)+Z_r(T)\le \mathcal{F}\bigl(\mathcal{F}(M_{s,0})+T\mathcal{F}\bigl(M_s(T)+Z_r(T)\bigr)\bigr).
\end{equation}
Since $\mu<1/4$ and since 
the estimate \eqref{n415} is proved in \cite{ABZ3} under the general assumption~\eqref{n406}, 
it remains only to prove that $Z_r(T)$ satisfies an estimate similar to 
\eqref{n415}. 
\begin{prop}\label{T25}
Let~$d\ge 1$ and consider~$s,r$ satisfying \eqref{n416}. 
There exists 
a continuous non-decreasing 
function~$\mathcal{F}\colon \xR^+\rightarrow\xR^+$ such that, for all~$T_0\in (0,1]$ 
and all smooth solution 
$(\eta,\psi)$ of \eqref{n400} defined on the time interval~$[0,T_0]$ and satisfying 
Assumption~\ref{T:22} on this time interval, there holds
\begin{equation}\label{n425}
Z_r(T)\le \mathcal{F}\bigl(T\mathcal{F}\bigl(M_s(T)+Z_r(T)\bigr)\bigr),
\end{equation}
for any $T$ in $[0,T_0]$.
\end{prop}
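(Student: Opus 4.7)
The plan is to apply the Strichartz estimate (Theorem~\ref{T20}) to the diagonalising unknown constructed in Corollary~\ref{T10}, and then to translate the resulting H\"older bound back into control on the triple $(\eta,V,B)$.

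First I would invoke Corollary~\ref{T10}, so that $u=\langle D_x\rangle^{-s}(U_s-i\theta_s)$ satisfies the paradifferential equation \eqref{eq:r1} with source $f$ obeying the tame bound \eqref{eq:r2}. The hypotheses of Theorem~\ref{T20} are then easily checked: the coefficient norm $\|V\|_{E_0}=\|V\|_{L^p([0,T];W^{1,\infty})}$ is dominated by $Z_r(T)$ since $r>1$, and $\mathcal{N}_k(\gamma)$ depends only on $\|a\|_{L^\infty}$ and $\|\nabla\eta\|_{L^\infty}$ through $\gamma=\sqrt{a\lambda}$, both controlled by $\mathcal{F}(M_s(T))$ via Sobolev embedding from $H^s$ and the estimates on the Taylor coefficient $a$ in~\cite{ABZ3}. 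Integrating \eqref{eq:r2} in $L^p(0,T)$ yields
\begin{equation*}
\|f\|_{L^p([0,T];H^s)}\le \mathcal{F}\bigl(M_s(T)\bigr)\bigl(T^{1/p}+Z_r(T)\bigr),
\end{equation*}
while \eqref{n201} gives $\|u\|_{C^0([0,T];H^s)}\le\mathcal{F}(M_s(T))$. Substituting into Theorem~\ref{T20} produces
\begin{equation*}
\|u\|_{L^p([0,T];W^{s-\frac{d}{2}+\mu,\infty})}\le \mathcal{F}\bigl(M_s(T)+Z_r(T)\bigr)\bigl\{T^{1/p}+Z_r(T)+1\bigr\}.
\end{equation*}

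Next I would translate this bound on $u$ into a bound on $Z_r(T)$. Since $r<s-\frac{d}{2}+\mu$ and $r\notin\mez\xN$, the embedding $W^{s-\frac{d}{2}+\mu,\infty}\hookrightarrow W^{r,\infty}$ transfers the Strichartz gain to the correct target regularity. Taking the imaginary part of $u$ produces $-\langle D_x\rangle^{-s}T_q\langle D_x\rangle^s\nabla\eta$; commuting $\langle D_x\rangle^{\pm s}$ past the paradifferential operator leaves remainders that smooth by one order (hence controlled in $W^{r,\infty}$ by $\mathcal{F}(M_s(T))$ via Sobolev embedding), and the ellipticity of $q=\sqrt{a/\lambda}$ (which is positive by Assumption~\ref{T:22} since $a\ge c>0$ and $\lambda\gtrsim|\xi|$) permits inverting $T_q$ to recover $\|\eta\|_{L^p W^{r+\mez,\infty}}$. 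Taking the real part of $u$ provides the combination $V+T_\zeta B$; separation into $V$ and $B$ is then obtained by combining Bony's decomposition $B\nabla\eta=T_\zeta B+T_B\nabla\eta+R(B,\nabla\eta)$ with the identity $\nabla\psi=V+B\nabla\eta$ from \eqref{n405} and with the expression of $B$ in terms of $\eta$, $\psi$ and $G(\eta)\psi$, all remainders being handled by Sobolev embedding applied to the a~priori $H^s$-bounds and by the Dirichlet--Neumann estimates of~\cite{ABZ3}. Assembling the three pieces produces an inequality of the shape $Z_r(T)\le\mathcal{F}(M_s(T)+Z_r(T))\{T^{1/p}+1\}$, from which \eqref{n425} follows after composing with a suitable monotone function.

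The principal obstacle is this translation step. Separating H\"older bounds on $V$ and $B$ from the bound on their diagonalising combination $V+T_\zeta B$ requires the extra structure of~\eqref{n405} together with delicate paradifferential bookkeeping, so that every remainder is controlled at top order by $\mathcal{F}(M_s(T))$ alone without reintroducing a top-order dependence on $Z_r(T)$. Getting this bookkeeping exactly right, so that the final bound takes the precise form \eqref{n425} with a continuous monotone $\mathcal{F}$ amenable to the bootstrap used in Theorem~\ref{T2}, is the delicate part of the argument.
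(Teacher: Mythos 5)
Your first step — applying Theorem~\ref{T20} to the unknown $u$ of Corollary~\ref{T10} and then undoing the symmetrization to recover $\eta$, $V+T_\zeta B$ and finally $V,B$ — is exactly the paper's strategy for its Proposition~\ref{T26}. But there is a genuine gap at the very end: the inequality you arrive at, $Z_r(T)\le \mathcal{F}\bigl(M_s(T)+Z_r(T)\bigr)\{T^{1/p}+1\}$, does \emph{not} imply \eqref{n425}, and no ``composition with a suitable monotone function'' can fix this. The right-hand side of \eqref{n425} tends to the constant $\mathcal{F}(0)$ as $T\to 0$ uniformly in $Z_r$, whereas your right-hand side retains the term $\mathcal{F}\bigl(M_s(T)+Z_r(T)\bigr)$ (coming from $\Vert u\Vert_{C^0(I;H^s)}$ and from all the remainders bounded by Sobolev embedding), which is unbounded in $Z_r(T)$ for fixed $T$. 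The factor $T$ \emph{inside} the outer $\mathcal{F}$ in \eqref{n425} is precisely what makes the continuity/bootstrap argument of Corollary~\ref{T:25.5} close, so losing it is fatal.

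The missing idea is the interpolation reduction. The paper first proves the estimate \emph{without} any factor of $T$ but at a strictly higher H\"older index $r'$ with $r<r'<s+\mu-\frac{d}{2}$ (this is Proposition~\ref{T26}, and it is what your argument actually delivers, since Theorem~\ref{T20} gives the gain at the exponent $s-\frac{d}{2}+\mu>r'$). Then, interpolating $W^{r,\infty}$ between $W^{1-\mu,\infty}$ (controlled by $C^0([0,T];H^{s})$, hence by $M_s(T)$) and $W^{r',\infty}$, and applying H\"older's inequality in time, one gets
$Z_r(T)\le T^{\theta/p}\,M_s(T)^\theta\,\bigl(Z_{r'}(T)\bigr)^{1-\theta}$ for some $\theta\in(0,1)$; the small power of $T$ generated here is what produces the form \eqref{n425}. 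You should also be aware that your proposed separation of $V$ and $B$ via Bony's decomposition of $B\nabla\eta$ and the formula \eqref{n405} is shakier than the paper's route, which uses the identity $G(\eta)B=-\cnx V+\gamma$ together with the paralinearization $G(\eta)=T_\lambda+R(\eta)$ to write $\cnx(V+T_\zeta B)=T_{-\lambda+i\zeta\cdot\xi}B+r$ and then inverts the elliptic symbol $-\lambda+i\zeta\cdot\xi$; invoking $G(\eta)\psi$ directly gives no H\"older control at the level $r'$.
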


Let us admit this result and deduce Theorem~\ref{T2}Ê
stated in the introduction, which is 
an {\em a priori} estimate in low norms. 

\begin{coro}\label{T:25.5}
Let $T_0>0$. For any $A>0$ there exist $B>0$ and $T_1\in (0,T_0]$ such that, 
for all smooth solution 
$(\eta,\psi)$ of \eqref{n400} defined on the time interval~$[0,T_0]$ and satisfying 
Assumption~\ref{T:22} on this time interval, if 
$M_{s,0}\le A$ then $M_s(T_1)+Z_r(T_1)\le B$.
\end{coro}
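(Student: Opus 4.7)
The proof plan rests entirely on combining the two already-stated ingredients, inequality \eqref{n415} from \cite{ABZ3} and Proposition \ref{T25}, into the single inequality \eqref{n417}, and then running a standard bootstrap on $T$.

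First I would produce \eqref{n417}. Since $\mathcal{F}$ can be taken non-decreasing (by replacing it with a pointwise majorant of the two functions appearing in \eqref{n415} and \eqref{n425}), adding the two inequalities and using monotonicity yields
\begin{equation*}
M_s(T)+Z_r(T)\le \mathcal{F}\bigl(\mathcal{F}(M_{s,0})+T\mathcal{F}\bigl(M_s(T)+Z_r(T)\bigr)\bigr)
\end{equation*}
for some new non-decreasing $\mathcal{F}$. This step requires only that $r$ and $s$ satisfy the more restrictive bound \eqref{n416}, which is precisely the hypothesis in the corollary.

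The second step is a continuity argument. Denote $\Phi(T)=M_s(T)+Z_r(T)$. Because $(\eta,\psi)$ is assumed smooth in time (Assumption~\ref{T:22} $i)$), $T\mapsto \Phi(T)$ is continuous on $[0,T_0]$; moreover $Z_r(0)=0$ and $M_s(0)\le M_{s,0}\le A$, so $\Phi(0)\le A$. Set $B\defn\mathcal{F}\bigl(\mathcal{F}(A)+1\bigr)$ and choose $T_1\in(0,T_0]$ so small that
\begin{equation*}
T_1\,\mathcal{F}(2B)\le 1.
\end{equation*}
Define $T^{*}\defn \sup\bigl\{T\in[0,T_1]:\Phi(T)\le 2B\bigr\}$. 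By continuity of $\Phi$ and $\Phi(0)\le A\le 2B$, we have $T^{*}>0$. For $T\le T^{*}$, applying \eqref{n417} and using $\Phi(T)\le 2B$ together with $M_{s,0}\le A$ gives
\begin{equation*}
\Phi(T)\le \mathcal{F}\bigl(\mathcal{F}(A)+T_1\mathcal{F}(2B)\bigr)\le \mathcal{F}\bigl(\mathcal{F}(A)+1\bigr)=B<2B.
\end{equation*}
Hence the set $\{T\in[0,T_1]:\Phi(T)\le 2B\}$ is both open and closed in $[0,T_1]$, so $T^{*}=T_1$ and in particular $\Phi(T_1)\le B$. This is the stated conclusion, with $B$ and $T_1$ depending only on $A$.

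The only genuinely non-trivial work is hidden in Proposition \ref{T25}, which has already been proved via the Strichartz estimate of Theorem \ref{T20}; the passage from \eqref{n417} to the corollary is purely a bootstrap, and the main care is just ensuring that $\mathcal{F}$ is chosen non-decreasing and that one uses the (mild) continuity in $T$ of the $L^p_t$ norm defining $Z_r$ — this holds because for a smooth solution the integrand is continuous in $t$, making $T\mapsto Z_r(T)$ continuous with $Z_r(0)=0$.
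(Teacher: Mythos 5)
Your proof is correct and follows essentially the same route as the paper: both combine \eqref{n415} and \eqref{n425} into \eqref{n417} and then run a continuity/bootstrap argument in $T$ (the paper phrases it as a contradiction via the intermediate value theorem, you as a connectedness argument with the threshold $2B$; these are equivalent). The only detail to repair is the unjustified claim $A\le 2B$, which is in fact not needed: \eqref{n417} at $T=0$ already gives $\Phi(0)\le \mathcal{F}(\mathcal{F}(A))\le B$, or one may simply enlarge $B$ to exceed $\max\{A,\mathcal{F}(\mathcal{F}(A))\}$ as the paper does.
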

\begin{proof}
Introduce for $T$ in $[0,T_0]$, 
$f(T)=M_s(T)+Z_r(T)$. It follows from \eqref{n415}Ê
and \eqref{n425} that \eqref{n417}Ê
holds. This means that there exists a continuous non-decreasing 
function~$\mathcal{F}\colon \xR^+\rightarrow\xR^+$ such that, for all~$T\in (0,T_0]$, 
\begin{equation}\label{n430}
f(T)\le \mathcal{F}\bigl(\mathcal{F}(A)+T\mathcal{F}\bigl(f(T)\bigr)\bigr).
\end{equation}
Now fix $B$ such that $B>\max\big\{ A,\mathcal{F}\bigl(\mathcal{F}(A)\bigr)\big\}$
and then chose $T_1\in (0,T_0]$ such that
$$
\mathcal{F}\bigl(\mathcal{F}(A)+T_1\mathcal{F}(B)\bigr)<B.
$$
We claim that $f(T)<B$ for any $T$ in $[0,T_1]$. Indeed, since $f(0)=M_{s,0}\le A <B$, 
assume that there exists $T'\in (0,T_1]$ such that $f(T')>B$. Since $f$ is continuous, this implies that 
there is $T''\in (0,T_1)$ such that $f(T'')=A$. Now it follows from~\eqref{n430}, the fact that 
$\mathcal{F}$ is increasing, and the definition of $T_1$ that
$$
B=f(T'')\le  \mathcal{F}\bigl(\mathcal{F}(A)+T''
\mathcal{F}\bigl(f(T'')\bigr)\bigr)
\le \mathcal{F}\bigl(\mathcal{F}(A)+T_1
\mathcal{F}\bigl(B\bigr)\bigr)<B.
$$
Hence the contradiction which proves that $f(T)\le B$ 
for any $T$ in $[0,T_1]$. \end{proof}

It remains to prove Proposition~\ref{T25}. This will be the purpose of the following subsections.

\subsection{Reduction}

We begin by using an interpolation 
inequality to reduce the proof of Proposition~\ref{T25} to the proof of the following proposition.

\begin{prop}\label{T26}
Let~$d\ge 1$ and consider~$\mu,s,r$ as in \eqref{n416}. 
Consider in addition $r'$ such that
$$
r<r'<s+\mu-\frac{d}{2},\quad r'\not\in\mez \xN
$$
and set
$$
Z_{r'}(T)\defn \lA \eta\rA_{L^p([0,T];W^{r'+\mez,\infty})}
+\lA (B,V)\rA_{L^p([0,T];W^{r',\infty}\times W^{r',\infty})}
$$
where $p=4$ if $d=1$ and $p=2$ for $d\ge 2$. 
There exists 
a continuous non-decreasing 
function~$\mathcal{F}\colon \xR^+\rightarrow\xR^+$ such that, for all~$T_0\in (0,1]$ 
and all smooth solution 
$(\eta,\psi)$ of \eqref{n400} defined on the time interval~$[0,T_0]$ and satisfying 
Assumption~\ref{T:22} on this time interval, there holds
\begin{equation}\label{n435}
Z_{r'}(T)\le \mathcal{F}\bigl(M_s(T)+Z_r(T)\bigr),
\end{equation}
for any $T$ in $[0,T_0]$.
\end{prop}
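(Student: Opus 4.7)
The plan is to apply the Strichartz estimate of Theorem~\ref{T20} to the paradifferential reformulation given by Corollary~\ref{T10}, and then translate the resulting Hölder bound on the auxiliary complex unknown $u$ back into bounds on the physical variables $(V,B,\eta)$.

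First, recall that $u := \langle D_x\rangle^{-s}(U_s - i\theta_s)$ solves \eqref{eq:r1} with source term satisfying \eqref{eq:r2}. Integrating \eqref{eq:r2} in time and using $T\leq T_0\leq 1$ gives $\|f\|_{L^p(I;H^s)}\leq \mathcal{F}(M_s(T))(1+Z_r(T))$. Combined with $\|u\|_{C^0(I;H^s)}\leq \mathcal{F}(M_s(T))$ from \eqref{n201}, I would check that the constant $\|V\|_{E_0}+\mathcal{N}_k(\gamma)$ appearing in Theorem~\ref{T20} is itself controlled by $\mathcal{F}(M_s(T)+Z_r(T))$: the first piece is contained in $Z_r(T)$ since $r>1$, and the second follows from the explicit formula $\gamma=\sqrt{a\lambda}$ together with the Sobolev and Hölder bounds on $a$ and $\eta$ already controlled by $M_s+Z_r$. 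Theorem~\ref{T20} then yields
\[
\|u\|_{L^p(I; W^{s-d/2+\mu,\infty})} \leq \mathcal{F}(M_s(T)+Z_r(T)),
\]
and since $r'<s+\mu-d/2$, this Strichartz Hölder exponent exceeds $r'$ (and $r'+\tfrac{1}{2}$ for the $\eta$-component) by a positive margin.

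Next, I would extract bounds on the individual unknowns from the decomposition
\[
u = V + \langle D_x\rangle^{-s} T_\zeta \langle D_x\rangle^s B - i \langle D_x\rangle^{-s} T_q \langle D_x\rangle^s \nabla\eta.
\]
The imaginary part isolates $\eta$: since $a\geq c>0$ by Assumption~\ref{T:22} and $\lambda\gtrsim |\xi|^2$ by \eqref{inf}, the symbol $q=\sqrt{a/\lambda}$ is elliptic of order $-\tfrac{1}{2}$, so $T_q$ admits a paradifferential parametrix with principal symbol $1/q$ modulo operators of lower order, at the Hölder regularity scale controlled by $M_s(T)+Z_r(T)$. Inverting yields $\eta\in L^p(I;W^{r'+1/2,\infty})$ with the required bound. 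The real part then reads $V+\langle D_x\rangle^{-s}T_\zeta\langle D_x\rangle^s B = \RE u$; now that $\zeta=\nabla\eta$ is Hölder-controlled, the paraproduct operator in this expression has norm $\mathcal{O}(\|\zeta\|_{L^\infty})$ on $W^{r',\infty}$, and combining with the identity $V+B\nabla\eta=\nabla\psi$ from~\eqref{defi:BV} together with the Sobolev bounds on $\psi$ from $M_s(T)$, I would solve the resulting linear system for $(V,B)$ to obtain $V,B\in L^p(I;W^{r',\infty})$.

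The hard part will be the decoupling step, namely the careful inversion of the paradifferential paraproducts $T_q$ and $T_\zeta$ on Hölder rather than Sobolev spaces at the marginal regularity allowed by our hypothesis, and controlling all error terms from the symbolic calculus by $\mathcal{F}(M_s(T)+Z_r(T))$ without generating a derivative loss. Once this inversion is set up correctly, using the uniform positivity of $a$ and the paradifferential parametrix construction of~\cite{ABZ3} at low regularity, the estimate~\eqref{n435} follows.
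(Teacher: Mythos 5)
Your first step and your recovery of $\eta$ match the paper's argument: one bounds $\lA u\rA_{L^p(I;W^{r',\infty})}$ by $\mathcal{F}(M_s(T)+Z_r(T))$ via Theorem~\ref{T20} together with \eqref{eq:r2}--\eqref{n201}, then removes the commutator $\bigl[\lDx{-s},T_{\sqrt{a/\lambda}}\lDx{s}\partialx\bigr]$ by symbolic calculus and inverts the elliptic order $-\mez$ symbol $\sqrt{a/\lambda}$ on H\"older spaces (statement $(iii)$ of Proposition~\ref{theo:scZ}). The genuine gap is in your last step, the separation of $V$ and $B$. The real part of $u$ only controls the combination $V+T_\zeta B$ in $W^{r',\infty}$, and the relation $V=\partialx\psi-B\partialx\eta$ cannot close the system: $M_s(T)$ controls $\psi$ only in $H^{s+\mez}$, hence $\partialx\psi$ only in $W^{s-\mez-d/2,\infty}$ by Sobolev embedding, which falls short of $W^{r',\infty}$ by up to $\mu+\mez$ derivatives --- recovering precisely this deficit is the point of the Strichartz gain, so $\partialx\psi$ is not an admissible right-hand side. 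Subtracting the two relations gives $(V+\zeta B)-(V+T_\zeta B)=T_B\zeta+R(\zeta,B)$, in which $B$ appears only as the low-frequency factor of paraproducts; the ``linear system'' is therefore degenerate and cannot be solved for the high frequencies of $B$. Nor can one invert $T_\zeta$ as you suggest: a paraproduct by a merely bounded (possibly vanishing) function $\zeta=\partialx\eta$ is not elliptic and admits no parametrix, in contrast with $T_{\sqrt{a/\lambda}}$ whose symbol is bounded below thanks to $a\ge c>0$.

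The missing idea is a structural identity from the water-waves system, not more paradifferential calculus. The paper uses $G(\eta)B=-\cnx V+\gamma$ with $\gamma$ of lower order (Prop.~4.5 of \cite{ABZ3}), together with the paralinearization $G(\eta)=T_\lambda+R(\eta)$ (Prop.~3.19 of \cite{ABZ3}), to convert the already-controlled quantity $\cnx\bigl(V+T_\zeta B\bigr)\in W^{r'-1,\infty}$ into the elliptic first-order equation $T_{-\lambda+i\zeta\cdot\xi}B=\cnx\bigl(V+T_\zeta B\bigr)-r$, where $r=T_{\cnx\zeta}B+\gamma+(T_\lambda-G(\eta))B$ is bounded in $W^{r'-1,\infty}$ by $\mathcal{F}(M_s)$. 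Inverting the elliptic order-one symbol $-\lambda+i\zeta\cdot\xi$ (Proposition~\ref{theo:scZ}$(iii)$ again) regains the derivative and yields $B\in L^p(I;W^{r',\infty})$, after which $V=(V+T_\zeta B)-T_\zeta B$ follows from \eqref{esti:quant1}. Without this use of the Dirichlet--Neumann structure your argument cannot be completed.
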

We prove in this paragraph that Proposition~\ref{T26} implies 
Proposition~\ref{T25}. Proposition~\ref{T26} will be proved in the next 
paragraph.

\begin{proof}[Proof of Proposition~\ref{T25} given Proposition~\ref{T26}] 
Consider a function $v=v(t,x)$. By interpolation, since $1-\mu<1<r<r'$, 
there exists a real number $\theta\in (0,1)$ such that
$$
\lA v(t,\cdot)\rA_{W^{r,\infty}}\les \lA v(t,\cdot)\rA_{W^{1-\mu,\infty}}^\theta
\lA v(t,\cdot)\rA_{W^{r',\infty}}^{1-\theta}.
$$
This implies that
$$
\int_0^T \lA v(t,\cdot)\rA_{W^{r,\infty}}^p\, dt
\les \lA v\rA_{C^0([0,T];W^{1-\mu,\infty})}^{p\theta}
\int_0^T \lA v(t,\cdot)\rA_{W^{r',\infty}}^{p(1-\theta)}\, dt.
$$
The H\"older inequality then implies that
$$
\lA v\rA_{L^p([0,T];W^{r,\infty})}
\les T^{\frac{\theta}{p}}
\lA v\rA_{C^0([0,T];W^{1-\mu,\infty})}^{\theta}
\lA v\rA_{L^p([0,T];W^{r',\infty})}^{1-\theta}.
$$
By the same way, there holds
$$
\lA v\rA_{L^p([0,T];W^{r+\mez,\infty})}
\les T^{\frac{\theta'}{p}}
\lA v\rA_{C^0([0,T];W^{1-\mu+\mez,\infty})}^{\theta'}
\lA v\rA_{L^p([0,T];W^{r'+\mez,\infty})}^{1-\theta'}.
$$
Since $s>(1-\mu)+d/2$, the Sobolev embedding implies that
$$
\lA v\rA_{C^0([0,T];W^{1-\mu,\infty})}\les \lA v\rA_{C^0([0,T];H^{s})},
\quad \lA v\rA_{C^0([0,T];W^{1-\mu+\mez,\infty})}\les \lA v\rA_{C^0([0,T];H^{s+\mez})}.
$$
This proves that
$$
Z_{r}(T)\le T^{\frac{\theta}{p}} M_s(T)^\theta (Z_{r'}(T))^{1-\theta}
$$
for some $\theta>0$. This in turn proves that \eqref{n435} implies \eqref{n425}.
\end{proof} 

\subsection{Proof of Proposition~\ref{T26}}

Recall that the positive real number 
$\mu$ has been chosen such that $\mu<1/24$ if $d=1$ and $\mu<1/12$ 
for $d\ge 2$, and $s,r,r'$ are such that
\begin{equation*}
s>1+\frac{d}{2}-\mu, \quad 1<r<r'<s+\mu-\frac{d}{2},\quad 
r\not\in\mez \xN. 
\end{equation*}
Let $T>0$ and set $I=[0,T]$.

The proof of Proposition~\ref{T26} is based on Corollary~\ref{T10} and Theorem~\ref{T20}. 
By combining these two results we shall deduce in the first step of the proof that 
\begin{equation}\label{n440}
\lA u\rA_{L^p(I;W^{r',\infty})}\le \mathcal{F}\bigl(M_s(T)+Z_r(T)\bigr)
\end{equation}
where $u$ is defined in terms of $(\eta,V,B)$ by (see \eqref{eq:r0})
\begin{equation}\label{n440.5}
\begin{aligned}
&u=\lDx{-s}(U_s-i \theta_s),\\[0.5ex]
&U_s \defn \lDx{s} V+T_\zeta \lDx{s}\B \qquad (\zeta=\partialx\eta),\\[0.5ex]
&\theta_s\defn T_{\sqrt{\ma/\lambda}}\lDx{s}\nabla \eta,
\end{aligned}
\end{equation}
where recall that $a$ is the Taylor coefficient 
and $\lambda$ is the principal symbol of 
$G(\eta)$, given by $\lambda=\sqrt{(1+\la\partialx\eta\ra^2)\la\xi\ra^2-(\partialx\eta\cdot\xi)^2}$. 

In the next steps of the proof we show 
how to recover estimates for the original unknowns 
$(\eta,V,\B)$ in $L^p([0,T];W^{r'+\mez}\times W^{r',\infty}\times W^{r',\infty})$. 
 
\paragraph{Step1: proof of \eqref{n440}.}

It follows from Theorem~\ref{T20} that 
\begin{equation*}
\Vert  u  \Vert_{L^p(I;W^{r', \infty}(\xR^d))}
\leq    \mathcal{F}\big(\Vert V\Vert_{E_0} +  \mathcal{N}_k(\gamma)\big) 
\Big\{\Vert  f\Vert_{L^p(I; H^{s }(\xR^d))} 
+ \Vert  u \Vert_{C^0(I; H^s(\xR^d))}\Big\}
\end{equation*}
Clearly we have
$$
\Vert V\Vert_{E_0}\le Z_1(T)\le Z_r(T),\quad \mathcal{N}_k(\gamma)
\le M_s(T).
$$
Moreover, \eqref{eq:r2} and \eqref{n201} imply that 
$$
\Vert  u \Vert_{C^0(I; H^s(\xR^d))}\le \mathcal{F}(M_s(T)),\quad 
\Vert  f\Vert_{L^p(I; H^{s }(\xR^d))} \le \mathcal{F}\bigl(M_s(T)+Z_r(T)\bigr).
$$
By combining the previous estimates we deduce the desired estimate \eqref{n440}.

\paragraph{Step 2: estimate for $\eta$.} 

Separating real and imaginary parts, directly from the definition \eqref{n440.5} 
of $u$, we get
$$
\big\lVert \lDx{-s}T_{\sqrt{a/\lambda}}\lDx{s}\partialx\eta \big\rVert_{W^{r',\infty}}
\le \lA u\rA_{W^{r',\infty}}.
$$

We shall make repeated uses of the following elementary properties 
of paradifferential operators.
\begin{prop}\label{theo:scZ}
Let~$m,m'\in\xR$ and~$\rho\in (0,1)$. Given a symbol~$a\in \Gamma^m_{\rho}(\xR^d)$ 
(see Definition~\ref{T:5}), recall the notation
\begin{equation}\label{defi:norms-2}
M_{\rho}^{m}(a)= 
\sup_{\la\alpha\ra\le 1+2d+\rho~}\sup_{\la\xi\ra \ge 1/2~}
\lA (1+\la\xi\ra)^{\la\alpha\ra-m}\partial_\xi^\alpha a(\cdot,\xi)\rA_{W^{\rho,\infty}(\xR)}.
\end{equation}

$(i)$ If~$a \in \Gamma^m_0({\mathbf{R}}^d)$, then for all~$\mu\in[\max(0,m),+\infty)$ (with $\mu\not\in\xN$ and $\mu-m\not\in\xN$) 
there exists a constant~$K$ such that
\begin{equation}\label{esti:quant1}  
\lA T_a \rA_{W^{\mu,\infty}\rightarrow W^{\mu-m,\infty}}\le K M_{0}^{m}(a).
\end{equation}
$(ii)$ If~$a\in \Gamma^{m}_{\rho}({\mathbf{R}}^d), b\in \Gamma^{m'}_{\rho}({\mathbf{R}}^d)$ then 
for all~$\mu\in[0,+\infty)$ there exists a constant~$K$ such that
\begin{equation}\label{esti:quant2}
\lA T_a T_b  - T_{a b}   \rA_{W^{\mu,\infty}\rightarrow W^{\mu-m-m'+\rho,\infty}}\le 
K M_{\rho}^{m}(a)M_{0}^{m'}(b)+K M_{0}^{m}(a)M_{\rho}^{m'}(b)
\end{equation}
provided $\mu\not\in\xN$, $0\le \mu-m-m'+\rho\not\in\xN$

$(iii)$ 
Let $(r,r_1,r_2)\in [0,+\infty)^3$ be such that $r\le \min (r_1+\rho,r_2+m)$, $r\not\in\xN$. 
Consider the equation $T_\tau v=f$ where $\tau=\tau(x,\xi)$ is a symbol such that 
$\tau$ and $1/\tau$ belongs to $\Gamma^m_\rho$. 
Then
$$
\lA v\rA_{W^{r,\infty}}\le K \lA v\rA_{W^{r_1,\infty}}+K \lA f\rA_{W^{r_2,\infty}}
$$
for some constant $K$ depending only on 
$\mathcal{M}^m_\rho(\tau)+\mathcal{M}^m_\rho(1/\tau)$.
\end{prop}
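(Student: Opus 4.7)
The plan is to base all three parts on the Littlewood--Paley characterization $\Vert u\Vert_{W^{\mu,\infty}} \sim \sup_{k\ge -1} 2^{k\mu}\Vert \Delta_k u\Vert_{L^\infty}$, valid for $\mu\notin\xN$ (this is where the integer exclusion enters), combined with the fact, built into \eqref{eq.para}, that a single dyadic block $\Delta_k(T_a u)$ has spectrum in a fixed dyadic annulus of size $2^k$ and depends only on $\Delta_j u$ for $\la j-k\ra\le C$. Thus every estimate reduces to a uniform control of dyadic pieces in $L^\infty$.

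For $(i)$, I would first rewrite $T_a u = \sum_j \Sigma_j u$, where $\Sigma_j u$ is the contribution from the annulus $\la\eta\ra \sim 2^j$ in \eqref{eq.para}. Because of the cutoff $\chi(\xi-\eta,\eta)$, this piece has spectrum in a dyadic annulus of size $2^j$ and is of the form $a_j(x,D)\Delta_j u$, where $a_j(x,\xi)$ is obtained from $a(x,\xi)$ by smoothing in $x$ at scale $2^j$. Young's inequality in $x$ and the bound $\Vert a_j(\cdot,\xi)\Vert_{L^\infty}\le C 2^{jm} M_0^m(a)$ on the relevant shell then give $\Vert \Sigma_j u\Vert_{L^\infty}\le C 2^{jm} M_0^m(a)\Vert \Delta_j u\Vert_{L^\infty}$. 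Multiplying by $2^{j(\mu-m)}$ and taking the supremum gives $(i)$ via the Zygmund characterization.

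For $(ii)$, I would use the classical symbolic-calculus expansion: Taylor-expanding $a(x,\xi)$ in $\xi$ around the frequency of a Littlewood--Paley block of $T_b u$, the composition $T_a T_b$ differs from $T_{ab}$ by a paradifferential operator $T_r$ whose symbol $r$ belongs to $\Gamma^{m+m'-\rho}_0$, with
\[
M_0^{m+m'-\rho}(r)\le K \bigl(M_\rho^m(a) M_0^{m'}(b)+M_0^m(a) M_\rho^{m'}(b)\bigr).
\]
Applying $(i)$ to $T_r$ yields $(ii)$, subject to the non-integer restriction on $\mu-m-m'+\rho$. For $(iii)$, since $\tau$ and $1/\tau$ lie respectively in $\Gamma^m_\rho$ and $\Gamma^{-m}_\rho$, $(ii)$ produces $T_{1/\tau}T_\tau = I + R$ with $R$ of order $-\rho$ and norm bounded in terms of $M_\rho^m(\tau) + M_\rho^m(1/\tau)$. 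The equation $T_\tau v = f$ then gives $v = T_{1/\tau} f - Rv$, and applying $(i)$ to each term yields
\[
\Vert v\Vert_{W^{r,\infty}} \le K \Vert T_{1/\tau} f\Vert_{W^{r_2+m,\infty}} + K \Vert Rv\Vert_{W^{r_1+\rho,\infty}} \le K'\bigl(\Vert f\Vert_{W^{r_2,\infty}} + \Vert v\Vert_{W^{r_1,\infty}}\bigr),
\]
provided $r\le \min(r_1+\rho, r_2+m)$, which is exactly the assumption.

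The main obstacle I anticipate is the bookkeeping around the non-integer restrictions on the Hölder exponents: the Zygmund characterization fails at integers and inserts a logarithmic loss, so one must apply $(i)$ only at exponents in $\xR_+\setminus\xN$ and verify that the intermediate exponents appearing in the composition in $(iii)$ avoid these endpoints. The hypotheses in the statement are tailored precisely to make this possible, so beyond this careful bookkeeping the argument is standard and follows the template in \cite{Bony,MePise,Meyer,Taylor}.
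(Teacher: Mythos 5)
Your argument is correct and follows the same route as the paper: parts $(i)$ and $(ii)$ are the classical Hölder--Zygmund mapping and symbolic-calculus estimates for paradifferential operators (which the paper simply cites, using $C^r_*=W^{r,\infty}$ for $r\notin\xN$, while you sketch the standard dyadic proofs), and part $(iii)$ is obtained from exactly the same parametrix identity $v=(I-T_{1/\tau}T_\tau)v+T_{1/\tau}f$ combined with $(i)$ and $(ii)$. The only cosmetic imprecision is that in $(ii)$ the remainder $T_aT_b-T_{ab}$ need not literally be a paradifferential operator $T_r$; what one proves (and all you use) is the operator-norm bound of order $m+m'-\rho$.
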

The first two points are classical (these points are discussed in \cite{ABZ3} with the following difference: in \cite{ABZ3} we worked in Zygmund spaces $C^r_*$. This makes no difference here 
since $C^r_*=W^{r,\infty}$ for any $r$ in $[0,+\infty)\setminus \xN$). To prove the third one, write
$$
v=\big(I -T_{1/\tau}T_\tau\big)v+T_{1/\tau}f
$$
and use the first (resp.\ second) point to estimate the first (resp.\ second) term.

Now write
$$
\lDx{-s}T_{\sqrt{a/\lambda}}\lDx{s}\partialx=
T_{\sqrt{a/\lambda}}\partialx+R
$$
where $R=\big[ \lDx{-s},T_{\sqrt{a/\lambda}}\lDx{s}\partialx\bigr]$. 
Since $\sqrt{a/\lambda}$ is a symbol of order $-1/2$ in $\xi$, it follows from the 
second point in Proposition~\ref{theo:scZ} that, for any $\rho\in (0,1)$, 
$$
\lA R\eta \rA_{W^{r',\infty}}\le K
\mathcal{M}^{-\mez}_\rho\Big(\sqrt{\frac{a}{\lambda}}\Big)\lA \eta\rA_{W^{r'+\mez-\rho,\infty}}
$$
and hence
$$
\lA R\eta \rA_{W^{r',\infty}}\le \mathcal{F}\bigl(\lA\partialx\eta\rA_{W^{\rho,\infty}},
\lA a\rA_{W^{\rho,\infty}}\big)
\lA \eta\rA_{W^{r'+\mez-\rho,\infty}}.
$$
Now by assumption on $s$ and $r'$ we can chose $\rho$ (say $\rho=1/4$) so that
$$
\rho<s-\mez-\frac{d}{2},\quad r'+\mez-\rho<s+\mez-\frac{d}{2}
$$
and hence 
$$
\lA R\eta \rA_{W^{r',\infty}}\le\mathcal{F}\bigl(\lA\eta\rA_{H^{s+\mez}},
\lA a\rA_{H^{s-\mez}}\big).
$$
Recalling (see \cite[Prop. 4.6]{ABZ3}) that 
$\lA a\rA_{H^{s-\mez}}\le \mathcal{F}(M_s)$ for any $s>3/4+d/2$, we obtain 
$\lA R\eta \rA_{W^{r',\infty}}\le \mathcal{F}(M_s)$. 

We thus deduce that
$$
\lA T_{\sqrt{a/\lambda}}\partialx\eta\rA_{W^{r',\infty}}\le 
\lA u\rA_{W^{r',\infty}}+\mathcal{F}(M_s).
$$
Using statement $(iii)$ in Proposition~\ref{theo:scZ} with 
$m=1/2$ and $\rho=1/4$, this yields
$$
\lA \eta\rA_{W^{r'+\mez,\infty}}\le 
K\lA u\rA_{W^{r',\infty}}+K\lA \eta\rA_{W^{r'+\mez-\rho,\infty}}+K \mathcal{F}(M_s)
$$
for some constant $K$ depending only on $\mathcal{M}^{-\mez}_\rho\Big(\sqrt{\frac{a}{\lambda}}\Big)$. As already seen, $K\le \mathcal{F}(M_s)$ and 
$\lA \eta\rA_{W^{r'+\mez-\rho,\infty}}\le M_s$ (using the Sobolev embedding). 
We conclude that
$$
\lA \eta\rA_{W^{r'+\mez,\infty}}\le 
\mathcal{F}(M_s)\lA u\rA_{W^{r',\infty}}+\mathcal{F}(M_s).
$$
Therefore \eqref{n440}Ê
implies that
\begin{equation}\label{n445}
\lA \eta\rA_{L^p(I;W^{r'+\mez,\infty})}\le \mathcal{F}\bigl(M_s(T)+Z_r(T)\bigr).
\end{equation}

\paragraph{Step 3: estimate for $V+T_{\zeta}B$.} 

We proceed as above: starting from \eqref{n440} one deduces an estimate for 
the $W^{r',\infty}$-norm of $\lDx{-s}\bigl(\lDx{s}V+T_{\zeta}\lDx{s}B\bigr)$. One 
rewrite this term as $V+T_\zeta B$ plus a commutator which is estimated by 
statement $(ii)$ in Proposition~\ref{theo:scZ} and the Sobolev embedding. 
It is find that
$$
\lA V+T_\zeta B\rA_{W^{r',\infty}}\le \mathcal{F}(M_s)\lA u\rA_{W^{r,\infty}}+\mathcal{F}(M_s)
$$
so that \eqref{n440}Ê
implies that
\begin{equation}\label{n447}
\lA V+T_\zeta B\rA_{L^p(I;W^{r',\infty})}\le \mathcal{F}\bigl(M_s(T)+Z_r(T)\bigr).
\end{equation}

\paragraph{Step 4: estimate for $V$ and $B$.} We shall estimate the 
$L^p(I;W^{r',\infty})$-norm of $B$. The estimate for 
the $L^p(I;W^{r',\infty})$-norm of $V$ will follow from 
$V=(V+T_\zeta B)-T_\zeta B$ since the first term 
$V+T_\zeta B$ is already estimated (see \eqref{n447}) and since, 
for the second term, 
one can use the rule \eqref{esti:quant1} to obtain 
$\lA T_\zeta B\rA_{W^{r',\infty}}\les 
\lA \zeta\rA_{L^\infty}\lA B\rA_{W^{r',\infty}}\les \lA \eta\rA_{H^{s+\mez}}\lA B\rA_{W^{r',\infty}}$.

To estimate the $W^{r',\infty}$-norm of $B$, as in \cite{ABZ3}, we use the identity 
$G(\eta)B=-\cnx V+\gamma$ where (see \cite[Prop. 4.5]{ABZ3}) 
$$
\lA \gamma\rA_{W^{r'-1,\infty}}\le \lA \gamma\rA_{H^{s-\mez}}\le \mathcal{F}
\big(\lA (\eta,V,B)\rA_{H^{s+\mez}\times H^\mez\times H^\mez}\big).
$$
In order to use this identity, write
\begin{equation}\label{n450}
\begin{aligned}
\cnx \big( V+T_\zeta B\big)&=\cnx V+T_{\cnx \zeta}B+T_{\zeta}\cdot\partialx B\\
&=-G(\eta)B+T_{\cnx \zeta}B+T_{\zeta}\cdot\partialx B+\gamma\\
&=T_{-\lambda+i\zeta\cdot\xi}B+r
\end{aligned}
\end{equation}
where
$$
r=T_{\cnx \zeta} B+\gamma+(T_\lambda-G(\eta))B.
$$
The first term in the right-hand side is estimated by means of 
\begin{alignat*}{2}
\lA T_{\cnx \zeta} B\rA_{W^{r'-1,\infty}}
&\les \lA T_{\cnx \zeta} B\rA_{H^{s-1+a}} \qquad &&\text{since }
r'<s+\mu-\frac{d}{2}\\
&\les \lA \cnx \zeta\rA_{C^{a-1}_*}\lA B\rA_{H^s} && \text{(see chapter 2 in~\cite{BCD})}\\
&\les \lA \eta\rA_{W^{1+a,\infty}}\lA B\rA_{H^s}&&\text{since } \cnx \zeta=\Delta\eta \\
&\les \lA \eta\rA_{H^{s+\mez}}\lA B\rA_{H^s}&&\text{since }1+\mu<1+\frac{1}{4}
<s+\mez-\frac{d}{2}\cdot
\end{alignat*}
The key point is to estimate $(T_\lambda-G(\eta))B$. 
To do so we shall use the following
\begin{prop}[Prop.\ 3.19 in \cite{ABZ3}]
Let~$d\ge 1$ and~$s>\mez+\frac{d}{2}$. For any~$\frac{1}{2}\leq \sigma \leq s$ and any
$$
0<\eps\leq \mez, \qquad \eps< s-\mez-\frac{d}{2},
$$
there exists a non-decreasing function~$\mathcal{F}\colon\xR^+\rightarrow\xR^+$ such that 
$R(\eta)f\defn G(\eta)f-T_\lambda f$ satisfies
\begin{equation*}
\lA R(\eta)f\rA_{H^{\sigma-1+\eps}({\mathbf{R}}^d)}\le \mathcal{F} \bigl(\| \eta \|_{H^{s+\mez}({\mathbf{R}}^d)}\bigr)\lA f\rA_{H^{\sigma}({\mathbf{R}}^d)}.
\end{equation*}
\end{prop}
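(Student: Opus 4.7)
The plan is to reduce the problem to a symbolic factorization of the paralinearized Laplace equation on a flattened strip, thereby exhibiting $G(\eta)$ as a first-order paradifferential operator with principal symbol $\lambda$ up to a smoothing error. First, I would flatten the fluid domain through a change of variables $(x,y)\mapsto(x,z)$ with $y=\rho(x,z)$, where $\rho(x,0)=\eta(x)$ and $\rho$ is an extension of $\eta$ to the strip $\xR^d\times[-1,0]$; one standardly takes $\rho(x,z)=e^{z\langle D_x\rangle}\eta(x)+z$ (or a truncated variant adapted to the bottom $\Gamma$ via a cutoff), so that $\nabla_{x,z}\rho$ has the Sobolev regularity of $\nabla\eta$ together with a smoothing factor in $z$. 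Then $\tilde\phi(x,z):=\phi(x,\rho(x,z))$ solves an elliptic equation on the strip with coefficients that are algebraic in $\nabla_x\rho$ and $\partial_z\rho$, of the form
\[
\partial_z^2\tilde\phi+\alpha\Delta_x\tilde\phi+\beta\cdot\partial_z\nabla_x\tilde\phi+\gamma\,\partial_z\tilde\phi=0,
\]
and the Dirichlet-Neumann operator is reconstructed from $\tilde\phi|_{z=0}$ and $\partial_z\tilde\phi|_{z=0}$ by a direct computation.

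Next, I would paralinearize the coefficients via Bony's decomposition, writing each product $\alpha u$ as $T_\alpha u+T_u\alpha+R(\alpha,u)$ where the remainders gain regularity measured by the H\"older regularity of the coefficients (which, by Sobolev embedding $H^{s-\mez}\hookrightarrow W^{\rho,\infty}$, is any $\rho<s-\mez-d/2$). This converts the elliptic equation to a paradifferential equation
\[
P\tilde\phi:=\bigl(\partial_z^2+T_\alpha\Delta_x+T_\beta\cdot\partial_z\nabla_x+T_\gamma\,\partial_z\bigr)\tilde\phi=F,
\]
with $F$ gaining $\eps$ Sobolev derivatives. The heart of the argument is the symbolic factorization
\[
P=(\partial_z-T_A)(\partial_z-T_a)+R_0,
\]
where $A$ and $a$ are first-order paradifferential symbols constructed as the two roots of the characteristic equation $\tau^2+i\beta\cdot\xi\,\tau-\alpha|\xi|^2=0$, with $a$ chosen as the root of negative real part (the evanescent mode), so that $\RE a\le-c|\xi|$. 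A direct computation at $z=0$ identifies the principal symbol of $a$ as $-\lambda(x,\xi)$, and by the composition rule \eqref{esti:quant2} the remainder $R_0$ gains $\eps$ Sobolev derivatives.

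To close the argument, I would solve the forward parabolic-type equation $(\partial_z-T_a)\tilde\phi=g$ from $z=-1$ to $z=0$ by a standard energy estimate in Sobolev spaces, which propagates $H^\sigma$ regularity up to $z=0$ and absorbs the $\eps$-smoothing contributions produced by $R_0$ and by the paralinearization errors. This yields an identity $\partial_z\tilde\phi|_{z=0}=T_a|_{z=0}\tilde\phi|_{z=0}+h$ with $\lA h\rA_{H^{\sigma-1+\eps}}\le\mathcal{F}(\lA\eta\rA_{H^{s+\mez}})\lA\psi\rA_{H^\sigma}$; translating back through the change of variables and using $\tilde\phi|_{z=0}=\psi$ produces $G(\eta)\psi=T_\lambda\psi+R(\eta)\psi$ with the claimed estimate. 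The main obstacle will be the symbolic calculus at low regularity: the symbols $\alpha,\beta,\gamma,\lambda$ lie in paradifferential classes $\Gamma^m_\rho$ only for $\rho<s-\mez-d/2$, and the composition rule \eqref{esti:quant2} gains exactly $\rho$ derivatives, forcing the constraint $\eps<s-\mez-d/2$. Verifying that each paralinearization and each symbolic composition in the factorization tolerates this loss is the delicate bookkeeping step; handling the bottom contribution to $G(\eta)$ through the variational definition of the trace and obtaining the nonlinear dependence of $\mathcal{F}$ on $\lA\eta\rA_{H^{s+\mez}}$ are secondary difficulties, resolved by Sobolev embedding and routine continuity of paraproducts.
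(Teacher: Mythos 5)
This paper does not actually prove the proposition: it is quoted verbatim from \cite{ABZ3} (Proposition~3.19 there), so the comparison is with the proof in that reference. Your overall route --- flattening the domain by a smoothing lifting of $\eta$, paralinearizing the resulting elliptic equation on the strip, factoring the paradifferential operator into two first-order factors whose symbols are the roots of $\tau^2+i\beta\cdot\xi\,\tau-\alpha|\xi|^2=0$, and reading off $G(\eta)=T_\lambda+R(\eta)$ from the boundary trace --- is exactly the strategy of \cite{ABZ3} (following Alazard--M\'etivier), including the correct identification of the constraint $\eps<s-\mez-\frac{d}{2}$ with the H\"older regularity available for the symbols via \eqref{esti:quant2}.

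There is, however, a genuine gap in the final step as written. First, the root whose quantization is evaluated at $z=0$ must be the one with \emph{positive} real part: for the harmonic extension decaying into the fluid one has $\partial_z v\approx +T_{\lambda_1}v$ at the surface with $\RE\lambda_1\ge c\la\xi\ra$ (in the model case $\eta=0$, $\partial_z v=\la D_x\ra v$), and inserting this into the trace formula for $G(\eta)$ produces $+T_\lambda$; your choice ($\RE a\le -c\la\xi\ra$, principal symbol $-\lambda$ at $z=0$) selects the mode growing into the fluid and would give $G(\eta)\approx -T_\lambda$, contradicting positivity. More seriously, the remainder $h=w\arrowvert_{z=0}$ with $w=(\partial_z-T_a)\tilde\phi$ cannot be estimated by ``solving the forward parabolic equation $(\partial_z-T_a)\tilde\phi=g$'': that equation determines $\tilde\phi$ from $g=w$, not $w\arrowvert_{z=0}$ from known data, so the step is circular. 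The control of $w\arrowvert_{z=0}$ must come from the \emph{outer} factor: one integrates $(\partial_z-T_A)w=\tilde F$ forward from an interior level $z_0\in(-1,0)$, where $w$ is controlled by interior elliptic regularity of the variational solution, up to $z=0$; this forward problem is parabolic precisely because the outer root has negative real part. So the roles of the two roots must be interchanged. Finally, for $\sigma$ as low as $\mez$ the existence and continuity in $z$ of the traces $\nabla_{x,z}v\arrowvert_{z=0}$ is not free: \cite{ABZ3} devotes a whole preliminary section to the low-regularity elliptic theory in the strip, and that input is what makes both the interior regularity and the tame dependence $\mathcal{F}(\lA\eta\rA_{H^{s+\mez}})\lA f\rA_{H^\sigma}$ rigorous; this deserves more than the ``secondary difficulty'' status you give it.
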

Using the Sobolev embedding, this implies that
$$
\lA (T_\lambda-G(\eta))B\rA_{W^{r'-1,\infty}}\le \mathcal{F}
\big(\lA (\eta,B)\rA_{H^{s+\mez}\times H^s}\big).
$$
We end up with
$$
\lA r\rA_{W^{r'-1,\infty}}\le 
\mathcal{F}
\big(\lA (\eta,V,B)\rA_{H^{s+\mez}\times H^s\times H^s}\big).
$$
Writing (see \eqref{n450}) 
$$
T_{-\lambda+i\zeta\cdot\xi}B=\cnx \big( V+T_\zeta B\big)-r,
$$ 
it follows from \eqref{n447} and
statement $(iii)$ in Proposition~\ref{theo:scZ} 
that 
$$
\lA B\rA_{L^p(I;W^{r'+\mez,\infty})}\le \mathcal{F}\bigl(M_s(T)+Z_r(T)\bigr).
$$
This completes the proof.

\addcontentsline{toc}{section}{Bibliography}

\vspace{1cm}

\noindent\textbf{Thomas Alazard}\\
\noindent DMA, \'Ecole normale sup\'erieure et CNRS UMR 8553 \\ 
\noindent 45 rue dÕUlm, 75005 Paris, France

\vspace{2mm}

\noindent\textbf{Nicolas Burq}\\
\noindent Univiversit\'e Paris-Sud  \\ 
\noindent D\'epartement de Math\'ematiques  \\ 
\noindent 91405 Orsay, France

\vspace{2mm}

\noindent\textbf{Claude Zuily}\\
\noindent 
Universit\'e Paris-Sud \\ 
\noindent D\'epartement de Math\'ematiques  \\ 
\noindent 91405 Orsay, France

\end{document}